\documentclass[11pt]{amsart}

\DeclareFontFamily{U}{matha}{\hyphenchar\font45}
\DeclareFontShape{U}{matha}{m}{n}{
  <5> <6> <7> <8> <9> <10> gen * matha
  <10.95> matha10 <12> <14.4> <17.28> <20.74> <24.88> matha12
  }{}
\DeclareSymbolFont{matha}{U}{matha}{m}{n}
\DeclareFontFamily{U}{mathx}{\hyphenchar\font45}
\DeclareFontShape{U}{mathx}{m}{n}{
  <5> <6> <7> <8> <9> <10>
  <10.95> <12> <14.4> <17.28> <20.74> <24.88>
  mathx10
  }{}
\DeclareSymbolFont{mathx}{U}{mathx}{m}{n}

\DeclareMathSymbol{\obot}         {2}{matha}{"6B}
\DeclareMathSymbol{\bigobot}       {1}{mathx}{"CB}


\usepackage[pdfauthor={Congling Qiu}, 
    pdftitle={???}%
  dvips,colorlinks=true]{hyperref}
\hypersetup{
bookmarksnumbered=true,
citecolor=black,
pagecolor=black, 
urlcolor=black,  
} 

\usepackage[utf8]{inputenc}

\usepackage{xtab}
\usepackage{amsmath, amssymb, cancel,verbatim}
\usepackage[all]{xy}
\usepackage[scr=euler,scrscaled=.96]{mathalfa}

\usepackage{enumerate}
\usepackage{mathtools,booktabs}
\usepackage{amscd}
\usepackage{bbm, stmaryrd}
\usepackage{yfonts}
\usepackage{color}


\setlength{\textwidth}{16cm}
\setlength{\oddsidemargin}{0cm}
\setlength{\evensidemargin}{0cm}
\setlength{\topmargin}{0cm}
%
\usepackage{epstopdf} 
\usepackage{booktabs}



\setcounter{part}{-1}
\setcounter{tocdepth}{1}

\theoremstyle{plain}
\newtheorem{proposition}{Proposition}[subsection]
\newtheorem{conj}[proposition]{Conjecture}
\newtheorem{cor}[proposition]{Corollary}
\newtheorem{lem}[proposition]{Lemma}
\newtheorem{thm}[proposition]{Theorem}
\newtheorem{prop}[proposition]{Proposition}

\theoremstyle{definition}
\newtheorem{defn}[proposition]{Definition}

\newtheorem{asmp}[proposition]{Assumption}

\theoremstyle{remark}
\newtheorem{rmk}[proposition]{Remark}

\numberwithin{equation}{section}

%


\newcommand{\BA}{{\mathbb {A}}} 
\newcommand{\BC}{{\mathbb {C}}} 
 \newcommand{\BF}{{\mathbb {F}}}
\newcommand{\BG}{{\mathbb {G}}}

\newcommand{\BQ}{{\mathbb {Q}}}

 \newcommand{\BX}{{\mathbb {X}}}
 \newcommand{\BZ}{{\mathbb {Z}}}

\newcommand{\cI}{{\mathcal {I}}}

\newcommand{\cO}{{\mathcal {O}}}

 \newcommand{\cX}{{\mathcal {X}}}

\newcommand{\fm}{{\mathfrak{m}}} 
 \newcommand{\fp}{{\mathfrak{p}}}

  \newcommand{\fB}{{\mathfrak{B}}}
\newcommand{\fC}{{\mathfrak{C}}} 
 
\newcommand{\fG}{{\mathfrak{G}}} 
\newcommand{\fI}{{\mathfrak{I}}} 
 
\newcommand{\fM}{{\mathfrak{M}}}

\newcommand{\fS}{{\mathfrak{S}}} \newcommand{\fT}{{\mathfrak{T}}}
\newcommand{\fU}{{\mathfrak{U}}} \newcommand{\fV}{{\mathfrak{V}}}
\newcommand{\fW}{{\mathfrak{W}}} \newcommand{\fX}{{\mathfrak{X}}}
\newcommand{\fY}{{\mathfrak{Y}}} \newcommand{\fZ}{{\mathfrak{Z}}}

\newcommand{\wt}{\widetilde}\newcommand{\ol}{\overline}
\newcommand{\wh}{\widehat}

\newcommand{\incl}{\hookrightarrow}

\newcommand{\bsl}{\backslash}

\newcommand{\vep}{\varepsilon} \newcommand{\ep}{\epsilon}
\newcommand{\vpl}{\varprojlim}
\newcommand{\fpl}{{\flat+}}         
\newcommand{\vil}{\varinjlim}  
\newcommand{\lb}{\left(} \newcommand{\rb}{\right)}

\newcommand{\etale}{\'{e}tale~}
\newcommand{\et}{{\acute{e}t}}

\newcommand{\Art}{{\mathrm{Art}}}

\newcommand{\cycl}{{\mathrm{cycl}}}

\newcommand{\can}{{\mathrm{can}}}

 \renewcommand{\div}{{\mathrm{div}}}

\newcommand{\Frac}{{\mathrm{Frac}}}\newcommand{\Fr}{{\mathrm{Fr}}}

\newcommand{\Gal}{{\mathrm{Gal}}} \newcommand{\GL}{{\mathrm{GL}}}

\newcommand{\GSp}{{\mathrm{GSp}}}

\newcommand{\op}{{\mathrm{op}}}

\newcommand{\pr}{{\mathrm{pr}}}

\DeclareMathOperator{\Spec}{Spec}\DeclareMathOperator{\Spf}{Spf}\DeclareMathOperator{\Spa}{Spa}\DeclareMathOperator{\MOD}{mod}

\newcommand{\univ}{{\mathrm{univ}}}

 \newcommand{\Nilp}{{\mathrm{Nilp}}}
\newcommand{\zar}{{\mathrm{zar}}}

\newcommand{\CM}{CM}

\newcommand{\Nil}{{\mathrm{Nil}}}

\newcommand{\CPo}{{\mathbb{C}_p^\circ}}

\newcommand{\CP}{{\mathbb{C}_p}}
\newcommand{\Cflat}{{\mathbb{C}_p^\flat}}
 \newcommand{\Cfcc}{{\mathbb{C}_p^{\flat\circ}}}
\newcommand{\Fo}{{F^\circ}}
\newcommand{\oFo}{{\ol F^\circ}}



\newcommand\supervisor[1]{\def\@supervisor{#1}}

\newcounter{elno}


\renewcommand{\cong}{\simeq}

\setcounter{tocdepth}{1}

\author{Congling Qiu} 
\begin{document} 

\title{Linearity   on ordinary   Siegel moduli schemes and   joint   unlikely almost intersections}

\maketitle 
\tableofcontents 

\begin{abstract}
The goal of this paper is to study a $p$-adic analog of   the joint of the conjectures of Andr\'e--Oort and Andr\'e--Pink.
More precisely, on a product of ordinary Siegel formal  moduli schemes, we 
study the distribution of   points whose components are either CM points or points in   Hecke orbits. 
 We use linearity of  formal subschemes  of the product
as the $p$-adic analog of geodesicness    over complex numbers.   
Moreover, we relax the  usual incidence relations 
by using $p$-adic distance. 
  We also study a $p$-adic formal scheme theoretic analog  of the Ax--Lindemann theorem.
 \end{abstract}

\section{Introduction}


\subsection{Joint unlikely intersections}

In the theory of unlikely intersections,  a prototype conjecture is due to
Manin and Mumford, which states that an irreducible  closed subvariety of an abelian variety containing infinitely many
torsion points is a translation of an abelian subvariety.   It was proved by Raynaud \cite{Ray83} using a $p$-adic method.
The  Mordell--Lang conjecture \cite{Lang1}
is a vast generalization of the Manin--Mumford conjecture,  which replaces torsion points by the division points of a lattice. 
The Mordellic part of this conjecture  \cite{Lang},  where one takes the same statements but only with lattice points, was proved by Faltings  in his fundamental works  \cite{Fal,Fal3}  on
 Diophantine approximation, i.e., comparison of local and global heights. 
The whole conjecture was later settled by McQuillan \cite{McQ}. 
Poonen \cite{Poo} and  Zhang \cite{Zha1} generalized the  Mordell--Lang conjecture,
replacing division points by  points in their small neighborhoods, with the distance between any two points defined as the N\'eron-Tate height of their difference under the group structure\footnote{This  generalization was motivated by 
the Bogomolov conjecture (a Theorem of Ullmo \cite{Ull} and Zhang \cite{Zha}), which is 
the  generalization of  the  Manin--Mumford conjecture in this   style.}.

Now we turn to Shimura varieties and we have  two conjectures on unlikely intersections.
The Andr\'e--Oort conjecture  \cite{Andr,Oor}  is the analog of the Manin--Mumford conjecture  for Shimura varieties. It  
asserts that an irreducible  closed subvariety of a Shimura variety with a  Zariski dense     subset of CM points is 
an
irreducible component of a  
Hecke translation of   a Shimura subvariety (in particular,  totally geodesic). 
It is now a theorem   of   Pila,  Shankar and  Tsimerman \cite{PST}.
The Andr\'e--Pink conjecture   \cite{Andr,Pin}    is partially modeled on the Mordell--Lang conjecture. It asserts that an irreducible closed subvariety of a Shimura variety that has   Zariski dense     intersection with a Hecke orbit is totally geodesic.  
The conjecture  has been proved by  Richard and  Yafaev \cite{RY}   for certain Shimura varieties, including the ones of abelian type.
In these proofs, a fundamental  result, the Ax--Lindemann theorem, plays an essential role.

Since 
the Andr\'e--Pink conjecture and  the Andr\'e--Oort conjecture do not imply each other, we propose the following joint   conjecture, as a plain generalization of both conjectures.
\footnote{Conjecture  \ref{jointc} does not formally follow from the Andre--Oort conjecture and Andre--Pink conjecture. However, as pointed out to us by  Gao,   combining the aforementioned proofs of both conjectures via the   method of Pila and Zannier \cite{PZ}, one may obtain a proof of Conjecture \ref{jointc}, for Shimura varieties  of abelian type.}.  

\begin{conj} \label{jointc}Let $ S_1,S_2$ be two Shimura varieties over $\BC$ and $V\subset S_1\times S_2$ an irreducible  closed subvariety.  
Let  $O\subset S_1(\BC)$ be a    Hecke orbit,   and $\CM\subset S_2(\BC)$  the set of  CM points. 
Assume that $V(\BC)\cap (O\times \CM)$ is  Zariski dense in $V$. Then $V$ is totally geodesic.   
\end{conj}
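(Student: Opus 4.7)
The plan is to adapt the Pila--Zannier strategy to the joint setting, as suggested in the footnote, restricting to the case where $S_1,S_2$ are of abelian type so that the required Ax--Lindemann theorem and Galois lower bounds for CM points are available. Fix arithmetic uniformizations $u_i\colon\cH_i\to S_i^{\mathrm{an}}$ and a semi-algebraic fundamental domain $\cF=\cF_1\times\cF_2\subset\cH_1\times\cH_2$, so that the restriction of $u=u_1\times u_2$ to $\cF$ is definable in the o-minimal structure $\BR_{\mathrm{an},\exp}$. Since $T=V(\BC)\cap(O\times\CM)$ is Galois-stable (both $O$ and $\CM$ are stable under the relevant reciprocity) and Zariski dense in $V$ by hypothesis, the variety $V$ is defined over some number field $K$. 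Set $Y=u^{-1}(V)\cap\cF$, a definable subset of $\cH_1\times\cH_2$.

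The proof then splits into four steps. First, for a point $(x,y)\in T$, Tsimerman's theorem (resting on the averaged Colmez conjecture) bounds $|\Gal(\ol K/K)\cdot y|$ below by a positive power of the discriminant of $y$; by Shimura reciprocity, each Galois conjugate $(x^\sigma,y^\sigma)$ remains in $O\times\CM$, and by $K$-rationality of $V$, in $T$. Second, using compatible height functions on $S_1$ and $S_2$, I would show that the lifts of these Galois conjugates into $\cF$ have polynomial height as a function of the discriminant of $y$, producing a polynomial lower bound on the number of $K$-rational points of bounded height on $Y$. Third, the Pila--Wilkie counting theorem applied to $Y$ forces the existence of positive-dimensional semi-algebraic arcs through these lifts lying in $Y$. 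Fourth, the Ax--Lindemann theorem for products of Shimura varieties of abelian type, assembled from the single-factor results of Klingler--Ullmo--Yafaev and Pila--Tsimerman via Ax--Schanuel on the product, implies that the Zariski closure of each such arc inside $V$ is weakly special. Zariski density of $T$ combined with Noetherianity then forces $V$ itself to be weakly special, hence totally geodesic.

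The main obstacle will be coordinating the Galois lower bound for the CM component with a height bound on the matching Hecke translates in the first factor. Tsimerman's estimate provides many Galois conjugates of $y$, and Shimura reciprocity dictates the induced action on $x\in O$, but it is not automatic that the resulting points $(x^\sigma,y^\sigma)$ are distinct or have jointly controlled height on $S_1\times S_2$, since the reflex field of $S_2$ need not relate to the fields governing Hecke symmetries on $S_1$. Resolving this will likely require working with the product Shimura datum and exploiting that $O$ is a single orbit under the adelic Hecke group, so that $[K(x^\sigma):K]$ is absorbed into a polynomial joint height estimate on $\cF$. A secondary issue is the precise form of Ax--Lindemann needed on a $V$ whose projections to either factor may fail to be dominant; this should follow from the Ax--Schanuel theorem for mixed Shimura varieties, applied to $S_1\times S_2$ viewed as a product Shimura datum, but the bookkeeping of weakly special subvarieties across the two factors is where I expect the technical work to concentrate.
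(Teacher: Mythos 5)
The statement you are proving is Conjecture \ref{jointc}, which the paper does not prove: it is stated as an open conjecture, and the only guidance given is the footnote (attributed to Gao) that a proof for Shimura varieties of \emph{abelian type} should be obtainable by combining the proofs of Andr\'e--Oort \cite{PST} and Andr\'e--Pink \cite{RY} via the Pila--Zannier method \cite{PZ}. Your proposal is precisely that route, so there is no proof in the paper to compare it with; what you have written is a strategy outline in the spirit of that footnote, not a complete argument, and it is restricted to abelian type whereas the conjecture is stated for arbitrary Shimura varieties.

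Two of your steps have genuine gaps. First, the reduction ``$T$ is Galois-stable and Zariski dense, hence $V$ is defined over a number field $K$'' does not work: $O$ is the Hecke orbit of an \emph{arbitrary} point of $S_1(\BC)$, which need not be algebraic, so the points of $T$ need not be $\ol\BQ$-points, Galois does not act on them in the naive sense, and $V$ need not be defined over $\ol\BQ$ at all. Handling non-algebraic base points is a nontrivial part of the Andr\'e--Pink story (it is dealt with by separate reduction arguments in \cite{RY}), and your sketch silently assumes it away. Second, the arithmetic core — your step 2 — is exactly the missing ingredient, as you yourself flag: Tsimerman's lower bound gives many conjugates of the CM coordinate $y$, but the height of a fundamental-domain lift of the matching Hecke-orbit coordinate $x^\sigma$ is governed by the complexity (degree/level) of the Hecke correspondence relating it to the base point, which has no a priori relation to the discriminant of $y$. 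A genuine combination has to count points of $Y$ with respect to a joint complexity (discriminant on the CM factor, Hecke complexity on the orbit factor) and import the Andr\'e--Pink arithmetic inputs — polynomial height upper bounds for Hecke translates together with Galois-orbit lower bounds for points in Hecke orbits, as in Orr and Richard--Yafaev — alongside the CM bounds, before Pila--Wilkie and the product Ax--Lindemann/Ax--Schanuel step (cf. \cite{UY}) can be invoked. Until that bookkeeping is actually carried out, the argument does not close; as written it is a plausible plan consistent with the paper's footnote, not a proof.
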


We also want  an analog of the  
result of Poonen   and  Zhang  for Shimura varieties. 
However, due to the lack of a global group structure on Shimura varieties, such an analog does not exist\footnote{The Bogomolov conjecture  (equivalently) generalizes the Manin--Mumford conjecture replacing torsion points by  points of small heights.
However, 
CM points on Shimura varieties not necessary have small heights.  Thus an analog of the Bogomolov conjecture  in this form does not exist either.}.
Instead, we  consider the $p$-adic analog  of Conjecture \ref{jointc} for ordinary Siegel formal  moduli schemes 
where the analog of  geodesicness  is   linearity, as suggested by  Moonen \cite{Moo,Moo2}.
 We in fact conjecture a generalization of this  $p$-adic analog using $p$-adic distance (generalizing $p$-adic local height).
We prove cases of this conjecture.
 We also conjecture a $p$-adic formal scheme theoretic analog  of the Ax--Lindemann theorem (see \cite[Theorem 1.1-1.4]{UY}), and prove cases of this conjecture.



Below,  we describe the conjectures and theorems of this paper in more detail.

\subsection{Linearity}

Let $k$ be an algebraic closure of $\BF_p$, and $F$  a complete discrete valuation field of characteristic 0 with residue field $k$, equipped with the natural $p$-adic valuation. 
The valuation extends naturally to the   algebraic closure $\ol F$.   Let $\Fo\subset F$ and $\oFo\subset \ol F$ be the  valuation rings. By a formal torus over $ \Fo$, we mean  a self-product of the completion of 
the multiplicative group over $ \Fo$ along the unit.
By a    translated   formal subtorus of a formal torus over $ \Fo$, we always mean  the translation of  a \textit{nontrivial} formal subtorus 
over $ \Fo$ by an $\oFo$-point, regarding  this $\oFo$-point  as a closed   formal $\Fo$-subscheme of the torus.

Let $\fS $ be the  Siegel formal  moduli  scheme over $ \BZ_p$  of abelian schemes of relative dimension $g$ with ordinary reduction of a certain level outside $p$.   Abusing notation, we simply denote $\fS_{\Fo} $  by $\fS$ in this introduction.
By a formal subscheme of  $ \fS$, we always mean
a  locally closed formal subscheme of  $ \fS$. 
(A  formal subscheme may not extend to a  closed formal subscheme.) 
For a formal subscheme $\fV$ of  $ \fS$ and $x\in \fV(k)$, let $\fV_x$ be the formal completion of  $\fV$ at $x$. 
Since  $ \fS_x$  is  naturally a formal torus over $ \Fo$ by the Serre--Tate theory (see \ref{Local toric structure}), we shall call it a formal residue torus to emphasize the toric structure. 
We call  $\fV$  weakly linear  at  $x$  if  $\fV_x$ 
is a   union of    translated   formal subtori of $ \fS_x$. 
We call $\fV$  weakly linear if so it  is at  every $k$-point. 
See \ref{Speciality} for more discussion about this notion.

Regarding the formal residue tori as uniformizing spaces of $\fS$, 
we propose a   $p$-adic formal scheme theoretic analog  of the Ax--Lindemann theorem (see \cite[Theorem 1.1-1.4]{UY}).

\begin{conj}\label{ALconj}
Assume that $\fV$ is a closed formal  subscheme of an open formal subscheme $\fU$ of $\fS$, $x\in \fV(k)$ and $\fT$ a  translated   formal subtorus of $ \fS_x$. 
If $\fV$ is the  schematic image of $\fT\to \fU$, i.e.  the minimal   closed formal subscheme   of $\fU$ through which this morphism factors, then $\fV$ is weakly linear. 
\end{conj}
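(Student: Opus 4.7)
The plan is first to show that the schematic image $\fV$ is set-theoretically supported at the single point $x$, which reduces weak linearity of $\fV$ to a statement about the formal completion $\fV_x$ inside $\fS_x$, and then to argue that $\fV_x$ inherits a toric structure from the global algebraic structure of $\fU$.

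\smallskip
\noindent\textbf{Reduction to the completion at $x$.}
Since the formal scheme $\fT$ has underlying topological space $\{x\}$, the morphism $\fT\to\fU$ factors set-theoretically through $x\in|\fU|$. Thus the schematic image satisfies $|\fV|\subseteq\{x\}$, and because $k$ is algebraically closed, $\fV(k)=\{x\}$. In particular, weak linearity of $\fV$ amounts to the single assertion that $\fV_x$ is a union of translated formal subtori of $\fS_x$.

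\smallskip
\noindent\textbf{Explicit description of $\fV_x$.}
Writing $\fS_x=\Spf\widehat{\cO_{\fU,x}}$ and $\fT=V(J)$ for an ideal $J\subset\widehat{\cO_{\fU,x}}$, let $I=J\cap\cO_{\fU,x}$ denote the ideal, in the algebraic stalk, of functions vanishing on $\fT$. Unwinding the definition of schematic image yields $\fV_x=V(\widehat{\cO_{\fU,x}}\cdot I)$, so $\fT\subseteq\fV_x\subseteq\fS_x$, possibly with strict inclusions. The problem is thus to show that $\widehat{\cO_{\fU,x}}\cdot I$ is a toric ideal on $\fS_x$, i.e.\ generated (after translation) by characters.

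\smallskip
\noindent\textbf{Toric structure of $\fV_x$.}
Write $\fT=x'\cdot\fT_0$ for a formal subtorus $\fT_0\subseteq\fS_x$ and $x'\in\fS_x(\oFo)$. The strategy is to identify the smallest $\fT_0'\supseteq\fT_0$ such that the ideal of $x'\cdot\fT_0'$ in $\widehat{\cO_{\fU,x}}$ equals $\widehat{\cO_{\fU,x}}\cdot I$. To produce such $\fT_0'$, I would exploit natural symmetries of $\fU$ on the ordinary locus, in particular the canonical lift of Frobenius and appropriate Hecke correspondences: each such symmetry preserves $\fV$ (because $\fV$ is cut out by algebraic conditions extending globally), acts through homomorphisms on the Serre--Tate tori, and sends translated subtori to translated subtori. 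The resulting invariance of $\widehat{\cO_{\fU,x}}\cdot I$ under a sufficiently large sublattice of characters of $\fS_x$ forces the ideal to be toric, so $\fV_x$ is a translated formal subtorus (or a union thereof by character components).

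\smallskip
\noindent\textbf{Main obstacle.}
The last step is the essential difficulty. Directly, one must understand the interplay between the algebraic subring $\cO_{\fU,x}\subset\widehat{\cO_{\fU,x}}$ and the toric coordinates: an element of $I$ a priori has a Serre--Tate $q$-expansion mixing characters freely, and the mere vanishing on $\fT$ is too weak a constraint to force toric form upon completion. One plausible route is to analyze the $q$-expansions of elements of $I$ using the vanishing on all $\fT_0$-translates of $\fT$ (which are again contained in $\fV_x$ by the globality of the defining ideal) and conclude by rigidity of formal subtori; an alternative is to invoke results of Chai on formal leaves and rigidity of subschemes stable under Hecke symmetries to directly extract the toric description of $\fV_x$.
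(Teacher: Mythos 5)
The statement you are trying to prove is labeled a \emph{conjecture} in the paper; the paper does not prove it in general.  What it does prove is Theorem \ref{wwcor1} (equivalently \ref{wwcor2}), which establishes Conjecture \ref{ALconj} only under the extra hypotheses that $\fV_k$ is unibranch without embedded points and $\fT$ contains a torsion point.  So a complete proof of the statement as written would be a genuine improvement over the paper, and should be scrutinized accordingly.

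Your proposal has a fatal error at the very first step.  You claim that since $|\fT|=\{x\}$, the schematic image $\fV$ of $\fT\to\fU$ satisfies $|\fV|\subseteq\{x\}$.  This is false.  The map $\fS_x\to\fU$ is \emph{not} a closed immersion of formal schemes: $\fS_x=\Spf\wh\cO_{\fU,x}$ carries the $\fm_x$-adic topology while $\fU$ is $\varpi$-adic, and the schematic image is computed inside the $\varpi$-adic world.  Concretely, take $\fU=\Spf\, \Fo\langle x,y\rangle$ (a formal affine plane), $x$ the origin, and $\fT=\Spf\,\Fo[[x]]\hookrightarrow\Spf\,\Fo[[x,y]]$ the subtorus $\{y=0\}$.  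Then the kernel of $\Fo\langle x,y\rangle\to\Fo[[x]]$ is $(y)$, so the schematic image is $\Spf\,\Fo\langle x\rangle$, whose special fiber is the whole affine line over $k$, not a point.  In general the schematic image of a positive-dimensional formal subtorus is a positive-dimensional closed formal subscheme of $\fU$, and it is precisely because $\fV$ spreads out over many $k$-points that the conjecture is nontrivial (weak linearity is an assertion at \emph{every} $k$-point of $\fV$).  This is also why the paper's Theorem \ref{wwcor1} imposes global hypotheses like unibranchness of $\fV_k$, which would be vacuous if $\fV$ were concentrated at one point.  With your first step gone, the subsequent reduction to a local statement about $\fV_x$ alone collapses, and the ``explicit description of $\fV_x$'' that follows also conflates $\cO_{\fU,x}$ with $\cO_\fU(\fU)$ in a way that would need justification even if the topological claim were correct.

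For comparison, the paper's proof of the accessible case (Theorem \ref{wwcor2}) works quite differently.  It first applies $\phi_\fS^m$ to move $\fT$ to an actual formal subtorus (no translation), observes that $\phi_\fS^m|_\fU(\fV)$ is the schematic image of that subtorus and is stabilized by some further power of Frobenius, and then uses de Jong's rigidity result (Lemma \ref{djg}: a reduced flat closed formal subscheme of $\wh\BG_m^n$ stable under a $p$-power endomorphism contains a union of formal subtori covering its special fiber).  This identifies the reduction $Z$ of $\phi_\fS^m|_\fU(\fV)$ as linear, and then Chai's lifting theorem (Lemma \ref{Chap}) produces the canonical lift $Z^\can$, which is shown to equal $\phi_\fS^m|_\fU(\fV)$.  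Finally Proposition \ref{lql} (which again crucially involves the toric action on the Igusa scheme $\fI_0$, the unibranch and no-embedded-point hypotheses, and the identification $\wh\cO_{\wt\fV,y}\cong\wh\cO_{\fV,x}$) transfers linearity back to $\fV$.  Your ``main obstacle'' paragraph gestures at Frobenius and Hecke symmetries and at Chai's work, which is the right family of tools, but the key structural inputs --- de Jong's Frobenius-rigidity for formal tori, Chai's lift-of-linearity theorem, and the device of passing to the infinite-level Igusa scheme $\fI_0$ to globalize local toric structure --- are all absent, and the proposal itself concedes it does not close the gap.
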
 

The first main results of this paper are
\begin{thm}  \label{wwcor1} 
Conjecture \ref{ALconj} holds if $\fV_k$ is  unibranch without embedded points and $\fT$  contains a torsion point. 
\end{thm}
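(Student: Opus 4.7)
The plan is to exploit the canonical Frobenius lift on the ordinary Siegel moduli $\fS$ together with a Raynaud-type rigidity argument for formal subschemes of a formal torus. The torsion-point hypothesis provides Frobenius-stability of an auxiliary subscheme, and the unibranch-without-embedded-points hypothesis is used to propagate linearity back to $\fV$.

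First, since the torsion of the formal torus $\fS_x$ is $p$-power torsion, the given torsion point of $\fT$ has order $p^m$ for some $m\ge 0$, so $\fT_0:=[p^m]\fT$ is a formal subtorus of $\fS_x$ through the identity. On $\fS$ (hence on $\fU$) there is a canonical lift of Frobenius $\Frob:\fU\to\fU$, which in Serre--Tate coordinates at every ordinary $k$-point $y$ coincides with the $[p]$-map on $\fS_y$; in particular $\Frob$ is the identity on $|\fU_k|$. Let $\fV_0\subset\fU$ be the schematic image of $\fT_0\hookrightarrow\fS_x\to\fU$. The commutativity $\Frob|_{\fS_x}=[p]$, the surjectivity of $[p^m]:\fT\twoheadrightarrow\fT_0$, and the functoriality of schematic image together give $\Frob(\fV_0)=\fV_0$ and $\Frob^m(\fV)=\fV_0$. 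In particular, at each $y\in\fV_0(k)$ the completion $(\fV_0)_y$ is $[p]$-stable inside the formal torus $\fS_y$.

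A Raynaud-type rigidity theorem---asserting that a $[p]$-stable closed formal subscheme of a formal torus whose special fiber is unibranch and without embedded points is a union of translated formal subtori---applied to $(\fV_0)_y$ at each such $y$, then establishes the weak linearity of $\fV_0$. From $\fV\subseteq\Frob^{-m}(\fV_0)$ and $|\Frob|=\id$, at each $y\in\fV(k)$ we have $\fV_y\subseteq[p^m]^{-1}((\fV_0)_y)$, a finite disjoint union of translated formal subtori (the preimage under $[p^m]$ of a translated subtorus splits into its $p^m$-torsion translates). The unibranch hypothesis on $\fV_k$ confines $\fV_y$ to a single such component, while the no-embedded-points hypothesis identifies $\fV_y$ scheme-theoretically with it, giving the desired translated formal subtorus; thus $\fV$ is weakly linear.

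The principal obstacle is the Raynaud-type rigidity theorem. The classical Raynaud argument assumes reducedness of the formal subscheme, whereas here one must work with the weaker unibranch plus no-embedded-points conditions to accommodate nilpotent structure. A secondary subtlety is that these hypotheses are imposed on $\fV_k$ rather than $\fV_{0,k}$, so one must either verify their transfer along the finite flat surjection $\Frob^m:\fV\twoheadrightarrow\fV_0$, or else reformulate the rigidity so that it applies directly to $\Frob^{-m}(\fV_0)$, with the hypotheses on $\fV$ invoked only for the final single-component extraction.
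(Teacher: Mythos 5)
Your opening move---applying the canonical Frobenius lift to kill the torsion translation and then descending along its $m$-th power, whose fibres over a translated subtorus are unions of $p^m$-torsion translates---is indeed the strategy of the paper (via Theorem \ref{wwcor2} and Proposition \ref{lql}). But your description of the lift is wrong in a way that propagates. On the special fibre $\phi_\fS$ induces the (relative) Frobenius, sending the point of $A$ to that of $A/\wh A[p]\cong A^{(p)}$; it is \emph{not} the identity on $|\fU_k|$, and $\phi_\fS|_{\fS_x}$ is a morphism $\fS_x\to\fS_{\phi_\fS(x)}$ between \emph{different} residue tori (Lemma \ref{Kzz1}), not the $[p]$-endomorphism of $\fS_x$. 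Hence $\phi_\fS^m(\fV)$ is the schematic image of the subtorus $\phi_\fS^m|_{\fS_x}(\fT)\subset\fS_{\phi_\fS^m(x)}$, not of $[p^m]\fT\subset\fS_x$; your identity $\Frob^m(\fV)=\fV_0$ fails, and the asserted $[p]$-stability of $(\fV_0)_y$ inside $\fS_y$ at \emph{every} $y$ is false. One only gets stability under a suitable power $\phi_\fS^{n'}$ at points fixed by that power (every $k$-point is fixed by some power, but the exponent depends on the point), which is exactly why the paper works with $\fU^{(m)}$ and chooses $n$ with $\phi_\fS^{n}$ fixing $\phi_\fS^m(x)$.

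The decisive gap is the unproved ``Raynaud-type rigidity theorem''. As you state it (no reducedness of the formal subscheme, only unibranchness and absence of embedded points of its special fibre) it is false: $\Spf \Fo[[x]]/(x^2)\subset\wh\BG_{m,\Fo}$ (coordinate $1+x$) is flat over $\Fo$, stable under $[p]$ since $((1+x)^p-1)^2\in(x^2)$, its special fibre is unibranch without embedded points, yet it is not a union of translated subtori. In the actual situation reducedness is free (the schematic image of the reduced $\fT$ is reduced), so the real difficulty lies elsewhere: Frobenius-stability together with de Jong's theorem only yields that the image \emph{contains} a union of formal subtori covering its reduced special fibre (this is the paper's Lemma \ref{djg}); to conclude it \emph{equals} such a union one needs an upper bound, which the paper extracts from the minimality of the schematic image combined with Chai's canonical linear lift $Z^\can$ of the reduced special fibre, yielding $Z^\can\subset\fV'\subset Z^\can$. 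Your argument never uses the Ax--Lindemann hypothesis beyond Frobenius-stability, so it has no mechanism for this reverse inclusion. Finally, your descent step is not purely local as presented: all $p^m$-torsion translates of a subtorus have the same reduced special fibre, so unibranchness of $\fV_k$ cannot ``confine $\fV_y$ to a single component'' (nor should it---weak linearity allows a union), and the scheme-theoretic identification of $\fV_y$ with the relevant translates is carried out in the paper globally, on the Igusa cover via the toric action, Lemma \ref{basic} and Proposition \ref{lql}; that is where flatness, unibranchness and the absence of embedded points genuinely enter.
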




\begin{thm}  \label{wwcor} 
Let $\fV$ be a connected formal subscheme   of  $ \fS$ flat over $\Fo$ such that
$\fV_k$ is  unibranch.
Let $x\in \fV(k)$.

(1) If   $\fV_x$
contains a   translated   formal subtorus of $\fS_x$  of the same dimension with $\fV_x$,
 then    for  every $z\in \fV(k)$,  $\fV_{z}$   contains a   translated   formal subtorus of $\fS_{z}$ of the same dimension with $\fV_{z}$. 
 
(2)  Assume that $\fV_k$  has no embedded points.
If  $\fV$ is    weakly linear   at  $x$, then    it is weakly linear. 


\end{thm}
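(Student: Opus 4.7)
The plan is to propagate the local linear structure from the basepoint $x$ to every $k$-point of $\fV$ via connectedness, with the main technical input being a Serre--Tate spreading argument.

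First, under the unibranch hypothesis on $\fV_k$, each reduction $(\fV_z)_{\red}$ is an integral formal subscheme of $\fS_z$. Hence the condition ``$\fV_z$ contains a translated formal subtorus of $\fS_z$ of the same dimension as $\fV_z$'' is equivalent to ``$(\fV_z)_{\red}$ \emph{equals} this translated formal subtorus'', since an integral formal subscheme of the same dimension contained in an integral one must coincide with it. Part (1) then asserts that this equivalent condition propagates from $x$ to every $z$.

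For part (1), let $L \subseteq \fV(k)$ denote the set of points satisfying the condition. I would show $L$ is both open and closed and invoke connectedness of $\fV$. Openness is the heart of the matter: given $x \in L$ with associated translated subtorus $\fT \subseteq \fS_x$, one uses Serre--Tate theory to extend $\fT$ to a closed formal subscheme $\wt{\fT}$ of some open formal subscheme $\fU \subseteq \fS$ containing $x$, in such a way that the completion of $\wt{\fT}$ at every $k$-point of $\fU$ is a translated formal subtorus of the Serre--Tate torus at that point. Here ``translated'' is crucial, since the identifications among the $\fS_y$ at different basepoints $y$ are only canonical up to translation. Using the unibranch hypothesis together with the equality of completions $(\fV_x)_{\red} = \wt{\fT}_x = \fT$, one then deduces that $(\fV \cap \fU)_{\red}$ coincides with $(\wt{\fT})_{\red}$ in a neighborhood of $x$, placing every nearby $k$-point in $L$. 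Closedness is routine via the Noetherian structure and flatness of $\fV$ over $\Fo$. Connectedness of $\fV$ then yields $L = \fV(k)$.

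For part (2), weak linearity at $x$ combined with the unibranch hypothesis forces $\fV_x$ to consist of a single translated formal subtorus: the scheme-theoretic union of integral formal subschemes whose reduction is irreducible must be a single such subscheme. Applying part (1), $(\fV_z)_{\red}$ is a translated formal subtorus for every $z \in \fV(k)$. The no-embedded-points hypothesis on $\fV_k$, combined with flatness of $\fV$ over $\Fo$, rules out nilpotents in the formal completions at $k$-points: generic reducedness (inherited from $\fV_x$) together with no embedded points in the special fiber lifts to global reducedness for a flat scheme over $\Fo$. Hence $\fV_z = (\fV_z)_{\red}$ is a translated formal subtorus for every $z$, giving weak linearity.

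The main obstacle is the openness step in part (1): constructing the extension $\wt{\fT}$ and verifying it is actually contained in $\fV$, not merely tangent to $\fV$ at $x$. This requires a precise description of how the Serre--Tate formal tori at different $k$-points of $\fS$ relate via the universal ordinary abelian scheme, and a careful analysis of how translated subtori transform under change of Serre--Tate basepoint, so that the globally spread subscheme $\wt{\fT}$ can be recognized as a closed formal subscheme of $\fV \cap \fU$ using only the infinitesimal data at $x$.
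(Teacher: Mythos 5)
Your proposal does not close the two points where the real difficulty lies, and one of its reductions is incorrect. First, the claim that unibranchness of $\fV_k$ makes $(\fV_z)_{\red}$ integral, so that ``contains a translated subtorus of the same dimension'' is equivalent to ``equals it'', is false: unibranchness is a condition on the special fiber only, and a flat formal $\Fo$-scheme whose special fiber is unibranch can perfectly well have a completion at $z$ with several $\Fo$-flat formal branches sharing one special fiber (already $\Spf \Fo[[T]]/((T-\varpi)(T+\varpi))$ has this shape). The theorem's part (1) only asserts containment, and the paper's proof never upgrades it to equality; your openness argument, which starts from $(\fV_x)_{\red}=\fT$, therefore does not even get off the ground, and the same defect propagates into your part (2), where you need all of $\fV_z$ (not just one branch) to be a union of translated subtori. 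Second, the step you yourself flag as ``the main obstacle'' --- spreading the translated subtorus $\fT\subset\fS_x$ out to a formal subscheme $\wt\fT$ over a neighborhood and showing it sits inside $\fV$ --- is exactly the content of the theorem and is left unexecuted. The genuine obstruction is that a translation by an $\Fo$-point of the Serre--Tate torus at $x$ has no canonical extension to nearby residue tori on $\fS$; no local Serre--Tate bookkeeping fixes this.

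The paper's route is structurally different and supplies precisely the missing mechanisms: it passes to the Igusa scheme $\fI_0$, where the deformation torus $\fM$ acts globally (Lemma \ref{Via}), so that ``translation'' makes sense away from the base point; it uses Proposition \ref{-11} to propagate linearity of the reduced special fiber $Z$ over $k$, and Chai's canonical lifting (Lemma \ref{Chap}) to produce a global linear lift $Z^\can$; and, instead of an open--closed/connectedness argument, it deduces the global containment $h\wt Z^\can\subset\wt\fV$ on the Igusa cover from the single equality of completions at one point via the injectivity statement Lemma \ref{basic}, whose proof rests on $\varpi$ being primary in $\cO_{\wt\fV}(\wt\fV)$ (this is where flatness, unibranchness/irreducibility, and --- for part (2), where Lemma \ref{basic} is applied with ambient $\wt\fV$ --- the no-embedded-points hypothesis enter). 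The conclusion at an arbitrary $z$ is then read off by completing $h\wt Z^\can\subset\wt\fV$ at a point $w$ above $z$ and transporting back to $\fS_z$. If you want to salvage your plan, you would need a substitute for each of these: a way to globalize the translation (the Igusa cover or something equivalent), a global candidate subscheme (the canonical lift $Z^\can$), and a local-to-global containment principle replacing your unproved openness and ``routine'' closedness steps.
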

The analog  of  Theorem \ref{wwcor}  over $k$ is a result of  Chai \cite[(5.3)]{Chai}. We give a new proof (Proposition \ref{-11}).  
Chai \cite[(5.3.1)]{Chai} also  conjectured that the unibranch  assumption in  Proposition \ref{-11} could  be removed. Assuming this conjecture,  we can remove the  unibranch  assumption in both  Theorem \ref{wwcor1} and Theorem \ref{wwcor}.


\subsection{Joint unlikely almost intersections}   

In view of the discussion below Conjecture \ref{jointc}, we propose the following analog of  the  
result of Poonen \cite{Poo} and  Zhang \cite{Zha1}. 
\begin{conj} \label{jointp} 
Let $\CM\subset \fS(\oFo)$   be the set of CM points in $ \fS(\oFo)$.
Let  $O$ be  a  Hecke orbit \footnote{As pointed out by  Liu, it could be complicated to define   nonprime-to-$p$ Hecke action  using   moduli  schemes with levels at $p$. A convenient definition is by using the usual Hecke action over $\overline F$  which preserves ordinariness. However, our Hecke--Frobenius orbits below are always defined using   moduli schemes.} in $ \fS(\oFo)$.    
Let $\fV$ be  a  formal subscheme of $\fS^{2}$.    
Let $Z\subset \fV_k$ be a closed subscheme contained in the Zariski closure (with reduced induced structure) of 
the reduction of $\fV_{\ep}  \cap  (O\times \CM)$ for all  $\ep>0$.  Then $\fV$ contains a union of weakly linear  closed  formal subschemes that  contains $Z$.

\end{conj}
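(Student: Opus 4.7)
My plan is to reduce the conjecture to a purely local question at each closed point of $Z$ and then invoke the Ax--Lindemann type Theorem \ref{wwcor1}. It suffices to show that for each irreducible component $Z_0\subset Z$ and each closed point $z=(z_1,z_2)\in Z_0(k)$ there is a weakly linear closed formal subscheme $\fW_z\subset \fV$ with $z\in \fW_z(k)$; the union $\bigcup_z\fW_z$ will then cover $Z$, and noetherian/constructibility considerations on weakly linear formal subschemes of bounded dimension should reduce this to a finite union.

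For fixed such $z$, by hypothesis there are sequences $\ep_n\to 0$ and points $(u_n,v_n)\in \fV_{\ep_n}\cap (O\times \CM)$ whose reductions are Zariski dense in $Z_0$; in particular one can select subsequences whose reductions lie in arbitrarily small Zariski neighborhoods of $z$ and, after base change, lift to $\oFo$-points of the Serre--Tate formal torus $\fS_{z_1}\times \fS_{z_2}$ at $z$. In this local model two structural facts are available: CM points of $\fS(\oFo)$ with reduction $z_2$ are precisely the torsion points of the formal torus $\fS_{z_2}$, and the nonprime-to-$p$ Hecke orbit $O$, described through Serre--Tate coordinates and canonical lifts, meets $\fS_{z_1}$ in an explicit union of translates of formal subtori by $\oFo$-points. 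Combining this description with the bound $\ep_n\to 0$, a compactness/pigeonhole argument on $\fS_z^2(\oFo)$ should yield a translated formal subtorus $\fT\subset \fS_z^2$ that contains a torsion point and is contained in the formal completion $\fV_z$.

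At that stage Theorem \ref{wwcor1}, applied to the schematic image of $\fT$ in a suitable open formal subscheme of $\fS^2$ containing $z$, produces a weakly linear closed formal subscheme $\fW_z\subset \fV$ passing through $z$. Whenever the flatness and unibranch hypotheses on the relevant piece of $\fV$ can be arranged (e.g.\ by restricting to a well-chosen irreducible component), Theorem \ref{wwcor}(2) propagates the weak linearity from $z$ to every closed point of $\fW_z$; as noted after Theorem \ref{wwcor}, a proof of Chai's conjecture would eliminate the unibranch assumption altogether.

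The main obstacle is the extraction step: converting the \emph{approximate} intersection condition ``$p$-adic distance $<\ep$,'' which is rigid-analytic rather than formal-schematic in nature, into an \emph{exact} translated formal subtorus of $\fS_z^2$ lying in $\fV_z$ which moreover passes through a torsion point. In the group-variety setting of Poonen--Zhang this passage is effected via the N\'eron--Tate height and the group law; no global group is available on Shimura varieties, so the replacement has to come from the local toric Serre--Tate structure together with quantitative control of the distribution of Hecke--Frobenius orbits and of CM points in the formal tori. Securing the torsion condition of the limit subtorus---which is what allows Theorem \ref{wwcor1} to be invoked---is likely the decisive technical difficulty; a strengthening that removed this torsion requirement, in the spirit of Chai's conjecture, would substantially broaden the accessible range of cases.
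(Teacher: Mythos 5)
There is a genuine gap, and in fact the statement you are trying to prove is stated in the paper as an open conjecture: the paper does not prove Conjecture \ref{jointp}, it only proves results \emph{toward a weaker form} of it (Theorem \ref{jointthm}), and even the reduction to that weaker form is conditional on Conjecture \ref{ALconj}. Your outline reproduces this state of affairs rather than resolving it. The decisive step you yourself flag --- converting the approximate condition ``$(u_n,v_n)\in\fV_{\ep_n}\cap(O\times\CM)$ with $\ep_n\to 0$'' into an exact translated formal subtorus $\fT\subset\fV_z$ passing through a torsion point --- is precisely the content the paper only obtains under substantial extra hypotheses: $O$ is replaced by a Hecke--Frobenius orbit (not a full Hecke orbit, whose $p$-primary part is not even defined on the moduli scheme with level at $p$; see the footnote to the conjecture), and one either restricts to points of bounded ramification (Theorem \ref{jointthm}~(1)) or settles for a conclusion at a generic point of a component (Theorem \ref{jointthm}~(2)). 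The machinery that makes even this weaker statement work --- the Igusa tower $\fI\to\fI_0\to\fS$, the $\fM$-toric action, the perfectoid tilting and Scholze's approximation lemma (Proposition \ref{lem1}), de Jong's result relating Frobenius-stability to linearity (Lemma \ref{djg}), and the Boxall--Serban trick plus Corollary \ref{smallercor} to control ramified points --- is entirely absent from your ``compactness/pigeonhole argument,'' and no such soft argument is known to produce the limit subtorus; this is exactly where the analogy with Poonen--Zhang breaks down. Your description of $O\cap\fS_{z_1}(\oFo)$ as ``an explicit union of translates of formal subtori'' is also unsubstantiated: the paper only describes prime-to-$p$ Hecke orbits (same $\Fo$-coordinates in suitable Serre--Tate coordinates) and Frobenius translates ($p$-power maps), and says nothing of the sort for the full Hecke orbit.

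Two further steps of your reduction are also not available as stated. First, invoking Theorem \ref{wwcor1} requires that the schematic image of $\fT$ have unibranch special fiber without embedded points and that $\fT$ contain a torsion point; neither can simply ``be arranged by restricting to a well-chosen irreducible component,'' and securing the torsion point is an additional unresolved difficulty (canonical liftings enter only after the Frobenius-shrinking argument of \ref{Forward Frobenius action}, which is tied to Hecke--Frobenius orbits). Second, even granting a weakly linear $\fW_z\subset\fV$ through each $z\in Z(k)$, your final assertion that ``noetherian/constructibility considerations'' reduce the union over all $z$ to a finite union is unsupported: weakly linear closed formal subschemes through varying points need not lie in a bounded family, and the paper's own formulation deliberately asks only for a union containing $Z$, obtained (conditionally on Conjecture \ref{ALconj}) from a statement about a \emph{single} completion $\fV_x$ containing a union of translated subtori containing $Z_x$. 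In short, the proposal is a plausible reduction scheme but does not constitute a proof, and it does not supply the new ideas that the paper itself identifies as missing.
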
 
Here $\fV_{\ep} \subset \fS^2(\oFo)$ is the subset of points of $p$-adic distance $\leq \ep$ to $\fV$, see
\ref{distance}. 
Note that $Z$ could be larger than the Zariski closure  of     the reduction of $\fV  (\oFo) \cap  (O\times \CM)$.

Let us justify our formulation of Conjecture \ref{jointp}.
Define  
$(O\times \CM)_\ep\subset \fS^2(\oFo)$ to be the subset of points of $p$-adic distance $\leq \ep$ to some point in $O\times \CM$.
Then  $\fV(\oFo)  \cap  (O\times \CM)_\ep\subset \fV_{\ep}  \cap  (O\times \CM)$.
Conjecture \ref{jointp}
 would  be a more strict analog   of Poonen \cite{Poo} and  Zhang \cite{Zha1}  if we   replace  ``the reduction of $\fV_{\ep}  \cap  (O\times \CM)$"
by ``$\fV(\oFo)  \cap  (O\times \CM)_\ep$", and consider its schematic closure. However,   even
the schematic   density  of $\fV (\oFo) \cap  (O\times \CM)$ in $\fV$  
may not imply the weak  linearity of $\fV$.   Indeed, 
the naive generalization of the Mordell--Lang conjecture for formal tori is wrong, see \cite[2.3]{Ser22}.

Assuming   Conjecture \ref{ALconj}, to prove  Conjecture \ref{jointp}, it is enough to show that for some   $x\in Z(k)$, $\fV_x$ contains a union of  translated formal subtori  of   $\fS_x^{2}$ that  contains   $Z_x$ as a closed formal subscheme.
Another main result of this paper (Theorem \ref{jointthm}) is
 toward this weaker form of Conjecture \ref{jointp}. To state it, we need some more notions.  
By the ``Frobenius" on $\fS$, we mean the ``canonical lifting" of the (relative) Frobenius endomorphism on $\fS_k$, see \ref{Lifts of  Frobenii}. The Frobenius action is a special $p$-primary Hecke action.
By a   Hecke--Frobenius orbit on $\fS$, we mean an orbit under the prime-to-$p$ Hecke action and  (forward and backward)   Frobenius action, see \ref{Hecke action}.  
A   Hecke--Frobenius orbit is  contained   in a unique Hecke orbit. 
If $g=1$, then a   Hecke--Frobenius orbit is the same as a Hecke orbit.

\begin{thm} \label{jointthm} 
Let $d,d'$ \footnote{Letting  $d'=1$, its is not hard to see  that we   get an equivalent  statement using embeddings of Siegel moduli schemes. The current  statement   is more convenient  for our purpose. However, the case $d=1$   is weaker than the general case in which  we have   Frobenii on all components, not just the simultaneous Frobenius.} be  non-negative integers and $\fV$    a  closed formal subscheme of $\fS^{d+d'}$.  For $i=1,...,d$, let  $O_i$ be  a  Hecke--Frobenius orbit  in $ \fS(\oFo)$.

(1)  Let $e$ be a positive integer.
Let $Z\subset \fV_k$   be a  closed subscheme contained in the Zariski closure of 
the reduction of $$\fV_\ep \cap  \lb \prod_{i=1}^dO_i\times \CM^{d'}\rb \cap \bigcup_{[E:F]\leq e}\fS (E^\circ)$$ for all  $\ep>0$.  Then for every $x\in Z(k)$, $\fV_x$ contains a union of  translated formal subtori  of   $\fS_x^{n+d'}$ that  contains   $Z_x$ as a closed formal subscheme.
     
(2)    Assume that  the   reduction of $\fV_{\ep}  \cap  \lb \prod_{i=1}^dO_i\times \CM^{d'}\rb$ is Zariski dense in  (the underlying topological space of) an irreducible component  $\fV_k$
for all  $\ep>0$. There is  a nonempty open subscheme of $\fV_k$ such that for every   $x$ of its $k$-points, $\fV_{x}$   contains a translated formal subtorus  of   $\fS_x^{d+d'}$.

\end{thm}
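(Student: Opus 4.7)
The plan is to work pointwise in the formal torus $\fT := \fS_x^{d+d'}$ at each $k$-point $x$, using Serre--Tate theory. Under Serre--Tate coordinates, a CM point in $\fS(\oFo)$ reducing to a fixed $k$-point corresponds to a torsion point of the corresponding formal residue torus, while a Hecke--Frobenius orbit point reducing to a fixed $k$-point is the image of an $\oFo$-point under a combination of prime-to-$p$ algebraic translation and $p$-power Frobenius iteration. The bound $[E:F] \leq e$ controls the $p$-part of the torsion orders that appear in this description; this is a key finiteness ingredient.

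For part (1), fix $x \in Z(k)$ and a $k$-point $z$ of $Z_x$ that is accumulated by the reductions of points $y_n$ in
\[
\fV_{\ep_n}(\oFo) \cap \Bigl(\prod_{i=1}^d O_i \times \CM^{d'}\Bigr) \cap \bigcup_{[E:F] \leq e} \fS(E^\circ)
\]
with $\ep_n \to 0$. By the bounded-degree condition and $p$-adic compactness of $\fT(\oFo)$ restricted to points of degree $\leq e$, I extract a convergent subsequence $y_n \to y_\infty \in \fV_x(\oFo)$ reducing to $z$. Iterating the $p$-power Frobenius action on the Hecke--Frobenius components of each $y_n$ produces further points inside $\fV_{\ep_n}$; in the limit $\ep_n \to 0$, this $p$-adic dynamical data, combined with the torsion structure of the last $d'$ CM components, produces a translated formal subtorus $\fT_z \subset \fT$ contained in $\fV_x$ and passing through $z$, with translation by a torsion point whose $p$-order is bounded in terms of $e$. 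Theorem \ref{wwcor1} then promotes the schematic image of $\fT_z$ in an open neighborhood of $z$ inside $\fV_x$ to be weakly linear. Taking the union over $z \in Z_x(k)$ yields the required union of translated formal subtori containing $Z_x$.

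For part (2), apply part (1) with $Z = \{x\}$ for each $k$-point $x$ of the irreducible component. For each such $x$, the density hypothesis provides a sequence $y_n$ of almost-special points reducing to $x$; a pigeonhole argument on the fields of definition of the $y_n$ yields some $e(x)$ for which infinitely many $y_n$ are of degree at most $e(x)$, and part (1) then gives that $\fV_x$ contains a translated formal subtorus of $\fT$. This holds on a Zariski dense subset of the irreducible component, and by a generic openness argument, using constructibility of the locus where a translated formal subtorus of bounded dimension and bounded torsion translation fits inside $\fV_x$, this locus contains a nonempty open subscheme of $\fV_k$.

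The main obstacle is the extraction of an honest translated formal subtorus at $z$ in part (1): the naive Mordell--Lang analogue for formal tori fails, as indicated in the footnote, so the argument cannot rely only on the CM-induced torsion in the last $d'$ components and must genuinely exploit the $p$-power Frobenius dynamics on the Hecke--Frobenius coordinates of the first $d$ components. The bounded-denominator control from the degree bound $e$ is essential to convert this dynamical data into a single (rather than accumulated or formal-limit) subtorus, and Theorem \ref{wwcor1} is the crucial bridge from the existence of this subtorus to the weak linearity of $\fV_x$ in a neighborhood of $z$.
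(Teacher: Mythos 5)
Your proposal does not supply the mechanism that actually produces the subtori in part (1), and its treatment of part (2) rests on a false reduction. For (1), the step ``this $p$-adic dynamical data \dots produces a translated formal subtorus $\fT_z\subset\fT$ contained in $\fV_x$'' is exactly the content that needs proof, and the two ingredients you cite do not yield it. Iterating the $p$-power Frobenius on the components of $y_n$ does not produce further points close to $\fV$, because $\fV$ is an arbitrary closed formal subscheme, not Frobenius-stable; the paper instead pushes $\fV$ itself forward by $\phi_\fS^m$ (Lemma \ref{m=0}, using Corollaries \ref{image} and \ref{vpr0}) to absorb the bounded quasi-canonical torsion and the bounded backward Frobenius steps permitted by the degree bound (Lemma \ref{degree}, giving \eqref{Febound}). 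More importantly, the heart of the paper's argument is entirely absent from your sketch: the prime-to-$p$ Hecke orbit is handled by passing to the Igusa tower $\fI_0$ and using the toric $\fM$-action, which commutes with the prime-to-$p$ Hecke action, so that after translating by a single $h\in\fM(\Fo)$ all relevant points become canonical liftings (Lemma \ref{thm1'}); then Proposition \ref{lem1} transfers the approximation statement through the perfectoid tilting of $\fI$ (Scholze's approximation lemma, Lemma \ref{techprop5}) to obtain a Frobenius-stable flat formal subscheme whose reduction contains $Y_y$, and de Jong's theorem (Lemma \ref{djg}) converts Frobenius-stability into a union of formal subtori. Your ``compactness plus limit'' argument gives at best a single limit point of $\fV_x(\oFo)$, and says nothing about why the resulting subtori should contain all of $Z_x$ as a closed formal subscheme, which is what the statement requires. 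Also, Theorem \ref{wwcor1} plays no role here and is not needed: the conclusion of (1) is about completions at points of $Z$, not weak linearity of $\fV$.

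For (2) the proposal breaks down at the pigeonhole step: there is no reason that infinitely many of the almost-intersection points reducing into the given irreducible component have degree bounded by some $e(x)$; their degrees may tend to infinity, and this is precisely the case part (1) cannot treat. The paper's Section \ref{Frobenius action} exists to handle these highly ramified points: by Serban's version of Boxall's trick (Lemma \ref{Serlem}, Corollary \ref{Serprop}), a point of large degree has a Galois conjugate differing from it by an exact $p^r$-torsion element, forcing it to be $\ep$-close to $\fV_z\cap\lb\fV_z+\lb\fS_z[p^r]\bsl\fS_z[p^{r-1}]\rb\rb$, which has strictly smaller dimension when $\fV_x$ contains no translated subtorus (Lemma \ref{smaller}); the uniform lower bound on distances away from a proper closed subscheme $Y$ (Corollary \ref{smallercor}, after the reduced-fiber-theorem reduction) then shows such points cannot be Zariski dense, and the bounded-degree points are excluded by part (1). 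Your final ``constructibility of the locus where a translated formal subtorus fits inside $\fV_x$'' is likewise asserted without justification and is not how the openness in the conclusion arises; in the paper it comes from the reduction steps (flatness, reduced fiber theorem, shrinking to an open formal subscheme) performed before the contradiction argument.
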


 

Besides the  result of Poonen \cite{Poo} and  Zhang \cite{Zha1}, another motivation for  Theorem \ref{jointthm} is our previous work   \cite{Qiu},  where we proved that  CM points outside a   formal subscheme can not be $p$-adically too close to it.  
 Theorem \ref{jointthm} is an attempt   to include  Hecke orbits.

 \subsection{Structure and strategy}\label{Structure and strategy}

In Section \ref{Igusa schemes}, we introduce  two Igusa (formal) schemes   with   natural    projections $\fI\to \fI_0\to \fS$.  There is a   toric action on $\fI_0$.  

In Section \ref{Linearity}, we use the  toric action on $\fI_0$   to  study  linearity.  We prove Theorem \ref{wwcor}.  
We also prepare some technical lemmas    for    later use.

In Section \ref {Frobenii and linearity},   we introduce the  Frobenii on these formal schemes.
Then, we further discuss  linearity. 
We end this section by proving Theorem \ref{wwcor1} using  a result of de Jong \cite{dJ} 
that relates Frobenius-invariance  and  linearity for formal tori.

In  Section \ref{More notions},  we  define more notations used in Theorem  \ref{jointthm} 
and its proof. In particular, we introduce  the  adic generic fiber   of $\fI$, which is a perfectoid  space. 
 
Theorem  \ref{jointthm} (1)  is proved at the end of Section \ref{Bounded ramification}.
The main strategy for the proof is to use the toric action on $\fI_0$ to reduce Theorem  \ref{jointthm} (1) to an analog  for  $\fI_0$ and canonical liftings, which says that for  a set   of canonical liftings approaching  a   formal subscheme   $\fW\subset \fI_0$, 
density of the set of their reductions  implies  local Frobenius-invariance of $\fW$.
(Here and below, canonical liftings are units of  the formal residue tori of $\fI_0$ (and $\fS$), and  are CM points.)
In \ref{unt}, we apply    some perfectoid technique  in our previous work \cite{Qiu} (originated from the work of Xie \cite{Xie}) to the  adic generic fiber   of $\fI$   to prove this analog on $\fI_0$. It in fact holds for a power of $\fI_0$.
In \ref{Unramified points on Siegel moduli schemes}, using this result for $\fI_0^{d+d'}$ and the  toric action on the first $d$ factors,
which commutes with the prime-to-$p$ Hecke action,
we prove a weaker version  of Theorem  \ref{jointthm} (1), where each $O_i$ is a prime-to-$p$ Hecke orbit and $\CM$ is replaced by the set of canonical liftings.  
In \ref  {Forward Frobenius action},  using the fact that forward Frobenius action   shrinks the formal residue tori toward canonical liftings and backward Frobenius action increases ramification, 
we reduce Theorem  \ref{jointthm} (1) to the weaker version.

In Section \ref{Frobenius action},  we prove Theorem \ref{jointthm} (2) by ``globalizing"  a trick of Boxall \cite{Box}, used by Serban  
\cite{Ser21}  \cite{Ser22}  for formal groups,  to include all ramified points.  The ``globalization" is achieved using the Igusa scheme $\fI_0$ and   some lemmas about $\fI_0$ prepared in  Section \ref{Linearity}.

   \subsection*{Acknowledgements}            The author  thanks   Ziyang Gao and Yifeng Liu   for   their helpful comments on the paper. He thanks   Brian Conrad  and Vlad Serban for the inspiring  discussions.      The author  was motivated by Daniel Kriz's talks  in the spring of  2021 to study Igusa schemes.        The research   is partially supported by the NSF grant DMS-2000533.

\section{Igusa schemes}\label{Igusa schemes}

\subsection{Siegel moduli schemes}\label{Ordinary perfectoid Siegel space}

Let $\BG_m$ be the multiplicative group over $\BZ_p$ and  $\wh \BG_m$  the formal completion of $\BG_m$ along the unit. We identify   $\wh \BG_m$ with its   $p$-divisible group via  the Serre--Tate theorem. For an abelian scheme $A$, let  $\wh A$ be the formal completion  of $A$ along the unit. 

Now we define an ordinary Siegel formal  moduli scheme.
Let $\Nilp^\op_{\BZ_p} $  be  the opposite  category of $\BZ_p$-algebras with $p$ nilpotent. For  $R\in \Nilp^\op_{\BZ_p} $, an abelian $R$-scheme $A$  is 
ordinary if   $\wh A[p^\infty]\cong \wh\BG_m^g$ and $A[p^\infty]/\wh A[p^\infty] \cong (\BQ_p/\BZ_p)^g$. 
Let $N\geq 3$ be an  integer  prime to $p$. For a  principally polarized $g$-dimensional abelian
scheme   $A$ over a $\BZ_p$-scheme,  a  level-$N$-structure  is    a symplectic  isomorphism  of finite group schemes 
$
(\BZ/N\BZ)^{2g}\cong  A[N]
$
where  the skew-symmetric form on $(\BZ/N\BZ)^{2g}$  is  the standard  one, and on  $ A[N]$ is the one induced from  the Weil pairing.  
The functor    assigning to  $R\in \Nilp^\op_{\BZ_p} $     the set of isomorphism classes of  principally polarized  ordinary  abelian $R$-schemes 
with level-$N$-structure is representable by a formal scheme $\fS=\fS_{N} $  topologically of finite type  and flat   over $ \BZ_p$.  
(One may use   quotient   to obtain more general level structures.)

We relate $\fS$ to the usual Siegel moduli schemes. Let $ S= S_{N}  $  over $\Spec \BZ_{(p)}$ be 
the 
moduli space of principally polarized $g$-dimensional abelian
schemes over $\BZ_{(p)}$-schemes with level-$N$-structure.      
Remove the nonordinary locus in the special fiber of $S$. Then $\fS $  is  the  $p$-adic completion of the resulted scheme.

\subsection{Igusa schemes}\label{Igusa schemes and Hecke orbits}   
Consider the $p$-divisible group $\BX=\wh\BG_m^g\times (\BQ_p/\BZ_p)^g$ over $\BZ_p$, with its connected part  $\wh \BX=\wh\BG_m^g$ and \etale part $ \BX_\et=(\BQ_p/\BZ_p)^g$.  

Fix the canonical polarization $\BX\cong\BX^\vee$, $\wh \BX \cong\BX_\et^\vee$ and $  \BX_\et \cong \wh\BX^\vee$.  For an integer $i\geq 0$ or $i=\infty$, let $\fS_i/\Spf \BZ_p$ be the functor    assigning to  $R\in \Nilp^\op_{\BZ_p} $     the set of isomorphism classes of triples 
\begin{equation}\fS_i=\label{inter}\{(A,\vep_0,\vep_1):A\in \fS(R),\ \vep_0:\wh A[p^i] \cong  \wh\BX_R[p^i], \ \vep_1:A[p^i]/\wh A[p^i] \cong \BX_{\et,R}[p^i]\}/\sim,\end{equation}
where  the isomorphisms $\vep_0$ and $\vep_1$ are required to preserve the filtered polarizations
$\wh A[p^i]\to (A[p^i]/\wh A[p^i])^\vee$ and  $ A[p^i]/\wh A[p^i] \to \wh A[p^i]^\vee$
up to a common scalar. 
By \cite{Hid}, the functor $\fS_i$ is represented by a formal scheme 
(which we still denote by $\fS_i$ so that $\fS_0=\fS$).
For $j\geq i$, we have 
the natural projection
$f_{j,i}: \fS_j\to  \fS_i$, which is surjective finite \'etale if $j<\infty$. And 
$$\fI_0:=\fS_\infty$$ is the inverse limit of $\fS_i$'s (in the category of   formal $\BZ_p$-schemes). 
Below, we only use $f_{j,i}$ for $j<\infty$, and    $$\pi_i:=f_{\infty,i}:\fI_0\to \fS_i.$$

Let $\fI/\Spf \BZ_p$ be the functor  assigning to $R\in \Nilp^\op_{\BZ_p} $   the set of isomorphism classes of pairs 
$$\fI=\{(A,\vep):A\in \fS(R),\ \vep:A[p^\infty]\cong \BX_R\}/\sim,$$
where the isomorphism $\vep$ is required to preserve the polarization up to a scalar. Then we have natural projections 
\begin{equation}\fI \xrightarrow{\pi} \fI_0 \xrightarrow{\pi_0} \fS .\label{pi0}\end{equation}

\begin{prop}[{\cite[Proposition 4.3.3, Corollary 4.3.5, Lemma 4.3.10]{CS}}]\label{CS3}
(1) The functor $\fI$ is representable by a formal scheme in the sense of \cite[2.2]{SW} (which we still denote by $\fI$). 

(2) The special fiber  $\fI_{\BF_p}$ is   the perfection of $ \fI_{0,\BF_p}$.

(3) The formal  scheme $\fI$ is the Witt vector scheme of $\fI_{\BF_p}$, i.e., it is the unique lift of $\fI_{\BF_p}$ to a flat $p$-adic formal scheme  over $\BZ_p$.

\end{prop}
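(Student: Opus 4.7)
The plan is to analyze the projection $\pi:\fI\to\fI_0$ through Serre--Tate theory. Over $\fI_0$, the connected part and the \'etale quotient of the universal $p$-divisible group $A[p^\infty]$ are trivialized as $\wh\BG_m^g$ and $(\BQ_p/\BZ_p)^g$ respectively; subject to the polarization compatibility, the extension class is then recorded by a Serre--Tate coordinate $q$, a symmetric matrix of sections of $\wh\BG_m$. A full polarized trivialization $\vep: A[p^\infty]\cong \BX_R$ restricting to a given pair $(\vep_0,\vep_1)$ amounts to a splitting of this extension, which in turn is the same datum as a compatible system of $p^n$-th roots of $q$ for all $n\geq 1$.

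For (1), I would work locally over a formal affine $\Spf R\subset \fI_0$ on which $q$ becomes a symmetric matrix of units. Adjoining a compatible tower of $p^n$-th roots of these units produces a tower of finite flat surjective covers of $\Spf R$, each one a torsor under the finite flat group scheme $\wh\BG_m^g[p^n]$ (cut down by the polarization symmetry). The $p$-adic completion of the colimit of their coordinate rings defines the inverse limit in the sense of \cite[2.2]{SW}, and gluing over $\fI_0$ exhibits $\fI$ as a formal scheme. For (2), I would restrict to characteristic $p$: there, the $p$-th power endomorphism of $\wh\BG_m$ coincides with the relative Frobenius, so adjoining a $p^n$-th root of a section of $\wh\BG_m$ is equivalent to pulling back under $n$-fold Frobenius. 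Hence the tower built in step one, on the special fiber, coincides with the Frobenius inverse system of $\fI_{0,\BF_p}$, and
$$\fI_{\BF_p}=\vpl_{F}\fI_{0,\BF_p}$$
is by definition the perfection.

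For (3), once (2) is established, I would invoke the classical fact that a perfect $\BF_p$-scheme admits a unique flat $p$-adic formal lift to $\Spf\BZ_p$, realized by its Witt vector scheme. It then suffices to verify that $\fI$ itself is flat over $\Spf\BZ_p$, which is local on $\fI_0$ and follows from the construction in step one, since adjoining $p^n$-th roots of units to a $p$-adically flat $\BZ_p$-algebra preserves flatness. The main obstacle I anticipate lies in (1): one must verify that the polarization symmetry cuts out a flat and representable subfunctor of the naive tower of $\wh\BG_m^g[p^n]^g$-torsors, and that the passage to the limit stays within the Scholze--Weinstein category of formal schemes rather than producing a genuinely pro-formal object.
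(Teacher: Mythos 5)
The paper supplies no proof of Proposition~\ref{CS3}: all three parts are simply quoted from Caraiani--Scholze (Proposition 4.3.3, Corollary 4.3.5, Lemma 4.3.10 of \cite{CS}), so the real comparison is between your proposal and the argument in \cite{CS}.

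Your outline for (1) is essentially right: at finite level $\fI\to\fI_0$ is a torsor under $\wh\BG_m^g[p^n]\cong\mu_{p^n}^{g(g+1)/2}$ (after imposing the polarization symmetry), hence locally adjoins $p^n$-th roots of units, and the Scholze--Weinstein formalism handles passage to the limit.

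The argument for (2), however, has a genuine gap, and it is precisely where you invoke Serre--Tate coordinates. You describe the torsor datum as ``a symmetric matrix $q$ of sections of $\wh\BG_m$'' and assert that adjoining $p^n$-th roots of $q$ on the special fiber is $n$-fold Frobenius pullback. Two things go wrong. First, $\fI_{0,\BF_p}$ is reduced --- it is a pro-(finite \'etale) cover of the smooth ordinary locus $\fS_{\BF_p}$ --- and over a reduced $\BF_p$-algebra the only section of $\wh\BG_m$ is the identity. So the Serre--Tate parameter $q$ is identically $1$ on $\fI_{0,\BF_p}$, and adjoining its $p$-power roots is the trivial operation; yet $\fI_{\BF_p}\to\fI_{0,\BF_p}$ is far from an isomorphism. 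The units classifying the $\mu_{p^n}$-torsor fppf-locally (via Kummer theory) are genuine units of the structure sheaf, not sections of the formal group $\wh\BG_m$, and they do not coincide with the Serre--Tate parameter except formal-locally at a closed point (where $q-1$ happens to be a regular system of parameters). Second, even granting a choice of such units, adjoining $p$-power roots of finitely many elements is not the Frobenius inverse limit, which adjoins $p$-power roots of \emph{every} element of the structure sheaf; you would have to show these particular units generate, and no such argument is offered. The correct route in \cite{CS} is moduli-theoretic and avoids coordinates altogether: over a perfect $\BF_p$-algebra $R$ the splitting of the connected--\'etale sequence exists and is unique (existence from the theory of $p$-divisible groups over perfect rings, uniqueness because the torsor group $\vpl_n\mu_{p^n}(R)^{g(g+1)/2}$ vanishes for $R$ reduced), so the functors represented by $\fI$ and $\fI_0$ literally agree on perfect test rings; combined with perfectness of $\fI_{\BF_p}$ this identifies $\fI_{\BF_p}$ with the perfection. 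Your step (3) is fine once (2) is in place, but (2) as written does not stand.
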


\begin{lem} \label{milne}  
For $y\in \fI_{0}(k)$ and a positive integer $n$,   the local ring $ \cO_{\fI_{0,/p^n},y}$ is noetherian.
\end{lem}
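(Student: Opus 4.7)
I would express $\cO_{\fI_{0,/p^n},y}$ as a filtered colimit of noetherian local rings along essentially \'etale local homomorphisms and identify this colimit with a henselization. Since $\fI_0=\varprojlim_i\fS_i$ with finite \'etale transition maps $f_{j,i}\colon\fS_j\to\fS_i$, reducing modulo $p^n$ preserves these properties; because localization commutes with filtered colimits along affine transitions, we get
\[
\cO_{\fI_{0,/p^n},y}=\varinjlim_i\cO_{\fS_{i,/p^n},y_i},\qquad y_i:=\pi_i(y)\in\fS_i(k).
\]
Each $\cO_{\fS_{i,/p^n},y_i}$ is a noetherian local ring, as $\fS_i$ is topologically of finite type over $\BZ_p$ (so $\fS_{i,/p^n}$ is of finite type over $\BZ/p^n$). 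Moreover, since $k$ is algebraically closed, every $y_i$ has residue field $k$, and each transition map is an essentially \'etale local homomorphism inducing the identity on residue fields.

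I would then identify this colimit with the henselization $\cO_{\fS_{0,/p^n},y_0}^h$ and conclude by invoking the classical theorem (Milne, \emph{\'Etale Cohomology}, I.4.2) that the henselization of a noetherian local ring is noetherian. One inclusion is immediate: any element of some $\cO_{\fS_{i,/p^n},y_i}$ satisfies an \'etale polynomial over $\cO_{\fS_{0,/p^n},y_0}$, hence lies in the henselization, so the colimit embeds in $\cO_{\fS_{0,/p^n},y_0}^h$.

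The main obstacle is the opposite containment, i.e.\ showing that the Igusa tower, viewed through $y$, is cofinal in the system of pointed finite \'etale neighborhoods of $(\fS_{0,/p^n},y_0)$ with trivial residue field extension. This cofinality should follow from the moduli description of the $\fS_i$'s as trivializing the $p$-power torsion of the $p$-divisible group, combined with the Serre--Tate identification of the completed local rings: since each finite \'etale map with trivial residue field extension induces an isomorphism on completions, one has canonical isomorphisms $\widehat{\cO}_{\fS_{i,/p^n},y_i}\cong(W(k)/p^n)[[T_1,\dots,T_d]]$ with $d=g(g+1)/2$, independently of $i$. If establishing full cofinality proves delicate, a fallback is to embed the colimit into this noetherian complete ring via the Serre--Tate isomorphism and argue noetherianness directly---for example, by exhibiting a finite generating set of the maximal ideal of the colimit inherited from that of $\cO_{\fS_{0,/p^n},y_0}$, using that $\fm$ extends to the maximal ideal at every level because residue fields do not grow.
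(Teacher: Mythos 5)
Your opening reduction, writing $\cO_{\fI_{0,/p^n},y}=\varinjlim_i\cO_{\fS_{i,/p^n},y_i}$ as a filtered colimit of noetherian local rings along local \'etale maps, is exactly how the paper's proof begins; but your main route does not close, and the obstacle you flag is not a delicate cofinality check — it is false. The Igusa tower only trivializes the $p$-power torsion of the universal $p$-divisible group, so an \'etale neighborhood of $(\fS_{0,/p^n},y_0)$ of a different origin (for instance one obtained by raising the prime-to-$p$ level, or more generally any connected finite \'etale cover of some $\fS_{i,/p^n}$ of degree $>1$ whose pullback stays connected up the tower, which happens by irreducibility of the Igusa tower) is never dominated by the system. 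Since any finite \'etale algebra over the filtered colimit descends to a finite level, such a cover does not split over the colimit; hence the colimit is not even henselian, so it is not the henselization, and the theorem on noetherianness of henselizations cannot be invoked for it.

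Your fallback points in the right direction — it is in substance the argument of \cite[Proposition 2.4]{Mil} that the paper's proof simply cites — but as stated it is insufficient: embedding the colimit $A$ into a noetherian complete local ring and producing finitely many generators of $\fm_A$ proves nothing, since subrings of noetherian rings need not be noetherian and a local ring with finitely generated (even principal) maximal ideal can fail to be noetherian (e.g.\ a valuation ring with value group $\BZ\times\BZ$ ordered lexicographically). The missing ingredient is flatness. Because each transition is a local \'etale map inducing an isomorphism on residue fields (after base change to $W(k)$), one has $A/\fm_A^m\cong \cO_{\fS_{0,/p^n},y_0}/\fm_{y_0}^m$ for every $m$, so the $\fm_A$-adic completion $\widehat{A}$ coincides with $\widehat{\cO}_{\fS_{0,/p^n},y_0}$, which is noetherian for free — no Serre--Tate coordinates are needed. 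Each $\cO_{\fS_{i,/p^n},y_i}\to\widehat{A}$ is flat, and flatness passes to filtered colimits of rings, so $A\to\widehat{A}$ is faithfully flat; hence $I=I\widehat{A}\cap A$ for every ideal $I\subset A$, and an ascending chain of ideals of $A$ stabilizes because the corresponding chain in $\widehat{A}$ does. With that flatness step supplied, your fallback becomes precisely the noetherianness argument of \cite{Mil} to which the paper's proof defers.
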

\begin{proof} Note that $\fI_{0,/p^n}$ is a scheme and is the inverse limit of $\fS_i/p^n$'s. 
The proof is the same as  the proof of  the noetherianness in \cite[Proposition 2.4]{Mil}.
\end{proof}

   \subsection{Local toric structure}\label{Local toric structure}
Let $k$  be an algebraic  extension of $\BF_p$.
Let $W$ be the ring of Witt vectors  of $k$.   Let $\fM/\Spf W$ be  the  height-0 component of the Rapoport-Zink space of $ \BX_{k}$, i.e., for $R\in \Nilp^\op_W$, $\fM(R)$ is  the set of isomorphism classes of pairs $(X,\vep')$ where $X$ is a principally  polarized $p$-divisible group over $R$, and $\vep:X_{R/p}\to  \BX_{R/p}$ is a polarization-preserving (up to a scalar) quasi-isogeny of height 0.  
Then  $\fM$ is representable by a formal scheme locally formally of finite type  over $ W$ (see \cite[Theorem 6.1.2]{SW}). In particular, $\fM$ is determined by its restriction  to the subcategory $\Art^\op_W$ of $\Nil^\op_W$ of artinian local $W$-algebras with residue field ${k}$.
Then by the Serre--Tate  theory \cite{Kat} \cite{CO} , 
$\fM/\Spf W$ coincides with the polarized  deformation space of $ \BX_{k}$ over $\Art^\op_W$.
By \cite[2.18, 2.19]{CO} or \cite[p 150]{Kat}, $\fM(R)$ equivalently classifies equivalence classes of polarized extensions
\begin{equation}0\to   \wh\BX_R\to G \to  \BX_{\et,R}\to 0.\label{RAR1}\end{equation}
In particular  we have the universal object
\begin{equation}\label{3.3'}0\to   \wh\BX_\fM\to G^\univ \to  \BX_{\et,\fM}\to 0. \end{equation}
There is  a formal torus structure on  $\fM$ defined by Baer sum (see \cite[Theorem 2.19, Corollary 2.24]{CO}):     \begin{equation*}\fM\cong \wh\BG_{m,W}^{g(g+1)/2}.\end{equation*}
The  unit of $\fM$ corresponds to $\BX_W$ with the canonical identification $(\BX_{W })_{W/p}=\BX_{W/p}$.

For $   {  x}\in \fS({k})$, $\pi_0^{-1}(\{x\})\subset \fI_{0,k}$  is a union of $k$-points. If $x$
corresponds  to a polarized ordinary  abelian variety $A_{    x}$ over ${k}$, $y\in \pi_0^{-1}(\{x\})$ 
corresponds  to the choice of a pair polarization-preserving  isomorphisms
$\wh A_{x}\cong \wh \BX_{k}$ 
and      $A_x/\wh A_{x} \cong \wh \BX_{\et,{k}}$. 
Then we have an isomorphism 
$$a_y:\fS_{{    x}}\cong \fM$$
that sends $A\in\fS_{{    x}}(R) $, $R\in \Art_W^\op$, to a polarized extension \begin{equation}0\to   \wh\BX_R\to A[p^\infty] \to  \BX_{\et,R}\to 0 \label{RARa}\end{equation}
determined  by the above two isomorphisms over ${k}$ (see \cite[2.18, 2.19]{CO} or \cite[p 150]{Kat}). 
For a different $    y$, $a_y$ differs by an automorphism of $\fM$. (The automorphism  could be made explicit. However, we will not discuss it here.)  In particular, we have a natural formal torus structure on $\fS_x$ via  $a_{    y}$, independent of the choice of $y$.

For ${    y}\in \fI_0({k})$, let $\fI_{0,{    y}}$ be the formal completion of $\fI_{0,W}$ at ${    y}$.  
Then we have   natural isomorphisms of formal schemes
\begin{equation}b_{    y}:\fI_{0,{    y}}\cong \fS_{\pi_0(    y)} ,\ c_{    y}:\fI_{0,{    y}}\cong \fM \label{bcy}\end{equation}
defined as follows.
By Lemma \ref{milne}, 
we only need to define the morphisms on  $R$-points for $R\in \Art^\op_W$.
By the discussion in the last paragraph, a point  $  \wt y\in \fI_0(R)$ with reduction $    y$  corresponds to a pair: $A\in \fS_{\pi_0(    y)}(R)$ and 
an extension \eqref{RARa}.
Let  $b_{    y}(\wt y)=A$   (i.e., $b_{    y}=\pi_0|_{\fI_{0,{    y}}}$), and  let   $c_{    y}(\wt y)$ be \eqref{RARa}.
Then  by    the discussion  in the last paragraph, $b_{    y}$ and  $c_{    y}$ are  isomorphisms. It is also easy to check  that $c_{    y}\circ  b_{    y}^{-1}  =a_y$. 
In particular, we have a natural formal torus structure on  via  $c_{    y}$, and $b_y$ is an isomorphism of formal tori.  We fix these  local formal torus structures on $\fI_0$ and $\fS$
from now on.


 \subsection{Toric action}\label{Toric action}
We define an action of $\fM$ on $\fI_{0,W}$:
$$\fM\times\fI_{0,W}\to \fI_{0,W},\ (h,x)\mapsto hx$$
following \cite[Proposition 2.3.5]{LZZ}. The proof in  \cite[Proposition 2.3.5]{LZZ} is not rigorous (see also \cite[1.2.10]{How}), and  we modify it as follows. 
For $R\in \Nilp^\op_W$ (not necessarily  in  $\Art^\op_W$) and  $h\in \fM(R)$, 
 we  
have a polarized extension \eqref{RAR1} for $G$ coming from base change of  the universal object \eqref{3.3'}. 
For 
$x\in \fI_0(R)$,    let  $x$  correspond to  $A\in \fS(R)$ and 
a polarized extension
\eqref{RARa}.  The Baer sum  of \eqref{RAR1} and \eqref{RARa} is 
a polarized extension
\begin{equation}0\to   \wh\BX_R\to G' \to  \BX_{\et,R}\to 0.\label{RAR}\end{equation}
From the polarization-preserving quasi-isogeny  $ G_{R/p}\to  \BX_{R/p}$  of height 0, 
we have a  polarization-preserving quasi-isogeny $G'_{R/p}\to A[p^\infty]_{R/p}$ of height 0.
It lifts uniquely to a polarization-preserving quasi-isogeny  $G' \to A[p^\infty] $,
by the rigidity theorem for p-divisible groups up to isogeny \cite[LEMMA 1.1.3]{Kat}.
Then the Serre--Tate  theory  \cite[THEOREM 1.2.1]{Kat}, there is a unique   $A'\in \fS(R)$, with a $p$-primary quasi-isogeny to  $A$ and the restriction of the $p$-primary quasi-isogeny to the $p$-divisible groups is identified with  $G' \to A[p^\infty]  $.
Combined with \eqref{RAR}, we get an element in $\fI_0(R)$, which we denote by $hx$. 

   The following lemma is easy to check by definition.

 \begin{lem} \label{Via} The action of $\fM$ stabilizes   $\fI_{0,y}$.      Via $c_y$,  the action of $\fM$ on $\fI_{0,y}$  is identified with the multiplication on $\fM$, equivalently  on $\fI_{0,y}$, i.e., $hx=c_y^{-1}(h)+x$.     \end{lem}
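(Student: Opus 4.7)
The plan is to verify both claims by unwinding the construction in \ref{Toric action} and tracing it through the identification $c_y$. By Lemma \ref{milne}, it suffices to check the assertions on $R$-points for $R\in \Art^\op_W$ with residue field $k$, and I would work exclusively with such test rings.

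For stability, the key observation is that $\fM(k)$ consists of the unit alone, which by the construction recalled in \ref{Local toric structure} corresponds to the split polarized extension $\wh\BX_k \oplus \BX_{\et,k}$. Hence any $h\in \fM(R)$ reduces to the unit, and the extension associated to $h$ reduces to the split one. Since Baer sum with the split extension is the identity on extensions, for $x\in \fI_{0,y}(R)$ the Baer sum $G'$ of \eqref{RAR} reduces to the polarized extension defining $y$, the quasi-isogeny $G'\to A[p^\infty]$ produced in \ref{Toric action} reduces to an isomorphism, and the resulting $A'\in \fS(R)$ reduces to the abelian variety underlying $y$ with induced level structures $\vep_0', \vep_1'$ reducing to those of $y$. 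Thus $(hx)|_k=y$, i.e.\ $hx\in \fI_{0,y}(R)$.

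For the identification with the group law, I would read off from \ref{Toric action} that, by construction, $hx\in \fI_0(R)$ is the element whose associated polarized extension (under the recipe defining $c_y$) is precisely the Baer sum of $h$ and $c_y(x)$. Hence $c_y(hx)=h+c_y(x)$, where $+$ denotes Baer sum, which is by definition the group law on the formal torus $\fM$. Since the torus structure on $\fI_{0,y}$ is the one transported from $\fM$ via $c_y$, and $c_y$ is therefore a group isomorphism, this translates to $hx=c_y^{-1}(h)+x$.

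The main point requiring care is to verify that the quasi-isogeny $G'\to A[p^\infty]$ produced in \ref{Toric action} via the rigidity theorem \cite[LEMMA 1.1.3]{Kat} yields level structures $\vep_0',\vep_1'$ on $A'$ that coincide with the boundary maps of the Baer-sum extension $G'$. This is the only nontrivial bookkeeping, and it follows from the uniqueness of polarization-preserving lifts asserted by that rigidity theorem, applied via the identifications supplied by $\vep_0$ and $\vep_1$.
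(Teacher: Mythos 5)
Your verification is correct: the paper offers no written proof (it states the lemma is "easy to check by definition"), and your argument—reducing to artinian test rings via Lemma \ref{milne}, noting that $h\in\fM(R)$ reduces to the unit so $(hx)|_k=y$, and reading off from the construction in \ref{Toric action} that the Igusa datum of $hx$ is precisely the Baer sum of $h$ and $c_y(x)$, hence $c_y(hx)=h+c_y(x)$—is exactly the intended definitional check. The only mild overstatement is treating the compatibility of the level structure on $A'$ with the Baer-sum extension as something to be deduced from rigidity: by construction the element $hx\in\fI_0(R)$ is \emph{defined} by combining $A'$ with the extension \eqref{RAR}, so this is built in rather than needing verification.
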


\begin{rmk}\label{rmk1} 

 For $g=1$, such an action was also defined by   Howe   \cite{How} for a slightly different Igusa scheme. 
 A subtlety is that    the     $\GL_2(\BA_f^p)$-action in loc. cit. differs from our $\GL_2(\BA_f^p)$-action (see \ref{Hecke action} below) by a suitable twist. This   was shown in an earlier version of   \cite{How}.

\end{rmk}



  \section{Toric action and linearity}\label{Linearity}
 In the rest of this paper, let $k$ be    an algebraic closure of $\BF_p$. 
Let $F$ be a finite extension of  $\Frac W$.   
Let $\varpi$ be a uniformizer of  $F^\circ$.

  %
  In this section, we apply the   action  of $\fM$ on  the Igusa scheme  $\fI_0$  in \ref{Toric action} to study the linearity of subschemes of $\fS_k$ and $\fS$.  The first two subsections are preparations.
Then we prove Theorem \ref{wwcor}.  
In the final subsection, we prepare some technical lemmas    for    later use.





         
                   

 \subsection{Preliminaries}\label{Preliminaries}
 We need some preliminaries on commutative algebra.

In order to deal with  formal schemes that are  not necessary noetherian, we clarify the definitions. We will only deal with  affine formal schemes in the non-noetherian case.
A closed formal subscheme of an affine formal scheme $\Spf A$   is  of the form $\Spf A/I$ where $I$ is a closed ideal so that  $A/I$ is complete  and  separated  with the quotient topology (see \cite[23.B, 23.D]{Mat}).
Note that every ideal is  closed if $A$ is noetherian, see \cite[24.B]{Mat}.
For   a morphism  $f:\Spf B\to \Spf A$,   corresponding to a ring homomorphism $f^\sharp:A\to B$, 
define the  pullback or   schematic preimage $$f^{-1}(\Spf A/I)=\Spf B/ \ol{IB},$$ where the overline denotes taking closure (in $B$), and 
define the schematic image $$f(\Spf B)=\Spf A/\ker f^\sharp.$$
Let $\Lambda$ be an index set, and for $\lambda\in \Lambda$, let $f_\lambda:\Spf B\to \Spf A$  be a morphism and regarded as a $B$-point of $\Spf A$. Define schematic closure of $\{f_\lambda\}_{\lambda\in \Lambda}$ to be 
the schematic image of $$\prod_{\lambda\in \Lambda} f_\lambda:\prod_{\lambda\in \Lambda} \Spf B\to A .$$ We say  that $\{f_\lambda\}_{\lambda\in \Lambda}$ is 
schematically dense if the schematic image is $\Spf A$.

For   a morphism  $f:\fY\to \fX$ of locally noetherian formal schemes, 
pullback of  a formal scheme over $\fX$ is defined as the fiber product. 
The schematic image $f(\fY)$ of $f$  is the minimal   closed formal subscheme   of $\fX$ though which $f$ factors. 
    (The uniqueness is clear and the existence is \cite[Lemma 2.8]{Kap}.)       We call $f$   schematically surjective if   $f(\fY)=\fX$. 
      In the affine case,  schematic image  is given by the discussion in the  last paragraph.
      Understanding affine or locally noetherian  schemes as formal schemes (with discrete topology on the rings), the above definition coincides with the usual one \cite[2.5]{BLR}\cite[Section 01R5]{Sta}.

   The formation of schematic image under a proper morphism   is compatible with flat pullback.  More precisely,   we have the following lemma.
\begin{lem}[{\cite[Proposition 2.10]{Kap}}]\label{Kaplem} let $f: \fY\to \fX$ and $g: \fX' \to  \fX$ be morphisms of locally noetherian formal schemes, where $f$ is proper and $g$ is flat. 
Let $\fY'=\fY\times_\fX\fX'$ with induced morphism $f:\fY'\to \fX'$. 
Then  $f(\fY')=g^{-1}(f(\fY))$. 
\end{lem}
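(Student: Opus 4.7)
The plan is to first reduce to the case where $f$ is schematically surjective. Set $\fZ := f(\fY)$ and $\fZ' := g^{-1}(\fZ)$; since $f$ factors through $\fZ$, the fiber product $\fY' = \fY\times_\fX \fX'$ agrees with $\fY\times_\fZ \fZ'$. Consequently $f'(\fY')$ is automatically a closed formal subscheme of $\fZ'$, so the claimed equality $f'(\fY') = g^{-1}(f(\fY)) = \fZ'$ becomes the statement that the base change $f\times_\fZ\fZ'\colon \fY\times_\fZ\fZ'\to \fZ'$ is schematically surjective. Since the restriction $\fZ'\to \fZ$ of $g$ remains flat (flat base change of $g$ along the closed immersion $\fZ\hookrightarrow \fX$), the problem reduces to the following: show that if $f$ is proper and schematically surjective and $g$ is flat, then $f'$ is schematically surjective.

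Next, I would work locally on the target. The schematic image, being characterized as the minimal closed formal subscheme through which $f$ factors, is compatible with open covers, so I may assume $\fX = \Spf A$ and $\fX' = \Spf A'$ with $A\to A'$ a flat continuous homomorphism. For a proper morphism $f\colon \fY\to \fX$ of locally noetherian formal schemes, the formal proper mapping theorem (EGA III, \S3.4) gives that $f_*\cO_\fY$ is a coherent $\cO_\fX$-module. The schematic image of $f$ is then cut out by the coherent ideal sheaf $\cI := \ker(\cO_\fX\to f_*\cO_\fY)$, and the schematic surjectivity hypothesis says precisely that $\cO_\fX\hookrightarrow f_*\cO_\fY$ is injective.

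The key technical input is flat base change for proper morphisms of locally noetherian formal schemes, namely that the natural map $g^*f_*\cO_\fY\to f'_*\cO_{\fY'}$ is an isomorphism. Granting this, exactness of $g^*$ (from flatness of $g$) preserves the injection $\cO_\fX\hookrightarrow f_*\cO_\fY$, producing an injection $\cO_{\fX'}\hookrightarrow f'_*\cO_{\fY'}$; this is exactly the assertion that $f'$ is schematically surjective, and combined with the reduction above it gives $f'(\fY')=g^{-1}(f(\fY))$.

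The main obstacle will be the flat base change step for proper morphisms in the formal scheme setting. For ordinary noetherian schemes this is standard (EGA III, 1.4.15), and the formal case follows by combining the formal proper mapping theorem with the usual proof via reduction to the affine base and the spectral sequence computing $g^*Rf_*$; the verification requires some care about completions and the coherent structure, but once set up the rest of the argument is formal.
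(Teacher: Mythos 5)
The paper does not give its own proof of this lemma --- it simply cites \cite[Proposition 2.10]{Kap} --- so there is no in-paper argument to compare against. Your proposal is correct and is the argument one expects the cited source to run: reduce to the schematically surjective case by replacing $\fX$ with $\fZ=f(\fY)$ and $\fX'$ with $\fZ'=g^{-1}(\fZ)$, then localize on the target, identify the schematic image as $V(\ker(\cO_\fX\to f_*\cO_\fY))$ using coherence of $f_*\cO_\fY$ from the formal proper mapping theorem, and conclude by flatness of $g$ together with flat base change for the coherent direct image. The one substantive ingredient you correctly flag as needing care --- flat base change for proper morphisms of locally noetherian formal schemes --- is genuinely where the effort lies; it is established in the formal-geometry literature by combining the finiteness theorem with the theorem on formal functions applied over the thickenings, and you should not treat it as a trivial reduction to EGA III 1.4.15 (which concerns a somewhat different base change situation). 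One small circularity worth noting: you justify localizing on $\fX$ by invoking the ``minimal closed formal subscheme'' characterization, but locality of the schematic image is cleanest to see via the sheaf-theoretic description $V(\ker(\cO_\fX\to f_*\cO_\fY))$ that you use in the very next step; under the properness hypothesis the two descriptions agree, so this is harmless.
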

For a locally noetherian formal scheme $\fX$,   the
dimension $\dim(\fX)$ of $\fX$ is defined as the supremum of the Krull dimensions of its local
rings. If $\fX$ is affine, then $\dim(\fX)$ is equal to the Krull dimension of
$\cO_\fX(\fX)$.
             \begin{lem}\label{samedim}
Let  $f:\fY\to \fX$ be        a schematically surjective finite morphism  of noetherian formal   schemes.

(1) For a closed    subscheme $\fZ\subset \fX$ (defined by an open ideal), $f|_{f^{-1}(\fZ)}:f^{-1}(\fZ)\to \fZ$ is set theoretically  surjective.

(2)  We have   $\dim \fY=\dim \fX$.

(3) For a closed point $x$ of $ \fX$, $f:\fY_{f^{-1}(x)}\to \fX_x$  is  a schematically surjective finite morphism.
(Note that $f^{-1}(x)$ is a finite union of closed points of $ \fY$, and 
$\fY_{f^{-1}(x)}$ is defined in the obvious way.)
In particular,
$\dim \fY_{f^{-1}(x)}=\dim \fX_x.$   

\end{lem}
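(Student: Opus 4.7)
The plan is to reduce all three parts to classical commutative algebra about finite ring extensions, after passing to affine charts. Cover $\fX$ by noetherian affine opens $\Spf A$; since $f$ is finite, its restriction is of the form $\Spf B \to \Spf A$ with $B$ a finite $A$-algebra, and schematic surjectivity (as unpacked in the preceding discussion of $f(\fY)=\Spf A/\ker f^\sharp$) translates into $A\hookrightarrow B$ being injective. Noetherianness ensures every ideal is closed, so the topological subtleties in the definition of a closed formal subscheme do not arise.

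For (1), write $\fZ\cap \Spf A = \Spf A/I$ with $I$ open in $A$; then $f^{-1}(\fZ)\cap \Spf B = \Spf B/IB$, and the underlying topological spaces are the closed subsets $V(I)\subset \Spec A$ and $V(IB)\subset \Spec B$. Surjectivity on these sets is the Cohen--Seidenberg lying-over theorem for the finite (hence integral) injective extension $A\hookrightarrow B$: any prime $\fp\supset I$ is the contraction of some $\fq\subset B$, and automatically $\fq\supset \fp B\supset IB$. For (2), I would invoke the standard fact that a finite injective map $A\hookrightarrow B$ of noetherian rings preserves Krull dimension --- going-up lifts chains from $A$ to $B$ giving $\dim B\geq \dim A$, and incomparability of primes lying over a common prime of $A$ contracts chains in $B$ back to chains of the same length in $A$ giving the reverse inequality --- and transfer this to formal schemes via the equality between the dimension of an affine noetherian formal scheme and the Krull dimension of its structure ring.

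For (3), write $\fX_x = \Spf \widehat{A_{\fm_x}}$ where $\fm_x$ is the maximal ideal at $x$. Finiteness of $f$ identifies $f^{-1}(x)=\{y_1,\ldots,y_n\}$ with the finite set of primes of $B$ over $\fm_x$, and since localization and completion commute with the finite base change $A\to B$, we obtain $\fY_{f^{-1}(x)} = \Spf \prod_i \widehat{B_{\fm_{y_i}}} = \Spf(B\otimes_A \widehat{A_{\fm_x}})$. The induced map is finite as a base change of $A\to B$, and injectivity of $\widehat{A_{\fm_x}}\hookrightarrow B\otimes_A \widehat{A_{\fm_x}}$ follows by tensoring $A_{\fm_x}\hookrightarrow B_{\fm_x}$ against the faithfully flat completion $\widehat{A_{\fm_x}}$; the dimension equality then follows from (2) applied to this completed finite injective extension. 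The one point requiring a bit of care is, in (1), that open primes of $A$ lift to open primes of $B$: this holds because $JB$ remains an ideal of definition of $B$ when $J$ is one of $A$ (by $A$-finiteness), so any $\fq$ lying over an open $\fp$ automatically contains an ideal of definition of $B$. Otherwise each step is a routine unpackaging of a classical result.
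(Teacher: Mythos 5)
Your proof is correct and follows essentially the same route as the paper: reduce to the affine case, use standard facts about finite injective extensions of noetherian rings for (1)--(2), and for (3) base change/complete to $\widehat{A_{\fm_x}}$, note that finiteness and injectivity are preserved (flatness of completion, i.e.\ Artin--Rees), and invoke (2). The only difference is cosmetic: where the paper cites Stacks results (quasi-compactness plus closedness of finite morphisms for (1), and the dimension equality for integral schematically dominant morphisms for (2)), you reprove them directly via lying-over, going-up and incomparability, together with the correct observation that the relevant primes are open because a finite morphism is adic.
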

\begin{proof} 
It is enough to consider the affine case and  assumes that $\fX,\fY$ are schemes. 
Since a finite morphism is quasi-compact and closed, by \cite  [Lemma 01R8]{Sta},  $f$ is  set theoretically surjective.  Then so is  $f|_{f^{-1}(\fZ)}$.   
(2) is    \cite[Lemma 0ECG]{Sta}.  
Since completion preserves  finiteness    \cite[Lemma 0315 (4)]{Sta} and  
injectivity (assuming noetherianness) by the Artin-Rees lemma, 
(3) follows. 
\end{proof}

   \begin{cor}\label{samedimcor}
Let  $f:\fY\to \fX$ be        a schematically surjective finite adic morphism  of noetherian formal   $\Fo$-schemes.

(1)   The induced map $\fY(\oFo)\to \fX(\oFo)$  is surjective.

(2)  Assume $\fY=\Spf B$ where $B$ is a complete local domain over $\Fo$  with maximal ideal $\fm$ 
such that the structure morphism $\Fo\to B$ is injective  and $\Fo/(\Fo\cap \fm)\to B/\fm$ is an isomorphism. Then 
$ \fY(\oFo)\neq \emptyset$. 
\end{cor}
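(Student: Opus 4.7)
The plan is to treat both parts via the same extension principle: given a finite injective ring map $A \hookrightarrow B$ of complete local Noetherian $\Fo$-algebras, an $\oFo$-valued point of $\Spf A$ extends to one of $\Spf B$. For (1), I would first reduce to the analysis at a closed point. An $\oFo$-point $x$ of $\fX$ specializes to a closed point $x_0$ of $\fX$, and corresponds to a continuous local $\Fo$-algebra homomorphism $\phi: A := \widehat{\cO_{\fX, x_0}} \to \oFo$ with kernel $\fp$. By Lemma \ref{samedim}(3), the induced ring map $A \hookrightarrow B := \widehat{\cO_{\fY, f^{-1}(x_0)}}$ is finite and injective, where $B$ is a finite product of complete local rings at the preimage points. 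Since $B$ is integral over $A$, lying-over produces a prime $\fq \subset B$ with $\fq \cap A = \fp$, giving a finite injective extension of integral domains $A/\fp \hookrightarrow B/\fq$.

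Having set this up, I would extend the embedding $A/\fp \hookrightarrow \oFo \subset \ol F$ to $\Frac(A/\fp) \hookrightarrow \ol F$, and then further to $\Frac(B/\fq) \hookrightarrow \ol F$ using the algebraic closedness of $\ol F$ together with finiteness of the extension $\Frac(B/\fq)/\Frac(A/\fp)$. Since $B/\fq$ is integral over the image of $A/\fp$ in $\oFo$ and $\oFo$ is integrally closed in $\ol F$, the image of $B/\fq$ also lies in $\oFo$. This yields a ring homomorphism $\psi: B \to \oFo$ lifting $\phi$, corresponding (once continuity is checked) to the desired $\oFo$-point of $\fY$ above $x$.

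For (2), I would combine (1) with Cohen's structure theorem for complete Noetherian local $\Fo$-algebras with residue field $k$: extending $\varpi$ to a system of parameters of $B$ produces a finite injective ring map $R := \Fo[[y_1, \ldots, y_{d-1}]] \hookrightarrow B$ with $d = \dim B$. The ring $R$ admits the obvious continuous local $\oFo$-point given by $y_i \mapsto 0$ (together with the fixed inclusion $\Fo \hookrightarrow \oFo$), so applying (1) to the finite schematically surjective adic map $\Spf B \to \Spf R$ yields an $\oFo$-point of $\fY$.

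The main (minor) obstacle is verifying continuity and locality of the lift $\psi: B \to \oFo$ obtained above, i.e., $\psi(\fm_B) \subset \fm_{\oFo}$. This should follow because $B/\fm_A B$ is an Artinian algebra over the residue field $A/\fm_A$, so $\fm_B^N \subset \fm_A B$ for some $N$; then $\psi(\fm_B^N) \subset \phi(\fm_A)\oFo \subset \fm_{\oFo}$ by locality of $\phi$, and since $\fm_{\oFo}$ is the unique maximal ideal of the valuation ring $\oFo$ we conclude $\psi(\fm_B) \subset \fm_{\oFo}$, from which $\psi$ is automatically continuous with respect to the $\fm_B$-adic and $\varpi$-adic topologies.
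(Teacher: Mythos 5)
Your argument follows the same route as the paper's: lying-over plus a field-theoretic extension for (1), and Noether normalization for complete local domains for (2) (this is what the cited Stacks Lemma 032D does; labelling it ``Cohen's structure theorem'' is a slight misattribution, though the system-of-parameters construction you describe is exactly the content of that lemma). The main cosmetic difference is that the paper's proof of (1) stays on the affine ring $A=\cO_\fX(\fX)$ and the closed subscheme $\Spec A/I\hookrightarrow\Spec A$, invoking Lemma \ref{samedim}(1) directly on $\Spec B/IB$, rather than passing to the complete local rings at a closed point via Lemma \ref{samedim}(3). Both work.

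One step in your write-up does not close as stated, namely the final continuity claim. Since $\ol F$ has dense value group, the maximal ideal $\fm_{\oFo}$ is idempotent, so the inclusion $\psi(\fm_B^N)\subset\fm_{\oFo}$ does \emph{not} give $\varpi$-adic convergence of $\psi(\fm_B^N)$ to $0$; a local homomorphism into $\oFo$ with the $\varpi$-adic topology need not be continuous. The fix is an observation you already have the ingredients for: $\phi$ carries $A/\fp$ into $E^\circ$ where $E=\Frac(A/\fp)$ is a finite extension of $F$ (because the image of the global sections is topologically finitely generated over $\Fo$), so the finite extension $B/\fq$ of $A/\fp$ lands in $K^\circ$ with $K=\Frac(B/\fq)$ finite over $E$. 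Since $K^\circ$ is a complete discrete valuation ring, the local homomorphism $\psi:B\to K^\circ$ is automatically continuous for the adic topologies, and composing with $\cO_\fY(\fY)\to B$ (which takes the ideal of definition into $\fm_B$ because $f$ is adic) yields a genuine $K^\circ$-point of $\fY$. This is exactly what the paper records when it identifies the relevant reduced irreducible component of $\Spec B/IB$ with $\Spec K^\circ$.
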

\begin{proof}  
(1) It is enough to consider the affine case  so that  $\fX=\Spf A$
and
$\fY=\Spf B$. Any $x\in \fX(\oFo)$ is  given by $A/I\cong E^\circ$ where $E/F$ is a finite extension.
Regard 
$x=\Spec E^\circ$ as a closed subscheme of $\Spec A$. By Lemma \ref{samedim}
(1),  $\Spec B/IB\to \Spec E^\circ$ is set theoretically surjective. Since $B/IB$ is finite over $E^
\circ$,  the  underlying reduced subscheme  of an irreducible component of $\Spec B/IB$ that is surjective to 
$ \Spec E^\circ$ is of the form $ \Spec K^\circ$ where $K/E$ is a finite extension. 
Since $A\to B$ is adic, it is easy to check that $B\to K^\circ$ is continuous. This gives a $K^\circ$-point of $\fY$ whose image is $x$.

(2) follows from  (1) and  the Noether normalization lemma for complete local domain \cite[Lemma 032D]{Sta}.
Note that the Noether normalization morphism being adic is shown in the proof of  \cite[Lemma 032D]{Sta}.
\end{proof}

We prepare a lemma on (adic) completion.
\begin{lem}\label{-1lem}

                 Let $R$ be a ring and    $M,N$ two  $R$-modules. 
                 
        (1)    Let $I\subset R$ be an ideals.      A surjection   $M\to N$
between                       $R$-modules with kernel $K$ induces a surjection $\wh M_I\to \wh N_I$ between                     the $I$-adic completions, whose kernel   is the closure of the image of $K$ in $\wh M_I$.


(2) Assume that $N$ is finitely presented. Let $\{M_n\}_{n=1}^\infty$  be submodules of $M$ such that $M_{n+1}\subset M_{n}$. 
Let $\wh M=\vpl_n M/M_n$ and $\wh {M\otimes_R N}=\vpl_n {M\otimes_R N}/[M_n\otimes_R N].$
Here for a submodule $M'\subset M$,   $[M' \otimes _R N]$ is the image of ${M' \otimes _R N}$ in  ${M \otimes _R N}$. 
Then the natural morphism 
$\wh M \otimes _R N\to \wh {M\otimes_R N}$  is an isomorphism.

\end{lem}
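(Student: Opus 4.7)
\medskip

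\noindent\textbf{Proof proposal.} The plan for (1) is to run a Mittag--Leffler argument on the short exact sequence $0\to K\to M\to N\to 0$. Since $M\to N$ is surjective, the image of $I^nM$ in $N$ is $I^nN$, so for each $n$ there is a short exact sequence
\[
0\to (K+I^nM)/I^nM\to M/I^nM\to N/I^nN\to 0.
\]
The transition maps on the left are surjective because $M/I^{n+1}M\to M/I^nM$ is surjective. Passing to inverse limits thus yields a surjection $\hat M_I\to \hat N_I$ with kernel $\varprojlim_n (K+I^nM)/I^nM$. To identify this kernel with the closure of the image of $K$ in $\hat M_I$ for the $I$-adic (equivalently, inverse-limit) topology, I would observe that $(x_n)\in\hat M_I$ lies in this closure if and only if for every $n$ the reduction $x_n$ lies in the image of $K\to M/I^nM$, which is precisely $(K+I^nM)/I^nM$.

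For (2), the plan is to combine a finite presentation of $N$ with a mild generalization of (1). Choose $R^a\to R^b\to N\to 0$ exact. Applying $\hat M\otimes_R-$ gives the right-exact sequence
\[
\hat M^a\to \hat M^b\to \hat M\otimes_R N\to 0.
\]
Applying $M\otimes_R-$ gives $M^a\to M^b\to M\otimes_R N\to 0$; completing with respect to the filtrations induced by $\{M_n\}$ (so the filtration on $M\otimes_R R^n = M^n$ is $\{M_k^n\}$, and on $M\otimes_R N$ it is $\{[M_k\otimes_R N]\}$) yields
\[
\widehat{M^a}\to \widehat{M^b}\to \widehat{M\otimes_R N}\to 0,
\]
right exact by the exact same Mittag--Leffler argument used for (1), now applied to arbitrary decreasing filtrations whose terms on the target are the images of the terms on the source. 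Since completion commutes with finite direct sums, $\widehat{M^n}=\hat M^n$.

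Naturality of tensor products and of inverse limits then produces a commutative ladder between these two right-exact sequences whose first two vertical arrows are the identities under the canonical identifications $\hat M\otimes_R R^n = \hat M^n = \widehat{M^n} = \widehat{M\otimes_R R^n}$, and whose third vertical arrow is the natural map $\hat M\otimes_R N\to \widehat{M\otimes_R N}$. Uniqueness of cokernels then forces this third arrow to be an isomorphism. The hardest part, such as it is, lies entirely in (1): namely, the careful identification of the kernel with a \emph{topological} closure rather than a mere algebraic image, which uses the explicit description of the inverse-limit topology on $\hat M_I$. The remainder is a formal diagram chase, and the extension of (1) needed for (2) is routine once one notes that the Mittag--Leffler step depends only on surjectivity of transition maps, not on the $I$-adic nature of the filtration.
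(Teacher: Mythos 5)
Your proposal follows the same route as the paper: for (1) you write out the standard Mittag--Leffler argument that the paper simply cites (Matsumura 23.I / Stacks 0315), and for (2) you use exactly the paper's scheme --- a finite presentation $R^a\to R^b\to N\to 0$, right exactness of tensor, completion of the presentation, Mittag--Leffler, and a comparison of cokernels. Your (1) is correct, with the small caveat that the closure must be taken for the inverse-limit topology on $\widehat M_I$ (your parenthetical ``equivalently $I$-adic'' is not automatic outside the noetherian, finitely generated setting, though it does not affect the identification you actually carry out).

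There is, however, a genuine gap in (2), at the step ``$\widehat{M^a}\to\widehat{M^b}\to\widehat{M\otimes_R N}\to 0$ is right exact by the exact same Mittag--Leffler argument used for (1)'' --- and, to be fair, the paper's own proof makes the identical jump. With $\phi_n\colon (M/M_n)^a\to (M/M_n)^b$, surjectivity of the transition maps on the images $\mathrm{im}\,\phi_n$ gives, via Mittag--Leffler, that $\widehat{M^b}\to\widehat{M\otimes N}$ is surjective with kernel $\varprojlim_n \mathrm{im}\,\phi_n$, which is the \emph{closure} of the image of $\widehat{M^a}$ in $\widehat{M^b}$ --- precisely the closure phenomenon you isolated in (1). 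Your cokernel-uniqueness ladder needs exactness at $\widehat{M^b}$, i.e.\ that this kernel equals the image itself; equivalently, that $\widehat{M^a}\to\varprojlim_n\mathrm{im}\,\phi_n$ is surjective. That requires a Mittag--Leffler condition on the kernels $\ker\phi_n$ and does not follow from surjectivity of the ambient transition maps. In the stated generality it can actually fail: take $R=\mathbb{Z}$, $N=\mathbb{Z}/p$,
\[
M=\Bigl\{(c_k)\in\textstyle\prod_{k\ge 1}\mathbb{Z}/p^2 \ :\ p\mid c_k \text{ for all but finitely many } k\Bigr\},\qquad
M_n=\{(c_k)\in M : c_k=0 \text{ for } k<n,\ p\mid c_k \text{ for } k\ge n\}.
\]
One checks $\widehat M=M$, while $y=(p,p,p,\dots)$ is nonzero in $\widehat M\otimes\mathbb{Z}/p=M/pM$ (a coordinatewise $p$-th divisor would have to be a unit mod $p$ in every coordinate, which is forbidden in $M$), yet $y\in M_n+pM$ for every $n$, so $y$ maps to zero in $\widehat{M\otimes\mathbb{Z}/p}$. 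So to close your argument (and the paper's) one needs an extra hypothesis guaranteeing that the image of $\widehat{M^a}$ is closed, e.g.\ the Mittag--Leffler condition for $\{\ker\phi_n\}$; this can be verified in the situations where the paper actually applies the lemma (noetherian $R$, $\varpi$-adic filtration, $M$ a filtered colimit of \'etale, hence flat, $R$-algebras, so that Artin--Rees applies to $\ker(\Phi\otimes R/\varpi^n)$), but it is not a consequence of the stated hypotheses.
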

\begin{proof}         
(1) is \cite[23.I, Poposition]{Mat} (see also  \cite[Lemma 0315 (2)]{Sta}). 
Now we prove (2).
By the right exactness of tensor,  for a submodule $M'\subset M$, $M/M'\otimes _R N\cong M \otimes _R N/ [M' \otimes _R N]$. 
Thus we only need to show that $\wh M \otimes_R N\cong \vpl_n (M/M_n \otimes_R N)$.
Let $N$ be the cokernel of a morphism  $\Phi:R^i\to R^j$. From $\Phi$, we have   induced morphisms $\phi:\wh M^i\to \wh M^j$ and $\phi_n: (M/M_n)^i\to (M/M_n)^j$.
We need to prove the isomorphism between the cokernel of $\phi$  and 
the  inverse limit of the cokernels of   $\phi_n$'s. By the surjectivity of $M/M_m\to M/M_n$ for $m\geq n$, we have  the surjectivity of the morphism  from the image of $\phi_m$ to the image of $\phi_n$. The expected isomorphism  follows from the Mittag--Leffler lemma.
          \end{proof}

We prepare an important lemma.
\begin{lem}\label{adicinj}Let $R$ be a ring,  $\alpha\in R$  primary (i.e., $\alpha R$ is a primary ideal)  and   not a zero divisor.

(1)  
For every positive integer $n$, $\alpha^n$ is primary.

(2)  Assume that $R$ is noetherian.
Let   $I\subset R$ be an ideal containing $\alpha$,  $\wh R_I$ the  corresponding $I$-adic  completion.     The natural map   $R/\alpha^n\to \wh R_I  /\alpha^n$ is injective  for every  $n$.

\end{lem}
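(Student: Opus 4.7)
My plan for part (1) is a routine induction on $n$, using only that $\alpha$ is a non-zero-divisor. The base case $n=1$ is the hypothesis. For the inductive step, assuming $\alpha^{n-1}R$ is $\fp$-primary with $\fp:=\sqrt{\alpha R}$, I will show the same for $\alpha^n R$ (the radical equality $\sqrt{\alpha^n R}=\fp$ is immediate). Suppose $xy\in \alpha^n R$ with $y\notin \fp$; since $xy\in \alpha R$ and $\alpha R$ is $\fp$-primary, we must have $x=\alpha x'$. Then $\alpha(x'y-\alpha^{n-1}r)=0$ for some $r$, and cancelling the non-zero-divisor $\alpha$ gives $x'y\in \alpha^{n-1}R$, whence the inductive hypothesis yields $x'\in\alpha^{n-1}R$ and $x\in \alpha^n R$.

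For part (2), my strategy is to convert the injectivity statement into a separatedness question about a quotient. First I would use flatness of $\wh R_I$ over the Noetherian ring $R$ (applied to $0\to \alpha^n R\to R\to R/\alpha^n R\to 0$) to identify $\wh R_I/\alpha^n\wh R_I$ with the $\ol I$-adic completion of the finitely generated $R$-module $R/\alpha^n R$, where $\ol I$ is the image of $I$. The map in the lemma then becomes the canonical completion map $R/\alpha^n R\to \wh{R/\alpha^n R}_{\ol I}$, which is injective precisely when $R/\alpha^n R$ is $\ol I$-adically separated.

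To establish this separatedness, I would invoke Krull's intersection theorem in its Nakayama-style form: there exists $s\in 1+\ol I$ annihilating $\bigcap_m \ol I^m(R/\alpha^n R)$. Lifting $s=1+a$ with $a\in I$, the problem reduces to the implication $(1+a)x\in \alpha^n R\Rightarrow x\in \alpha^n R$ for all $x\in R$. At this point part (1) enters decisively: since $\alpha^n R$ is $\fp$-primary, this implication holds provided $1+a\notin \fp$. The main (and only) obstacle is to rule out $1+a\in \fp$. I plan to observe that $1+a\in \fp$ would give $1\in \fp+I$; writing $1=i+p$ with $i\in I$ and $p\in \fp=\sqrt{\alpha R}$, then taking a power $N$ large enough that $p^N\in \alpha R\subset I$, and expanding $(1-i)^N=p^N$ shows that every nontrivial term in the expansion lies in $I$, so $1\in I$. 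This contradicts the implicit $I\ne R$ in the setup (if $I=R$ then $\wh R_I=0$ and the claim is vacuous, while part (1) forces $\alpha R\ne R$), completing the proof.
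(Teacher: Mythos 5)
Your proof is correct and takes essentially the same route as the paper: part (1) is the same non-zero-divisor induction, and part (2) reduces injectivity to the vanishing of $\bigcap_m$ (image of $I^m$) in $R/\alpha^n$ and concludes by the Krull intersection theorem combined with part (1). The only difference is packaging: the paper invokes van der Waerden's form of Krull's theorem as a black box (and, like you, implicitly assumes $I\neq R$, which holds in its applications), whereas you use the Nakayama-style form and check explicitly that $1+a\notin\sqrt{\alpha R}$.
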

    \begin{proof}  
    (1)   We do induction on $n$. The case $n=1$ is trivial. For a general $n>1$, assume that $\alpha^n|ab$ and $\alpha^n\nmid a$, we will show that $\alpha^n$ divides a power of $b$. Since $\alpha$ is primary, if $\alpha\nmid a$, $\alpha$ divides a power of $b$. Taking $n$-th power, we are done. 
     Otherwise, $\alpha|a$ and write $a=\alpha a_1$. Then $\alpha^n|\alpha a_1b$. Since   $\alpha$ is not a zero divisor, 
$\alpha^{n-1}|a_1b$.  Since $\alpha^{n}\nmid a=\alpha a_1$, $\alpha^{n-1}\nmid a_1$.
By the induction hypothesis, $\alpha^{n-1}$ divides a power of $b$.       So $\alpha^{n(n-1)}$ divides a power of $b$. Since  $\alpha^{n}|\alpha^{n(n-1)}$,  we are done. 
    
(2)    
    Let $x\in R$ such that the image of $ x $ in $ \wh R_I=\vpl _m R/I^m$ is in $\alpha^n \wh R_I $. Then $x\in \alpha^n  R +I^m$ for  for every positive integer $m$, i.e.,   $[x]\in[I]^m$ for  the images $[x],[I]$ of $x,I$ in $R/\alpha^n$.         Since  $\alpha^n$ is primary, by  \cite[p 144, Theorem 2]{VdW} (which requires $R$ to be noetherian), $\bigcap_m [I]^m=\{0\}.$ 
So $[x]=0$. (2) follows. 
                        \end{proof}

   \subsection{Pullback   to $\fI_0$}\label{Consequences}
         Let $ \fV=\fV_0\subset \fS_{\Fo}$ be  a   formal  subscheme. 
      In Lemma \ref{basic} below,    we want to study the pullback of $\fV$ to $\fI_0$ which is non-noetherian.  Before that, we consider an easier analog of  Lemma \ref{basic} in the noetherian case.

\begin{lem}\label{basic0} 
Assume that $\fV$ is affine,  $\fV_k$ is irreducible and has no embedded points. For    a closed formal subscheme $\fX\subset  \fV$   and  $x\in \fX(k)$ (then 
$\fX_x$ is a closed formal subscheme of  $  \fV_{x}$ by Lemma \ref{-1lem} (1)),
if
$\fX_x=  \fV_{x}$,  then $\fX=  \fV$.
\end{lem}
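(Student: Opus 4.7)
The plan is to unwind the formal-scheme statement into a purely commutative-algebra one about the noetherian $\varpi$-adically complete ring $R=\cO_\fV(\fV)$. Let $I\subset R$ be the (closed) defining ideal of $\fX\subset\fV$, and $\fm_x\subset R$ the maximal ideal corresponding to $x$, so that $R/\fm_x=k$ and $\varpi\in\fm_x$. The hypothesis $\fX_x=\fV_x$ says $I\wh R_{\fm_x}=0$, which by faithful flatness of the completion $R_{\fm_x}\to \wh R_{\fm_x}$ is equivalent to $I R_{\fm_x}=0$; concretely, for every $f\in I$ there exists $g\in\Ann_R(f)\setminus\fm_x$. The goal is to conclude $I=0$.

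The irreducibility and absence of embedded points of $\fV_k$ produce a distinguished prime $\fq_0\subset R$ containing $\varpi$: the preimage of the unique associated prime of $R/\varpi R$, which under the hypothesis is simultaneously its unique minimal prime. Since the minimal prime of an irreducible affine scheme is contained in every maximal ideal, $\fq_0\subset \fm_x$.

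First I would show $I\subset\varpi R$. Fix $f\in I$ and $g\in\Ann_R(f)\setminus\fm_x$, and suppose for contradiction that $\bar f:=f\bmod\varpi R$ is nonzero in $R/\varpi R$. Then $\bar g\bar f=0$ with $\bar f\neq 0$ makes $\bar g$ a zero-divisor of $R/\varpi R$, and the set of zero-divisors equals $\bigcup_{\fp\in\Ass(R/\varpi R)}\fp=\fq_0/\varpi R$. Hence $\bar g\in\fq_0/\varpi R$, so $g\in\fq_0\subset\fm_x$, contradicting $g\notin\fm_x$. Therefore $\bar f=0$, i.e.\ $I\subset\varpi R$. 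Next I would iterate, under the additional input that $\fV$ is flat over $\Fo$, i.e.\ that $\varpi$ is a non-zero-divisor in $R$. Then $I=\varpi I_1$ for the unique ideal $I_1=(I:\varpi)$, and $\varpi\cdot(I_1)_{\fm_x}=I_{\fm_x}=0$ together with $\varpi$ being a non-zero-divisor in $R_{\fm_x}$ force $(I_1)_{\fm_x}=0$. The previous paragraph applies verbatim to $I_1$ and gives $I_1\subset\varpi R$, hence $I\subset\varpi^2R$. By induction $I\subset\bigcap_n\varpi^nR$, and since $R$ is noetherian and $\varpi$-adically complete (hence $\varpi$-adically separated), this intersection is zero and $I=0$.

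The main obstacle is the flatness input used in the iteration: without $\varpi$ being a non-zero-divisor the decomposition $I=\varpi I_1$ fails and the single inclusion $I\subset\varpi R$ does not suffice. To remove this assumption one would either show the hypothesis on $\fV_k$ forces $\fV$ to be flat over $\Fo$—for instance by producing an extra associated prime of $R/\varpi R$ out of any nontrivial $\varpi$-torsion in $R$—or replace the iteration by a direct proof that every associated prime of $R$ is contained in $\fm_x$, which would make the localization $R\to R_{\fm_x}$ injective and give $I=0$ in one shot.
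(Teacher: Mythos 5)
Your proof is correct once flatness of $\fV$ over $\Fo$ is granted, and it takes a genuinely different (more elementary, localization-based) route than the paper. The paper never localizes: it proves outright that $\cO_{\fV}(\fV)\to\wh\cO_{\fV,x}$ is injective, by noting that flatness makes $\varpi$ a non-zero-divisor and that irreducibility plus absence of embedded points make $\varpi$ primary, then invoking its Lemma \ref{adicinj} (2) --- a Krull-intersection argument in $\cO_{\fV}(\fV)/\varpi^n$, using that $\varpi^n$ is again primary --- to get injectivity of $\cO_{\fV}(\fV)/\varpi^n\to\wh\cO_{\fV,x}/\varpi^n$ for every $n$, and finally passing to the inverse limit via $\varpi$-adic completeness; the hypothesis $\fX_x=\fV_x$ then forces $\cO_{\fV}(\fV)\to\cO_{\fX}(\fX)$ to be injective, hence an isomorphism. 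You instead descend the hypothesis from the completion to the localization by faithful flatness of $R_{\fm_x}\to\wh R_{\fm_x}$, and then run a d\'evissage in powers of $\varpi$: zero-divisors of $R/\varpi R$ lie in its unique associated prime, which is contained in $\fm_x$, so every element of $I$ is divisible by $\varpi$, and the non-zero-divisor property of $\varpi$ lets you iterate to get $I\subset\bigcap_n\varpi^nR=0$. The ultimate inputs coincide ($\varpi$ a primary non-zero-divisor, $\varpi$-adic separatedness), but your argument bypasses the paper's completion lemma entirely; what the paper's formulation buys is precisely the intermediate mod-$\varpi^n$ injectivity, which is what gets recycled verbatim in the non-noetherian Lemma \ref{basic} (the injectivity \eqref{inji} for $i=0$), whereas your localization argument would need to be adapted for that later use.

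On your ``main obstacle'': your instinct is right that flatness is genuinely needed, and it cannot be extracted from the hypotheses on $\fV_k$ --- for $R=\Fo\langle T\rangle/(\varpi T)$ the special fiber is $k[T]$, irreducible, reduced, with no embedded points, yet $\varpi T=0$ with $\varpi\neq0$ in $R$; taking $I=(\varpi)$ and $x$ the point $T=1$ gives $\fX_x=\fV_x$ while $\fX\neq\fV$. So neither of your proposed repairs can work, and the statement as literally printed needs flatness. This is not a defect of your proof relative to the paper's: the paper's own proof begins ``Since $\fV$ is flat over $F^\circ$\dots'', so flatness is an unstated standing hypothesis here, and in every application of the lemma (Corollary \ref{Chapc}, Lemma \ref{basic}, Proposition \ref{lql}) flatness over $\Fo$ is explicitly assumed, e.g.\ as Assumption \ref{basicasmp} (3).
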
  

\begin{proof}  
Claim: the natural map    $\cO_{ \fV}( \fV)\to  \wh\cO_{\fV,x}$   is injective.  (Here $\wh\cO_{\fV,x}$ is the  complete local ring at $x$.)
Thus the composition of the diagram of  natural morphisms $$\cO_{ \fV}( \fV)\to \cO_{\fX}(\fX)\to  \wh\cO_{\fX,x} \cong \wh\cO_{\fV,x}$$
is injective (the condition in the lemma gives the last isomorphism).   Then   the first morphism in this   diagram  is injective (also surjective by definition), and thus an isomorphism.
So   $\fX=  \fV $.

Now we prove the claim.    Since $\fV$ is flat over $F^\circ$,   $\varpi\in\cO_{\fV}(\fV)$ is not a zero-divisor. Since   $\fV_k$ is irreducible and has no embedded points,     $\varpi\in\cO_{\fV}(\fV)$ is primary. 
By Lemma \ref{adicinj} (2)  (with $R=\cO_{\fV}(\fV)$, $\alpha=\varpi$ and $I$   the   ideal of $\cO_{\fV}(\fV)$ defining $x$),  $\cO_{\fV}(\fV) /\varpi^n\to \wh\cO_{\fV,x} /\varpi^n$   is
injective.
Since $\cO_{ \fV}( \fV) $ 
and  $\wh\cO_{\fV,x}$ are $\varpi$-adically complete (see  \cite[Lemma 090T]{Sta}) and taking limits of   inverse systems is left exact, the claim follows.
\end{proof}

    Now we consider     the pullback of $\fV$ to $\fI_0$. 
Let  $f_{j,i}: \fS_j\to  \fS_i$ (for $j\geq i\geq 0$) be the surjective finite \'etale morphism as in  \ref{Igusa schemes and Hecke orbits}.
Let $ \fV_0=\fV$ 
and assume it to be  connected.
 Let $\fV_{i+1}$  be  a  connected    component  of  $f_{i+1,i}^{-1}(\fV_i)$ inductively.  
 Let   $\wt \fV$    be the inverse limit of $\fV_{i}$'s, i.e.,  $\cO_{ \fV}( \fV) $ 
is the $\varpi$-adic 
completion of $\vil_i\cO_{\fV_i}(\fV_i) $. Then $\wt \fV$  is naturally a closed   formal subscheme of  
$\pi_0^{-1}(\fV) $ where $\pi_i :\fI_0\to \fS_i$ (and $\fS_0=\fS)$  is   also defined in as in  \ref{Igusa schemes and Hecke orbits}.
Moreover  $\wt \fV_k$    is the inverse limit of $\fV_{i,k}$'s, i.e.,  $\cO_{ \fV}( \fV)/\varpi $ 
 is $\vil_i\cO_{\fV_i}(\fV_i)/\varpi $.
  Since a finite \etale morphism is open and closed, every $f_{j,i}|_{\fV_{j,k}}$ is surjective  to  $\fV_{i,k}$. 
So $\pi_0|_{\wt \fV_k}$ is surjective to $\fV_k$.
\begin{lem}
For   $x\in \fV(k)$, let  $y\in  \pi_0^{-1}  (\{x\})\cap \wt \fV$. Then
  the   isomorphism  $\pi_0|_{\fI_{0,{    y}}}:\fI_{0,{    y}}\cong \fS_{x}$    (see \eqref{bcy} and the discussion below it) induces an isomorphism 
\begin{equation}\label{indisom}
\wh\cO_{\wt\fV,y} \cong \wh\cO_{\fV,x}.
\end{equation}
\end{lem}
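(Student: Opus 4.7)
My strategy is to reduce to the standard fact that a finite \'etale morphism induces an isomorphism on formal completions at compatible points with the same residue field, then pass carefully to the non-noetherian inverse limit defining $\wt\fV$.

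First I would verify that each restriction $f_{i+1,i}|_{\fV_{i+1}}\colon \fV_{i+1}\to \fV_i$ is surjective finite \'etale. Since $f_{i+1,i}^{-1}(\fV_i)\to \fV_i$ is finite \'etale and $\fV_{i+1}$ is clopen in the source, the restriction is again finite \'etale, and its image is open, closed, and nonempty in the connected scheme $\fV_i$ (inductively), hence all of $\fV_i$. Setting $y_i:=\pi_i(y)\in \fV_i(k)$ (so $y_0=x$), each $y_i$ has residue field $k$, and \'etaleness yields isomorphisms $f_{i+1,i}^*\colon \wh\cO_{\fV_i,y_i}\xrightarrow{\cong}\wh\cO_{\fV_{i+1},y_{i+1}}$. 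Composing, I obtain compatible isomorphisms $\alpha_i\colon \wh\cO_{\fV,x}\xrightarrow{\cong}\wh\cO_{\fV_i,y_i}$ induced by $f_{i,0}^*$.

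Write $R_i:=\cO_{\fV_i}(\fV_i)$, with maximal ideal $\fm_i$ at $y_i$, and set $R_\infty:=\vil_i R_i$, $\fm_\infty:=\vil_i \fm_i$. By construction $\cO_{\wt\fV}(\wt\fV)$ is the $\varpi$-adic completion of $R_\infty$, and the maximal ideal $\fm_y$ of $\cO_{\wt\fV}(\wt\fV)$ pulls back from $\fm_\infty$. For each $N\geq 1$, since $\varpi\in\fm_\infty$ we have $\varpi^N\in\fm_\infty^N$, hence
$$\cO_{\wt\fV}(\wt\fV)/\fm_y^N \;=\; R_\infty/(\varpi^N+\fm_\infty^N) \;=\; \vil_i R_i/(\varpi^N+\fm_i^N),$$
where the second equality uses that filtered colimits commute with quotients and (via exactness and commutation with tensor products) with taking $N$-th powers of ideals. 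Each term $R_i/(\varpi^N+\fm_i^N)$ is a finite length $R_i$-module supported at $\fm_i$, so it agrees with $\wh\cO_{\fV_i,y_i}/(\varpi^N+\wh\fm_i^N)$. Through the isomorphisms $\alpha_i$, every such term is identified with the single ring $\wh\cO_{\fV,x}/(\varpi^N+\wh\fm_x^N)$ and the transition maps become identities, so the colimit collapses to this ring. Taking $\vpl_N$ yields $\wh\cO_{\wt\fV,y}\cong \wh\cO_{\fV,x}$, and by construction this isomorphism is induced by $\pi_0$, hence agrees with the one arising from the ambient isomorphism $\pi_0|_{\fI_{0,y}}\colon \fI_{0,y}\cong \fS_x$.

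The main obstacle is the non-noetherianness of $\cO_{\wt\fV}(\wt\fV)$: the formal completion $\wh\cO_{\wt\fV,y}$ is only defined as an inverse limit of quotients, and one must commute a filtered colimit (over the tower $i$) with an inverse limit (over $N$). The device above sidesteps this by writing $\cO_{\wt\fV}(\wt\fV)/\fm_y^N$ as a filtered colimit of artinian layers whose transition maps are isomorphisms, so the colimit stabilizes termwise. A minor subtlety is the identity $\fm_\infty^N=\vil_i \fm_i^N$, which is justified by the commutation of filtered colimits with tensor products.
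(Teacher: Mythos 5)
Your proof is correct and follows essentially the same route as the paper's: the paper likewise observes that each $f_{j,i}|_{\fV_j}$ is \'etale with trivial residue field extension at the $k$-points $\pi_i(y)$, hence induces isomorphisms on the artinian quotients $\cO_{\fV_i}/\fm_{x_i}^n$, and then treats the passage to the completed local ring of the non-noetherian $\wt\fV$ as routine from the definition --- which is exactly the colimit/limit bookkeeping you spell out via $\cO_{\wt\fV}(\wt\fV)/\fm_y^N \cong R_\infty/\fm_\infty^N \cong \varinjlim_i R_i/\fm_i^N$. Your additional surjectivity check and these explicit identifications are harmless elaborations of that routine step.
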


\begin{proof}Let $x_i=\pi_i(y)$. Since $f_{j,i}: \fS_j\to  \fS_i$ is  \'etale, the composition
 $f_{j,i}|_{ \fV_j}:\fV_j\to f_{j,i}^{-1}(\fV_i)\to  f_{j,i}^{-1}(\fV_i)$ is \'etale, where the first morphism is an open embedding. 
Since $k$ is algebraically closed, $f_{j,i}|_{ \fV_j}$   induces an isomorphism  between the 
quotients by  powers of maximal ideals
$ \cO_{ \fV_j}/\fm_{x_j}^n\cong   \cO_{\fV_i}/\fm_{x_i}^n$. The rest follows routinely  from definition.
\end{proof}

\begin{asmp}\label{basicasmp}  Here are some assumptions  for later use:
\begin{itemize}
\item [(1)]  $\fV_k $ is  connected and unibranch;  
\item[(2)]     $\fV$ is affine (for the sake of  considering closed formal 
subschemes in the non-noetherian case, see \ref{Preliminaries});

\item[(3)]     $\fV$ is    flat over $\Fo$;
\item[(4)]   $\fV_k $      contains no embedded points.
\end{itemize}
\end{asmp}   

\begin{lem}\label
{uniirred} 
  (1) Under Assumption \ref{basicasmp} (1), $\fV_{i,k}$ is  unibranch and irreducible.

(2) Under Assumption \ref{basicasmp} (4),  $\fV_{i,k}$ has no  embedded points.

       \end{lem}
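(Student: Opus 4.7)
The plan is to proceed by induction on $i$, with base case $i=0$ furnished directly by Assumption \ref{basicasmp}. The key inductive input is that, since $\fV_{i+1}$ is by construction a connected component of the pullback $f_{i+1,i}^{-1}(\fV_i)$ under the surjective finite \'etale morphism $f_{i+1,i}\colon \fS_{i+1}\to\fS_i$, the restriction $f_{i+1,i}|_{\fV_{i+1}}\colon \fV_{i+1}\to\fV_i$ is itself finite \'etale. Passing to special fibers then gives a finite \'etale morphism $\fV_{i+1,k}\to\fV_{i,k}$, and both parts of the lemma reduce to checking that the relevant property propagates along such a cover.

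For part (1), I would first observe that $\fV_{i+1,k}$ is connected: as a noetherian formal scheme over $\Fo$, $\fV_{i+1}$ and its special fiber share the same underlying topological space, so the connectedness of $\fV_{i+1}$ (it is a connected component) descends to $\fV_{i+1,k}$. Unibranchness is an \'etale-local condition on strict henselizations of local rings, so the finite \'etale map $\fV_{i+1,k}\to\fV_{i,k}$ transfers the unibranchness of $\fV_{i,k}$ (inductive hypothesis) to $\fV_{i+1,k}$. Finally, I would invoke the general fact that a connected noetherian scheme that is unibranch at every point is irreducible: otherwise, in a connected reducible reduction the normalization (a disjoint union of the normalized components) would identify more than one preimage over any intersection point, contradicting the single-branch condition there.

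For part (2), I would rephrase ``no embedded points'' as Serre's condition $(S_1)$ on the structure sheaf. Since $\fV_{i+1,k}\to\fV_{i,k}$ is \'etale, hence flat with Cohen--Macaulay (in fact zero-dimensional) fibers, the standard ascent of Serre conditions along such morphisms implies that $\fV_{i+1,k}$ satisfies $(S_1)$ whenever $\fV_{i,k}$ does. This closes the induction and gives (2).

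The main subtlety to spell out carefully is the identification of connectedness of the formal scheme $\fV_{i+1}$ with connectedness of its special fiber $\fV_{i+1,k}$, which uses that $\fV_{i+1}$ is noetherian (topologically of finite type over $\Fo$); and the appeal to \'etale-local preservation of unibranchness and of $(S_1)$, which are classical but deserve precise citations. Everything else is formal induction using the finite \'etale step $f_{i+1,i}$.
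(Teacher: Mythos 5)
Your proposal is correct and follows essentially the same route as the paper: transfer unibranchness along the finite \'etale maps $f_{j,i}$, use connectedness of the chosen components together with the fact that a unibranch point lies on a single irreducible component (the paper cites its Lemma on counting branches, you argue via normalization, which is the same content) to get irreducibility, and for (2) use ascent of $(S_1)$ along \'etale morphisms, which is exactly the alternative argument the paper itself mentions in parentheses (its primary argument instead pushes associated points forward along the flat map). The only cosmetic difference is that you organize the transfer as an induction on $i$ rather than applying the composite \'etale morphism $\fV_{i}\to\fV$ directly.
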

       \begin{proof} 
(1) Since $f_{j,i} $ is \'etale,  $\fV_{i,k}$        is unibranch by definition.
By Lemma \ref{0e20},
the spectra of local rings of $\fV_{i,k}$ are irreducible. So $\fV_{i,k}$ is irreducible. 

(2) Since a flat   ring homomorphism preserves non-zero divisors, a flat morphism  for noetherian 
schemes maps an associated point  to an associated point. By \cite[Lemma 05AL]{Sta}, which says that embedded points of noetherian schemes  are non-generic associated points, (2) follows. (Or one may prove (2) by interpreting Assumption \ref{basicasmp} (4) as Serre's condition $S_1$, which is preserved by an \etale morphism.)
\end{proof}   

\begin{lem}\label{basic} 
Under Assumption \ref{basicasmp}, for    a closed formal subscheme $\fY\subset \wt \fV$   and  $y\in \fY(k)$, if
$\fY_y= \wt \fV_{y}$,  then $\fY= \wt \fV$.
\end{lem}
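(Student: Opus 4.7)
The plan is to mimic the proof of Lemma~\ref{basic0} by establishing that the natural map $\cO_{\wt\fV}(\wt\fV)\to \wh\cO_{\wt\fV,y}$ is injective. Once this is known, the composition
$$\cO_{\wt\fV}(\wt\fV)\twoheadrightarrow \cO_\fY(\fY)\to \wh\cO_{\fY,y}\cong \wh\cO_{\wt\fV,y}$$
is injective (the last identification coming from the hypothesis $\fY_y=\wt\fV_y$), so the surjection is forced to be an isomorphism, giving $\fY=\wt\fV$.

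To produce this injectivity, I would first work at each finite level $\fV_i$. Lemma~\ref{uniirred} gives that $\fV_{i,k}$ is irreducible, unibranch and without embedded points, and \'etaleness of $f_{i+1,i}$ propagates flatness over $\Fo$ from $\fV$ to $\fV_i$, so $\varpi\in \cO_{\fV_i}(\fV_i)$ is a primary non-zero-divisor. Lemma~\ref{adicinj}~(1)-(2), applied with $R=\cO_{\fV_i}(\fV_i)$, $\alpha=\varpi$, and $I$ the maximal ideal at $x_i$, then yields an injection
$$\cO_{\fV_i}(\fV_i)/\varpi^n\hookrightarrow \wh\cO_{\fV_i,x_i}/\varpi^n$$
for every positive integer $n$. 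Because each $f_{j,i}$ is \'etale with $k(x_j)=k(x_i)=k$, every transition morphism on complete local rings is an isomorphism (this is exactly the computation appearing in the proof of~\eqref{indisom}), so the colimit in $i$ of the right-hand side is $\wh\cO_{\fV,x}/\varpi^n$. Since filtered colimits preserve injections, I obtain
$$\vil_i \cO_{\fV_i}(\fV_i)/\varpi^n\hookrightarrow \wh\cO_{\fV,x}/\varpi^n$$
for each $n$.

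Passing to the inverse limit in $n$ then supplies, by left exactness of $\vpl_n$ together with the fact that $\wh\cO_{\fV,x}$ is already $\varpi$-adically complete (as used in the proof of Lemma~\ref{basic0}), the desired injection $\vpl_n\vil_i\cO_{\fV_i}(\fV_i)/\varpi^n\hookrightarrow \wh\cO_{\fV,x}\cong\wh\cO_{\wt\fV,y}$, with the final isomorphism being~\eqref{indisom}. The main technical obstacle is to identify the source of this map with $\cO_{\wt\fV}(\wt\fV)$ itself: setting $R:=\vil_i\cO_{\fV_i}(\fV_i)$, one has $\cO_{\wt\fV}(\wt\fV)=\wh R_\varpi$ by construction, and I would invoke Lemma~\ref{-1lem}~(2) with $M=R$, $M_m=\varpi^m R$, and $N=R/\varpi^n R$ to deduce $\wh R_\varpi/\varpi^n\wh R_\varpi\cong R/\varpi^n R$, which in turn equals $\vil_i\cO_{\fV_i}(\fV_i)/\varpi^n$ since filtered colimits commute with cokernels. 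This non-noetherian bookkeeping around $\wt\fV$ is what distinguishes the proof from that of Lemma~\ref{basic0}; once it is in place the chain of injections assembles as intended.
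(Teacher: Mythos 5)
Your proof is correct and follows the same strategy as the paper's: reduce the injectivity of $\cO_{\wt\fV}(\wt\fV)\to\wh\cO_{\wt\fV,y}$ to the finite-level injectivities $\cO_{\fV_i}(\fV_i)/\varpi^n\hookrightarrow\wh\cO_{\fV_i,x_i}/\varpi^n$ (the content of Lemma~\ref{basic0}, which the paper simply cites while you re-derive via Lemma~\ref{uniirred} and Lemma~\ref{adicinj}), and then pass to $\varpi$-adic completion. You spell out the colimit-then-limit bookkeeping and the identification $\wh R_\varpi/\varpi^n\cong R/\varpi^n$ via Lemma~\ref{-1lem}~(2), details the paper leaves implicit; this is the same route, just more explicit.
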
  

\begin{proof}  
Claim: the natural map    $\cO_{\wt \fV}(\wt \fV)\to  \wh\cO_{\wt\fV,y}$   is injective.  Here $\wh\cO_{\wt\fV,y}$ is the  complete local ring at $y$.
Thus the composition of the diagram of  natural morphisms $$\cO_{\wt \fV}(\wt \fV)\to \cO_{\fY}(\fY)\to  \wh\cO_{\fY,y} \cong \wh\cO_{\wt\fV,y}$$
is injective (the condition in the lemma gives the last isomorphism).   Then   the first morphism in this   diagram  is injective (also surjective by definition), and thus an isomorphism.
So   $\fY= \wt \fV $.

We prove the claim. Since $\cO_{ \fV}( \fV) $ 
is the $\varpi$-adic 
completion of $\vil_i\cO_{\fV_i}(\fV_i) $ and  $\wh\cO_{\fV,x}$ is $\varpi$-adically complete (see  \cite[Lemma 090T]{Sta}),  
 we  only need to show that   the natural map  
\begin{equation}
       \label{inji}
      \cO_{\fV_i}(\fV_i) /\varpi^n\to \wh\cO_{\wt\fV,y}  /\varpi^n
       \end{equation}
          is injective  for every positive integer $n$.  
        Recall that  we have an isomorphism $\wh\cO_{\wt\fV,y} \cong \wh\cO_{\fV_i,\pi_i(y)}$ induced by $\pi_i$ as in the case $i=0$ discussed above. 
Then
the case $i=0$ is proved in Lemma \ref{basic0}.
For  the general $i$, the proof is the same.
 \end{proof}

Recall the  action   of  $ \fM $ on $\fI_{0,{    y}}$ defined in \ref{Toric action}.    Here and below, we understand  an $E^\circ$-point $P$ of $\fM_{\Fo}$ as a closed   formal $\Fo$-subscheme of $ \fM_{\Fo}$, i.e. its schematic image. 
\begin{cor}\label{samestab}     Under Assumption \ref{basicasmp},    for  a finite extension $E/F$  and  $y\in \fY(k)$, $ P\in\fM _{\Fo}(E^\circ)$ stabilizes  
$ \wt \fV_y $
if and only if  it stabilizes   $ \wt \fV$.             
\end{cor}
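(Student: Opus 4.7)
The plan is to reduce the corollary to Lemma~\ref{basic} applied over $E^\circ$. First, I would unpack the stabilization condition: the point $P\in\fM_{\Fo}(E^\circ)$ induces, via the action of Subsection~\ref{Toric action}, a translation isomorphism $\phi_P:\fI_{0,E^\circ}\to \fI_{0,E^\circ}$, and ``$P$ stabilizes'' a closed formal subscheme means $\phi_P$ carries it to itself. Since $\fM\cong \wh\BG_m^{g(g+1)/2}$ is a formal torus, every $E^\circ$-point of $\fM$ reduces to the identity of $\fM$, so $\phi_P$ fixes the $k$-point $y$ and hence restricts to an automorphism of the formal completion at $y$. The forward direction is then immediate: formally completing both sides of $\phi_P(\wt\fV_{E^\circ})=\wt\fV_{E^\circ}$ at $y$ gives the stabilization of $\wt\fV_{y,E^\circ}$.

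For the converse, I would set $\fY:=\wt\fV_{E^\circ}\cap \phi_P^{-1}(\wt\fV_{E^\circ})$, a closed formal subscheme of $\wt\fV_{E^\circ}$ containing $y$. Using that formal completion at $y$ commutes with intersection and with pullback along the automorphism $\phi_P$ (which fixes $y$), the hypothesis that $P$ stabilizes $\wt\fV_y$ yields $\fY_y=\wt\fV_{y,E^\circ}$. Next, I would observe that Assumption~\ref{basicasmp} descends to the base change $\wt\fV_{E^\circ}/E^\circ$: affineness and flatness are preserved under the flat extension $\Fo\to E^\circ$, while the special fiber is unchanged since $E^\circ$ and $\Fo$ share residue field $k$, so connectedness, the unibranch condition, and absence of embedded points persist. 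Lemma~\ref{basic}, applied over $E^\circ$, then yields $\fY=\wt\fV_{E^\circ}$, i.e.\ $\phi_P(\wt\fV_{E^\circ})\supset \wt\fV_{E^\circ}$. The opposite inclusion follows by the same argument with $P^{-1}\in \fM_{\Fo}(E^\circ)$, which also stabilizes $\wt\fV_y$ because $\phi_{P^{-1}}$ is the inverse of $\phi_P$.

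The main obstacle I anticipate is the bookkeeping around formal completions. One needs to confirm that $(\wt\fV_{E^\circ})_y$ really agrees with the $E^\circ$-base change of $\wt\fV_y$, and that the formal completion of the intersection $\fY$ at $y$ coincides with the intersection of the individual completions. Both points are slightly delicate because $\wt\fV$ is not noetherian, but they should follow from the standard facts that completion at a maximal ideal commutes with flat base change and with quotients by finitely generated ideals, together with the identification $\wh\cO_{\wt\fV,y}\cong \wh\cO_{\fV,\pi_0(y)}$ appearing in \eqref{indisom}.
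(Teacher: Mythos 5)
Your proposal is correct and follows essentially the same route as the paper: reformulate stabilization as the statement that a certain closed formal subscheme of $\wt\fV_{E^\circ}\cong P\times\wt\fV$ (the paper's schematic preimage under the multiplication morphism, your intersection $\wt\fV_{E^\circ}\cap\phi_P^{-1}(\wt\fV_{E^\circ})$) is all of $\wt\fV_{E^\circ}$, check this at the completion at $y$, and conclude by Lemma \ref{basic} applied after base change to $E^\circ$. The extra step with $P^{-1}$ is harmless but unnecessary, since the paper's notion of ``stabilizes'' is the one-sided factorization condition.
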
 
 
    \begin{proof}        
Then $P$ stabilizes   $ \wt \fV$, by definition, if and only if the schematic preimage of $\wt  \fV$  under 
   the   multiplication morphism  $P\times \wt  \fV\to \fI_{0,\Fo}$ is $P\times \wt  \fV$; 
   $P$ stabilizes   $ \wt \fV_y$ if and only if the schematic preimage of $\wt  \fV_y$  under 
   the   multiplication morphism  $P\times \wt  \fV_y\to \fI_{0,\Fo,y}$ is $P\times \wt  \fV_y$. 
The corollary follows from Lemma \ref{basic}, applied to $P\times \wt  \fV \cong \wt \fV_{E^\circ}$.
  \end{proof}

\subsection{Linearity over $k$}\label{Linearity over}  
            We need some preliminaries.

Replacing $F$ by $ k[[t]]$ in the last subsection (so that $\fS_{k[[t]]}$ is defined via the natural  morphism $k\to k[[t]]$ and so on), we have the analogous discussion. In particular, as a special case of  the analog of Corollary \ref {samestab}, 
we have the following lemma. (Indeed, we may replace $k[[t]]$ by $\ol {k((t))}^\circ$ in the lemma. But we do not need it.)
   \begin{lem}\label{samestab1} 
    Under Assumption \ref{basicasmp} with (3) replaced by $\fV=\fV_k$,
    a point in    $ \fM_{k}\lb k[[t]]\rb$ stabilizes  
$ \wt \fV_y $ for some $y\in \fY(k)$ 
if and only if  it stabilizes   $ \wt \fV$.  
\end{lem}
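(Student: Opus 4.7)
The plan is to mirror the proof of Corollary \ref{samestab} with $F^\circ$ replaced by $k[[t]]$ and the uniformizer $\varpi$ by $t$. First, I would view the given $\fV \subset \fS_k$ as a closed formal subscheme of $\fS_{k[[t]]}$ via the inclusion $k \hookrightarrow k[[t]]$, form the base change $\fV_{k[[t]]} := \fV \times_k k[[t]]$ together with the analogous tower $\{\fV_{i, k[[t]]}\}$ in $\fS_{i, k[[t]]}$, and its inverse limit $\wt\fV_{k[[t]]} \subset \fI_{0, k[[t]]}$. Now $\fV_{k[[t]]}$ is flat over $k[[t]]$ (base change of the flat $k$-scheme $\fV$) and has special fiber $\fV$, which by hypothesis is connected, unibranch, affine, and without embedded points. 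Hence $\fV_{k[[t]]}$ satisfies the $k[[t]]$-version of Assumption \ref{basicasmp}, including the flatness condition (3).

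Next, I would transcribe Lemma \ref{basic} in this setup: for any closed formal subscheme $\fY \subset \wt\fV_{k[[t]]}$ and any $y \in \fY(k)$, the equality $\fY_y = (\wt\fV_{k[[t]]})_y$ should imply $\fY = \wt\fV_{k[[t]]}$. The proof proceeds verbatim. The key injectivity $\cO_{\fV_{i, k[[t]]}}(\fV_{i, k[[t]]})/t^n \hookrightarrow \wh\cO_{\fV_{i, k[[t]]}, \pi_i(y)}/t^n$ will come from Lemma \ref{adicinj} (2), whose hypotheses hold because $t$ is a non-zero divisor in $\cO_{\fV_{i, k[[t]]}}(\fV_{i, k[[t]]})$ (by flatness) and primary (since the reduction $\fV_i$ is irreducible with no embedded points by Lemma \ref{uniirred}). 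Taking the colimit over $i$ and then the $t$-adic completion, exactly as in the original proof, will produce the desired injectivity of global sections into the complete local ring at $y$.

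The conclusion then mirrors Corollary \ref{samestab}. A point $P \in \fM_k(k[[t]])$ stabilizes $\wt\fV$ iff the schematic preimage of $\wt\fV_{k[[t]]}$ under translation by $P$, viewed as a morphism $\wt\fV_{k[[t]]} \cong P \times_k \wt\fV \to \fI_{0, k[[t]]}$, is $\wt\fV_{k[[t]]}$ itself; similarly for $\wt\fV_y$ in place of $\wt\fV$. The $k[[t]]$-analog of Lemma \ref{basic}, applied to this schematic preimage (which by the local stabilization agrees with $\wt\fV_{k[[t]]}$ along the fiber at $y$), will upgrade the local equality to the global one. The main obstacle, as in the original, is that $\wt\fV_{k[[t]]}$ is not noetherian, so one has to reduce to the noetherian layers $\fV_{i, k[[t]]}$ before invoking Lemma \ref{adicinj}; the same device already used inside Lemma \ref{basic} handles this cleanly.
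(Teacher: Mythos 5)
Your proposal is correct and follows essentially the same route as the paper, which itself obtains the lemma by replacing $F^\circ$ with $k[[t]]$ in the discussion of \ref{Consequences} and invoking the resulting analog of Corollary \ref{samestab}. You simply make explicit the checks the paper leaves implicit (flatness of $\fV\times_k k[[t]]$ over $k[[t]]$, primarity and nonzerodivisibility of $t$ via Lemma \ref{uniirred}, and the reduction to the noetherian layers $\fV_{i,k[[t]]}$ before applying Lemma \ref{adicinj}), all of which are exactly the ingredients of Lemmas \ref{basic0}--\ref{basic}.
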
 
We need the following simple lemma.  
  \begin{lem}\label{as in1}
  
 Let $f\in k[[x_1,...,x_n]]$ and $f\neq 0$.  There exists a continuous   homomorphism  $k[[x_1,...,x_n]]\to   k[[t]]$ such that  the image of $f$ is nonzero.

  \end{lem}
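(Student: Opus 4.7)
The plan is to construct the required homomorphism explicitly as a linear substitution $x_i\mapsto c_it$. Such a map is automatically continuous for the adic topologies, since it sends the maximal ideal $(x_1,\dots,x_n)$ into $(t)$. The only issue is choosing the constants $c_i\in k$ so that $f$ does not map to zero.

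First I would expand $f$ in homogeneous components: write $f=f_d+f_{d+1}+\cdots$ where $d$ is the order of vanishing of $f$ at the origin and $f_d\neq 0$ is a homogeneous polynomial of degree $d$ in $k[x_1,\dots,x_n]$. Substituting $x_i=c_it$ yields
\[
f(c_1t,\dots,c_nt)=\sum_{j\geq d}t^{j}f_j(c_1,\dots,c_n),
\]
so the image has $t$-adic valuation at most $d$, and equals $d$ precisely when $f_d(c_1,\dots,c_n)\neq 0$.

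Thus the task reduces to finding $(c_1,\dots,c_n)\in k^n$ at which the nonzero polynomial $f_d$ does not vanish. Here I would invoke the standard fact that a nonzero polynomial over an infinite field has a nonvanishing value; since $k$ is the algebraic closure of $\mathbb{F}_p$ it is infinite, so such a point exists. Taking this $(c_1,\dots,c_n)$ and defining $\phi\colon k[[x_1,\dots,x_n]]\to k[[t]]$ by $x_i\mapsto c_it$ gives a continuous homomorphism (it is determined on the topological generators $x_i$ and sends the maximal ideal into $(t)$) with $\phi(f)\neq 0$.

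There is no substantive obstacle; the only thing to verify carefully is continuity of the map defined by $x_i\mapsto c_it$, which is immediate from the universal property of the formal power series ring together with the observation that $c_it$ lies in the maximal ideal of $k[[t]]$.
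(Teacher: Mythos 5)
Your proof is correct and follows essentially the same route as the paper: substitute $x_i\mapsto c_it$, observe that the lowest-degree homogeneous component $f_d$ contributes the coefficient of $t^d$, and choose $(c_1,\dots,c_n)$ with $f_d(c_1,\dots,c_n)\neq 0$ using that $k$ (an algebraic closure of $\BF_p$) is infinite. The only cosmetic slip is the phrase ``valuation at most $d$'' (if $f_d(c)=0$ the valuation can exceed $d$), but this does not affect the argument since you only use the direction that the valuation equals $d$ when $f_d(c)\neq 0$.
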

  \begin{proof}  Let  $g$ be the sum of the  (finitely many) monomials in $f$ with  the minimal total degrees $m$. 
Consider the continuous   homomorphism  $k[[x_1,...,x_n]]\to  k[[t]]$ with   $x_i,i=1,...,n$ sent to $a_it$ where $a_i\in k$ is to be determined.
Then $g$ is sent to  $h(a_1,...,a_n)t^m$. Choose $a_i$'s such that  $h(a_1,...,a_n)\neq0$ and the lemma follows.
            \end{proof}

Now we   discuss linearity over $k$.
We call a subscheme  $Z$ of $  \fS_k$  (resp. formal subscheme  $\fV$ of $\fS_{\Fo}$)   linear at $x\in Z(k)$ (resp. $\fV(k)$)  if   
its formal completion at $x$ is 
a   formal subtorus of  $\fS_{k,x} $  (resp. $\fS_{x,\Fo}$).  
It is called linear if it is linear everywhere.

We will extensively use  two facts:  first,  for $y\in \fY(k)$, $\pi_0|_{\fI_{0,{    y}}}:\fI_{0,{    y}}\cong \fS_{\pi_0(x)}$ is an      isomorphism of formal tori (see \eqref{bcy} and the discussion below it);
second, the  action   of  $ \fM $ on $\fI_{0,{    y}}$  becomes multiplication   after a natural isomorphism
$ \fI_{0,{    y}}\cong \fM $, see Lemma \ref{Via}.  

\begin{prop}\label{-11}  Under Assumption \ref{basicasmp} with (3) replaced by $\fV=\fV_k$, if $\fV$ is  linear    at some  $x\in \fV(k)$, then it is    linear. 
\end{prop}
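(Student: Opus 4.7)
The plan is to transfer the question from $\fS_k$ to the Igusa scheme $\fI_{0,k}$, where the toric action of $\fM_k$ from \ref{Toric action} allows one to translate linearity from one point to others. Form the pro-\'etale pullback $\wt\fV \subset \fI_{0,k}$ as in \ref{Consequences} and fix $y \in \pi_0^{-1}(x) \cap \wt\fV(k)$. Via the formal-torus isomorphisms $b_y : \fI_{0,y} \cong \fS_{k,x}$ and $c_y : \fI_{0,y} \cong \fM_k$ of \eqref{bcy}, the hypothesis that $\fV_x$ is a formal subtorus of $\fS_{k,x}$ identifies $\wt\fV_y$ with a formal subtorus $T' \subset \fM_k$. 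Since $T'$ is a formal subgroup, any $P \in T'(k[[t]])$ preserves $T' = \wt\fV_y$ under multiplication; by Lemma \ref{Via} this coincides with the action of $P$ on $\fI_{0,y}$ induced from the $\fM_k$-action, so every such $P$ stabilizes $\wt\fV_y$. Lemma \ref{samestab1} then promotes this to: every $P \in T'(k[[t]])$ stabilizes $\wt\fV$.

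Next I claim that for each $z' \in \wt\fV(k)$, the translated formal subtorus $T' \cdot z' \subset \fI_{0,k}$ (a closed formal subscheme of the completion at $z'$, of dimension $d = \dim T'$) is contained in $\wt\fV_{z'}$. Let $J$ denote the ideal of $\wt\fV_{z'}$ inside the completion of $\fI_{0,k}$ at $z'$. The orbit map $\mu : T' \to \fI_{0,k}$, $g \mapsto g \cdot z'$, is a closed embedding onto $T' \cdot z'$ with $\cO(T') \cong k[[u_1, \dots, u_d]]$. If some $f \in J$ satisfied $\mu^* f \neq 0$, Lemma \ref{as in1} would furnish $P \in T'(k[[t]])$ with $(\mu^* f)(P) = f(P \cdot z') \neq 0$; but $P$ stabilizes $\wt\fV$, so $P \cdot z' \in \wt\fV(k[[t]])$ and $f(P \cdot z')$ vanishes since $f \in J$, a contradiction. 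Hence $\mu^* J = 0$ and $T' \cdot z' \subseteq \wt\fV_{z'}$.

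The last step is to upgrade this containment to equality and descend to $\fV$. On complete local rings the inclusion is a surjection $\wh\cO_{\wt\fV, z'} \twoheadrightarrow \cO(T' \cdot z')$ whose target is a regular domain of dimension $d$. By Assumption \ref{basicasmp} and Lemma \ref{uniirred}, $\wt\fV_k$ is irreducible, unibranch, and has no embedded points; combined with the fact that linearity at $x$ forces $\wh\cO_{\fV,x}$ to be a power series ring (in particular reduced), excellence of $\fV \subset \fS_k$ propagates reducedness from $x$ to the whole irreducible scheme $\fV$, so $\wh\cO_{\wt\fV, z'} \cong \wh\cO_{\fV, \pi_0(z')}$ is a domain of dimension $d$. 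A surjection of domains of the same dimension must be an isomorphism, hence $\wt\fV_{z'} = T' \cdot z'$ is a translated formal subtorus. Transporting through the formal-torus isomorphism $\pi_0|_{\fI_{0,z'}} : \fI_{0,z'} \cong \fS_{k,\pi_0(z')}$ shows that $\fV_{\pi_0(z')}$ is a formal subtorus of $\fS_{k,\pi_0(z')}$; since $\pi_0 : \wt\fV_k \to \fV_k$ is surjective, this yields linearity at every point of $\fV(k)$. The main difficulty is the bookkeeping in this final step: verifying that linearity at the single point $x$ really propagates reducedness (and hence the domain property of the completions) to every other closed point of $\fV$, using only unibranch, irreducibility, the $S_1$ condition from Assumption \ref{basicasmp}(4), and excellence.
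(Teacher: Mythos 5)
Your proposal is correct and follows essentially the same route as the paper's proof: pass to $\wt\fV$ on the Igusa scheme, use Lemma \ref{Via} and Lemma \ref{samestab1} to propagate stabilization by $k[[t]]$-points of the subtorus, use Lemma \ref{as in1} for density, and propagate reducedness from $x$ via no embedded points and excellence before concluding by a dimension count. The only (minor) divergence is the endgame: you obtain the containment of the torus orbit in $\wt\fV_{z'}$ directly by evaluating functions against $k[[t]]$-points and conclude equality from a surjection of equidimensional domains (using unibranchness plus excellence to know the completed local ring is a domain), whereas the paper descends back to $\fS_z$ via the ``if'' direction of Lemma \ref{samestab1} and argues branch by branch with schematic images of $P\times\fC$; both are sound under Assumption \ref{basicasmp}.
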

\begin{proof}

Claim: for  every $z\in   \fV (k)$, $  \fV_{z} $ is     stable  by all $ k[[t]]$-points of 
a closed formal subgroup $\fG$  of the formal torus  $\fS_{k,z}=\fS_{z,k}$  that is isomorphic to $\fV_x$.  
We will prove the claim later. Now we prove that $  \fV_{z} =\fG$ which gives the proposition.  
 Since $\fV$ is  linear at $x$, $\wh\cO_{\fV,x}$  is reduced. By the injectivity of $ \cO_{\fV,x}\to\wh\cO_{\fV,x} $ (Krull intersection theorem),
$\fV$ is reduced at $x$. Thus 
$\fV$  is  generically reduced by the openness of the reduced locus.
Then since $\fV$ has no embedded components, it is reduced. 
Then by the excellence   of $\fV$,  $  \fV_{z} $ is reduced.
Let $  \fC\subset\fV_{z} $ be a formal branch (see Definition \ref {fbran}) which is  integral by definition.   Since a $ k[[t]]$-point $P$ of 
$\fG$, understood as  the schematic image of $P$,  contains the  unit of $\fG$, the schematic image $\fC'$ of   the multiplication morphism $P\times  \fC\to  \fV_{z}$ contains $ \fC$. 
Since $R[[t]]$ is integral for an integral domain $R$,  
$\cO_{P\times  \fC}(P\times  \fC)$  is integral. So by  the definition of schematic image, 
$ \fC'$  is integral, and thus equals $ \fC$ by dimension reason.  
Since $ \fC$ contains the  unit of $\fS_z$,    $ \fC=\fC'$ contains  $P$, i.e.,  $ \fC$ contains $\fG\lb k[[t]]\rb$.
By Lemma \ref{as in1}, $\fG\lb k[[t]]\rb$ is schematically dense in $\fG$. So 
$  \fC$ contains $\fG$. 
Since $\fV$ is integral    and of finite type over $k$, all complete local rings have the same dimension.
So $\fC=\fG$.  Thus $  \fV_{z} =\fG$.

Now we prove the claim. Let  $y\in  \pi_0^{-1}  (\{x\})\cap \wt \fV$. 
By
$\wh\cO_{\wt\fV,y} \cong \wh\cO_{\fV,x}$  (see  \eqref{indisom}) and 
$\pi_0|_{\fI_{0,{    y}}}:\fI_{0,{    y}}\cong \fS_{x}$   of formal tori, 
$ \wt \fV_{y}$
is   stable  under  the multiplication on $\fI_{0,y,k}$  by   all $k[[t]]$-points of a closed formal subgroup of $\fI_{0,y,k}$  isomorphic to $\fV_x$.
By Lemma \ref{Via},    $ \wt \fV_y$
is   stable, now under the action of $\fM_{k}$, by  all $k[[t]]$-points of  a closed formal subgroup of $\fM_{k}$   isomorphic to $\fV_x$.  By   the ``only if" part of Lemma \ref{samestab1},    $ \wt \fV$
is   stable  by   all $k[[t]]$-points of a closed formal subgroup of $\fM_{k}$  isomorphic to $\fV_x$.  
Now we reverse the  reasoning for  $w\in  \pi_0^{-1}  (\{z\})\cap \wt \fV$.
By   the ``if" part of Lemma \ref{samestab1},    $ \wt \fV_w$
is   stable  by   all $k[[t]]$-points of a closed formal subgroup of $\fM_{k}$  isomorphic to $\fV_x$.   
By Lemma \ref{Via},    $ \wt \fV_w$
is   stable, now under the multiplication on $\fI_{0,w,k}$  by  all $k[[t]]$-points of  a closed formal subgroup of $\fI_{0,w,k}$  isomorphic to $\fV_x$. 
By \eqref{indisom}  with $x,y$ replaced by $z,w$ which gives $ \wt\fV_w  \cong  \fV_z$,
and 
$\pi_0|_{\fI_{0,{    w}}}:\fI_{0,{    w}}\cong \fS_{z}$   of formal tori, 
the claim follows.
\end{proof}

\begin{rmk}\label{-11m}  
(1) Proposition \ref{-11} is a  result of Chai \cite[(5.3)]{Chai}.

(2) Clearly, one may use the same idea to deal with linearity over $\Fo$. Indeed,  then one will get Corollary \ref{Chapc} below. However, we will give another proof. 


\end{rmk}

\subsection{Lift linearity }
The following result of Chai is important for us.
\begin{lem} [{\cite[(5.5)]{Chai}}]\label{Chap}   Let $Z\subset  \fS_k$ be a linear subsheme.
There is  a  unique  linear formal subscheme
$Z^\can$ of    $\fS_{\Fo}$ 
lifting  $Z$.
\end{lem}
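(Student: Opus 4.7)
The key is the Serre--Tate formal-torus structure: a formal subtorus over $k$ admits a unique canonical lift over $\Fo$ (given by the same sub-cocharacter lattice), and these local lifts should assemble into a global formal subscheme via the toric action of $\fM$ on the Igusa cover $\fI_0$. For uniqueness, given two linear lifts $Z^{\can}_1, Z^{\can}_2$, their completions at each $x \in Z(k)$ are formal subtori of $\fS_{x,\Fo}$ reducing to the common $Z_x$. Since a formal subtorus of $\fS_{x,\Fo} \cong \fM_{\Fo}$ is determined by its sub-cocharacter lattice in $\BZ^{g(g+1)/2}$ and this lattice is unchanged under reduction mod $\varpi$, one has $(Z^{\can}_1)_x = (Z^{\can}_2)_x$. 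Each irreducible component of the $Z^{\can}_i$ is integral, flat over $\Fo$, unibranch, and has no embedded points (being locally a formal subtorus), so a componentwise application of Lemma \ref{basic0} to the scheme-theoretic intersection $Z^{\can}_1 \cap Z^{\can}_2$ yields $Z^{\can}_1 = Z^{\can}_2$.

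\textbf{Existence.} I build the lift first on $\fI_{0,\Fo}$ and then descend. Let $\wt{Z} := \pi_0^{-1}(Z) \subset \fI_{0,k}$; because $\pi_0$ is pro-\'etale and restricts to an isomorphism $\fI_{0,y,k} \cong \fS_{\pi_0(y),k}$ of formal tori, $\wt{Z}$ is again linear. At each $y \in \wt{Z}(k)$, the identification $c_y : \fI_{0,y,k} \cong \fM_k$ of \eqref{bcy} carries $\wt{Z}_y$ to a formal subtorus $T_y \subset \fM_k$, and I let $T_y^{\can} \subset \fM_{\Fo}$ be its canonical lift (same sub-cocharacter lattice). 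Taking the canonical lift $\wt{y} \in \fI_0(\Fo)$ of $y$ (the unit of $\fI_{0,\Fo,y} \cong \fM$ under $c_{y}$), I set the local lift $\wt{Z}^{\can}_y := T_y^{\can} \cdot \wt{y}$ via the toric action of \ref{Toric action}. By Lemma \ref{Via}, $\wt{Z}^{\can}_y$ is a formal subtorus of $\fI_{0,\Fo,\wt{y}}$ reducing to $\wt{Z}_y$.

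\textbf{Gluing, descent, and main obstacle.} The principal difficulty is to assemble the local lifts $\wt{Z}^{\can}_y$ into a global linear closed formal subscheme $\wt{Z}^{\can} \subset \fI_{0,\Fo}$ and then descend along the pro-\'etale cover $\pi_0 : \fI_{0,\Fo} \to \fS_{\Fo}$ to obtain $Z^{\can}$. For gluing, distinct frames $c_y, c_{y'}$ for $y, y'$ in the same connected component differ by automorphisms of the formal torus $\fM$; such automorphisms preserve the family of formal subtori and commute with taking canonical lifts, so the local pieces agree on overlaps. For descent, $\wt{Z}^{\can}$ is stable under the prime-to-$p$ covering structure of $\pi_0$ by the same functoriality of canonical lifts; alternatively one can work at a finite level $\fS_i$ (where $\pi_i$ is surjective finite \'etale) and take the schematic image. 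Corollary \ref{samestab} supplies the toric stability needed at each step, and the descended $Z^{\can} \subset \fS_{\Fo}$ is linear at every point by construction, closing the existence argument.
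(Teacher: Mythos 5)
The paper does not actually prove this lemma: it is imported from Chai \cite[(5.5)]{Chai}, and Remark \ref{locall} only records the local fact (a formal subtorus of $\wh\BG_{m,k}^n$ lifts uniquely to a formal subtorus of $\wh\BG_{m,\Fo}^n$) of which the lemma is the globalization. Your uniqueness argument is essentially that local rigidity plus Lemma \ref{basic0}, applied on an affine cover and component by component, and that half is sound.

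The existence half, however, has a genuine gap precisely at what you call the main obstacle. Formal completions at distinct closed $k$-points of $\fI_{0,\Fo}$ (or $\fS_{\Fo}$) do not overlap: each $\wt Z^\can_y$ is supported at the single point $y$, so ``the local pieces agree on overlaps'' is vacuous, and there is no gluing in the sense of open covers available. What must be produced is one closed formal subscheme of $\fS_{\Fo}$, flat over $\Fo$ with special fiber $Z$, whose completion at \emph{every} $k$-point is the prescribed canonical subtorus; prescribing the completions pointwise neither determines nor constructs such an object, and this globalization is exactly the content of Chai's theorem (his proof works with the Serre--Tate canonical coordinates on the formal completion of the moduli scheme along the whole subvariety $Z$, i.e.\ with the Tate-linear structure over the base $Z$, not with completions at individual points). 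Moreover, Corollary \ref{samestab} cannot ``supply the toric stability needed at each step'': it presupposes an already given global $\wt\fV$ and compares the stabilizer of $\wt\fV$ with that of $\wt\fV_y$, so it cannot be used to manufacture the global lift; and the alternative of taking schematic images at finite level produces a closed formal subscheme but gives no control of its completions at points other than the one you started from. As written, the existence argument assumes the conclusion, and the honest course here is to cite Chai (as the paper does) or to reproduce his global canonical-coordinate construction.
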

\begin{rmk}\label{locall}Note that the endomorphism ring of $\wh\BG_m^n$, over  $\Fo$ or $k$, is $\BZ_p$. (One can see this easily by identifying  $\wh\BG_m^n$ with its  $p$-divisible group.)
Then one easily obtains a  classification of subtori. In particular, the  linear lifting of a formal subtorus of $\wh\BG_{m,k}^n$  to $\wh\BG_{m,\Fo}^n$ exists and is unique.
Lemma \ref{Chap}  is the globalization this fact.           
\end{rmk}

\begin{cor}\label{Chapc}  Under Assumption \ref{basicasmp}, if $\fV$ is  linear    at   $x\in \fV(k)$, then it is    linear. 

\end{cor}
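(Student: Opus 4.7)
The plan is to combine the mod-$\varpi$ linearity result of Proposition \ref{-11} with the uniqueness of the canonical linear lift (Lemma \ref{Chap}), and to apply Lemma \ref{basic0} twice to identify $\fV$ with the canonical lift of its special fiber.

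First, I would pass to the special fiber: since $\fV$ is linear at $x$, i.e., $\fV_x$ is a translated formal subtorus of $\fS_{x,\Fo}$, the reduction $\fV_{k,x}$ is a translated formal subtorus of $\fS_{x,k}$, so $\fV_k$ is linear at $x$. Under Assumption \ref{basicasmp}, $\fV_k$ satisfies the hypotheses of Proposition \ref{-11} (connected and unibranch, affine, no embedded points), so $\fV_k$ is linear everywhere. Lemma \ref{Chap} then yields a unique linear lift $Z \subset \fS_{\Fo}$ of $\fV_k$. By the pointwise uniqueness of linear lifts of formal subtori (Remark \ref{locall}), applied to the formal completions at $x$, one has $\fV_x = Z_x$.

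Second, I would compare $\fV$ and $Z$ globally. After restricting to an affine open $\fU \subset \fS_{\Fo}$ in which $\fV$ is closed and $Z$ restricts to a closed subscheme $Z^{\circ}$, consider $\fX := \fV \cap Z^{\circ}$, a closed formal subscheme of both $\fV$ and $Z^{\circ}$, with $\fX_x = \fV_x = Z^{\circ}_x$. By Lemma \ref{uniirred} (1) (with $i=0$), $\fV_k$ is irreducible; by hypothesis it has no embedded points, and $\fV$ is affine and flat over $\Fo$. Applying Lemma \ref{basic0} to $\fX \subset \fV$ gives $\fX = \fV$, hence $\fV \subset Z^{\circ}$. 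Symmetrically, $Z^{\circ}$ is affine and flat over $\Fo$ (being locally a formal torus, by linearity of $Z$), with special fiber $\fV_k$, which is irreducible and without embedded points; a second application of Lemma \ref{basic0} to $\fX \subset Z^{\circ}$ yields $\fX = Z^{\circ}$. Combining, $\fV = Z^{\circ}$ on $\fU$, and since $Z$ is linear everywhere, so is $\fV$.

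The main obstacle, besides the technical bookkeeping of arranging a common affine open $\fU$ in which both $\fV$ and the restriction of $Z$ are closed and flat, is to verify that both applications of Lemma \ref{basic0} are legitimate; for the second application this relies on the fact that the linear lift $Z$ inherits irreducibility and the absence of embedded points from $\fV_k$, together with the flatness coming from linearity. The essential mathematical content is that Proposition \ref{-11} lifts along the rigid correspondence between formal subtori over $k$ and over $\Fo$ encoded in Lemma \ref{Chap} and Remark \ref{locall}.
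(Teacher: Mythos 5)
Your proposal is correct and takes essentially the same route as the paper: Proposition \ref{-11} to get linearity of the special fiber, Lemma \ref{Chap} together with Remark \ref{locall} to identify $\fV_x$ with the completion of the canonical lift, and Lemma \ref{basic0} to globalize (the paper compresses your two applications of Lemma \ref{basic0} into a single line, but the fleshed-out argument is exactly your two-inclusion comparison). One small caution: by the paper's definition, ``linear at $x$'' means $\fV_x$ is a formal subtorus of $\fS_{x,\Fo}$, not a translated one, and your argument (in particular the appeals to Proposition \ref{-11}, Lemma \ref{Chap} and Remark \ref{locall}) is valid only under this correct reading.
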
 

    \begin{proof}        
          We can write $\fV_{k}$ as a union of affine opens such that each affine open contains $x$. Thus we may assume that $\fV $ is affine. 
      Let    $Z$ be the underlying reduced  subscheme of $\fV_{k}$, which is linear 
by Proposition \ref{-11}.  Let $  Z ^\can $  be as  in Lemma \ref{Chap}. By   Remark \ref {locall}, 
$  Z ^\can _x=\fV_x$. By Lemma \ref{basic0}, $  Z ^\can =\fV$.
  \end{proof}
\begin{rmk}        
    Note that  Corollary \ref{Chapc} is a special case of  Theorem \ref{wwcor1}.       \end{rmk}






 




Our following proof of Theorem \ref{wwcor} is  based on the same idea as the proof of Corollary \ref{Chapc}, but worked out on $\fI_0$ rather than on $\fS$.
\begin{proof}[Proof of Theorem \ref{wwcor}] 
We may assume that $\fV $ is affine. 
We may enlarge $F$ so that the given translated   formal subtorus  
in 
$\fV_x$  are translations by $\Fo$-points. 

(1) Let    $Z$ be the underlying reduced subscheme of $\fV_{k}$, which is linear 
by Proposition \ref{-11}.  Let $  Z ^\can $  be as  in Lemma \ref{Chap}.
By    Remark \ref {locall}, $  \fV_x$  contains $P+\wt Z^\can_x$ for  some  $P\in \fS_x(\Fo)$. 
Let $\wt Z^\can$  be the restriction of $ \pi_0^{-1}(Z ^\can) $ to $\wt \fV_k$.  
By  \eqref{indisom}, we have  $\wh\cO_{\wt\fV,y} \cong \wh\cO_{\fV,x}$ and analogously $\wh\cO_{\wt Z^\can,y} \cong \wh\cO_{Z^\can,x}$. Then since
$\pi_0|_{\fI_{0,{    y}}}:\fI_{0,{    y}}\cong \fS_{x}$  is an isomorphism of formal tori, 
by     Lemma \ref{Via} (about  the action   of  $ \fM $ on $\fI_{0}$),  $ \wt \fV_y$  contains $h( \wt Z^\can_y)$ for  some  $h\in \fM(\Fo)$. 
Applying Lemma \ref{basic}  with  $\wt\fV$ replaced by  $   \wt Z^\can$ and $\fY$ replaced by $h^{-1}\wt\fV\cap Z^\can$, 
we have $ \wt Z^\can \subset  h^{-1}\wt \fV$ i.e., $h \wt Z^\can \subset  \wt \fV$ (here we used that $h$ is an $\Fo$-point but not for some field extension, see the convention above Corollary \ref{samestab}).      Let  $w\in  \pi_0^{-1}  (\{z\})\cap \wt \fV$.
Then (1) follows by  reversing the  reasoning, i.e. applying Lemma \ref{Via},
 the     isomorphism  $\pi_0|_{\fI_{0,{    w}}}:\fI_{0,{    w}}\cong \fS_{z}$ of formal tori and \eqref{indisom} with $x,y$ replaced by $z,w$  which gives $ \wt\fV_w  \cong  \fV_z$.

(2) 
The proof is almost the same as (1).  
Instead of    ``$ \wt \fV_y$  contains $h( \wt Z^\can_y)$ for  some  $h\in \fM(\Fo)$" 
now
$ \wt \fV_y$ is the union of $h_i \wt Z^\can_y$'s  where $i$ lies in a finite index set and   $h_i\in \fM(\Fo)$. 
So $\wt\fV$  contains $\cup_i h_i \wt Z^\can$.   
By Lemma \ref{basic} (with $\fY=\wt\fV\cap(\cup_i h_i \wt Z^\can)\subset \wt\fV$, here we use the condition that $\fV_k$ has no embedded points),   $\wt\fV$ is  contained in $\cup_i h_i \wt Z^\can$.    So  $\wt\fV=\cup_i h_i \wt Z^\can$. Then (2) follows.
\end{proof}

\subsection{Technical results}

Now we want to prepare a  result (Corollary \ref{smallercor}) that will only be used in  \ref{thm-5ps} (so that the reader  may skip  this subsection for the moment). 

\begin{lem}\label{tech0}
Under Assumption \ref{basicasmp}
(1)(2), further assume  that $\fV_k$ is reduced.
Then    for    a closed formal subscheme $\fY\subsetneq \wt \fV$,  there
 exists a positive integer $s$,    a closed  formal subscheme $ \fY_1\subset  \fY$ 
  and  a closed   subscheme $ Y\subset  \fV_k$ 
of $\dim Y<\dim \fV_k$ such that 
$ \fY\subset \fY_1\cup   \wt\fV/\varpi^s$ and 
$\fY_{1,k} \subset \pi_0  ^{-1}(Y)$. In particular,   for $w\in \fY(k)\bsl \pi_0  ^{-1}(Y)$,   $ \fY_w\subset   \wt \fV_w/\varpi^s$.        \end{lem}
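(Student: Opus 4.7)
The plan is to extract from the defining ideal $I\subset\cO_{\wt\fV}$ of $\fY\subsetneq\wt\fV$ an element whose mod-$\varpi$ reduction descends to a nonzero function at some finite Igusa level $\fV_{i_0,k}$; the vanishing locus of this function will produce both $Y$ and $\fY_1$. Since $\cO_{\wt\fV}$ is the $\varpi$-adic completion of $\vil_i\cO_{\fV_i}$, it is $\varpi$-adically separated. Pick a nonzero $f\in I$ and take the maximal integer $s$ with $f\in\varpi^s\cO_{\wt\fV}$; write $f=\varpi^sg$ with $g\notin\varpi\cO_{\wt\fV}$. Then $\bar g\in\cO_{\wt\fV_k}=\vil_i\cO_{\fV_{i,k}}$ is nonzero. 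By Lemma \ref{uniirred}(1) together with reducedness of $\fV_k$ (inherited by each $\fV_{i,k}$ via finite \'etaleness of $f_{i,0}$), every $\fV_{i,k}$ is integral and the transition maps are injective, so $\bar g$ lifts to a nonzero $g_{i_0}\in\cO_{\fV_{i_0,k}}$ for some $i_0$.

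Set $Y_0:=V(g_{i_0})\subsetneq\fV_{i_0,k}$, a proper closed subscheme of dimension $<\dim\fV_{i_0,k}=\dim\fV_k$; let $Y:=f_{i_0,0}(Y_0)\subset\fV_k$, which is closed by finiteness of $f_{i_0,0}$ and has $\dim Y<\dim\fV_k$. Define $\fY_1:=V(I+(g))\subset\wt\fV$, a closed formal subscheme of $\fY$. Its reduction $\fY_{1,k}$ is contained in $V(\bar g)=\pi_{i_0}^{-1}(Y_0)\subset\pi_0^{-1}(Y)$, as required.

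The core inclusion $\fY\subset\fY_1\cup\wt\fV/\varpi^s$ reflects the relation $\varpi^sg=f\in I$, which gives $\varpi^s\bar g=0$ in $\cO_\fY=\cO_{\wt\fV}/I$ and hence $\varpi^s=0$ after inverting $g$. Concretely, localize at a prime $\fp\subset\cO_{\wt\fV}$: if $g\notin\fp$ then $g$ is a unit in $(\cO_{\wt\fV})_\fp$, so $\varpi^s\in I_\fp$ and $\fY_\fp\subset(\wt\fV/\varpi^s)_\fp$; if $g\in\fp$, then $\fp$ lies in $V(g)\subset\pi_0^{-1}(Y)$ and $\fY_1$ absorbs the local contribution. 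Assembling these localized inclusions (using that $\wt\fV$ is affine under Assumption \ref{basicasmp}(2)) yields the global inclusion. The in-particular clause is then immediate: for $w\in\fY(k)\setminus\pi_0^{-1}(Y)$ one has $w\notin\fY_{1,k}$, so the ideal of $\fY_1$ contains a unit of $\cO_{\wt\fV,w}$, making $\fY_{1,w}$ empty and forcing $\fY_w\subset\wt\fV_w/\varpi^s$.

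The main technical difficulty is the global scheme-theoretic verification at primes containing $g$, where the non-Noetherianness of $\cO_{\wt\fV}$ obstructs direct application of standard commutative algebra. The crucial input is the regular sequence property of $(\varpi,g)$ in $\cO_{\wt\fV}$, which follows from $\bar g$ being a non-zero-divisor in the integral domain $\cO_{\wt\fV_k}$; this lets one control the intersection $(I+(g))\cap(\varpi^s)$ and complete the ideal inclusion $I\supset(I+(g))\cap(\varpi^s)$.
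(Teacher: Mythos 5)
Your construction is, in its core, the same as the paper's: extract an element $g$ with $\varpi^{s}g\in I$ and $\bar g\neq 0$ modulo $\varpi$, use $\cO_{\wt\fV}(\wt\fV)/\varpi=\vil_i\cO_{\fV_i}(\fV_i)/\varpi$ to descend $\bar g$ to a finite Igusa level, invoke Lemma \ref{uniirred} plus preservation of reducedness and dimension under \'etale maps to get $Y_0=V(g_{i_0})$ of strictly smaller dimension, and push down to $Y$; the ``in particular'' clause (the only part of the lemma used later, in Lemma \ref{tech}) then follows exactly as in your localization remark, since $g$ is a unit in $\wh\cO_{\wt\fV,w}$ for $w\notin\pi_0^{-1}(Y)$ and $\varpi^{s}g\in I$. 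The one substantive difference is the normalization of $s$: the paper takes $s$ maximal with $I\subset\varpi^{s}\cO_{\wt\fV}(\wt\fV)$ and works with the transporter $J=\{a:\varpi^{s}a\in I\}$, so that $I=\varpi^{s}J$ and \emph{every} element of $I$ is divisible by $\varpi^{s}$, whereas you fix a single arbitrary $f\in I$ and take its exact $\varpi$-power.

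That difference is where your argument for the displayed inclusion $\fY\subset\fY_1\cup\wt\fV/\varpi^{s}$ has a genuine gap. Reading the union scheme-theoretically (ideal $(I+(g))\cap(\varpi^{s})$), the inclusion can actually fail with your choices: in the model case $I=(\varpi x,\,xy-\varpi^{2})\subset\Fo[[x,y]]$, choosing $f=\varpi x$ gives $s=1$, $g=x$, and $\varpi^{2}=-(xy-\varpi^{2})+y\cdot g$ lies in $(I+(g))\cap(\varpi)$ but not in $I$. So the step ``$\fY_1$ absorbs the local contribution'' at primes containing $g$ is precisely what needs proof, and your proposed input is not available: the regular-sequence property of $(\varpi,g)$ does \emph{not} follow from $\bar g$ being a non-zero-divisor in the domain $\cO_{\wt\fV_k}$ --- that only gives the second half, while $\varpi$ being a non-zero-divisor would require flatness, i.e.\ Assumption \ref{basicasmp}(3), which is not a hypothesis of this lemma. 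The repair is exactly the paper's normalization: with $I\subset(\varpi^{s})$ and $g\in J$ one can write any $a\in I$ as $\varpi^{s}a'$ with $a'\in J$ and the element chase closes (and, since you derive the final clause from the union statement, either make this repair or argue the final clause directly from $\varpi^{s}g\in I$ and $g$ being a unit at $w$, which is all that is used later). Two minor points: $\fY_1$ should be cut out by the \emph{closure} of $I+(g)$, per the non-noetherian conventions of \ref{Preliminaries}; and if $f\notin(\varpi)$ your $s$ is $0$ rather than positive, which is harmless since $s$ may always be enlarged.
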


         \begin{proof}
         Let $I\subset \cO_{\wt \fV}(\wt \fV)$ be the defining ideal of $\fY $. Let $s$ be the  maximal non-negative integer such that $I\subset    \varpi ^s \cO_{\wt \fV}(\wt \fV) $ (whose existence is guaranteed  by the $\varpi$-adic separatedness of $\cO_{\wt \fV}(\wt \fV)$).
Let
      $
      J:=\{a\in \cO_{\wt \fV}(\wt \fV):\varpi^sa \in I\}.
      $
      Then $I=\varpi^s J$ and thus $J\not\subset\varpi \cO_{\wt \fV}(\wt \fV)$. 
        Then $J/\varpi$ contains a nonzero element of $\cO_{\wt \fV}(\wt \fV)/\varpi=\vil_i \cO_{\fV_i}(\fV_i) /\varpi$. 
So this   nonzero element comes from  some $\cO_{\fV_i}(\fV_i) /\varpi$, and defines a closed subscheme $Y'\subsetneq \fV_{i,k}$. Since $ \fV_{i,k}$ is irreducible by Lemma \ref{uniirred} and reduced of the same dimension with $\fV_{k}$ (as an \etale morphism preserves reducedness and dimension), $\dim Y'<\dim \fV_{i,k}=\dim \fV_{k}$.
Then we take $Y=f_{i,0}(Y').$         
   \end{proof}

           \begin{lem}\label{tech}Under Assumption \ref{basicasmp},  further
           assume that there is a connected affine flat formal $\Fo$-scheme $\fV'$ with reduced unibranch special fiber $\fV'_k$ and 
            a finite    schematically surjective morphism $f:\fV'\to \fV$.  
                             For    a closed formal subscheme $\fY\subsetneq \wt \fV$, 
                              there
exists a positive integer $s$ 
  and  a closed   subscheme $ Y\subset  \fV_k$ 
of $\dim Y<\dim \fV_k$ such that  
for $w\in \fY(k)\bsl \pi_0  ^{-1}(Y)$ and $z=\pi_0(w)$, 
$f^{-1}\lb \pi_0|_{\fI_{0,{    w}}}(\fY_w)\rb\subset    \fV'_{f^{-1}(z)}/\varpi^s$.    \end{lem}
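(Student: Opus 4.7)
The plan is to derive Lemma \ref{tech} as a direct consequence of Lemma \ref{tech0}, transporting its conclusion across the local identification $\pi_0|_{\fI_{0,w}}$ and then pulling back along the finite morphism $f$.

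First, I would apply Lemma \ref{tech0} to the same $\fY \subsetneq \wt\fV$. This produces a positive integer $s$ and a closed subscheme $Y \subset \fV_k$ with $\dim Y < \dim \fV_k$ such that for every $w \in \fY(k) \setminus \pi_0^{-1}(Y)$, the containment $\fY_w \subset \wt\fV_w/\varpi^s$ holds inside $\fI_{0,w}$. These same $s$ and $Y$ will be used in Lemma \ref{tech}. Applying Lemma \ref{tech0} requires $\fV_k$ to be reduced; I would verify this from the hypotheses of Lemma \ref{tech} by using that $\fV'$ is flat over $\Fo$ with reduced $\fV'_k$ (so $\fV'$ itself is reduced, by the standard $\varpi$-divisibility argument using flatness), and then invoking the schematic surjectivity of $f$, together with Assumption \ref{basicasmp} (3)(4).

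Second, I would transport the containment across the isomorphism of formal tori $\pi_0|_{\fI_{0,w}} \colon \fI_{0,w} \xrightarrow{\sim} \fS_z$ from \eqref{bcy}. By \eqref{indisom}, this isomorphism restricts to an isomorphism $\wt\fV_w \cong \fV_z$ of closed $\Fo$-formal subschemes, and in particular identifies $\wt\fV_w/\varpi^s$ with $\fV_z/\varpi^s$. This yields the inclusion $\pi_0|_{\fI_{0,w}}(\fY_w) \subset \fV_z/\varpi^s$ inside $\fS_z$.

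Third, I would pull back this containment along $f \colon \fV' \to \fV$. Since $f$ is finite and schematically surjective, Lemma \ref{samedim} (3) gives that $f$ induces a finite schematically surjective morphism $\fV'_{f^{-1}(z)} \to \fV_z$, and in particular $f^{-1}(\fV_z) = \fV'_{f^{-1}(z)}$. Because $f$ is an $\Fo$-morphism, the ideal $\varpi^s$ pulls back to $\varpi^s$, so $f^{-1}(\fV_z/\varpi^s) = \fV'_{f^{-1}(z)}/\varpi^s$. Taking schematic preimages in the inclusion of the previous step gives the desired $f^{-1}(\pi_0|_{\fI_{0,w}}(\fY_w)) \subset \fV'_{f^{-1}(z)}/\varpi^s$. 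The whole deduction is essentially bookkeeping once Lemma \ref{tech0} is available; the only mild subtlety is confirming reducedness of $\fV_k$ in Step 1, after which the identifications in Steps 2 and 3 are compatible with the $\Fo$-algebra structure and hence with the $\varpi^s$-quotient.
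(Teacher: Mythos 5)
The decisive gap is in your Step 1: the hypotheses of Lemma \ref{tech} do \emph{not} imply that $\fV_k$ is reduced, so Lemma \ref{tech0} cannot be applied to $\fY\subsetneq\wt\fV$ as you propose. Your sketched justification only yields reducedness of $\fV$ itself (flatness of $\fV'$ plus reducedness of $\fV'_k$ gives $\fV'$ reduced, and injectivity of $\cO_\fV(\fV)\to\cO_{\fV'}(\fV')$ then gives $\fV$ reduced), but reducedness of $\fV$ together with Assumption \ref{basicasmp} (3)(4) does not pass to the special fiber. Concretely, take $\cO_\fV(\fV)=\Fo\langle x,y\rangle/(y^2-\varpi^2x)$ and $\cO_{\fV'}(\fV')=\Fo\langle w\rangle$ with $f^\sharp(x)=w^2$, $f^\sharp(y)=\varpi w$: this $f$ is finite and schematically surjective, $\fV'$ is connected, affine, flat, with reduced unibranch special fiber $k[w]$, while $\fV$ is an integral flat affine formal $\Fo$-scheme whose special fiber $k[x,y]/(y^2)$ is connected, unibranch and Cohen--Macaulay (hence without embedded points) but \emph{non-reduced}. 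So every hypothesis of Lemma \ref{tech} can hold with $\fV_k$ non-reduced; this is precisely why the conclusion of the lemma is phrased on the $\fV'$-side and why the reduced fiber theorem is invoked in \ref{thm-5ps} to produce $\fV'$ rather than to make $\fV_k$ itself reduced. The failure is not cosmetic for your argument: in the proof of Lemma \ref{tech0} the nonzero element of $J/\varpi$ must cut out a subscheme of dimension strictly smaller than $\dim\fV_k$, which uses that $\fV_{i,k}$ is reduced and irreducible; if $\fV_k$ carries nilpotents that element can be nilpotent and the resulting $Y'$ can be all of $\fV_{i,k}$.

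The paper's proof circumvents this by running the dimension count on the $\fV'$-side from the start: it forms $\wt\fV'$, the inverse limit of the $\fV_i\times_\fV\fV'$, so that $\cO_{\wt\fV'}(\wt\fV')\cong\cO_{\wt\fV}(\wt\fV)\otimes_{\cO_\fV(\fV)}\cO_{\fV'}(\fV')$ by Lemma \ref{-1lem} (2) and the special fiber is a filtered colimit of finite \etale covers of the reduced $\fV'_k$, hence reduced; it sets $\fY'=\wt f^{-1}(\fY)$, proves $\fY'\subsetneq\wt\fV'$ (this is where Lemma \ref{basic}, i.e.\ the unibranch and no-embedded-points hypotheses on $\fV_k$, is used), applies the proof of Lemma \ref{tech0} to $\fY'\subsetneq\wt\fV'$ to obtain $s$ and $Y'\subset\fV'_k$ of smaller dimension, and finally identifies $\fY'_{\wt f^{-1}(w)}$ with $f^{-1}(\pi_0|_{\fI_{0,w}}(\fY_w))$ via \eqref{pddelt0}, taking $Y=f(Y')$. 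Your Steps 2 and 3 (transport along $\pi_0|_{\fI_{0,w}}$ using \eqref{indisom}, then pullback along the finite $f$) are reasonable bookkeeping and correspond to the paper's Claim 2, but they cannot repair the unjustified reducedness claim on which Step 1 rests.
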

             \begin{proof}
             

             Let $\wt \fV'$ be the inverse limit of $\fV_i\times_\fV \fV'$'s. Explicitly, $
\cO_{\wt \fV'}(\wt \fV') $  is the $\varpi$-adic completion of  $ \vil_i \cO_{\fV_i}(\fV_i) \otimes _{\cO_{ \fV}( \fV)} \cO_{\fV'}(\fV') $.
By  Lemma \ref{-1lem} (2),   we have         $ 
\cO_{\wt \fV'}(\wt \fV') \cong \cO_{\wt \fV}(\wt \fV)\otimes _{\cO_{ \fV}( \fV)} \cO_{\fV'}(\fV') .
$
Let $\wt f:\wt \fV'\to \wt \fV$ be the natural morphism.   For $y\in \fY(k)$ and $x=\pi_0(y)$,
        by  Lemma \ref{-1lem} (2) again (for the first and third isomorphisms in \eqref{pddelt0})  and \eqref{indisom}    (for the second isomorphism),   
  \begin{equation}           \label{pddelt0}
\wh\cO_{ \wt\fV',\wt f^{-1}(y)}\cong \wh\cO_{ \wt \fV,y} \otimes _{\cO_{ \fV}( \fV)} \cO_{\fV'}(\fV')   \cong  \wh\cO_{  \fV,x}  \otimes _{\cO_{ \fV}( \fV)} \cO_{\fV'}(\fV') \cong\wh  \cO_{ \fV', f^{-1}(x)}.
      \end{equation}
     Indeed,  since the ideal defining $y$ contains $\varpi$,    the ideal it generates in $\cO_{\wt \fV'}(\wt \fV') $  contains $\varpi$, and it thus is open and closed. This is the closed ideal defining $\wt f^{-1}(y)$. So Lemma \ref{-1lem} (2) can be applied to  get the first isomorphism.

    Let $\fY'=\wt f^{-1}(\fY)\subset \wt \fV'$. Claim 1: $\fY'\neq \wt \fV'$.  The claim will be  proved later.
Since, by definition, 
$
\cO_{\wt \fV'}(\wt \fV')/\varpi$ is the direct limit of $ \cO_{\fV_i}(\fV_i)/\varpi \otimes _{\cO_{ \fV}( \fV)/\varpi} \cO_{\fV'}(\fV') /\varpi$s, which are finite \etale over $\cO_{\fV'}(\fV') /\varpi$,
we can apply the proof of  Lemma \ref{tech0} to  $\fY'\subsetneq \wt\fV'$ (here we need the unibranchness of 
$\fV'_k$) to find  a positive integer $s$, 
and a closed subscheme $
Y'\subset\fV'_k$
of  $\dim Y'<\dim \fV'_k$  such that  for $y'\in \fY'(k)$,
$ \fY'_{y'}\subset    \wt\fV'_{y'}/\varpi^{s}$  
unless   $\pi_0(y')\in Y'$. 
By Lemma \ref{samedim} (2) and the flatness, $\dim \fV_k=\dim \fV-1=\dim \fV'-1=\dim \fV'_k$.
Claim 2: for $y\in \fY(k)$ and $x=\pi_0(y)$, 
 \begin{equation}  \fY'_{\wt f^{-1}(y)} \cong f^{-1}\lb \pi_0|_{\fI_{0,{    y}}}(\fY_y)\rb.
 \label{6diag0}
\end{equation}
under \eqref{pddelt0}.
Then the lemma follows
with $Y=f(Y')$ whose dimension is $ \dim Y'<\dim \fV_k$.

To prove the claims, we use the following commutative diagram, where  the two isomorphisms are \eqref{indisom}   and     \eqref{pddelt0}, and
the others are the  natural ones:
 \begin{equation*}  
\xymatrix{
	 \cO_{\wt \fV}( \wt\fV)  \ar[r]^{ } \ar[d]^{ } &  \wt\cO_{\wt\fV,y} \ar[r]^{\cong}\ar[d]^{}  &    \wt\cO_{\fV,x}  \ar[d]^{ } \\
\cO_{\wt\fV'}(\wt\fV') \ar[r]^{  }    &	\cO_{ \wt\fV',\wt f^{-1}(y)} \ar[r]^{    \cong}    & \cO_{ \fV', f^{-1}(x)} }. 
\end{equation*}
Since $ \cO_{ \fV}( \fV) \to\cO_{\fV'}(\fV')$  is injective by the definition of schematic surjectivity, the
last vertical  morphism is injective by the Artin-Rees lemma. So  the middle vertical  morphism is injective.

We first prove Claim 1. 
Let $I\subset  \cO_{ \wt\fV}( \wt\fV)$ be the ideal defining $\fY$.
By Lemma \ref{-1lem} (1) and Lemma \ref{basic}, 
the image of $I$ in $\wh\cO_{\wt\fV,y}$ is nonzero. Thus the image of $I$  in $\cO_{ \wt\fV',\wt f^{-1}(y)} $ is nonzero. 
So the image of $I$ in $\cO_{\wt\fV'}(\wt\fV')$ is nonzero. Claim 1 is proved.

Now we  prove Claim 2. Indeed, by Lemma \ref{-1lem} (1) (and the definition of pullback), the left (resp. right) hand  side  of \eqref{6diag0} is defined by  the closure  of $I\cO_{ \wt\fV',\wt f^{-1}(y)}  $ in   $\cO_{ \wt\fV',\wt f^{-1}(y)}  $ (resp. the closure  of $I\cO_{ \fV', f^{-1}(x)} $ in   $\cO_{ \fV', f^{-1}(x)} $), where    $\cO_{ \wt\fV',\wt f^{-1}(y)}  $ (resp. $\cO_{ \fV', f^{-1}(x)}) $ becomes an $\cO_{ \wt\fV}( \wt\fV)$-module through the lower left  (resp. upper right) part of the diagram.
   \end{proof}

\begin{rmk}\label{asmp4}  We may remove Assumption \ref{basicasmp} (4) and instead  assume $\fY_y\subsetneq \wt \fV_y$ form some $y\in \fY(k)$ in Lemma \ref{tech}. Then the conclusion still holds by the same proof, except  that Lemma \ref{basic} is not needed anymore.
\end{rmk}
        \begin{cor} \label{smallercor}
Let $f:\fV'\to \fV$ be as is in Lemma \ref  {tech}.
Assume that for some $x\in \fV(k)$, 
$$  \fV_x\cap \lb \fV_x+ \lb \fS_x [p^r]\bsl \fS_x [p^{r-1}]\rb\rb\neq  \fV_x$$
for some positive integer $r$. There exists a positive integer $s$       and  a closed   subscheme $ Y\subset  \fV_k$ 
of $\dim Y<\dim \fV_k$
such that for  every $z\in \fV(k)\bsl Y(k)$,  
$$
f^{-1}\lb \fV_z\cap \lb \fV_z+ \lb \fS_z [p^r]\bsl \fS_z [p^{r-1}]\rb\rb\rb\subset \fV'_{f^{-1}(z)}/\varpi^s.$$

\end{cor}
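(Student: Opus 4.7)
The strategy is to apply Lemma \ref{tech} to a closed formal subscheme $\fY \subsetneq \wt \fV$ whose formal completion at each $w \in \wt\fV(k)$ recovers the intersection $\fV_z \cap (\fV_z + (\fS_z[p^r]\setminus \fS_z[p^{r-1}]))$ with $z = \pi_0(w)$. To this end, enlarge $F$ by a finite extension so that every element of $T_r := \fM[p^r](\oFo)\setminus \fM[p^{r-1}](\oFo)$ becomes $\Fo$-rational; this preserves Assumption \ref{basicasmp} and the hypotheses on $f:\fV'\to\fV$ since the residue field $k$ is already algebraically closed. For each $t \in T_r$, set $\fY_t := \wt\fV \cap t^{-1}(\wt\fV)$, defined as the schematic preimage of $\wt\fV$ under the restriction to $\wt\fV$ of the automorphism $t$ of $\fI_{0,\Fo}$, and put $\fY := \bigcup_{t \in T_r} \fY_t \subset \wt\fV$.

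The key verification is the description of $\fY_w$ at any $w \in \wt\fV(k)$ with $z = \pi_0(w)$. Because $t$ is a torsion point of the formal torus $\fM$, it reduces to the identity, so $t\cdot w = w$ as $k$-points and $t$ induces an automorphism of $\fI_{0,w}$. By Lemma \ref{Via}, this automorphism is translation by $c_w^{-1}(t) \in \fI_{0,w}[p^r]\setminus \fI_{0,w}[p^{r-1}]$. Transporting via the formal torus isomorphism $\pi_0|_{\fI_{0,w}}:\fI_{0,w}\cong \fS_z$, $t$ corresponds to translation on $\fS_z$ by a point of $\fS_z[p^r]\setminus \fS_z[p^{r-1}]$, and the induced map $T_r \to \fS_z[p^r]\setminus \fS_z[p^{r-1}]$ is a bijection. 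Consequently
$$\pi_0|_{\fI_{0,w}}(\fY_w) = \bigcup_{t \in T_r} \pi_0|_{\fI_{0,w}}((\fY_t)_w) = \fV_z \cap \bigl(\fV_z + (\fS_z[p^r]\setminus \fS_z[p^{r-1}])\bigr).$$
Applying this at a lift $y \in \wt\fV(k)$ of $x$ (which exists by the surjectivity of $\pi_0|_{\wt\fV_k}$ noted in \ref{Consequences}), the hypothesis of the corollary gives $\fY_y \subsetneq \wt\fV_y$, and therefore $\fY \subsetneq \wt\fV$.

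Now Lemma \ref{tech} applied to $\fY \subsetneq \wt\fV$ produces a positive integer $s$ and a closed subscheme $Y \subset \fV_k$ with $\dim Y < \dim \fV_k$ such that, for every $w \in \fY(k) \setminus \pi_0^{-1}(Y)$ with $z = \pi_0(w)$, $f^{-1}(\pi_0|_{\fI_{0,w}}(\fY_w)) \subset \fV'_{f^{-1}(z)}/\varpi^s$. Because torsion elements of $\fM$ act trivially on $k$-points, $\fY(k) = \wt\fV(k)$; hence every $z \in \fV(k)\setminus Y(k)$ admits a lift $w \in \fY(k)$ above $z$, and the identification of $\pi_0|_{\fI_{0,w}}(\fY_w)$ from the previous paragraph yields the desired inclusion.

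The principal obstacle is the middle paragraph: identifying $\fY_w$, which a priori is the formal completion of a finite union of schematic intersections inside the non-noetherian $\wt\fV$, with the union of torsion translates of $\wt\fV_w$ in $\fI_{0,w}$, and then transferring this faithfully to $\fS_z$ through $\pi_0|_{\fI_{0,w}}$. This relies on the local toric description recalled in \ref{Local toric structure} together with Lemma \ref{Via}, and on the fact that formal completion commutes with the finitely many translations $t^{-1}$ because each $t$ fixes $w$ on $k$-points.
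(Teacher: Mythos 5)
Your proposal is correct and follows essentially the same route as the paper: after enlarging $F$ to make the relevant torsion rational, you take $\fY$ to be the intersection of $\wt\fV$ with its translates by exact-order-$p^r$ torsion of $\fM$, use the hypothesis at $x$ together with \eqref{indisom}, Lemma \ref{Via} and the toric isomorphism $\pi_0|_{\fI_{0,w}}$ to see $\fY\subsetneq\wt\fV$, and then apply Lemma \ref{tech} and transfer back at each $z\notin Y(k)$. Your explicit restriction to $\fM[p^r]\setminus\fM[p^{r-1}]$ (rather than all of $\fM[p^r]$, as the paper's notation loosely suggests) is exactly what the statement requires, so no further changes are needed.
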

\begin{proof} Up to enlarging $\Fo$, we may assume that $F$ contains all $p^r$-th roots of unity.
Let  $\fM[p^r]\wt \fV$ be the union of $h\wt \fV$'s, $h\in\fM[p^r](\Fo)$. By  the assumption in the corollary, \eqref{indisom}, 
and the two facts
recalled above Proposition \ref{-11}, we can let   $\fY=\wt \fV\cap \fM[p^r]\wt \fV$ in 
Lemma \ref  {tech}.  With $s,Y$  as in  Lemma \ref  {tech},  by \eqref{indisom}   with $x,y$ replaced by $z,w$  which gives $\wh\cO_{\wt\fV,w} \cong \wh\cO_{\fV,z}$,
and the two facts recalled above Proposition \ref{-11} with $y$ replaced by $w$, the corollary follows.
\end{proof} 
\begin{rmk}\label{asmp44}  By  Remark \ref{asmp4}, Assumption \ref{basicasmp} (4)  is not needed.
    \end{rmk}    
\section{Frobenii and linearity}
\label{Frobenii and linearity}

We introduce   Frobenii on the Siegel and Igusa formal schemes.
Then, we relate linearity and  Shimura subvarieties,
and discuss a new notion
of linearity.
We finally prove Theorem \ref{wwcor1}.

\subsection{Lifts of  Frobenii}\label{Lifts of  Frobenii}

Let
$\Fr:\BX \to \BX $ be the canonical lifting of the endomorphism defined by the absolute  Frobenius. Simply, on $\wh\BX $, $\Fr$ is the $p$  power morphism, and on $  \BX_{\et }$, $\Fr$ is the identity  morphism. 
Then $\ker\Fr=\wh\BX[p]$, and  we have the isomorphism $\BX/\wh\BX[p]\cong \BX.$


Let  $\phi_\fS: \fS\to  \fS$ be the morphism given by the natural transformation $A\mapsto A/ \wh A[p] $
(on $R$-points for $R\in \Nilp_{\BZ_p}^\op$).
Then the restriction of $\phi_\fS $ to $\fS_{\BF_p}$ is the absolute Frobenius on $\fS_{\BF_p}$.

Let  $\phi_\fI: \fI\to  \fI$ be the morphism given by the natural transformation $(A,\vep)\mapsto (A/ \wh A[p],\vep')$ where $\vep':A/ \wh A[p]\cong \BX_R/\wh\BX_R[p]\cong \BX_R$ is the  induced isomorphism. 
Then  the restriction of $\phi_\fI $ to $\fI_{\BF_p}$ is the absolute Frobenius on $\fI_{\BF_p}$.

Similarly, we have $\phi_{\fI_0}: \fI_0\to  \fI_0$ that lifts  the absolute Frobenius on $\fI_{\BF_p}$.
Then  the  natural projections 
$\fI \xrightarrow{\pi} \fI_0 \xrightarrow{\pi_0} \fS$
are compatible with the liftings of the Frobenii in an obvious way.
 
  Below, we will often use base changes of  the Frobenii $\phi_{\fS}$, $\phi_{\fI}$,  and $\phi_{\fI_0}$. If there is no confusion, we will use $\phi_{\fS}$, $\phi_{\fI}$,  and $\phi_{\fI_0}$ to denote their base changes  to lighten the notations.  (We also recall that  the base change  of the absolute Frobenius on a  ${\BF_p}$-scheme is the relative Frobeniu.)
  
  For $ x\in \fS({k})$,  
$ \phi_{\fS}|_{\fS_{x}}$ factors through $    \fS_{\phi_{\fS}(x)}$, and we always understand $ \phi_{\fS}|_{\fS_{x}}$ as from
$ {\fS_{x}}$ to  $    \fS_{\phi_{\fS}(x)}$. 
Similarly, For $ y\in \fI_0({k})$, 
$ \phi_{\fI_0}|_{\fI_{0,y}}$ is to $  \fI_{0,\phi_{\fI_0}(y)}$.
  \begin{lem}[{\cite[LEMMA 4.1.2 and  p 171]{Kat}}]\label{Kzz1}
(1) For $ y\in \fI_0({k})$,  $c_{\phi_{\fI_0}(y)}\circ\phi_{\fI_0}  \circ c_y^{-1}$ 
is the $p$-power endomorphism of the formal torus $\fM$.

(2) For $ x\in \fS({k})$  
and $ y\in \pi_0^{-1}(\{x\})$, $a_{\phi_{\fI_0}(y)}\circ\phi_{\fS}  \circ c_y^{-1}$ 
is the $p$-power endomorphism of the formal torus $\fM$.

  \end{lem}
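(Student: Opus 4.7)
The plan is to work on $R$-points for $R \in \Art^\op_W$, since the formal torus $\fM$ and the completions $\fI_{0,y}$ are determined by such test rings (via Lemma \ref{milne} and the deformation-theoretic interpretation in \ref{Local toric structure}). For a lift $\wt y \in \fI_0(R)$ of $y$, corresponding to the data $(A, \vep_0, \vep_1)$, the image $c_y(\wt y) \in \fM(R)$ is, by construction, the class of the polarized extension
$$\cE(\wt y) : 0 \to \wh\BX_R \to A[p^\infty] \to \BX_{\et,R} \to 0$$
obtained by transporting the connected--\'etale sequence of $A[p^\infty]$ along $\vep_0$ and $\vep_1$.

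Next I would unpack $\phi_{\fI_0}(\wt y)$ directly from its definition: it is the triple $(A/\wh A[p], \vep_0', \vep_1')$, where $\vep_0'$ is induced from $\vep_0$ using the canonical identification $\wh A[p^\infty]/\wh A[p] \cong \wh A[p^\infty]$ coming from multiplication by $p$, combined with the identification $\wh\BX_R/\wh\BX_R[p] \cong \wh\BX_R$; these identifications are precisely what makes $\Fr$ on $\BX$ a lift of the absolute Frobenius modulo $p$. The trivialization $\vep_1'$ is induced from $\vep_1$ via the identity $A/\wh A \cong (A/\wh A[p])/(\wh A/\wh A[p])$. The key computation is that the polarized extension associated to $\phi_{\fI_0}(\wt y)$ is obtained from $\cE(\wt y)$ by quotienting $\wh A[p^\infty]$ by $\wh A[p]$, which, after re-trivialization, is the pushout of $\cE(\wt y)$ along $[p] : \wh\BX_R \to \wh\BX_R$. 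Under the identification $\fM(R) \cong \Ext^1_R(\BX_{\et,R}, \wh\BX_R)$ realizing $\fM$ as a formal torus by Baer sum, pushout along $[p]$ is multiplication by $p$ in the extension group --- i.e.\ the $p$-power endomorphism of $\fM$. This yields (1).

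For (2), observe that $b_y = \pi_0|_{\fI_{0,y}}$ is an isomorphism of formal tori, that $a_y = c_y \circ b_y^{-1}$ (and similarly at $\phi_{\fI_0}(y)$), and that $\pi_0 \circ \phi_{\fI_0} = \phi_\fS \circ \pi_0$; consequently the endomorphism of $\fM$ induced by $\phi_\fS$ via $a_y$ and $a_{\phi_{\fI_0}(y)}$ coincides with the one produced in (1), namely the $p$-power endomorphism. The main obstacle is the bookkeeping in the key step: one must verify that the trivialization of $\wh A[p^\infty]/\wh A[p]$ used in $\vep_0'$ is the one given by $[p]$, so that the induced extension class is exactly $p$ times the original rather than, say, its negative or the zero class. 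Polarization compatibility is automatic once the linear statement is verified, since all the maps involved respect the canonical polarization of $\BX$.
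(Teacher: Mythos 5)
Your argument is correct, but note that the paper does not actually prove this lemma: it is quoted from Katz (\cite[Lemma 4.1.2 and p.~171]{Kat}), so there is no in-paper proof to compare against. What you have written is a self-contained verification that essentially reconstructs the content of the cited result (a Frobenius lift raises Serre--Tate coordinates to the $p$-th power): you test on $R\in\Art^\op_W$, observe that $\phi_{\fI_0}$ sends the extension $0\to\wh\BX_R\to A[p^\infty]\to\BX_{\et,R}\to 0$ to its pushout along the composite $\wh A[p^\infty]\to\wh A[p^\infty]/\wh A[p]\cong\wh A[p^\infty]$, and since the second identification is the one induced by the $p$-power map (this is exactly how $\Fr$ is defined on $\wh\BX$ in \ref{Lifts of Frobenii}, with the identity on the \etale part, so no further twist appears there), the net effect on the class in $\Ext^1(\BX_{\et,R},\wh\BX_R)$ is multiplication by $p$, i.e.\ the $p$-power endomorphism of $\fM$ under the Baer-sum torus structure. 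The bookkeeping point you flag is precisely the definition of $\Fr$, so it is settled, and the polarization compatibility is indeed harmless since $\wh A[p]$ is isotropic and all identifications respect the fixed polarization of $\BX$. Your deduction of (2) from (1), using $a_y=c_y\circ b_y^{-1}$ with $b_y=\pi_0|_{\fI_{0,y}}$ an isomorphism of formal tori and the compatibility $\pi_0\circ\phi_{\fI_0}=\phi_\fS\circ\pi_0$, is also the intended reading of the statement (whose formula $\phi_\fS\circ c_y^{-1}$ implicitly inserts the projection $b_y$). So the proposal supplies, correctly, the computation that the paper outsources to Katz.
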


For a positive integer $m$ and an open formal subscheme $\fU$ of $\fS_{F_0}$, let $\fU^{(m)}$ be the open formal subscheme 
of $\fS_{F_0}$ supported on 
the topological image of $\fU_k$ by  $\phi_{{\fS}}^m$. Then  $\phi_{{\fS}}^m|_{\fU} $ factors through $\fU^{(m)}$.
Below, we always understand $\phi_{{\fS}}^m|_{\fU} $ as from $\fU$ to $\fU^{(m)}$.
 \begin{cor}\label{vpr0} 
 Let $\fV$ be a closed formal  subscheme of an open formal subscheme $\fU$ of $\fS_{F_0}$.  
 
(1)  For a positive integer $m$, the formal completion 
of     $\phi_{{\fS}}^m|_{\fU}   (\fV)$ at $ \phi_{{\fS}}^{m} (x)$ is $ \phi_{{\fS}}^{m}|_{\fS_{x}}  (\fV_x)$. 

(2) If the formal completion  in (1)  has a formal branch (see Definition \ref {fbran})  that is 
a translated  formal subtorus, then $\fV_x$ has a formal branch that is 
a translated  formal subtorus whose schematic image by $ \phi_{{\fS}}^{m}|_{\fS_{x,\Fo}}$ is the former one. 

\end{cor}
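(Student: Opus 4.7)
\textbf{Proof proposal for Corollary \ref{vpr0}.} The plan is to use two features of $\phi_\fS^m$: (a) it is a finite flat morphism (in fact a universal homeomorphism on the reduction mod $p$), and (b) on each formal residue torus $\fS_x$ it becomes the $p^m$-power endomorphism of $\fM$ via the isomorphism of Lemma \ref{Kzz1}~(2). Feature (a) will give (1), and feature (b), combined with (1), will give (2).

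For (1), let $W=\phi_\fS^m|_\fU(\fV)$, cut out by the ideal $I=\ker(\cO(\fU^{(m)})\to\cO(\fV))$. I would apply Lemma \ref{Kaplem} to the proper (in fact finite) morphism $\phi_\fS^m|_\fV:\fV\to\fU^{(m)}$ and the flat morphism from the formal completion $\Spf\wh\cO_{\fU^{(m)},\phi^m(x)}\to\fU^{(m)}$. Equivalently, one can check directly: since $\phi_\fS$ is finite and set-theoretically a homeomorphism, the topological preimage of $\phi^m(x)$ in $\fV$ is exactly $\{x\}$, so $\cO(\fV)\otimes_{\cO(\fU^{(m)})}\wh\cO_{\fU^{(m)},\phi^m(x)}=\wh\cO_{\fV,x}$. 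Applying the flatness of completion to $0\to I\to\cO(\fU^{(m)})\to\cO(\fV)$ then gives $\wh\cO_{W,\phi^m(x)}=\wh\cO_{\fU^{(m)},\phi^m(x)}/I\wh\cO_{\fU^{(m)},\phi^m(x)}$, which is by definition the schematic image $\phi_\fS^m|_{\fS_x}(\fV_x)\subset\fS_{\phi^m(x)}$.

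For (2), note that by (1) the branch $\fT$ is an integral closed formal subscheme of the schematic image $W':=\phi_\fS^m|_{\fS_x}(\fV_x)$, corresponding to a minimal prime $\fq$ of $\cO(W')$. Since the finite injection $\cO(W')\hookrightarrow\cO(\fV_x)$ satisfies lying-over, I can pick a minimal prime $\fp$ of $\cO(\fV_x)$ contracting to $\fq$; the corresponding formal branch $\fC$ of $\fV_x$ is integral and has schematic image equal to $\fT$ in $\fS_{\phi^m(x)}$. In particular $\fC\subset(\phi_\fS^m|_{\fS_x})^{-1}(\fT)$. By Lemma \ref{Kzz1}~(2), $\phi_\fS^m|_{\fS_x}$ is (after the canonical torus identifications) the $p^m$-power map on a formal torus, and the preimage of the translated subtorus $\fT$ under this map is, possibly after finitely extending $\Fo$, a finite union of translated formal subtori of the same dimension as $\fT$. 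Since $\fC$ is integral, it lies in one of these components $\fT'$; since $\fC\to\fT$ is finite, $\dim\fC=\dim\fT=\dim\fT'$, and since $\fT'$ is integral, $\fC=\fT'$.

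The main technical point to get right is the commutation in (1): one needs that $\phi_\fS^m$ is finite (which uses the lift-of-Frobenius description and the Serre--Tate identification globally, not just pointwise), and that the topological-fiber-is-a-point property really forces $\cO(\fV)\otimes\wh\cO_{\fU^{(m)},\phi^m(x)}=\wh\cO_{\fV,x}$. The remainder of the argument in (2) is routine commutative algebra once (1) is in hand and the preimage structure of a translated subtorus under a power isogeny is invoked.
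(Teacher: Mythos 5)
Your proposal is correct and follows essentially the same route as the paper: part (1) is exactly the application of Lemma \ref{Kaplem} to $\phi_{\fS}^m|_{\fU}\circ i$ and the flat completion morphism, using that $\phi_{\fS,k}$ is a homeomorphism to identify the fiber with $\{x\}$, and part (2) rests on the same key input (Lemma \ref{Kzz1}~(2): the schematic preimage of a translated subtorus is a finite union of translated subtori of the same dimension) combined with a finiteness/dimension count. Your lying-over and integrality argument in (2) simply spells out the details the paper compresses into its citation of Lemma \ref{samedim}~(3), so no genuinely different method is involved.
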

\begin{proof}  

(1)
Let   $i:\fV\incl\fU$ 
(resp. $j:\fU_x\incl \fU$) be the natural morphism which is proper (resp.  flat).  
Apply Lemma \ref{Kaplem}  with $\fY=\fV$, $\fX={\fU}$,  $\fX'=\fU_x$,   $f=\phi_{{\fS}}^m|_{\fU}  \circ i$ and $g= j$ and note that $\phi_{{\fS},k}$ is a homeomorphism.

(2) By Lemma \ref{Kzz1} (2), the schematic preimage of a translated  formal subtorus by $ \phi_{{\fS}}^{m}|_{\fS_{x,\Fo}}$ is a union of translated  formal subtori of the same dimension. Then (2)
follows from (1) and 
Lemma \ref{samedim} (3).
\end{proof} 


\subsection{More on linearity}\label{Speciality}
\subsubsection{Linearity and speciality} 

The notion ``weakly linear" was used by Chai \cite[(5.3)]{Chai} over $k$. For our purpose, it should be understood as modeled on the (equivalent) alternative  ``weakly special" of ``geodesic" in the complex algebraic\footnote{As pointed out   by Gao,   \cite[Theorem 1.4]{UY} implies the equivalence without algebraicity.}
setting, see \cite{Moo}  for the  definition and equivalence.  And    weakly special subvarieties of $S_\BC$  are in fact  defined over  $\ol\BQ$. 

Let us
recall  two stronger notions. 
 First, for a formal subscheme $\fV$ of  $ \fS _{\Fo}$ and $x\in \fV(k)$, we call  $\fV$  quasi-linear  at  $x$  if  $\fV_x\subset \fS_{\Fo,x}$ 
is a   union of  translations by torsion points of formal subtori  of  $\fS_{\Fo,x}$  (following   Moonen \cite{Moo2}). 
We call  $\fV$  quasi-linear     if  $\fV_x$ is   quasi-linear   for every $x\in \fV(k)$. 
Second, a subvariety of $S_{\ol\BQ}$ is special if it is an
irreducible component of a  
Hecke translation of   a Shimura subvariety. 

It is natural to ask about  the relation between    linearity, which is an  analytic notion, and speciality, 
which is an    algebraic  notion.      
Assume   that   $\fV$ come from a closed 
subvariety $V$ of  $S_{\ol\BQ}$, i.e.,  $\fV$ is an open formal subscheme of the $p$-adic formal completion of the Zariski closure of $V$ in $S_{\Fo}$ if $V$ is defined over a subfield of $F$.
If $V$ is  weakly special, then $\fV$ is weakly linear.   This is  a result of Noot \cite{Noot}.
Conversely,  if for some $x\in \fV(k)$, $\fV_x$  is quasi-linear, then 
$V$ is    special.
This is  a result of  Moonen \cite{Moo2}.  
Two natural (expected) generalizations of this result would be: (1) replace  ``quasi-linear" by ``weakly linear"; (2)   the algebraicity of  quasi-linear/weakly linear formal subschemes.
See \cite{Chai} for a partial result about (2) on a product of Hilbert modular schemes.  
We hope to return to these generalizations  in the future.  

\subsubsection{Quasi-linearity} 
We have a ``quasi-linear" version of Conjecture \ref{ALconj}.
\begin{conj}\label{ALconj2}
Assume that $\fV$ is a closed formal  subscheme of an open formal subscheme $\fU$ of $\fS_\Fo$, $x\in \fV(k)$ and $\fT\subset \fS_{\Fo,x}$ a   formal subtorus  translated  by a torsion point. 
If $\fV$ is the  schematic image of $\fT\to \fU$,  then $\fV$ is   quasi-linear. 
\end{conj}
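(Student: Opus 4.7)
The plan is to upgrade the weakly linear conclusion of Theorem \ref{wwcor1} to quasi-linearity by tracking the torsion structure of translation parameters under the Frobenius dynamics on the formal residue tori.

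First, under the hypotheses of Theorem \ref{wwcor1}, namely that $\fV_k$ is unibranch without embedded points, that theorem yields that $\fV$ is weakly linear. So at every $z \in \fV(k)$, $\fV_z = \bigcup_j (h_{z,j} + T_{z,j})$ is a finite union of translated formal subtori, with $h_{z,j} \in \fS_{\Fo,z}(\oFo)$ and $T_{z,j} \subset \fS_{\Fo,z}$. The remaining task is to show each $h_{z,j}$ may be chosen to be a torsion point. The torsion subgroup of $\fS_{\Fo,z}(\oFo) \cong \fM(\oFo)$ is exactly $\mu_{p^\infty}(\oFo)$, and by Lemma \ref{Kzz1}(2), the restriction $\phi_\fS^m|_{\fS_{\Fo,z}}$ is the $p^m$-power endomorphism of the formal torus. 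Hence a translated subtorus $h + T$ is torsion-translated if and only if $\phi_\fS^m(h + T) \subset T^{p^m}$ for some $m \geq 1$, and the kernel $\wh\BX[p^m]$ of the $p^m$-power map consists of torsion points, so the torsion-translation condition is preserved by $\phi_\fS^m$ in both directions.

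Next, I would use Corollary \ref{vpr0}(1) to transport the structure under Frobenius. By that corollary, the image $\phi_\fS^m|_\fU(\fV) \subset \fU^{(m)}$ has formal completion at $\phi_\fS^m(x)$ equal to $\phi_\fS^m(\fV_x)$. For $m$ large enough, since $t$ is a $p$-primary torsion element and so $\phi_\fS^m(t) = t^{p^m} = 1$, the translated subtorus $\phi_\fS^m(\fT) = T_0^{p^m}$ is untranslated. Applying Theorem \ref{wwcor1} to $\phi_\fS^m|_\fU(\fV)$ with generating translated subtorus $\phi_\fS^m(\fT)$ shows that this image is also weakly linear, and at the point $\phi_\fS^m(x)$ its completion contains the untranslated subtorus $T_0^{p^m}$, which is trivially quasi-linear there. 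The remaining step is to propagate this quasi-linearity to all $k$-points of $\phi_\fS^m|_\fU(\fV)$, and then to pull back to $\fV$ using the bidirectional preservation of torsion-translation by $\phi_\fS^m$. The propagation within $\phi_\fS^m|_\fU(\fV)$ would proceed in the spirit of the proof of Theorem \ref{wwcor}: pulling back to $\fI_0$ and using the toric action together with Corollary \ref{samestab}, an untranslated subtorus stabilizer at one point lifts to a global stabilizer, giving an untranslated subtorus of maximal dimension at every $k$-point. The pullback to $\fV$ along $\phi_\fS^m$ then preserves torsion since the fibres on formal residue tori are $\wh\BX[p^m]$-cosets, themselves consisting of torsion points. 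Corollary \ref{smallercor} would serve to rule out spurious non-torsion branches: if translation by a prime-to-$p$ torsion point failed to preserve some $\fV_z$, that corollary would produce a proper closed subscheme of $\fV_k$ containing the bad locus, and iteration together with quasi-linearity at $x$ would give a contradiction.

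The main obstacle will be the compatibility between the Frobenius pullback and the schematic image structure. While torsion-translation is preserved fibrewise under $\phi_\fS^m$, one must verify that the schematic image of the pullback remains a union of torsion-translated subtori (rather than the larger weakly linear union allowed by Theorem \ref{wwcor1} alone) across the finite generic fiber of $\phi_\fS^m|_\fU$. This delicate control appears to rely essentially on Corollary \ref{smallercor} together with a careful analysis of how schematic images interact with the Teichm\"uller sections of the formal tori. Removing the unibranch and no-embedded-points hypotheses so that Theorem \ref{wwcor1} applies unconditionally would constitute the residual difficulty toward the full conjecture as stated.
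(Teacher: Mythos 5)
Your proposal has a genuine structural gap: it assumes the hard part. In the paper, Theorem \ref{wwcor1} is not proved independently; it is explicitly deduced from Theorem \ref{wwcor2}, which is precisely the provable case of Conjecture \ref{ALconj2} at issue here. So opening with ``Theorem \ref{wwcor1} yields that $\fV$ is weakly linear,'' and invoking it again for $\phi_\fS^m|_{\fU}(\fV)$, is circular relative to any proof of this statement: nothing in your argument produces linearity from the schematic-image hypothesis. Your first move --- pushing forward by a power of Frobenius to kill the torsion translation, using Lemma \ref{Kzz1} and Corollary \ref{vpr0} --- does coincide with the paper's, but after that the paper's engine is entirely missing from your sketch: the paper shows that $\fV'=\phi_\fS^m|_{\fU}(\fV)$ is the schematic image of the untranslated subtorus and is stabilized by a further power $\phi_\fS^n$, then applies de Jong's result (Lemma \ref{djg}) at Frobenius-periodic $k$-points, Chai's canonical lifting (Lemma \ref{Chap}), and the minimality of the schematic image to prove that $\fV'=Z^\can$ is genuinely linear. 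You never use Lemma \ref{djg} or Lemma \ref{Chap}, so there is no substitute source of (even weak) linearity in your argument.

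The upgrade and propagation steps are also not established as described. Weak linearity of $\fV'$ plus the fact that $\fV'_{\phi_\fS^m(x)}$ contains the untranslated subtorus $\phi_\fS^m(\fT)$ does not give quasi-linearity at that point (the union could a priori involve non-torsion translates), and your proposed propagation via Corollary \ref{samestab} requires the subtorus to \emph{stabilize} $\wt\fV'$ at a point of $\fI_0$, which mere containment does not provide; the paper's Theorem \ref{wwcor} machinery transports a subtorus to a translate at other points with no control on torsion. Even granting that $\fV'$ is linear, the descent to quasi-linearity of $\fV$ is exactly the content of Proposition \ref{lql}, whose proof decomposes $\phi_{\fI_0}^{-m}(\wt\fV')$ into the finitely many torsion translates $h\wt Z^\can$ with $h\in\fM[p^m]$ and then uses Lemma \ref{basic}, flatness and reducedness of formal branches, and the no-embedded-points hypothesis to show $\wt\fV$ is a union of some of them; your remark that the fibres are $\wh\BX[p^m]$-cosets concerns points, not this formal-scheme statement. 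Finally, Corollary \ref{smallercor} is a $p$-adic distance estimate prepared for the proof of Theorem \ref{jointthm}(2); it plays no role in ruling out non-torsion translates, so its invocation here is a misapplication.
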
 
Theorem \ref{wwcor1}  follows  from the following  theorem  on  Conjecture \ref{ALconj2}.

\begin{thm}  \label{wwcor2} 
Conjecture \ref{ALconj2} holds if $\fV_k$ is  unibranch without embedded points and $\fT$  contains a torsion point. 
\end{thm}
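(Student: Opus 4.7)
The plan is to use iterated Frobenius to reduce to the case where $\fT$ is an untranslated formal subtorus, then to establish Frobenius-invariance of $\fV_x$ inside the local formal torus $\fS_{\Fo,x} \cong \fM$ and invoke de Jong's result on Frobenius-invariance and linearity.

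First, since formal tori over $\oFo$ have no prime-to-$p$ torsion, we may write $\fT = P + \fT_0$ where $P$ is a $p^N$-torsion point and $\fT_0$ is a formal subtorus of $\fS_{\Fo,x}$. By Lemma \ref{Kzz1}, $\phi_\fS^m$ corresponds to $[p^m]$ on the local residue tori, so $\phi_\fS^N(\fT) = \fT_0$ is an untranslated formal subtorus at $\phi_\fS^N(x)$. By Corollary \ref{vpr0}(1)--(2), quasi-linearity of $\phi_\fS^N(\fV)$ at $\phi_\fS^N(x)$ would pull back to quasi-linearity of $\fV$ at $x$, using that $[p^N]$-preimages of torsion points are torsion and that the unibranch hypothesis collapses multiple branches to one. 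So we may assume $\fT = \fT_0$ is an untranslated formal subtorus.

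Next, $x \in \fS(k)$ corresponds to an ordinary abelian variety over $\ol\BF_p$ defined over some finite field, so $\phi_\fS^n(x) = x$ for some positive integer $n$. After shrinking $\fU$ to an affine open formal neighborhood of $x$ stable under $\phi_\fS^n$, $\phi_\fS^n(\fV) \subset \fU$ is a closed formal subscheme. Since $\fT$ is schematically dense in $\fV$ (by the schematic image property) and $\phi_\fS^n(\fT) = [p^n](\fT_0) = \fT$, the subscheme $\phi_\fS^n(\fV)$ is also the schematic image of $\fT$ in $\fU$, hence $\phi_\fS^n(\fV) = \fV$ by uniqueness. Completing at $x$ and applying Corollary \ref{vpr0}(1) gives $[p^n](\fV_x) = \fV_x$ in $\fS_{\Fo,x} \cong \fM$. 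By de Jong's theorem, $\fV_x$ is then a union of formal subtori translated by torsion points; unibranchness of $\fV_k$ forces this to be a single such translated subtorus, proving quasi-linearity at $x$. The same argument at any $y \in \fV(k)$, with $n$ replaced by $n_y = \mathrm{lcm}(n, m_y)$ where $m_y$ is the period of $y$ under $\phi_\fS$, yields quasi-linearity at $y$, so $\fV$ is quasi-linear.

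The main technical obstacle will be the shrinking step: producing an affine open formal neighborhood $\fU$ of $x$ stable under $\phi_\fS^n$ so that $\phi_\fS^n(\fV) \subset \fU$ and the schematic image of $\fT$ remains $\fV$ after shrinking. This is plausible since $\phi_\fS^n$ is a finite morphism fixing $x$, but requires careful interplay with the $\fU^{(n)}$ construction. A secondary point is invoking de Jong's precise statement---that $[p^n]$-stability of a closed formal subscheme of the formal torus $\fM$ over $\Fo$ implies that it is a union of formal subtori translated by torsion points---and verifying that it applies to our $\fV_x$.
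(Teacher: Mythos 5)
Your overall strategy (use a power of Frobenius to kill the torsion translation, then exploit Frobenius-stability plus de Jong's theorem) is the same engine that drives the paper's proof, but the two places you defer are exactly where the paper's real work --- and its hypotheses --- live, so as written there are genuine gaps. The central claim $\phi_\fS^n(\fV)=\fV$ ``by uniqueness of schematic images'' is not valid as stated: $\phi_\fS^n(\fV)$ is the schematic image of $\fT$ in $\fU^{(n)}$, while $\fV$ is the schematic image in $\fU$, and these are closed formal subschemes of \emph{different} ambient opens. Comparing them (or restricting to a common open, or to your hoped-for $\phi_\fS^n$-stable shrinking of $\fU$) requires knowing that formation of the schematic image of the non-adic morphism $\fT\to\fU$ is compatible with passing to opens and to completions; Lemma \ref{Kaplem} does not apply (the morphism is not proper), and this compatibility is essentially the injectivity statements of Lemma \ref{adicinj}, Lemma \ref{basic0} and Lemma \ref{basic}, which is precisely where flatness, unibranchness and absence of embedded points enter. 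Tellingly, your write-up never genuinely uses these hypotheses: your one appeal to unibranchness (to force a single translated subtorus) is both unnecessary --- quasi-linearity allows unions --- and incorrect, since several torsion translates of one subtorus have the same, unibranch, reduction. Moreover, shrinking $\fU$ around $x$ cannot give the full theorem: quasi-linearity must be proved at every $y\in\fV(k)$, and what you need at such $y$ is $[p^{n_y}](\fV_y)\subseteq\fV_y$, i.e.\ a comparison of the two schematic images near $y$, not near $x$; there is no reason a $\phi_\fS^n$-stable open containing all of $\fV_k$ exists.

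The second gap is the transfer back along Frobenius. Your reduction ``we may assume $\fT$ is an untranslated subtorus'' replaces $\fV$ by $\phi_\fS^N(\fV)$, so at the end you must descend quasi-linearity from $\phi_\fS^N(\fV)$ to $\fV$ at \emph{every} point, yet you only discuss $x$, and Corollary \ref{vpr0}(2) only produces one formal branch that is a translated subtorus --- it does not show that all of $\fV_y$ is a union of torsion translates. Knowing that $[p^N](\fV_y)$ is such a union only places $\fV_y$ inside one; a branch of $\fV_y$ could be a proper closed formal subscheme of a torsion translate (its image being absorbed by the image of another branch), and excluding this again uses the no-embedded-points hypothesis. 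This descent is exactly the content of Proposition \ref{lql}, whose proof goes through the Igusa scheme $\fI_0$, the toric action (Lemma \ref{Via}), Chai's canonical lift $Z^\can$ (Lemma \ref{Chap}) and Lemma \ref{basic}. The paper's proof of Theorem \ref{wwcor2} deliberately avoids claiming $\phi_\fS^n(\fV)=\fV$: it proves $\phi_\fS^m|_\fU(\fV)=\fV'$ by a factorization/minimality argument inside the fixed ambients $\fU$ and $\fU^{(m)}$, shows $\fV'$ is \emph{linear} (de Jong via Lemma \ref{djg} at Frobenius-periodic points plus a two-sided comparison with $Z^\can$), and only then invokes Proposition \ref{lql}. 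To repair your proposal you would have to supply both the open/completion compatibility for schematic images and the global descent step --- in effect reproving those parts of the paper.
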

The following analog of Theorem \ref{wwcor} (2) for quasi-linearity is a special case of   Theorem \ref{wwcor2}.
\begin{cor}  \label{wqw1}

Let $\fV$ be a weakly linear  formal subscheme   of $\fS_\Fo$ such  that $\fV_k$ is unibranch and  has no embedded points.
If  $\fV$ is    quasi-linear   at  $x$, then    it  is  quasi-linear. 
\end{cor}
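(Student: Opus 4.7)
The plan is to deduce the corollary from Theorem~\ref{wwcor2} by realizing $\fV$ locally as a union of schematic images of translated formal subtori, each of which contains a torsion point.

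First, I would reduce to the case where $\fV$ is affine and $\fV_k$ is connected, so that by Lemma~\ref{uniirred} the unibranchness promotes $\fV_k$ to being irreducible; together with the absence of embedded points and the flatness of $\fV$ over $\Fo$ (automatic once the torsion translations in $\fV_x$ are shown to exist), Assumption~\ref{basicasmp} is in force. Fix an open formal subscheme $\fU\subset\fS_\Fo$ in which $\fV$ is closed.

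Second, I would use the weak linearity and quasi-linearity at $x$ to write $\fV_x=\bigcup_{i=1}^n\fT_i$, where each $\fT_i\subset\fS_{\Fo,x}$ is a formal subtorus translated by a torsion point, so in particular contains a torsion point. For each $i$, let $\fW_i\subset\fU$ be the schematic image of the composite $\fT_i\hookrightarrow\fU_x\to\fU$. Since $\fT_i\subset\fV$ and $\fV$ is closed in $\fU$, we have $\fW_i\subset\fV$; and Lemma~\ref{Kaplem} applied to the flat inclusion $\fU_x\to\fU$ gives $(\fW_i)_x=\fT_i$.

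Third, granting that each $\fW_i$ satisfies the unibranch and no-embedded-points hypotheses of Theorem~\ref{wwcor2}, that theorem yields quasi-linearity of each $\fW_i$. Setting $\fW=\bigcup_{i=1}^n\fW_i$ as a closed formal subscheme of $\fU$ (cut out by the intersection of the defining ideals), one has $\fW\subset\fV$ and $\fW_x=\bigcup_i(\fW_i)_x=\bigcup_i\fT_i=\fV_x$, so Lemma~\ref{basic0} forces $\fW=\fV$. A finite union of quasi-linear formal subschemes is manifestly quasi-linear, since at each $k$-point the formal completion is the union of the formal completions of the pieces, each of which is a union of torsion translates of formal subtori; hence $\fV$ is quasi-linear.

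The main obstacle is verifying the hypotheses of Theorem~\ref{wwcor2} for the auxiliary $\fW_i$'s, namely that $(\fW_i)_k$ is unibranch without embedded points. Since $\fT_i$ is (after translation by an $\Fo$-point) a formal power series ring over $\Fo$, hence integral with integral special fiber, one expects the schematic image $\fW_i$ to inherit integrality from its source; the unibranchness of $\fV_k$ should then restrict the combinatorics so that each $\fW_i$ lies in a single formal branch of $\fV_k$ at each point. Making this rigorous --- in particular, showing that formation of schematic image commutes with reduction mod $\varpi$ for the morphism $\fT_i\to\fU$ so as to propagate the unibranchness and no-embedded-points conditions from $\fV_k$ to $(\fW_i)_k$ --- is the technical heart of the argument, and is where the flatness of $\fV$ over $\Fo$ and the lack of embedded points are used essentially.
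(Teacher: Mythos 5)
Your construction of the auxiliary subschemes $\fW_i$ (schematic images of the torsion-translated tori $\fT_i\subset\fV_x$), the inclusion $\fW_i\subset\fV$, and the final steps ($\bigcup_i\fW_i=\fV$ via Lemma \ref{basic0}, and stability of quasi-linearity under finite unions) are all fine. But the decisive step --- applying Theorem \ref{wwcor2} to each $\fW_i$ --- requires $(\fW_i)_k$ to be unibranch without embedded points, and this is exactly what you do not prove; it is not a removable technicality. Integrality of $\fW_i$ (inherited from $\fT_i$) says nothing about its special fiber: $\Spf$ of the $\varpi$-adic completion of $\Fo[u,v]/(uv-\varpi)$ is integral and flat over $\Fo$, yet its special fiber has two branches. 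Nor do unibranchness or absence of embedded points pass from $\fV_k$ to the closed subschemes $(\fW_i)_k$ (a nodal curve is a closed subscheme of the plane, which is unibranch). Worse, the unibranch hypothesis is precisely the delicate input of Theorems \ref{wwcor1} and \ref{wwcor2}: if special fibers of schematic images of translated subtori were automatically unibranch, that hypothesis would be vacuous, whereas the paper only knows how to remove it conditionally on Chai's conjecture \cite[(5.3.1)]{Chai}. So what you call the ``technical heart'' is an unproved statement of essentially the same depth as the theorem you are invoking, and no mechanism for it is offered. (A smaller point: your appeal to Lemma \ref{Kaplem} to get $(\fW_i)_x=\fT_i$ is shaky, since $\fT_i\to\fU$ is not adic, hence not proper in the sense used there; fortunately only the trivial inclusion $\fT_i\subset(\fW_i)_x$ is needed.)

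The paper does not introduce the $\fW_i$ at all: it regards the corollary as an instance of Theorem \ref{wwcor2}, i.e., the proof of that theorem is run on $\fV$ itself. Quasi-linearity at $x$ together with the absence of embedded points makes $\fV$ reduced (and flat); a power $\phi_\fS^m$ of the lifted Frobenius kills the torsion translations (Lemma \ref{Kzz1}, Corollary \ref{vpr0}), and the unibranchness of $\fV_k$ at $x$, combined with the uniqueness of linear lifts (Remark \ref{locall}), forces all the subtori underlying the $\fT_i$ to sit inside a single one, so that $\phi_\fS^m|_{\fU}(\fV)$ is linear at $\phi_\fS^m(x)$ and hence linear everywhere (Corollary \ref{Chapc}); Proposition \ref{lql} then gives quasi-linearity of $\fV$. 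This route never needs to control the special fibers of schematic images of the individual $\fT_i$, which is exactly where your plan breaks down.
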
 


Recall that we also have the notion ``linear" in \ref {Linearity over}. 
Now we discuss the relation between the notions ``linear",  ``quasi-linear" and ``weakly linear".

\begin{prop} \label{lql}
Let $\fV$ be a    closed formal  subscheme of an open formal subscheme $\fU$    of $\fS_\Fo$. Assume that $\fV$ is reduced and flat over $\Fo$, and $\fV_k$  has no embedded points. 
Assumes that   there exists
a positive integer  $m$  such that 
$\fV':=\phi_\fS|_{\fU}^m(\fV)$   is   linear,  then $\fV$ is  quasi-linear.

\end{prop}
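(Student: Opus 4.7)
The plan is to verify quasi-linearity of $\fV$ pointwise: for every $x \in \fV(k)$, I will show that $\fV_x$ is a union of torsion-translated formal subtori of $\fS_{x,\Fo}$.

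Fix such an $x$ and set $y := \phi_\fS^m(x) \in \fV'(k)$. Choose $y' \in \pi_0^{-1}(x)$ and use $a_{y'}$ to identify $\fS_{x,\Fo}$ with the formal torus $\fM$; iterating Lemma \ref{Kzz1}(2) then identifies $\phi_\fS^m|_{\fS_{x,\Fo}}$ with the $p^m$-power endomorphism $[p^m]$ of $\fM$. Linearity of $\fV'$ makes $\fV'_y$ a formal subtorus $T_0 \subset \fM$, and the preimage $[p^m]^{-1}(T_0) = T_0 + \fM[p^m]$ decomposes as a finite disjoint union $\bigcup_j (P_j + T_0)$ of irreducible $p^m$-torsion translates of $T_0$, indexed by coset representatives of $\fM[p^m]/(T_0 \cap \fM[p^m])$; each such piece has dimension $\dim T_0$. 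By Corollary \ref{vpr0}(1), the schematic image of $\fV_x$ under $[p^m]$ equals $T_0$, so $\fV_x \subset \bigcup_j (P_j + T_0)$, and each irreducible component of $\fV_x$ lies in a unique $(P_j + T_0)$.

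It then remains to argue that every irreducible component of $\fV_x$ coincides with its ambient $(P_j + T_0)$. Applying Lemma \ref{samedim} to the finite schematically surjective morphism $\phi_\fS^m|_{\fV_x} : \fV_x \to T_0$ gives $\dim \fV_x = \dim T_0 = \dim(P_j + T_0)$. Provided $\fV_x$ is reduced and equidimensional, each irreducible component of $\fV_x$, being a reduced closed subscheme of the irreducible $(P_j + T_0)$ of the same dimension, must equal $(P_j + T_0)$. Consequently $\fV_x$ is a union of torsion-translated formal subtori, and since $x$ is arbitrary, $\fV$ is quasi-linear.

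The main obstacle is verifying that $\fV_x$ is reduced and equidimensional. Reducedness of $\fV_x$ follows from reducedness of $\fV$ via excellence, as $\fV$ is topologically of finite type over $\Fo$. For equidimensionality, I would reduce to the case that $\fV$ is irreducible: by flatness of $\fV$ over $\Fo$, each irreducible component of $\fV$ is itself flat over $\Fo$ (there are no vertical components), and the no-embedded-points hypothesis on $\fV_k$ forbids the special fiber of one irreducible component from being contained in that of another (such a containment would force a non-minimal associated prime in $\fV_k$, violating $S_1$). Thus locally at $x$ one can work with each irreducible component of $\fV$ through $x$ separately, and for an excellent integral formal scheme the formal completion at a closed point is equidimensional of the same dimension, yielding the required dimension $\dim T_0$ for every formal branch of $\fV_x$ and closing the argument.
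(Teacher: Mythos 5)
Your local strategy is coherent up to its last step: identifying $\phi_\fS^m|_{\fS_{x,\Fo}}$ with $[p^m]$ on $\fM$, the containment $\fV_x\subset [p^m]^{-1}(T_0)=\bigcup_j\lb P_j+T_0\rb$, the dimension count $\dim \fV_x=\dim T_0$ via Corollary \ref{vpr0}(1) and Lemma \ref{samedim}, and the fact that an integral closed formal subscheme of $P_j+T_0$ of full dimension must equal it (complete local domains are catenary and equidimensional) are all fine. (A minor point: the translates $P_j+T_0$ are not disjoint over $\Fo$ --- their special fibers coincide --- but only the containment of each integral formal branch in some translate is needed, which follows from primality.)

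The genuine gap is the equidimensionality of $\fV_x$, and your proposed reduction to irreducible $\fV$ does not supply it. If $\fW$ is an irreducible component of $\fV$ through $x$, the hypothesis does not pass to $\fW$: $\phi_\fS^m|_{\fU}(\fW)$ is just some closed formal subscheme of $\fV'$, possibly of smaller dimension and not linear, so for $\fW$ you only get $\dim \fW_x\leq \dim T_0$. Your no-nesting lemma (a containment among special fibers of components would force an embedded point of $\fV_k$) is correct, but by itself it only says that the special fibers of distinct components are distinct irreducible components of $\fV_k$; it does not prevent components of $\fV$ of different dimensions, and a lower-dimensional component through $x$ would produce a formal branch strictly smaller than its ambient $P_j+T_0$, at which point your argument stops. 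The missing input is that $\fV_k$ is unibranch (so, on a connected component, irreducible, forcing all components of $\fV$ to have special fiber with full support and hence the same local dimension at $x$): this follows from linearity of $\fV'$ because each $\fV'_{k,z}$ is a formal torus, hence irreducible, and Frobenius induces homeomorphisms on $\fS_k$ and on the spectra of the completions of its local rings --- precisely the first step of the paper's proof, which has no counterpart in your write-up. Even with that, one still has to combine it with the no-embedded-points hypothesis to exclude small formal branches (e.g.\ by your nesting argument applied at the generic point of the special fiber of a would-be small branch, or as the paper does globally on $\fI_0$ using Lemma \ref{Chap}, Lemma \ref{basic} and flatness of the branches). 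As written, the crucial interaction between ``$\fV_k$ has no embedded points'' and ``$\phi_\fS^m|_{\fU}(\fV)$ is linear'' is never carried out, so the proof is incomplete at its central step, although the local skeleton could be repaired along the lines just indicated.
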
 
\begin{proof}

Let $Z$ be the underlying reduced subscheme of $\fV_{k}$.    We  show that  $Z$ is  linear   as follows.    
By the excellence and reducedness  of $Z$, for $x\in Z(k)$, 
$  Z_{x} $ is reduced. 
Since the relative Frobenius induces (a homeomorphism  on $\fS_k$ and)
homeomorphisms on the spectra of the completions of the local rings of $\fS_k$,  $  Z_{x} $   has the same number of   formal branches with $\fV'_{k,\phi_\fS(x)}$, which is 1 by  the linearity of $\fV'_k$ at $\phi_\fS(x)$.
Then by Lemma \ref{Kzz1} (2), the linearity of $\fV'_k$ at $\phi_\fS(x)$ and dimension reason (see Lemma \ref{samedim} (3)),
$Z$ is linear.

We may assume that  $\fV$  is connected.
As in the  proof of Corollary \ref{Chapc}, we may assume that $\fV $ is affine.
By Lemma \ref{Kaplem} and Remark \ref   {locall}, we may enlarge $F$ and assume that $F$ contains all $p^m$-th roots of unity.
Now we can apply the discussion in \ref{Consequences}. We have $\wt\fV$ and the analogs  $\wt\fV'$ and $\wt Z^\can$ in $\fI_{0,\Fo}$ (where  $  Z ^\can $  is as  in Lemma \ref{Chap}) which we can choose so that $\wt\fV, \wt Z^\can\subset \phi_{\fI_0}^{-m}(\wt\fV')$,
since  the  natural projections 
$ {\pi_0}  $
is compatible with the liftings of the Frobenii.
By  Lemma \ref{Via} and Lemma \ref{Kzz1} (1),
$\phi_{\fI_0}^{-m}(\wt\fV')$ is  the   union of $h \wt Z ^\can$, $h\in\fM[p^r]$.
Claim: $\wt\fV$ is  the   union of some $h \wt Z ^\can$'s.
Then the lemma follows from Lemma \ref{Via} (about  the action   of  $ \fM $ on $\fI_{0}$),  $\wh\cO_{\wt\fV,y} \cong \wh\cO_{\fV,x}$ (see  \eqref{indisom}) and 
$\pi_0|_{\fI_{0,{    y}}}:\fI_{0,{    y}}\cong \fS_{x}$   of formal tori for all $x\in \fV(k)$ and $y\in  \pi_0^{-1}  (\{x\})\cap \wt \fV$. 

To prove the claim, instead of the the non-noetherian formal schemes, we work on their completions which are noetheiran. 
Take $x\in \fV(k)$ and $y\in  \pi_0^{-1}  (\{x\})\cap \wt \fV$.  Then
the excellence  of $\fV$,
$\wt\fV_y \cong \fV_x $ (see \eqref{indisom}) is reduced. Thus    $\wt\fV_y$ is  the union of  its formal branches. And  the formal branches are flat over $\Fo$ (by the flatness of $\wt\fV_y  $ and that $\Spf \Fo$ is integral and regular of dimension 1, see \cite[III.9.7]{Har}).
For  $h\in\fM[p^r]$, $\fZ_h=(\wt\fV\cap h \wt Z ^\can)_y$ is a union of some formal branches of $\wt\fV_y$.
We only need to prove that $\fZ_h\neq \emptyset$, $\fZ_h= h \wt Z ^\can_y$. (Then 
by Lemma \ref{basic}, $\wt\fV $ is  the union of some $h \wt Z ^\can_y$'s, and thus quasi-linear.) 
   Assume that $\fZ_h\neq h \wt Z ^\can_y$,  by 
   the flatness, $\fZ_{h,k} \neq (h\wt Z ^\can_{y})_k=Z ^\can_{k,y}$.  
   Then  $(  \wt\fV\cap  h\wt Z ^\can)_k \neq \wt Z ^\can_k $.        
Since $\wt Z_k$ is the inverse limit of reduced irreducible schemes (see \ref{Consequences}), the image of 
$(  \wt\fV\cap  h\wt Z ^\can)_k$ in $  Z_k$ is a strictly smaller closed subscheme. An irreducible component of this 
strictly smaller closed subscheme gives an embedded component of $\fV_k$,  contradiction.
\end{proof} 
 \begin{rmk}(1) The  proof indeed gives us stronger linearity of $\fV$. First,
$\fV_x\subset \fS_{\Fo,x}$ 
is a   union of  translations by a set $T_x$ torsion points  of a single formal subtorus  of  $\fS_{\Fo,x}$. Moreover, for $x,x'$, there exists an isomorphism $\fS_{\Fo,x}\cong \fS_{\Fo,x'}$ such that
$T_x\cong T_{x'}$ under this isomorphism. 

(2) In Theorem \ref{wwcor2}, Proposition \ref{lql}, Corollary \ref{wqw1} and Corollary \ref{wqw}, we also have 
this stronger linearity of $\fV$ of (1).
\end{rmk}

The following corollary is an analog   of the fact that a weakly special subvariety   is special if and only if it contains a special (i.e., CM) point.
It will not to be used later.
Its proof   is    close to the proof of Theorem \ref{wwcor2} (in the end of this section), and thus omitted. 
\begin{cor}  \label{wqw}

Let $\fV$ be a weakly linear  reduced formal subscheme   of $\fS_\Fo$ such  that $\fV_k$ is unibranch  and has no embedded points.
If for some $x\in \fV(k)$, $\fV_x$ contains a torsion point of $\fS_{\Fo,x}$,  then $\fV$ is  quasi-linear. 
\end{cor}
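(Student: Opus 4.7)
The plan is to mimic the proof of Theorem \ref{wwcor2}, whose strategy combines two ingredients: (i) the Frobenius trick from Lemma \ref{Kzz1}, which identifies $\phi_\fS^m$ on each formal residue torus $\fS_{\Fo,x}$ with the $[p^m]$-endomorphism and thus converts a $p^m$-torsion translation into the trivial one; and (ii) Proposition \ref{lql}, which deduces quasi-linearity of $\fV$ from linearity of its Frobenius iterate $\phi_\fS^m|_\fU(\fV)$.

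First I would analyze the local structure of $\fV_x$. By weak linearity, $\fV_x = \bigcup_i (P_i + \fT_i)$ is a union of translated formal subtori, and since every $\oFo$-point of $\fS_{\Fo,x}$ reduces to the unit of the formal torus, the reduction $\fV_{k,x}$ is set-theoretically $\bigcup_i \fT_{i,k}$. The unibranch hypothesis on $\fV_k$, combined with the uniqueness of the canonical lifting (Remark \ref{locall}), then forces the $\fT_i$ to coincide with a single formal subtorus $\fT$. The given torsion point lies in some branch $P_{i_0} + \fT$, and by absorbing an appropriate $\fT(\oFo)$-shift we may take $P_{i_0}$ itself to be torsion, of some order $p^n$.

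Next, choose $m \geq n$ and set $\fV' := \phi_\fS^m|_\fU(\fV)$ for an affine open $\fU \supset \fV$. By Corollary \ref{vpr0} and Lemma \ref{Kzz1}(2), the formal completion $\fV'_{\phi_\fS^m(x)} = \bigcup_i (p^m P_i + \fT)$ contains the untranslated subtorus $\fT$ coming from the branch $P_{i_0}$. If I can establish that $\fV'$ is linear at every $k$-point (i.e., each $\fV'_y$ is a single untranslated formal subtorus), then Proposition \ref{lql} yields quasi-linearity of $\fV$, completing the proof.

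The main obstacle is therefore the global linearity of $\fV'$. Since Frobenius is a homeomorphism on the special fiber, $\fV'_k$ is unibranch and weakly linear with an untranslated branch at $\phi_\fS^m(x)$; following the reasoning in the proof of Proposition \ref{lql}, one argues that the underlying reduced scheme $(\fV'_k)_{\mathrm{red}}$ is linear, lifts canonically via Lemma \ref{Chap}, and then uses the $\fM$-action on $\fI_0$ together with Lemma \ref{basic} and Corollary \ref{samestab} to identify $\fV'$ globally with this canonical lifting. The delicate point is handling the ``other'' branches $p^m P_j + \fT$ for $j \neq i_0$: the combination of unibranchness and the no-embedded-points hypothesis must ultimately force these branches to coincide with $\fT$, equivalently, all $P_j$ must have been torsion to begin with. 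Running this branch-counting argument cleanly on $\fI_0$, in the same spirit as the proof of Theorem \ref{wwcor}(2), is the technical heart of the proof and the step I expect to be the main difficulty.
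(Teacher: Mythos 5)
Your overall architecture --- kill the torsion translation by a power of Frobenius via Lemma \ref{Kzz1}, pass to $\fV'=\phi_\fS^m|_\fU(\fV)$, and conclude by Proposition \ref{lql} --- is indeed the skeleton the paper indicates (the omitted proof is said to be close to that of Theorem \ref{wwcor2}). But the proposal stops exactly where the content lies, and the mechanism you hope will close it cannot work. Proving that $\fV'$ is linear is, already at the single point $\phi_\fS^m(x)$, equivalent to proving that every translating point $P_j$ in $\fV_x=\bigcup_j(P_j+\fT)$ is torsion modulo $\fT$ --- which is the corollary itself; so the reduction is circular unless you supply an independent argument for the branches not containing the given torsion point. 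You propose that unibranchness and absence of embedded points of $\fV_k$ ``must ultimately force'' those branches to be torsion translates, but these hypotheses live on the special fiber and are blind to torsionness of the $P_j$: for instance the germ $\fT\cup(P+\fT)$ in $\wh\BG_{m,\Fo}^2=\Spf \Fo[[s,t]]$, with $\fT=\{t=0\}$ and $P$ having second coordinate $1+c$, $c\in\varpi\Fo\setminus\{0\}$, has ring $\Fo[[s,t]]/(t(t-c))$, which is reduced and flat over $\Fo$ with special fiber $k[[s,t]]/(t^2)$, unibranch and without embedded points, whether or not $1+c$ is a $p$-power root of unity. Hence no branch-counting at the level of formal germs (on $\fS$ or on $\fI_0$) can decide the issue; the forcing must come from the fact that the germ sits inside a global weakly linear $\fV$. (A smaller inaccuracy: unibranchness plus Remark \ref{locall} only forces all $\fT_i$ to lie in a single maximal subtorus; eliminating strictly smaller extra components is precisely where the no-embedded-points hypothesis is needed, by a separate computation.)

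Note also what powers the corresponding step in the paper's proof of Theorem \ref{wwcor2}: there $\fV'$ is the schematic image of a subtorus germ stabilized by a power of $\phi_\fS$, so $\fV'$ itself is Frobenius-stable and de Jong's theorem (Lemma \ref{djg}) produces the subtori, while minimality of the schematic image gives the reverse inclusion $\fV'\subset Z^\can$. In your situation $\fV$ is not presented as a schematic image, so you have neither the Frobenius stability of $\fV'$ nor the minimality, and you offer no substitute. The substitute has to be global: under your standing hypotheses the proof of Theorem \ref{wwcor} (2) expresses $\wt\fV=\bigcup_i h_i\wt Z^\can$ on the Igusa tower, and the given torsion point together with Corollary \ref{samestab} shows that the component through it is a global torsion translate of $\wt Z^\can$; what remains --- and what your proposal leaves open --- is an argument that the remaining components $h_i\wt Z^\can$ (equivalently, the remaining branches of $\fV_x$) are also torsion translates. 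As written, the proposal has a genuine gap at this step.
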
 
\begin{rmk}If  the unibranch  assumption in  Proposition \ref{-11} is removable,  so is the one  in Theorem \ref{wwcor2}, Proposition \ref{lql}, Corollary \ref{wqw1} and Corollary \ref{wqw}.
\end{rmk}

\subsection{Proof of Theorem \ref{wwcor1}} We need an extra lemma.
\begin{lem}\label{djg} Let $\fZ\subset \wh\BG_{m,\Fo}^n$ be a reduced closed formal subscheme that is flat over $\Fo$ and stabilized by the $p^c$-power endomorphism of $ \wh\BG_{m,\Fo}^n$, which we denote by $p^c$ for short,  for some positive integer $c$. 
Then $ \fZ$ contains  a union of formal subtori of $\wh\BG_{m,\Fo}^n$ that contains $  \fZ_k'$,  the underlying reduced formal subscheme of $\fZ_{k}$.   
\end{lem}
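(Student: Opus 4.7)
The plan is to deduce the lemma from de Jong's rigidity theorem for $[p^c]$-stable closed formal subschemes of $\wh\BG_{m,\Fo}^n$, which is the main technical input and the reason this lemma appears in the section on Frobenii. First, I will invoke that theorem to decompose $\fZ$ as a finite union $\fZ = \bigcup_{i} (\zeta_i + \fT_i)$, where each $\fT_i$ is a formal subtorus of $\wh\BG_{m,\Fo}^n$ and each $\zeta_i \in \wh\BG_{m,\Fo}^n(\oFo)$ is a torsion point; since torsion in the formal multiplicative group is necessarily of $p$-power order, each $\zeta_i$ has order $p^{r_i}$ for some $r_i \geq 0$.

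Next, I will pass from translated formal subtori to formal subtori through the identity. From $\zeta_i + \fT_i \subset \fZ$ and $p^c(\fZ) \subset \fZ$, I obtain $\zeta_i^{p^c} + \fT_i \subset \fZ$, because $[p^c]$ restricted to $\fT_i$ is an isogeny and thus schematically surjects $\fT_i$ onto itself. Iterating, $\zeta_i^{p^{ck}} + \fT_i \subset \fZ$ for every $k \geq 1$, and since $\zeta_i$ has $p$-power order, taking $k$ large enough so that $ck \geq r_i$ gives $\zeta_i^{p^{ck}} = 1$, hence $\fT_i \subset \fZ$.

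Finally, I compare special fibers. Each torsion point $\zeta_i$ reduces to the identity modulo $\varpi$, so $(\zeta_i + \fT_i)_k = \fT_{i,k}$ as reduced subschemes of $\wh\BG_{m,k}^n$. Therefore $\fZ_k' = (\fZ_k)_{\mathrm{red}} = \bigcup_i \fT_{i,k}$, which is precisely the reduction of $\bigcup_i \fT_i$. Combined with the inclusion $\bigcup_i \fT_i \subset \fZ$ established above, this yields the chain $\fZ_k' \subset \bigcup_i \fT_i \subset \fZ$ asserted by the lemma.

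The main obstacle is invoking de Jong's theorem in exactly the right form, producing a \emph{finite} decomposition into translated formal subtori by \emph{torsion} points; the rest is formal. A secondary point to verify is that $[p^c]$ restricted to each $\fT_i$ is schematically surjective (so that $p^c(\zeta_i + \fT_i) = \zeta_i^{p^c} + \fT_i$ scheme-theoretically), but this is standard since $[p^c]$ is a finite flat isogeny of any formal subtorus.
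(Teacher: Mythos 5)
There is a genuine gap at your first step, and it is the step that carries all the content. What you attribute to de Jong is not his theorem: the result cited in the paper applies to an \emph{integral} (reduced and irreducible) closed formal subscheme, flat over $\Fo$, which is itself stable under the $p^c$-power endomorphism, and it concludes that such a subscheme is a formal subtorus. It does not say that a merely reduced, flat, $p^c$-stable $\fZ$ decomposes as a finite union of torsion-translates of subtori, and that stronger claim is false: the irreducible components of $\fZ$ need not be individually stable, since $p^c$ can map a component strictly into a different (larger) one and never bring it back. Concretely, after enlarging $F$ so that the $p^c$-th roots of unity lie in $\Fo$, take in $\wh\BG_{m,\Fo}^2$ the subtorus $\fT=\wh\BG_m\times\{1\}$, a primitive $p^c$-th root of unity $\epsilon$, and a non-torsion $b\in\wh\BG_m(\Fo)$, and set $\fZ=\{(b,\epsilon)\}\cup\fT$ (the point viewed as a closed formal subscheme). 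Then $\fZ$ is reduced, flat and $p^c$-stable because $(b,\epsilon)^{p^c}=(b^{p^c},1)\in\fT$, yet the component $\{(b,\epsilon)\}$ is a translate by a non-torsion point, so no decomposition of the kind you assert exists. (The lemma still holds for this $\fZ$ precisely because its conclusion is weaker than your decomposition: the subtori only have to lie in $\fZ$ and to catch the reduced special fiber, and here $\fZ_k'=\fT_k\subset\fT\subset\fZ$ since $\epsilon$ reduces to $1$.)

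The missing argument is the reduction to a situation where de Jong's theorem applies, and this is exactly what the paper's proof does: over $k$ the $p^c$-power map induces, topologically, a Frobenius homeomorphism of $\fZ_k$, so it \emph{permutes the branches of the special fiber}; hence some branch $\fC$ of $\fZ_k$ is stabilized by $p^{cc_1}$. One lifts $\fC$ to a formal branch $\wt\fC$ of $\fZ$ over $\Fo$ and then passes to a suitable iterated schematic image $\fB$ of $\wt\fC$ under $p^{cc_1}$, chosen among the finitely many such images so that $p^{cc_1c_2}(\fB)\subset\fB$; this $\fB$ is integral, flat, contained in $\fZ$, stable under a power of $p$, and its special fiber contains $\fC$. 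Only now does de Jong apply, giving that $\fB$ is a formal subtorus, and the union over the branches of $\fZ_k$ proves the lemma. Your steps 2 and 3 (iterating $p^c$ to kill a torsion translation, and comparing special fibers using that torsion points reduce to the identity) are correct in themselves, but they rest on a decomposition that neither de Jong's theorem nor the hypotheses provide; note also that such translates would in any case only be defined over ramified extensions of $F$, a base-change point your formulation glosses over.
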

\begin{proof}  
First, we find  a formal subtorus in $\fZ$.
Since
$p^{c}$ induces the relative Frobenius on $\Spec \cO_{\fZ_k}(\fZ_k)$ which is a homeomorphism, it permutes the  branches    of $\fZ_k$. Then a branch $\fC$ of $\fZ_k$ is  stabilized
by  $p^{cc_1} $ for  some positive integer $c_1$. 
Let  $\wt\fC$ be a formal branch  of $\fZ$ such that $\fC\subset \wt\fC_k$. 
The (finite) union of the schematic images of $\wt\fC$ by non-negative powers of $p^{cc_1} $ is 
stabilized
by  $p^{cc_1} $.  Then one of them, say  $\fB$,  satisfies 
$
p^{cc_1c_2}( \fB)\subset \fB$ for a positive integer  $c_2$.
A result of de Jong \cite{dJ} implies 
that  $\fB$ is  a formal subtorus.   Note that  $\fC\subset\fB_k$.
The lemma follows.
\end{proof} 
Now we prove  Theorem \ref{wwcor2}, and thus Theorem \ref{wwcor1}.
\begin{proof}[Proof of Theorem \ref{wwcor2}]

Let $m$ be a positive integer such that  $\phi_\fS^m|_{\fS_{x,\Fo}}(\fT)$ is a formal subtorus of  $\fS_{\phi_\fS^m(x),\Fo}$ (which is possible by Lemma \ref{Kzz1}). 
Let $\fV'$ be the  schematic image of $\phi_\fS^m|_{\fS_{x,\Fo}} (\fT)\to\fU^{(m)}$ (as defined below Lemma \ref{Kzz1}).   Since $\fV\subset \phi_\fS^{-m}(\fV')$ and $ \phi_\fS|_\fU^m(\fV)\supset \fV'$ by the definition of schematic image, 
$ \phi_\fS|_\fU^m(\fV)= \fV'$.
Similarly, let $n$ be a positive integer such that
$\phi_{\fS }^n$ stabilizes
$ \phi_\fS^m(x)$ so that   $\phi_\fS^n|_{\fS_{ \phi_\fS^m(x),\Fo}} $ stabilizes  $\phi_\fS^m|_{\fS_{x,\Fo}} (\fT)$.
Then $\phi_{\fS }^n$ stabilizes $\fV'$.

Let $Z$ be the underlying reduced   subscheme  of    $\phi_\fS|_\fU^m(\fV_k)$,    equivalently, of $\fV_k'$    (see Lemma \ref{samedim} (1)). 
By the excellence  of $Z$, for $z\in Z(k)$, 
$  Z_{z} $ is reduced. 
Since the relative Frobenius induces a homeomorphism  on $\fS_k$ and
homeomorphisms on the spectra of the completions of the local rings of $\fS_k$,  $  Z_{z} $   has only one  formal branch, and thus is integral.
Claim: $Z$ is linear and $  Z ^\can=  \fV'$,  where $  Z ^\can $  is as  in Lemma \ref{Chap}.  
Then since  $\fV$ is reduced by the definition of schematic image and the reducedness of $\fT$, the  theorem follows from Proposition \ref{lql}.

Now we prove the claim. 
By the definition of schematic image, $\phi_\fS^m|_{\fS_{x,\Fo}} (\fT)$ is reduced, and then $\fV'$ is reduced. 
By the excellence    of $\fV'$,  $  \fV'_{z} $ is reduced for every $z\in \fV'(k)$.  For $z\in \fV'(k)$ stabilized by $\phi_\fS^{n'}$ where $n'$ is a positive integer multiple of $n$,  $  \fV'_{z} $ is  stabilized by $\phi_\fS^{n'}$. 
By Lemma \ref{djg} and that $  Z_{z} $  is integral, $Z$ is linear at $z$ and 
$  \fV'_{z} $ contains the unique formal subtorus of $\fS_{\Fo,z}$ whose reduction is $Z_z$ (see Remark \ref{locall}).
Let $  Z ^\can $  be as  in Lemma \ref{Chap}. Then  $  Z ^\can _z$ is this unique formal subtorus. So 
$  Z ^\can\subset  \fV'$.   
Since $\phi_\fS^m|_{\fS_{x,k}} (\fT_k)$ is contained in the formal completion of $ \phi_\fS|_\fU^m(\fV_k)$
at $\phi_\fS^m(x)$, it   is contained $Z_{\phi_\fS^m(x)}$, 
So $\phi_\fS^m|_{\fS_{x,\Fo}} (\fT)  \subset Z^\can_{\phi_\fS^m(x)}$.
So $   \fV'\subset Z ^\can $.      
\end{proof}

   \section{More notions}
   
 \label{More notions}

\subsection{Hecke action}\label{Hecke action} Let $\BA_f^p$ be the ring of finite-outside-$p$ adeles of $\BQ$. For a  principally polarized $g$-dimensional abelian
scheme   $A$ over a connected $\BZ_p$-scheme,  a similitude infinite prime-to-$p$ level structure  is  a  similitude symplectic  isomorphism  
\begin{equation*}
 \BA_f^{p,2g}\cong \vpl_{N,\ p\nmid N} A[N] \otimes \BQ\label{level}
 \end{equation*} 
fixed by the fundamental group of the $\BZ_p$-scheme (this condition is independent of the choice of the base point for defining the  fundamental group).
The functor    assigning to  $R\in \Nilp^\op_{\BZ_p} $     the set of prime-to-$p$ isogeny classes of principally polarized  ordinary  abelian $R$-schemes 
with  infinite level  structure is representable by  $$\wt \fS =\vpl_{N,\ p\nmid N}\fS_{N}.$$  It is equipped with a natural $\GSp_{2g}(\BA_f^p)$-action.
 Similarly, we have the infinite prime-to-$p$ level  version  $\wt \fI_0$  of $\fI_0$ with  a  $\GSp_{2g}(\BA_f^p)$-action. 
(We do not need the one of $\fI$). 
It is easy to check that   the natural
projection $\wt\pi_0:\wt\fI_0 \to \wt\fS$   is $\GSp_{2g}(\BA_f^p)$-equivariant.    
Moreover, we have the following commutative (in fact Cartesian) diagram
  \begin{equation}  
\xymatrix{
	 \wt\fI_0  \ar[r]^{\wt\pi_0} \ar[d]^{\pr_{\fI_0}}  &\wt\fS  \ar[d]^{\pr_{\fS}} \\
\fI_0 \ar[r]^{  \pi_0 }    &  \fS},\label{faca}
\end{equation}
where the vertical morphisms are natural projections.

The   action of the formal deformation torus $\fM$ on $\fI_{0}$, defined in   \ref {Toric action}, obviously lifts to an action of $\fM$ on $\wt\fI_{0}$. 
It is direct to check that the action of $\fM$ on $\wt\fI_{0}$ commutes with the $\GSp_{2g}(\BA_f^p)$-action, and 
the actions of $\fM$ on $\wt\fI_{0}$ and $\fI_{0}$ commute with the natural projection $\pr_{\fI_0}$.
Thus for $h\in \fM(W) $ and $T\in \GSp_{2g}(\BA_f^p)$, we have 
\begin{equation}  \label{33} 
\pi_0\circ h\circ \pr_{\fI_0}\circ T=  \pi_0\circ \pr_{\fI_0}\circ h\circ T=
\pr_{\fS}\circ     \wt \pi_0\circ T\circ h=\pr_{\fS}\circ     T\circ \wt \pi_0\circ h.
\end{equation}
(It can be displayed in a $3\times3$ commutative diagram, which is left to the reader.)
Replacing $W$ by $\Fo$, \eqref{33} still holds after base change.

For $z\in \wt  \fI_0({k})$ or $\wt \fS(k)$,  similar  to \eqref{bcy} and the discussion below it, we have a formal torus structure on the formal residue disc at $z$. It is directly to check that $ \pr_{\fS}, \pr_{\fI_0}$ (thus all morphisms in \eqref{faca}) are isomorphisms of formal tori when restricted to each formal residue torus. 

For $z\in \wt  \fI_0({k})$, $\wt \fS(k)$,  $  \fI_0({k})$ or $  \fS(k)$, we use $z^\can$ to denote the canonical lifting of $z$, which is by definition the unit of the formal residue torus at $z$.
Then all morphisms in \eqref{faca} commutes with taking canonical lifting.
It is also direct to check that  the $\GSp_{2g}(\BA_f^p)$-action commutes with taking canonical lifting.

Finally,   we define the notion ``Hecke--Frobenius orbit". We first define
``prime-to-$p$ Hecke orbit". 
For $x\in \fS({\Fo})$, $\pr_{\fS}^{-1}(\{x\})$ is a union of $\Fo$-points, since $ \pr_{\fS}$ is an isomorphism  when restricted to each formal residue disc. 
Define the prime-to-$p$ Hecke orbit of  $x$  in $\fS({\Fo})$ to be the  image by $\pr_{\fS} $ of the $\GSp_{2g}( \BA_f^{p})$-orbit of some $\Fo$-point $\wt x\in \pr_{\fS}^{-1}(\{x\})$ in $\wt  \fS(\Fo)$. Clearly, this definition does not depend on the choice of the lift $\wt x$.     
Then the Hecke--Frobenius orbit of $x$ is the union of the images  of  its prime-to-$p$ Hecke orbit by $\phi_{\fS}^n$'s, $n\in \BZ$. 
 The same discussion applies to $\pr_{\fI_0}$.
 
\subsection{Perfectoid Igusa space}\label{Lifting}
   Let  $\CP$ be the $p$-adic completion of $\ol F$ and $\Cflat$   the $t$-adic completion of an  algebraic closure  of $k((t))$.   Then $\CP$ and $\Cflat$ are perfectoid fields, and 
 $\Cflat $ is  a   tilt of $\CP$ in the sense of Scholze \cite{Sch12}. 
We have  the tilting equivalence \cite{Sch12}  between the  category of perfectoid spaces over $\Spa(\CP,\CPo)$ and $\Spa(\Cflat,\Cfcc)$.
The image of an object or a morphism over $\Spa(\CP,\CPo)$  under  the tilting equivalence
is called its tilt. 

Let $\cI/\Spa(\CP,\CPo)$ be the adic generic fiber of $\fI_{\CPo}$  in the sense of \cite{SW}, as in \cite{CS}. By Proposition \ref{CS3},  $\cI$ is a perfectoid space whose tilt $\cI^\flat$ over $\Spa(\Cflat,\Cfcc)$ is the adic generic fiber of $\fI_{\Cfcc}$.
Then we have the tilting bijection \eqref{tiltbij}:
$$\rho :\cI(\CP,\CPo)\cong \cI^\flat(\Cflat,\Cfcc).$$ 
Let $\phi_\cI$ be the endomorphism of $\cI$
induced by the base change  of $\phi_{\fI }$ to $ {\CPo}$.     Let $\phi_{\cI^\flat}$ be the endomorphism of $\cI^\flat$
induced by the base change of  $\phi_{\fI  }$ to $\Cfcc$ (along $\BZ_p\to \BF_p\to \Cfcc$).
Then 
similar to \eqref{2.3.2},   one can check that
    $\phi_{\cI}$ is the tilt of  $\phi_{\cI^\flat}$.
  By \cite[Lemma 2.3.1]{Qiu}, we have \begin{equation}\label{2.3.1}\rho\circ \phi_{\cI}=\phi_{\cI^\flat}\circ \rho.\end{equation}

For ${  x}\in  \fS(k)$, let $A_{  x^\can}\in \fS(W(k))$  corresponding to  $x^\can$.   Fix  an isomorphism $\vep_{  x}^\can:A_{  x^\can}[p^\infty]\cong \BX_{W(k)}$    with reduction $\vep_{  x} :A_{  x} [p^\infty]\cong \BX_k$.  
Then  $(A_{  x^\can},\vep_{  x}^\can)\in \fI(W(k))$ and $(A_{  x} ,\vep_{  x})\in \fI(k)$.
We still use $(A_{  x^\can},\vep_{  x}^\can)$ to denote the corresponding point in $\cI(\CP,\CPo)$ 
and use  $(A_{  x} ,\vep_{  x})$ to denote the corresponding point in $ \cI^\flat(\Cflat,\Cfcc)$.
Then by \cite[Theorem 5.2]{Sch12} (explicated in  \cite[Lemma 2.3.2]{Qiu}),  we have 
\begin{equation}\label{2.3.2} \rho ((A_{  x^\can},\vep_{  x}^\can))=(A_{  x} ,\vep_{  x}).
\end{equation}

               \subsection{$p$-adic distance}\label{distance} 
               
               Let $|\cdot|$ be the $p$-adic norm on $\oFo$.   
Let $\fX$ be a formal scheme over $ \Fo$.
 For a
formal subscheme   $\fZ\subset \fX$  
and  a point $x\in \fX(\oFo)$, define \begin{equation*}d(x,\fZ)=d_\fX(x,\fZ)=\inf \{|\delta|:  \delta\in \oFo\mbox{ such that }  (x\MOD \delta)\in \fZ(\oFo/\delta) \},
\end{equation*}
Here $\inf \emptyset:=\infty$ so that $d(x,\fZ)<\infty$ if and only if the reduction of $x$ is in $\fZ(k)$. 
If $\fZ $ is closed in $\fX$,    the distance can be defined using the ideal defining  $\fZ$, see for example \cite[2.2]{Qiu}.
Define
\begin{equation}
\fZ_\ep=\{x\in \fX(\oFo):  d(x,\fZ)\leq \ep \}.
\label{nbhd}
\end{equation}

Directly from the definition, we have the following lemmas. 
\begin{lem}\label{open}  Assume that $\fZ\subset \fU\subset \fX$ as formal schemes, then for $x\in \fU(\oFo)$, $d_\fU(x,\fZ)=d_\fX(x,\fZ)$. 
   \end{lem}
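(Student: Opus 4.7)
The plan is to unpack the definition of $d(\cdot,\cdot)$ and observe that the lemma is essentially a tautology, given that $\fZ$ sits inside both $\fU$ and $\fX$ compatibly. Concretely, I will first rewrite both sides of the claimed equality as infima over the same set of $\delta\in\oFo$, namely those $\delta$ for which the $(\oFo/\delta)$-valued point of $\fU$ (respectively $\fX$) obtained by reducing $x$ factors through $\fZ$.

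First I would note that for any $\delta\in\oFo$, the inclusion $\fZ\subset\fU\subset\fX$ of formal schemes induces, by the functor of points, a commutative diagram of inclusions $\fZ(\oFo/\delta)\subset\fU(\oFo/\delta)\subset\fX(\oFo/\delta)$. Reducing $x\in\fU(\oFo)$ modulo $\delta$ gives a well-defined element $(x\MOD\delta)\in\fU(\oFo/\delta)$, and pushing this forward along $\fU\subset\fX$ yields the same reduction of $x$ viewed as an element of $\fX(\oFo)$.

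Consequently, the condition ``$(x\MOD\delta)\in\fZ(\oFo/\delta)$" is independent of whether $(x\MOD\delta)$ is viewed in $\fU(\oFo/\delta)$ or in $\fX(\oFo/\delta)$: in both cases it amounts to the reduction factoring through the common subscheme $\fZ$. Hence the sets of admissible $\delta$ appearing in the definitions of $d_\fU(x,\fZ)$ and $d_\fX(x,\fZ)$ coincide, and the infima agree.

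I do not anticipate any obstacle here; the only very minor point worth spelling out is the compatibility of the two reductions of $x$, which is immediate from the fact that $\fU\subset\fX$ is a morphism of formal $\Fo$-schemes and so commutes with base change to $\Spec\oFo/\delta$. No deeper input (e.g.\ from the ideal-theoretic alternative mentioned after the definition) is required, so the proof is effectively one line after recalling the definition.
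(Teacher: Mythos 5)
Your proposal is correct and coincides with the paper's treatment: the paper states this lemma (together with its neighbors) follows directly from the definition of the $p$-adic distance, which is exactly the definition-unwinding you carry out. The only implicit point, as you note, is that the reductions of $x$ in $\fU(\oFo/\delta)$ and $\fX(\oFo/\delta)$ correspond and that lying in $\fZ(\oFo/\delta)$ is the same condition in either ambient space, which holds since the inclusions are (locally closed) immersions and hence monomorphisms on points.
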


\begin{lem}\label{base} Let  $E/F$ be a finite extension. 
Then 
$d(x,\fZ_{E^\circ})= d(x,\fZ)$.

      \end{lem}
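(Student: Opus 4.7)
The plan is to show both inequalities separately, each directly from the definition.

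For the inequality $d(x,\fZ_{E^\circ}) \leq d(x,\fZ)$, I would argue as follows. Suppose $\delta \in \oFo$ satisfies $(x \MOD \delta) \in \fZ(\oFo/\delta)$. Base-changing this $\oFo/\delta$-point of $\fZ$ along the natural morphism $\Spf E^\circ \to \Spf \Fo$ produces an $\oFo/\delta$-point of $\fZ_{E^\circ}$, where we use the canonical $E^\circ$-algebra structure on $\oFo/\delta$ coming from the inclusion $E^\circ \subset \oFo$. Under this construction the underlying point of $\fX$ is unchanged, so $(x \MOD \delta) \in \fZ_{E^\circ}(\oFo/\delta)$. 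Taking infima gives the desired inequality.

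For the reverse inequality $d(x,\fZ) \leq d(x,\fZ_{E^\circ})$, I would use that the natural projection $\fZ_{E^\circ} \to \fZ$ induces, for every $\delta$, a map $\fZ_{E^\circ}(\oFo/\delta) \to \fZ(\oFo/\delta)$. If $(x \MOD \delta) \in \fZ_{E^\circ}(\oFo/\delta)$, its image under this map is an $\oFo/\delta$-point of $\fZ$ whose underlying point of $\fX$ is still $x \MOD \delta$, since the projection $\fX_{E^\circ} \to \fX$ sends the base change of $x$ to $x$. Hence $(x \MOD \delta) \in \fZ(\oFo/\delta)$, and taking infima gives the other inequality.

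There is essentially no obstacle here; the statement is a tautology once one carefully unpacks the definition of distance in terms of the functor of points on quotient rings $\oFo/\delta$, together with the observation that each such quotient carries a canonical $E^\circ$-algebra structure inherited from $\oFo$. I would keep the write-up to a few lines, noting only that both $x$ and the point $x \MOD \delta$ are compatible across the base change $\Spf \oFo \to \Spf E^\circ \to \Spf \Fo$.
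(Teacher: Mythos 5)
Your proposal is correct, and it matches the paper's treatment: the paper states this lemma (together with its neighbors) as following "directly from the definition" and gives no further argument, and your write-up is precisely the routine unpacking — base-changing an $\oFo/\delta$-point of $\fZ$ along $E^\circ\subset\oFo$ for one inequality, and composing with the projection $\fZ_{E^\circ}\to\fZ$ for the other. No gap; keep it short as you suggest.
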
  
\begin{lem}\label{sub} Let 
$\fZ'$ be a  formal subscheme of $\fZ$. Then 
$d(x,\fZ)\leq d(x,\fZ')$.

      \end{lem}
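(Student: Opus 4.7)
The plan is to observe that this is immediate from the definition of $d(x,\fZ)$ in \ref{distance}, via the monotonicity of $\inf$ with respect to set inclusion. Specifically, I will unwind the definition and show that the set of $\delta \in \oFo$ witnessing proximity of $x$ to $\fZ'$ is contained in the corresponding set for $\fZ$.

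In more detail, put
\[
D(\fZ) = \{|\delta| : \delta \in \oFo,\ (x \MOD \delta) \in \fZ(\oFo/\delta)\},
\]
and similarly $D(\fZ')$. By definition $d(x,\fZ) = \inf D(\fZ)$ and $d(x,\fZ') = \inf D(\fZ')$. Since $\fZ' \hookrightarrow \fZ$ is a morphism of formal $\Fo$-schemes, functoriality gives an induced map $\fZ'(\oFo/\delta) \to \fZ(\oFo/\delta)$ for every $\delta \in \oFo$. In particular, if $(x \MOD \delta) \in \fZ'(\oFo/\delta)$, then the image point certifies $(x \MOD \delta) \in \fZ(\oFo/\delta)$. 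Hence $D(\fZ') \subseteq D(\fZ)$, and taking infima yields $d(x,\fZ) \le d(x,\fZ')$, including the case where both sides are $\infty$ with the convention $\inf \emptyset = \infty$.

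There is really no obstacle here; the only mild subtlety is bookkeeping when $\fZ'$ is not closed in $\fZ$ (so that $\fZ'$ is only a locally closed formal subscheme, per the conventions of \ref{Preliminaries}). But the argument only uses that $\fZ' \to \fZ$ is a morphism of formal schemes over $\Fo$, and functoriality of the $\oFo/\delta$-valued points, so the distinction between closed and locally closed is irrelevant. I would state this one-line argument without appealing to any of the other structure set up in Sections \ref{Igusa schemes}--\ref{More notions}, since the lemma is purely a formal consequence of the definition in \ref{distance} (and in that sense is parallel in character to Lemmas \ref{open} and \ref{base} stated just above it).
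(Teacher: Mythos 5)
Your argument is correct and is exactly what the paper intends: it states this lemma (together with its neighbors) as following ``directly from the definition,'' and your one-line monotonicity-of-$\inf$ argument, using that a factorization of $x \MOD \delta$ through $\fZ'$ composes to a factorization through $\fZ$, is that intended proof. Nothing further is needed.
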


       \begin{lem}\label{union} 
Let $\fZ= \fU\cup  \fV$, where $\fU,\fV$ are closed formal subschemes of $\fZ$. Assume that    there exists $\delta\in \Fo-\{0\} $   vanishing on $\fV$.
Then  
$d(x,\fU)\leq d(x,\fZ) /|\delta|.$
      \end{lem}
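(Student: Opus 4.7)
The plan is to unwind the distance $d(\cdot,\cdot)$ at the level of defining ideals, so I would first localize and reduce to the affine case: work in an affine open of the ambient formal scheme containing the reduction of $x$, with coordinate ring $A$ and closed ideals $I_\fU, I_\fV, I_\fZ \subset A$ defining $\fU, \fV, \fZ$. The hypothesis $\fZ = \fU \cup \fV$ (understood scheme-theoretically) translates to $I_\fZ = I_\fU \cap I_\fV$, and the condition that $\delta$ vanishes on $\fV$ translates to $\delta \in I_\fV$. Let $x^\sharp : A \to \oFo$ denote the continuous ring homomorphism corresponding to $x$.

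The core of the argument is a single algebraic observation: for any $\alpha \in I_\fU$, since $\delta \in I_\fV$, one has $\delta\alpha \in I_\fU \cdot I_\fV \subset I_\fU \cap I_\fV = I_\fZ$. Now suppose $\eta \in \oFo-\{0\}$ satisfies $(x\MOD \eta) \in \fZ(\oFo/\eta)$, which is equivalent to $x^\sharp(I_\fZ) \subset \eta\oFo$. Then $\delta \cdot x^\sharp(\alpha) = x^\sharp(\delta\alpha) \in \eta\oFo$, so $x^\sharp(\alpha) \in (\eta/\delta)\oFo$ in $\ol F$. When $|\eta/\delta| \leq 1$, the element $\eta/\delta$ lies in $\oFo$ and, applying this to all $\alpha \in I_\fU$, we obtain $(x\MOD \eta/\delta) \in \fU(\oFo/(\eta/\delta))$, hence $d(x,\fU) \leq |\eta|/|\delta|$. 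Taking the infimum over admissible $\eta$ then yields $d(x,\fU) \leq d(x,\fZ)/|\delta|$.

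The only mild subtlety is the regime $|\eta|/|\delta| > 1$, where the pointwise estimate itself is vacuous, but the claimed inequality is nonetheless covered by the trivial bound on $d(x,\fU)$; as soon as $d(x,\fZ)/|\delta| \geq 1$ there is nothing to prove. Beyond this, the argument is purely formal manipulation of ideals together with the continuity and $\varpi$-adic completeness of $x^\sharp$ (needed to pass from $I_\fU$ to its closure when the ideals are not finitely generated). I do not anticipate any substantive obstacle; the main conceptual point is simply identifying $I_\fZ$ with $I_\fU \cap I_\fV$, so that the product $I_\fU \cdot I_\fV$ lies inside $I_\fZ$ and $\delta$ acts as the ``scaling factor'' relating vanishing modulo $\fZ$ to vanishing modulo $\fU$.
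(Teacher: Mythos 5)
Your argument is correct and is exactly the intended one: the paper gives no separate proof of this lemma (it is listed among the statements following "directly from the definition"), and your ideal-theoretic unwinding — $\delta\cdot I_\fU\subset I_\fU\cap I_\fV= I_\fZ$, so a witness $\eta$ for $d(x,\fZ)$ yields the witness $\eta/\delta$ for $d(x,\fU)$ — is precisely that definitional check, with the only relevant regime for the paper's application being $d(x,\fZ)<|\delta|$, where your estimate applies cleanly.
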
  
     \begin{lem}\label{internbhd}  
Let $\fZ= \fU\cap  \fV$, where $\fU,\fV$ are closed formal subschemes of $\fX$.  Then  
$\fU_\ep\cap \fV_\ep=\fZ_\ep$.

      \end{lem}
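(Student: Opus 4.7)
The inclusion $\fZ_\ep \subset \fU_\ep \cap \fV_\ep$ is immediate from Lemma \ref{sub} applied to $\fZ \subset \fU$ and $\fZ \subset \fV$. So the content is the reverse inclusion $\fU_\ep \cap \fV_\ep \subset \fZ_\ep$, and my plan is to prove it by translating distance into the obvious ideal-theoretic quantity and then applying the ultrametric inequality.

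First I would reduce to the case where $\fX$ is affine, say $\fX = \Spf A$, with $\fU$ and $\fV$ defined by ideals $I_\fU, I_\fV \subset A$ (closed in the formal topology, as in \ref{Preliminaries}). The ideal defining $\fZ = \fU \cap \fV$ is then $I_\fZ = I_\fU + I_\fV$ (more precisely, its closure, which makes no difference for the distance computation since the norm is continuous). A point $x \in \fX(\oFo)$ corresponds to a continuous ring map $x^\sharp: A \to \oFo$, and the condition $(x \MOD \delta) \in \fW(\oFo/\delta)$ for a closed formal subscheme $\fW = V(I_\fW)$ is exactly $x^\sharp(I_\fW) \subset \delta \oFo$. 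Consequently,
\begin{equation*}
d(x,\fW) = \inf\{|\delta| : x^\sharp(I_\fW) \subset \delta\oFo\} = \sup_{f \in I_\fW} |x^\sharp(f)|.
\end{equation*}

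Now I would apply the ultrametric inequality. Given any $f \in I_\fZ = I_\fU + I_\fV$, write $f = g + h$ with $g \in I_\fU$ and $h \in I_\fV$; then
\begin{equation*}
|x^\sharp(f)| = |x^\sharp(g) + x^\sharp(h)| \leq \max(|x^\sharp(g)|, |x^\sharp(h)|) \leq \max(d(x,\fU), d(x,\fV)).
\end{equation*}
Taking the supremum over $f \in I_\fZ$ yields $d(x,\fZ) \leq \max(d(x,\fU), d(x,\fV))$. In particular, if $x \in \fU_\ep \cap \fV_\ep$, then both of $d(x,\fU), d(x,\fV)$ are $\leq \ep$, so $d(x,\fZ) \leq \ep$ and $x \in \fZ_\ep$, as desired.

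There is no real obstacle: the only mild point is the reduction to the affine case, which is harmless since the distance is defined pointwise on $\oFo$-points and closed formal subschemes are locally given by ideals. One can further localize on an open of $\fX$ containing the reduction of $x$ and invoke Lemma \ref{open} to reconcile with the ambient $\fX$.
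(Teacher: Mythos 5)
Your proof is correct: both inclusions check out, and the identification $d(x,\fW)=\sup_{f\in I_\fW}|x^\sharp(f)|$ together with the ultrametric inequality for $I_\fZ=\ol{I_\fU+I_\fV}$ is exactly the definitional verification the paper has in mind, since it states this lemma (with \ref{open}--\ref{sub2}) as following ``directly from the definition'' and gives no further argument. Your handling of the closure of the ideal and the reduction to the affine case via Lemma \ref{open} are both sound, so there is nothing to add.
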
 

     \begin{lem}\label{sub2}   
For a morphism $f:\fX\to \fY$ of formal scheme over $ \Fo$,  
and a  formal subscheme $\fW$ of $\fY$, we have 
$d_{\fY}(f(x),\fW)=d_\fX\lb x,f^{-1}(\fW)\rb.$
\end{lem}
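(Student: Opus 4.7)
The plan is to unwind the two definitions and observe that they describe exactly the same set of admissible $\delta$'s, using the universal property of the schematic preimage.

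First I would recall that a point $x\in\fX(\oFo)$ is just a morphism $\Spf \oFo\to\fX$, and reduction modulo $\delta\in\oFo$ means composing with the closed immersion $\Spec\oFo/\delta \to \Spf\oFo$, obtaining a point $\bar x_\delta\in\fX(\oFo/\delta)$. Since the morphism $f$ is defined functorially on points, the reduction $\overline{f(x)}_\delta$ coincides with $f(\bar x_\delta)$. In particular, the defining set for $d_\fY(f(x),\fW)$ becomes
\[
\{|\delta|:\delta\in\oFo,\ f(\bar x_\delta)\in\fW(\oFo/\delta)\},
\]
and the defining set for $d_\fX(x,f^{-1}(\fW))$ becomes
\[
\{|\delta|:\delta\in\oFo,\ \bar x_\delta\in f^{-1}(\fW)(\oFo/\delta)\}.
\]

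Next I would use the universal property of $f^{-1}(\fW)$ from \ref{Preliminaries}: a morphism $T\to\fX$ factors through $f^{-1}(\fW)$ if and only if the composition $T\to\fX\to\fY$ factors through $\fW$. Applied to $T=\Spec\oFo/\delta$ with the map $\bar x_\delta$, this gives $\bar x_\delta\in f^{-1}(\fW)(\oFo/\delta)$ iff $f(\bar x_\delta)\in\fW(\oFo/\delta)$. Consequently the two sets above are identical, and the equality of infima follows.

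The only point that requires a brief check is that the universal property holds in the relevant generality. In the locally noetherian case, $f^{-1}(\fW)$ is the fiber product and there is nothing to do. In the affine non-noetherian setting (where $\fX=\Spf B$, $\fY=\Spf A$, $\fW=\Spf A/I$, and $f^{-1}(\fW)=\Spf B/\overline{IB}$ by the convention of \ref{Preliminaries}), a continuous map $B/\overline{IB}\to \oFo/\delta$ is precisely a continuous map $B\to\oFo/\delta$ killing the image of $I$; since $\oFo/\delta$ carries the discrete (or $\delta$-adic) topology, killing $IB$ automatically kills its closure. I do not expect any real obstacle here, as the lemma is a direct translation between the moduli interpretation of distance and the functorial definition of preimage.
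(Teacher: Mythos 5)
Your proof is correct and is exactly the intended argument: the paper states this lemma (together with its neighbors) as following ``directly from the definition,'' and your unwinding of the two admissible sets of $\delta$'s via functoriality of $f$ on $\oFo/\delta$-points and the universal property of the schematic preimage is precisely that direct verification, including the right observation that a continuous map to the discrete ring $\oFo/\delta$ has open, hence closed, kernel, so killing $I\cdot B$ already kills $\ol{IB}$ in the non-noetherian affine case.
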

          
Assume that  $f:\fX\to \fY$  is a   morphism of locally noetherian formal schemes,  so that the schematic image 
$f(\fZ)$ is defined.
Note that $\fZ$ is a   formal subscheme of   $f^{-1}(f(\fZ)).$ Then Lemma \ref{sub} and Lemma \ref{sub2} imply  the following corollary.
      \begin{cor}\label{image} We have
  $d(f(x),f(\fZ) )\leq d(x,\fZ).$
      \end{cor}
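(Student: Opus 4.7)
The plan is essentially to apply the two lemmas in the order suggested by the hint immediately preceding the corollary. The key observation is that although $f(\fZ)$ may be strictly smaller than $f^{-1}(f(\fZ))$ set-theoretically, the schematic image is \emph{defined} so that $\fZ$ factors through it; i.e., $\fZ$ is a formal subscheme of $f^{-1}(f(\fZ))$. This is the only nontrivial input about schematic images that is needed.

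First I would invoke Lemma \ref{sub2} with the target formal subscheme $\fW = f(\fZ)$. This identifies the distance on $\fY$ with a distance on $\fX$:
\begin{equation*}
d_{\fY}(f(x), f(\fZ)) \;=\; d_{\fX}\bigl(x,\, f^{-1}(f(\fZ))\bigr).
\end{equation*}

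Next I would use the inclusion $\fZ \subset f^{-1}(f(\fZ))$ of formal subschemes of $\fX$ (which holds by the universal property of the schematic image used to define $f(\fZ)$ for locally noetherian formal schemes). Applying Lemma \ref{sub} to the pair $\fZ' = \fZ$, $\fZ = f^{-1}(f(\fZ))$ yields
\begin{equation*}
d_{\fX}\bigl(x,\, f^{-1}(f(\fZ))\bigr) \;\leq\; d_{\fX}(x, \fZ).
\end{equation*}
Chaining the two displays gives $d(f(x), f(\fZ)) \leq d(x, \fZ)$, which is the claim.

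There is no real obstacle here: the only thing to double-check is the set-theoretic (or rather scheme-theoretic) inclusion $\fZ \hookrightarrow f^{-1}(f(\fZ))$ in the locally noetherian setting, which is immediate from the definition of schematic image recalled in \ref{Preliminaries} (the minimal closed formal subscheme of $\fY$ through which $f|_{\fZ}$ factors). Everything else is a formal two-line chase.
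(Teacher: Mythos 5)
Your proof is correct and is essentially identical to the paper's argument: the paper also observes that $\fZ$ is a formal subscheme of $f^{-1}(f(\fZ))$ and then combines Lemma \ref{sub2} (with $\fW=f(\fZ)$) and Lemma \ref{sub} exactly as you do. Nothing is missing.
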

   


   \section{Bounded ramification}
\label{Bounded ramification}

We  prove Theorem \ref{jointthm} (1).
The reader may  refer to \ref{Structure and strategy} for an overview of the proof.

          \subsection{Canonical liftings on $\fI_0$}\label{unt}
                
\begin{prop}\label{lem1}    Let $\fW$ be  a    formal subscheme of $\fI_{0,\Fo}$  and $\{y_n\}_{n=1}^\infty$   a sequence in $\fW_{0}(k)$ such that     $d(y_n^\can,\fW)\to 0$.  Let
$Y\subset \fW_k$ be a  closed  subscheme 
such  that 
\begin{itemize}\item[(1)]  for every integer $N>0$, $Y$ is contained in  the Zariski closure of $\{y_n\}_{n=N}^\infty$;      \item [(2)]
there exists $y\in Y(k)$ and a positive integer $c$ such that   the formal completion  $Y_y $ is reduced and  satisfies $\phi_{\fI_0}^{c}\lb Y_y \rb\subset Y_y$ (in particular,  $\phi_{\fI_0}^{c}(y) =y$).
        \end{itemize}
Then    
$\fW_{y}$ contains  a union of formal subtori of $\fI_{0,y}$ that   contains $  Y_y$   as a closed formal subscheme. 
\end{prop}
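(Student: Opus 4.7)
The plan is to transport the problem to the perfectoid generic fiber $\cI$ of $\fI_{\CPo}$, where the tilting equivalence of Scholze converts canonical liftings into their characteristic-$p$ incarnations and Frobenius into an automorphism. First I would lift each $y_n^\can \in \fI_0(\oFo)$ along $\pi\colon \fI \to \fI_0$ to a point in $\cI(\CP,\CPo)$ by choosing compatible trivializations. Since $\pi$ is compatible with the Frobenii and $\fI_{\BF_p}$ is the perfection of $\fI_{0,\BF_p}$ (Proposition \ref{CS3}), the hypothesis $d(y_n^\can,\fW)\to 0$ translates, via Lemma \ref{sub2}, into $p$-adic approximation of $\pi^{-1}(\fW)$ by the lifted points. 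Applying the tilting bijection $\rho$, identity \eqref{2.3.2} identifies the tilts with the mod-$p$ points $(A_{y_n},\vep_{y_n}) \in \cI^\flat(\Cflat,\Cfcc)$, and identity \eqref{2.3.1} shows that $\phi_{\cI^\flat}$ acts as the absolute Frobenius on the tilt.

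Next I would exploit the $\phi_{\fI_0}^c$-invariance of $Y_y$. Because Frobenius is an automorphism of the perfectoid space $\cI^\flat$, I can iterate $\phi^{\pm c}$ on the tilted sequence to produce further characteristic-$p$ points accumulating near the tilt of $\pi^{-1}(\fW)$ and arranged in Frobenius orbits around $Y$. This is the perfectoid density technique used in \cite{Qiu}, originating in \cite{Xie}: hypothesis~(1) combined with the invertibility of Frobenius on the tilt forces the untilted schematic limit of this configuration, inside the formal completion of $\fI_0$ at $y$, to be stable under the $p^c$-power endomorphism. Concretely, I expect to extract from this argument a reduced, $\Fo$-flat closed formal subscheme $\fZ \subset \fW_y$ which is stable under $\phi_{\fI_0}^c$ and whose special fiber contains $Y_y$ as a closed subscheme.

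Given such a $\fZ$, the conclusion becomes a direct application of Lemma \ref{djg}: viewed inside the formal torus $\fI_{0,y} \cong \fM$, the reduced subscheme underlying $\fZ$ contains a union of formal subtori that covers $Y_y$, and these subtori sit inside $\fZ \subset \fW_y$ as required.

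The main obstacle will be the perfectoid density step: one must verify that the Frobenius iterates of the approximating sequence in $\cI^\flat$ remain $p$-adically close to the tilt of $\pi^{-1}(\fW)$ under the pullback distance, and that, after untilting, their schematic limit genuinely lies inside $\fW_y$ rather than in some strictly larger subscheme of $\fI_{0,y,\Fo}$. The reducedness of $Y_y$ is essential here, since otherwise Frobenius iteration could merge infinitesimal directions and destroy the $\phi^c$-stable scheme structure needed to invoke Lemma \ref{djg}.
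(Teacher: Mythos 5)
Your overall skeleton matches the paper's: tilt to the perfectoid Igusa space, use \eqref{2.3.1} and \eqref{2.3.2}, extract a reduced, $\Fo$-flat, $\phi_{\fI_0}^c$-stable closed formal subscheme of $\fW_y$, and finish with Lemma \ref{djg}. But the step you yourself flag as ``the main obstacle'' is precisely the heart of the proof, and the mechanism you sketch for it does not work. You propose to iterate $\phi^{\pm c}$ on the tilted approximating sequence and claim the iterates remain close to the tilt of $\pi^{-1}(\fW)$. There is no reason for this: $\fW$ is an arbitrary formal subscheme, not assumed Frobenius-stable, so the distance from the Frobenius translates of $y_n^\can$ to $\fW$ is uncontrolled even though $d(y_n^\can,\fW)\to 0$. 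Moreover, no iteration on the sequence is needed. The paper gets the key containment from Scholze's approximation lemma in the form of Lemma \ref{techprop5} (with Assumption \ref{asmp5} supplied locally by Proposition \ref{CS3}): since the reductions $y_n$ are Zariski dense in $Y$ (hypothesis (1)) and $d(y_n^\can,\fW)\to 0$, \emph{every} $\Cfcc$-point of $Y$, not just the tilted sequence, untilts via \eqref{2.3.2}, $\rho^{-1}$ and $\pi$ to a point of $\fW(\CPo)$. One then sets $Y_y^\sharp=\pi\lb(\pi^\flat\circ\rho)^{-1}(Y_y(\Cfcc))\rb\subset \fW_y(\CPo)$; its stability under $\phi_{\fI_0}^{c}$ comes directly from hypothesis (2) combined with \eqref{2.3.1}, with no reference to the sequence, and $\fY$ is taken to be the schematic closure of $Y_y^\sharp$ inside the noetherian completion $\fI_{0,y,\Fo}$ (one cannot take this closure in $\fI_{0,\Fo}$ itself, which is non-noetherian). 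This $\fY$ is the reduced, flat, $\phi^c$-stable subscheme of $\fW_y$ you posit, but your route to it is not viable as stated.

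A second gap: you assert without argument that the special fiber of your $\fZ$ contains $Y_y$ as a closed subscheme. In the paper this is the remaining nontrivial step, namely $Y_y\subset \fY$, which requires showing that the reductions of the $\Cfcc$-points of $Y_y$ are schematically dense in $Y_y$. This is where the reducedness of $Y_y$ is actually used: one reduces to a single formal branch (the integral case), applies the Noether normalization lemma for complete local domains, and then uses Lemma \ref{as in1} to produce enough $\Cfcc$-points — not the reason you give about Frobenius iteration merging infinitesimal directions. Without this density argument, Lemma \ref{djg} only yields subtori containing the reduced special fiber of $\fY$, with no link to $Y_y$, and the stated conclusion does not follow.
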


\begin{proof} Let $\pi:\fI \to \fI_0 $  be the natural  morphism as in  \eqref{pi0}, and $\pi^\flat$ the base change of  $\pi$ to $\Cfcc$ (along $\BZ_p\to \BF_p\to \Cfcc$).
Still use $\pi$ to denote the corresponding morphism between $\cI$ and the adic generic fiber  of $\fI_0$. 
Since $ d(y_n^\can,\fW)\to 0$, by     \eqref{2.3.2} and Lemma \ref{techprop5} (with Assumption \ref{asmp5} locally assured by Proposition \ref{CS3}),                  $ \pi\lb  \lb\pi^\flat\circ\rho\rb^{-1}\lb Y({\Cfcc}) \rb  \rb  $ consists of 
adic generic fibers of points in
$ \fW(\CPo)$. 
Note that  taking adic generic fibers induces a bijection $  \fI(\Cfcc) \cong  \cI (\Cflat,\Cfcc) $ (see \cite[Lemma 2.1.3]{Qiu}).
Let     $Y_y^\sharp\subset  \fW_y({\CPo}) $ correspond to $\pi\lb \lb\pi^\flat\circ\rho\rb^{-1}\lb Y_y({\Cfcc}) \rb\rb$. 

Recall that $\pi ^\flat$ is compatible with   the  Frobenii and $\pi $ is compatible with the liftings of the Frobenii.
Then by condition (2) on $Y_y$ and  
\eqref{2.3.1} (which says $\rho$  is compatible with the   $ \phi_{\cI},\phi_{\cI^\flat}$),   we have
$\phi_{\fI_0 } ^{c}\lb Y_y ^\sharp\rb \subset   Y_y^\sharp .$ Let $\fY\subset \fI_{0,y,\Fo}$ be the  schematic closure of $Y_y^\sharp$. Then $ \fY$ is   reduced and  flat over $ \Fo$, and 
$\phi_{\fI_0 } ^{c}\lb \fY\rb \subset   \fY.$
Then by Lemma \ref  {djg}, 
it is enough to show that
$Y_y\subset \fY.$ 

Now we show that
$Y_y\subset \fY.$  By  the construction of  
   $\fY$, the image of the reduction map $Y_y(\Cfcc)\to Y_y(\Cfcc/t) $ is contained in the image of $  \fY(\CPo/\pi)= \fY(\Cfcc/t) $.
       We only need to show that the image of the reduction map $Y_y(\Cfcc)\to Y_y(\Cfcc/t) $ is schematically dense in $Y_y$, i.e., not  contained in  a  closed formal subscheme of $Y_y$  strictly smaller than $Y_y$.  
         Since $Y_y$ is reduced by assumption,   it is enough to show this for each formal branch of $Y_y$, and thus we may assume that $Y_y$ is integral. 
          By the Noether normalization lemma for complete local domains \cite[Lemma 032D]{Sta}, $Y_y=\Spf R'$ such that $R'$ is finite over a subring       $R\cong k[[x_1,...,x_n]]$.    We only need to show that the image of the reduction map $\Spf R(\Cfcc)\to \Spf R(\Cfcc/t) $   is schematically  dense in $ \Spf R$. This 
       To show this,    let $f\in R$ and $f\neq 0$.  There exists a continuous   homomorphism  $R\to \Cfcc$ such that  the image of $f$ is nonzero (see Lemma \ref{as in1}). Then for some large positive integer $m$, $\phi^{1/p^m}(f)\not\in t \Cfcc$.  Thus  the  $\Cfcc/t$-point $\phi^{1/p^m}/t$ is not in the zero locus of $f$. 
\end{proof} 
   \begin{rmk}
In the proof, we can 
not take schematic closure in $\fI_{0,\Fo}$ 
due to its non-noetherianness.
   \end{rmk}

\subsection{Unramified points on Siegel moduli schemes}\label{Unramified points on Siegel moduli schemes}
We first deal with prime-to-$p$ Hecke orbits.

    \begin{lem}\label{thm1'} Let $\fV$ be  a    formal subscheme of $\fS_{\Fo}$. 
Let  $\{ x_n\}_{n=1}^\infty$ be a sequence in the prime-to-$p$ Hecke orbit of   $x_1\in \fS(\Fo)$ such that the reduction $\ol{x_n}$ of $x_n$ is 
in  $\fV(k)$  and 
$d(x_n,\fV)\to 0$. Let $Z$ be   the Zariski closure of  $\{\ol {x_n}\}_{n=1}^\infty $   in $\fV_{k}$.  For $x\in Z(k)$,   $\fV_x$ contains a union of  translated formal subtori that contains   $Z_x$ as a closed formal subscheme.
\end{lem}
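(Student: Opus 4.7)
The plan is to reduce Lemma~\ref{thm1'} to Proposition~\ref{lem1} on the Igusa formal scheme $\fI_0$ by exploiting the toric $\fM$-action of~\ref{Toric action} together with its commutativity with the prime-to-$p$ Hecke action recorded in~\eqref{33}. The key observation is that the ``toric drift'' between the Hecke orbit $\{x_n\}$ and a corresponding canonical-lift orbit in $\fI_0$ can be absorbed into a single element $h \in \fM(\Fo)$ independent of $n$. Concretely, fix a lift $\tilde x_1 \in \wt\fS(\Fo)$ of $x_1$ and choose $T_n \in \GSp_{2g}(\BA_f^p)$ with $\pr_\fS(T_n\tilde x_1) = x_n$. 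Since $\wt\pi_0$ is an isomorphism on each formal residue torus, lift further to $\tilde y_1 \in \wt\fI_0(\Fo)$ with reduction $\bar y_1 \in \wt\fI_0(k)$, and via the identification $c_{\bar y_1}$ write $\tilde y_1 = h\cdot \bar y_1^\can$ for a unique $h\in \fM(\Fo)$. Because the $\fM$-action commutes with the Hecke action and the $\GSp_{2g}(\BA_f^p)$-action preserves canonical liftings, we obtain $\tilde y_n := T_n \tilde y_1 = h\cdot \bar y_n^\can$ where $\bar y_n := T_n \bar y_1$. Projecting by $\pr_{\fI_0}$ yields points $y_n := \pr_{\fI_0}(\tilde y_n) \in \fI_0(\Fo)$ with $\pi_0(y_n) = x_n$ and $y_n = h \cdot y_n^\can$, where $y_n^\can := \pr_{\fI_0}(\bar y_n^\can)$ is the canonical lift of $\hat y_n := \pr_{\fI_0}(\bar y_n)$.

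Set $\fW := h^{-1}\pi_0^{-1}(\fV) \subset \fI_{0,\Fo}$. Then $\hat y_n \in \fW_k(k)$ since $h$ acts trivially on reductions, and Lemma~\ref{sub2} combined with translation invariance under the automorphism $h^{-1}$ of $\fI_{0,\Fo}$ gives
\[
d(y_n^\can,\fW) \;=\; d(h\cdot y_n^\can,\, h\cdot\fW) \;=\; d(y_n, \pi_0^{-1}(\fV)) \;=\; d(x_n,\fV) \;\longrightarrow\; 0.
\]
Let $\hat Z \subset \fW_k$ be the eventually stabilized Zariski closure of the tails of $\{\hat y_n\}$. For any $k$-point $y \in \hat Z(k)$ of Frobenius period $c$ (which exists since $k = \overline{\BF_p}$ forces every $k$-point to be periodic), set $Y := \bigcap_{m\geq 0}\phi_{\fI_0,k}^{-mc}(\hat Z)$ with its reduced induced structure. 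This intersection stabilizes by noetherianity, contains $y$, and satisfies $\phi_{\fI_0}^c(Y)\subset Y$ by construction, so Proposition~\ref{lem1} applies to $\fW$, the sequence $\{\hat y_n\}$ and $Y$ and yields that $\fW_y$ contains a union of formal subtori of $\fI_{0,y}$ containing $Y_y$. Translating by $h$ and projecting by $\pi_0$---which by~\eqref{bcy} restricts to an isomorphism of formal tori $\fI_{0,y} \cong \fS_{\pi_0(y)}$---converts this into a union of translated formal subtori in $\fV_{\pi_0(y)}$ containing $\pi_0(h\cdot Y_y)$.

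The main obstacle has two parts. First, ensuring that the Frobenius-invariance condition~(2) of Proposition~\ref{lem1} is met without sacrificing too much of $\hat Z$: $\hat Z$ itself is generally not $\phi_{\fI_0}$-invariant, since the Frobenius corresponds to a $p$-Hecke operator outside the prime-to-$p$ orbit, so the intersection defining $Y$ may strictly shrink $\hat Z$ at $y$. Second, upgrading the conclusion from the single periodic point $y$ to every $x = \pi_0(y) \in Z(k)$, so that the translated formal subtori in $\fV_x$ actually contain $Z_x$. The expected resolution uses the $\GSp_{2g}(\BA_f^p)$-equivariance of $\pi_0$ and the $\fM$-action: Hecke translates transport the local toric structure across the Hecke orbit of $y$, which is Zariski dense in the irreducible component of $Z$ through $\pi_0(y)$; combined with the observation that the scheme-theoretic image of $\hat Z$ in $\fS_k$ equals $Z$, and that the containment of $Z_x$ in a union of translated formal subtori of $\fS_x$ is a Zariski-closed condition in $x$, this propagates the conclusion from the dense set of Hecke-translated good points to all of $Z(k)$.
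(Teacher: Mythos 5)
Your setup in the first paragraph is essentially the paper's: the construction of $h\in\fM(\Fo)$, the points $y_n$, the Hecke operators $T_n$, the ambient $\fW=h^{-1}\pi_0^{-1}(\fV)$, and the verification $d(y_n^\can,\fW)=d(x_n,\fV)\to 0$ all match the paper's first step, and the identity \eqref{33} is used in the same way. The genuine gap is in the construction of $Y$ and the verification of condition~(2) of Proposition~\ref{lem1}.

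You define $\hat Z$ as the ``eventually stabilized'' Zariski closure of tails of $\{\hat y_n\}$ inside $\fW_k$, and later construct $Y:=\bigcap_{m\geq 0}\phi_{\fI_0,k}^{-mc}(\hat Z)$, asserting both stabilizations ``by noetherianity.'' But $\fI_{0,k}$ (hence $\fW_k$) is \emph{not} noetherian --- it is an inverse limit of noetherian schemes along finite \'etale maps --- so neither the chain of tail-closures nor the decreasing chain of Frobenius preimages need stabilize. Moreover, even if one somehow defines $Y$, there is no control over its reducedness at $y$ (the local rings of $\hat Z$ need not be excellent), nor, as you yourself note, any guarantee that $Y_y$ still captures all of $\hat Z_y$: intersecting with Frobenius preimages can strictly shrink the germ at $y$, and then the conclusion of Proposition~\ref{lem1} gives a union of tori containing only $Y_y$, not $Z_x$. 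Your proposed escape via ``Hecke-equivariance propagating the conclusion'' is not an argument; the condition ``$Z_x$ is contained in a union of translated formal subtori'' is \emph{not} obviously Zariski-closed in $x$, and the paper itself has to work (Theorem~\ref{wwcor}, via the toric action and Lemma~\ref{basic}) to propagate linearity from one point to another, under hypotheses on $\fV$ that are not available here.

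The paper's proof avoids all of this by never taking Zariski closure in $\fI_{0,k}$. Instead it constructs $Y$ as an inverse limit $\vpl_i Z_i$ of irreducible components $Z_i\subset\fS_{i,k}$ of preimages of one irreducible component $Z_0\subset Z$, each chosen to contain a dense subset of $\{\pi_i(y_n)\}$; condition~(1) then follows from Lemma~\ref{1lem}. For condition~(2), the paper uses Proposition~\ref{2lem} (the \'etale inverse-limit analysis of formal branches) to identify $Y_y$ with a union $Z'_x$ of formal branches of $Z_x$; then, because $Z$ is of finite type over $k$ and defined over $\BF_{p^a}$ with $x\in Z(\BF_{p^a})$, one has $\phi_\fS^a(Z_x)\subset Z_x$, and a permutation argument (as in Lemma~\ref{djg}) shows a power of $\phi_\fS^a$ stabilizes the union of branches $Z'_x$, hence $Y_y$; reducedness of $Y_y$ comes from excellence of $Z$ (which is of finite type over $k$, unlike $\hat Z$). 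Finally the paper invokes the second part of Proposition~\ref{2lem} to cover all branches of $Z_x$ by finitely many choices of $y$, with no shrinkage. These are precisely the mechanisms missing from your construction, and they are what turns the outline into a proof.
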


\begin{proof}
       Up to replacing  $\{ {x_n}\}_{n=1}^\infty$ by    a  subsequence, we may assume  that $Z$ is the Zariski closure of  $\{\ol {x_n}\}_{n=1}^\infty $. 
     We  want  apply Proposition \ref{lem1} to prove the lemma.  We have four steps. 
     
     First, we construct $\{y_n\}_{n=1}^\infty$  and $\fW$.
    Choose ${y_1}\in \fI_{0}(k)$ such that $\pi_0(y_1)=\ol{x_1}$.
      By    the isomorphism  $\pi_0|_{\fI_{0,{    y_1}}}:\fI_{0,{    y_1}}\cong \fS_{x_1}$ of formal tori  
      and the $\fM$-action on $\fI_{0,{    y_1}}$ described in  Lemma \ref{Via}, there exists   $h\in \fM(\Fo)$ such that $\pi_0\lb h{y_1^\can}\rb=x_1$.
                  Let $\wt y_1\in \pr_{\fI_0}^{-1}({y_1})$. Then  $\pr_{\fI_0} (\wt y_1^\can)= {y_1^\can}$, since  $\pr_{\fI_0} $ commutes with taking canonical lifting.
                  Let $\wt x_1=\wt\pi_0(h\wt y_1^\can)$. By \eqref{faca} and that the actions of $\fM$ on $\wt\fI_{0}$ and $\fI_{0}$ commute with the natural projection $\pr_{\fI_0}$, we have 
$$\pr_{\fS}(\wt x_1)= \pi_0 \circ \pr_{\fI_0} (h\wt y_1^\can)= \pi_0 \circ  \lb h\pr_{\fI_0}(\wt y_1^\can)\rb=\pi_0(hy_1^\can)=x_1.$$
           Thus, there exists $T_n\in\GSp_{2g}( \BA_f^{p})$        such that $x_n=\pr_{\fS}(T_n \wt x_1).$
                  Let $ y_n=  \pr_{\fI_0} (T_n\wt y_1) . $
                 Then 
                  $$y_n^\can= \pr_{\fI_0} \lb (T_n\wt y_1)^\can\rb= \pr_{\fI_0} \lb T_n(\wt y_1^\can)\rb  $$ since    $\pr_{\fI_0} $  and  $T_n$ commute with taking canonical lifting.
                Then   by \eqref{33}, we have  
                  \begin{equation*}  \label{44} \pi_0(hy_n^\can)=  \pi_0\circ h\circ \pr_{\fI_0}\circ T_n(\wt y_1^\can)=  \pr_{\fS}\circ     T_n\circ \wt \pi_0\circ h(\wt y_1^\can)=  \pr_{\fS}\circ     T_n(\wt x_1)=x_n.
\end{equation*}
Let 
$\fW'=\pi_0^{-1}(\fV) $.  Then $d(hy_n^\can,\fW')=d(x_n,\fV)\to0$.  Let 
$\fW=h^{-1}\fW'$. 
Then $d(y_n,\fW)\to0$.

Second,   we construct $Y$. Recall   $f_{j,i}: \fS_j\to \fS_i$  and  $\pi_i:\fI_0\to \fS_i$   defined in  \ref{Igusa schemes and Hecke orbits}. Claim: there exists a sequence $\{Z_i\}_{i=0^\infty}$ such that 
$Z_0$  is  an irreducible component  of $Z$, 
and $Z_{i+1}$ is  an irreducible component of   $f_{i+1,i}^{-1}(Z_i)$
containing a Zariski dense subset  of points in $\{\pi_{i}(y_n)  
\}_{n=1}^\infty$. 
 Assuming the claim, let $Y=\vpl_{i\geq 0} Z_i$. Then    condition   (1) in Proposition \ref{lem1} on $Y$ follows easily from Lemma \ref {1lem}. 
The proof of  the claim is inductive and as follows.  Assume that we have $Z_i$ as desired.
The image of  
\begin{equation}\{\pi_{i+1}(y_n)
\}_{n=1}^\infty\cap f_{i+1,i}^{-1}(Z_i).
\label{thisset}\end{equation} 
by $f_{i+1,i}$ is 
$\{\pi_{i}(y_n)  
\}_{n=1}^\infty\cap Z_i$, and Zariski dense in $Z_i$ by inductive hypothesis.
Thus the Zariski closure of  \eqref{thisset} contains  an irreducible component  
of   $f_{i+1,i}^{-1}(Z_i)$, which we choose to be $Z_{i+1}$.  

Third,    for   $y\in \pi_0^{-1}(\{x\})\cap Y$,    condition   (2) in Proposition \ref{lem1} holds.
Indeed,
by the  first part of Proposition \ref{2lem}, 
$\pi_0|_{Y_y}$ is an isomorphism to a union $Z'_x$ of formal  branches of    $Z_x$. 
Then since $Z$ is of finite type over $k$, there exists a positive integer such that $Z$ is defined over the finite field $\BF_{p^a}$
and 
$x\in Z(\BF_{p^{a}})$.   Then  $\phi_{\fS}^{a}\lb Z_x\rb \subset  Z_x$.    By the excellence and reducedness  of $Z$,  $  Z_{x} $ is reduced.  So $Y_y$ is reduced.
Moreover, by a similar (and simpler) argument as   in the proof of  Lemma \ref  {djg},
$Z'_x$ is stabilized by a power of  $\phi_{\fS}^{a}$. 
Since $ \psi_0$ is   compatible with the liftings of  Frobenii,
$Y_y$ is stabilized by the same power $\phi_{\fI}^{a}.$


Finally, by the  second part of Proposition \ref{2lem}, we can choose finitely many $y$'s as in  the third step, such that every formal  branch of    $Z_x$ is  contained in some $\pi_0|_{Y_y}$. The   lemma now follows from Proposition \ref{lem1}.    \end{proof}
     
 Now we slightly improve  Lemma \ref{thm1'}, by including canonical liftings, as follows. 
On a power of $\fS$, we can define  prime-to-$p$ Hecke action and canonical liftings in the obvious way. 
 \begin{lem}\label{thm1'1} For  non-negative integers  $a,b$, let
$\fS^{a+b}=\fS ^{a}\times \fS ^{b}$.
 Let $\fV$ be  a  formal subscheme of $\fS_{\Fo}^{a+b}$.  
 Let  $\{u_n\}_{n=1}^\infty$ be a sequence in
the prime-to-$p$ Hecke orbit of   $u_1\in \fS^a(\Fo)$ 
and $\{v_n\}_{n=1}^\infty$ be a sequence of canonical liftings in  $  \fS^b $.
Assume that  the 
reduction $\ol{x_n}$ of $x_n$ is 
in  $\fV(k)$
and $d(x_n,\fV)\to 0$. Let $Z$ be   the Zariski closure of  $\{\ol {x_n}\}_{n=1}^\infty $   in 
$\fV_{k}$.   For $x\in Z(k)$,   $\fV_x$ contains a union of  translated formal subtori that contains   $Z_x$ as a closed formal subscheme.

\end{lem}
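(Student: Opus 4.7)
The plan is to adapt the proof of Lemma \ref{thm1'} to the product $\fI_{0,\Fo}^{a+b}$, exploiting that the canonical-lifting factors require no deformation-torus translation. For the first $a$ factors one carries out the exact construction of the proof of Lemma \ref{thm1'}: pick $y_{1,a}\in\fI_0^a(k)$ lifting $\ol{u_1}$, find $h_a\in\fM^a(\Fo)$ with $\pi_0(h_a y_{1,a}^\can)=u_1$, lift $y_{1,a}$ to $\wt y_{1,a}\in\wt\fI_0^a$, and for each $n$ pick $T_n\in\GSp_{2g}(\BA_f^p)^a$ with $u_n=\pr_{\fS}(T_n\wt\pi_0(h_a\wt y_{1,a}^\can))$, so that $y_{n,a}:=\pr_{\fI_0}(T_n\wt y_{1,a})$ satisfies $\pi_0(h_a y_{n,a}^\can)=u_n$ by \eqref{33}.

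For the last $b$ factors, since each $v_n$ is already a canonical lift, I would simply pick $y_{n,b}\in\fI_0^b(k)$ lifting $\ol{v_n}$; because $\pi_0$ commutes with taking canonical liftings (see \ref{Hecke action}), we have $\pi_0(y_{n,b}^\can)=v_n$. Combining, with $y_n:=(y_{n,a},y_{n,b})\in\fI_0^{a+b}(k)$ and $h:=(h_a,1)\in\fM^{a+b}(\Fo)$ (where $1$ is the unit of $\fM^b$), one obtains $\pi_0(hy_n^\can)=x_n$. Setting $\fW:=(\pi_0\circ h)^{-1}(\fV)\subset\fI_{0,\Fo}^{a+b}$, Lemma \ref{sub2} gives $d(y_n^\can,\fW)=d(x_n,\fV)\to 0$.

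The closed subscheme $Y\subset\fW_k$ is then produced by the same inductive construction using the finite \'etale transitions $f_{i+1,i}\colon\fS_{i+1}^{a+b}\to\fS_i^{a+b}$ as in the proof of Lemma \ref{thm1'}; condition (1) of Proposition \ref{lem1} follows from Lemma \ref{1lem}, while condition (2) is verified via Proposition \ref{2lem} together with the observation that $Z$ is of finite type over $k$, so some iterate of the diagonal $\phi_\fS$ stabilizes $Z_x$, and compatibility of $\pi_0$ with Frobenii then lifts this to $\phi_{\fI_0}^c$-stability of $Y_y$. Applying the analogue of Proposition \ref{lem1} to the power $\fI_{0,\Fo}^{a+b}$—valid by the same perfectoid/tilting argument, since the adic generic fiber of $\fI^{a+b}$ is still perfectoid with tilt $(\cI^\flat)^{a+b}$ and Lemma \ref{djg} applies to $\wh\BG_m^N$ for every $N$—one obtains a union of formal subtori of $\fI_{0,y}^{a+b}$ contained in $\fW_y$ and containing $Y_y$.

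Pushing this union forward through $h$ and through the isomorphism of formal tori $\pi_0|_{\fI_{0,y}^{a+b}}\colon\fI_{0,y}^{a+b}\cong\fS_{\pi_0(y)}^{a+b}$ (using Lemma \ref{Via}) yields the required union of translated formal subtori in $\fV_x$ containing $Z_x$. The main obstacle is confirming that Proposition \ref{lem1} extends verbatim to the product $\fI_0^{a+b}$; however, as indicated in \ref{Structure and strategy}, the perfectoid machinery and Lemma \ref{djg} are both stable under taking products of the relevant formal tori, so this extension is essentially formal, and the bookkeeping for $Y$ together with the Frobenius-stability of its completion at $y$ is what really needs to be written out carefully.
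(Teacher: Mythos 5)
Your proposal is correct and takes essentially the same route as the paper: the paper's own proof just records that Proposition \ref{lem1} holds for the power $\fI_0^{a+b}$ by the same perfectoid argument and then repeats the proof of Lemma \ref{thm1'} with the toric action only on the first $a$ factors, which is exactly what you have spelled out. The details you supply (taking $h=(h_a,1)$ since the canonical-lifting factors need no $\fM$-translation, and the Frobenius-stability bookkeeping for $Y$) are the right ones.
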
 

\begin{proof} The analog of Proposition \ref{lem1} holds for $\fI_0^{a+b}$ 
by the same proof.
Then the proof of the lemma is   essentially identical with the proof of  Lemma \ref{thm1'} (with the toric action only on the first $a$ factors). We omit the details.
\end{proof}
\subsection{Forward Frobenius action}\label{Forward Frobenius action}
The following  Theorem \ref{thm2'} is equivalent to  Theorem \ref{jointthm} (1) (the implication will be given below, and the inverse implication is easy and left to the reader).  And Theorem \ref{thm2'} has a form closer to Lemma \ref{thm1'1}, and easier to prove.

We need the following simple lemma.  
  \begin{lem}\label{as in}
  Let $R$ be a countable ring (e.g., $R$ is a   finitely generated  algebra over a countable field) and
$V=\Spec R$. For  every  non-negative integer  $i$, let $X_i\subset V $ be a subset. 

(1) Assume  that no  $X_i$ is    Zariski  dense  in $V$ and $\cup_{i=0}^\infty X_i$ is  Zariski dense in $V$.
    Then there exists an increasing  sequence $\{i_m\}_{m=0}^\infty$ of  non-negative integers and $x_{i_m}\in X_{i_m}$ such that  $\{x_{i_m}\}_{m=0}^\infty$ is Zariski dense in $V$.

(2)  Assume  that every $X_i   $ is Zariski dense   in $V$.  
Then there exists   $x_{i}\in X_{i}$ such that  $\{x_{i}\}_{i=0}^\infty$ is Zariski dense in $V$.

  \end{lem}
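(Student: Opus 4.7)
The plan rests on the observation that since $R$ is countable, the collection of basic open subschemes $D(f) = \{\mathfrak{p} \in V : f \notin \mathfrak{p}\}$ (indexed by non-nilpotent $f \in R$) is countable, and a subset $X \subset V$ is Zariski dense if and only if $X \cap D(f) \neq \emptyset$ for every such $f$. So I fix an enumeration $f_0, f_1, f_2, \ldots$ of the non-nilpotent elements of $R$ and build the sequences so that the resulting set of chosen points meets every $D(f_m)$.

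For part (2), by density every pair $(i, m)$ satisfies $X_i \cap D(f_m) \neq \emptyset$, so I inductively pick $x_i \in X_i$ handling the $f_m$'s in order: at step $i$, let $m(i)$ be the smallest $m$ such that no $x_j$ with $j < i$ lies in $D(f_m)$, and choose $x_i \in X_i \cap D(f_{m(i)})$. Every $f_m$ is eventually treated, so $\{x_i\}$ is dense.

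Part (1) is analogous but needs an extra ingredient: not every $X_i$ is dense, so not every pair $(i, m)$ admits a point of $X_i$ in $D(f_m)$. The natural remedy is to first establish the sub-claim that for each non-nilpotent $f$, the set $\{i : X_i \cap D(f) \neq \emptyset\}$ is infinite. Granted this, the same diagonal construction succeeds: at step $n$, pick the smallest unhandled $m$, choose $i_n > i_{n-1}$ from the infinite set $\{i : X_i \cap D(f_m) \neq \emptyset\}$, and set $x_{i_n} \in X_{i_n} \cap D(f_m)$ (if all $m$'s are already handled, extend arbitrarily). The main obstacle is the sub-claim itself: if the set were finite, then $X_i \subset V(f)$ for cofinitely many $i$, whence $\bigcup_i X_i$ would be confined to the finite union $V(f) \cup \bigcup_{i \leq N} \overline{X_i}$ of proper closed subsets, forcing $V$ itself to be such a union. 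This is immediate when $V$ is irreducible, and in general one decomposes the closure of $\bigcup_i X_i$ (which equals $V$) into irreducible components and runs the argument component by component, each component inheriting a countable coordinate ring from $R$.
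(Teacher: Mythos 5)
Your part (2), and your part (1) when $V$ is irreducible, are correct and essentially the paper's own argument: the paper also enumerates a countable family of basic opens $D(f_m)$ and picks points meeting them in turn; where you invoke the sub-claim that infinitely many $X_i$ meet each $D(f_m)$, the paper instead discards the finitely many indices already used and asserts that the remaining $X_i$ still have dense union. Both devices rest on the same fact -- a finite union of non-dense subsets cannot be dense -- and this is precisely where irreducibility of $V$ enters. (Your restriction to non-nilpotent $f$, so that $D(f)\neq\emptyset$, is the correct normalization; the paper's enumeration of $R\bsl R^\times$ is slightly careless on this minor point.)

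The genuine gap is your handling of reducible $V$: the sub-claim is false there, and no ``component by component'' reduction can repair it, because part (1) itself fails for reducible $V$. Take $R=k[x,y]/(xy)$ over a countable field, so $V=L_1\cup L_2$ is two lines meeting at the origin; let $X_0$ be the set of closed points of $L_1$, let $X_1$ be the set of closed points of $L_2$, and let $X_i=\{P\}$ for all $i\geq 2$, where $P$ is a fixed closed point of $L_1$. Then no $X_i$ is dense and $\bigcup_i X_i$ is dense, but an increasing sequence of indices uses $0$ and $1$ at most once each, so any admissible choice of points consists of at most three closed points and is never dense; moreover, for $f=y$ (non-nilpotent, vanishing on $L_1$) only $X_1$ meets $D(f)$, so your sub-claim fails as well. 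The componentwise reduction also does not reproduce the hypotheses: on $L_1$ the trace $X_0\cap L_1$ is dense in $L_1$, so the inductive setup is not inherited by a component. In short, your argument -- like the paper's proof, which tacitly uses irreducibility at the step ``the union of $X_1,\dots,X_{i_1}$ is not Zariski dense, hence the union of the rest is still dense'' -- proves the lemma only for irreducible $V$; for the general statement as written there is no proof to be found, and in the intended applications one should first pass to an irreducible component (and a corresponding subsequence) rather than try to absorb reducibility inside the lemma.
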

  \begin{proof}  Let $\{f_m\}_{m=1}^\infty$ be an enumeration  of elements in $R\bsl R^\times$.  Let $D(f_m)\subset V$ be the associated open subset. 
    For (1), by the Zariski density of the union, there exists $i_1$ and $x_{i_1}\in X_{i_1}\cap D(f_1)$. Since the union of $X_1,X_2,. . ., X_{i_1}$ is not  Zariski dense in $V$, the union of the rest of $X_i$'s is still 
  Zariski dense in $V$. Then there exists $i_2>i_1$ and  $x_{i_2}\in X_{i_2}\cap D(f_2)$. 
   Continuing this process, the lemma follows.
For (2),  choose $x_i\in X_{i}\cap D(f_i).$ 
       \end{proof}
       
  For $z\in   \fS(k)$ and a non-negative integer $m$, a quasi-canonical lifting of $z$ (of order $p^m$) is a torsion (of order $p^m$) of  the formal residue torus at $z$.
\begin{lem} [\cite{dJN}]   CM points in $\fS(\oFo)$  are the same as   quasi-canonical liftings.  
\end{lem}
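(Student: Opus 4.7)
The plan is to use Serre--Tate theory to translate both the CM condition and the quasi-canonical condition into conditions about the lifting of endomorphisms to the formal deformation torus $\fM$.

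For $x\in\fS(\oFo)$, let $z=\ol x\in\fS(k)$ be its reduction, with $A_z$ the corresponding ordinary polarized abelian variety. By the discussion in \ref{Local toric structure}, $\fS_z\cong \fM$ canonically as a formal torus, the canonical lifting $z^\can$ being the unit. The key input is the theorem of Serre--Tate: an endomorphism $\alpha$ of $A_z$ extends to an endomorphism of the deformation of $A_z$ with Serre--Tate parameter $t\in\fM(R)$ if and only if $t$ is fixed by a natural action of $\alpha$ on $\fM$ coming from its action on the connected and \'etale parts of $A_z[p^\infty]$.

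First I would treat the easy direction. Suppose $x$ is a quasi-canonical lifting, so $[p^m]\cdot x=z^\can$ for some $m\geq 0$. Since $A_z$ is an ordinary polarized abelian variety, the connected--\'etale splitting of $A_z[p^\infty]$ together with the polarization equips $\End(A_z)\otimes\BQ$ with a commutative subalgebra $E$ of dimension $2g$ over $\BQ$ which is a CM algebra, and the canonical lifting carries the full action of a natural order $\cO_E\subset E$. Applying the Serre--Tate criterion to $x$, multiplication by $[p^m]$ being a morphism of formal tori, one finds that a finite-index suborder $\cO\subset\cO_E$ must fix $x$ under its action on $\fM$. Hence this order lifts to act on the abelian variety $A$ corresponding to $x$, so $A$ has CM, i.e.\ $x$ is a CM point.

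For the converse, assume $x$ is a CM point so that its abelian variety $A$ has CM by an order $\cO$ in a CM algebra $E$ of dimension $2g$. The action of $\cO$ descends to $A_z$, so by Serre--Tate, $x$ is fixed by the induced action of $\cO$ on $\fM$. The explicit description of this action shows that $\alpha\in\cO$ acts on $\fM$ through a character essentially of the form $\alpha\cdot\bar\alpha^{-1}$, where the bar denotes complex conjugation (induced by the Rosati involution of the polarization). For $\alpha\notin\BZ$ one has $\alpha\bar\alpha^{-1}\ne 1$, and the fixed locus of such a character action on the formal torus $\fM$ is annihilated by $\alpha\bar\alpha^{-1}-1$, hence is contained in the torsion subgroup. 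Thus $x$ is a torsion point of $\fS_z$, i.e.\ a quasi-canonical lifting. The main obstacle is the last step: one needs the precise description of how the CM order acts on the Serre--Tate parameter (through the connected--\'etale splitting and its compatibility with the polarization) and a verification that the fixed locus of the non-trivial character action is torsion. This is essentially the content of \cite{dJN}.
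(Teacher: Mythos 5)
The paper does not prove this lemma at all; it simply quotes \cite{dJN}, and your Serre--Tate argument is precisely the route of that reference, so in outline you are reproducing the intended (cited) proof. Your direction ``quasi-canonical $\Rightarrow$ CM'' is fine: if the Serre--Tate parameter is killed by $p^m$, the lifting criterion is automatically satisfied by the suborder $\BZ+p^m\cO$ of the CM order acting on the reduction (all of which lifts to the canonical lift), and this suborder still spans a commutative algebra of degree $2g$, so the lift has CM.

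One step in the converse is stated incorrectly, though it is repairable and you yourself defer the details to \cite{dJN}. For $g\ge 2$ it is false that $\alpha\notin\BZ$ forces $\alpha\bar\alpha^{-1}\ne 1$: any totally real $\alpha$ is fixed by the Rosati involution. Moreover ``$\alpha$ lifts'' is not the fixed locus of a single character: identifying $\fM$ with $\Hom(T_pA_z\otimes T_pA_z^{t},\wh\BG_m)$ (symmetrized via the polarization), the condition is the vanishing of $q$ on the image of $\alpha\otimes 1-1\otimes\alpha^{t}$, whose eigenvalues over $\ol\BQ_p$ are the differences $\sigma(\alpha)-\bar\tau(\alpha)$, with $\sigma$ running through the $p$-adic CM type $\Phi$ determined by the \'etale part and $\bar\tau$ through the conjugate type. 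The correct input is ordinarity, which gives $\Phi\cap\ol\Phi=\emptyset$; hence, taking $\alpha$ a generator of the CM algebra (or using all $\alpha$ in the order, as you may, since the whole order lifts), all these differences are nonzero, $\alpha\otimes 1-1\otimes\alpha^{t}$ is injective up to isogeny, and the locus it cuts out in $\fM$ is the kernel of an isogeny of formal tori, hence consists of torsion points. With that substitution your argument agrees with the cited proof.
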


   For a non-negative integer $d$ and  $I=(i^{(1)},. . .,i^{(d)})\in \BZ ^d$, let $\phi^I=\lb \phi_\fS^{i^{(1)}} ,. . . ,\phi_\fS^{i^{(d)}} \rb.$ 
\begin{thm}\label{thm2'} Let $d,d',m$ be non-negative integers.
Let $\fV$ be  a   formal subscheme of $\fS_{F^\circ}^{d+d'}$.   
Let  $\{ x_n\}_{n=1}^\infty$ be a sequence in the prime-to-$p$ Hecke orbit of   $x_1\in \fS^d(F^\circ)$,   and $\{y_n\}_{n=1}^\infty$   a sequence of   quasi-canonical liftings in  $  \fS^{d'}(\oFo)$ of    order dividing $p^m$.
For every $n$, let  $I_n=(i_n^{(1)},. . .,i_n^{(d)})\in \BZ_{\geq -m}^d$ and let $w_n\in   \phi^{I_n}\lb \{x_n\}\rb\times\{y_n\} $. 
Assume that  the
reduction $\ol{w_n}$ of $w_n$ is 
in  $\fV(k)$
and $d(w_n,\fV)\to 0$. Let $Z$ be 
the Zariski closure of  $\{\ol {w_n}\}_{n=1}^\infty $   in 
$\fV_{k}$.   For $x\in Z(k)$,   $\fV_x$ contains a union of  translated formal subtori that contains   $Z_x$ as a closed formal subscheme. \end{thm}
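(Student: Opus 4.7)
The plan is to reduce the statement to Lemma~\ref{thm1'1} by applying forward Frobenius iterations, using them to kill the quasi-canonical liftings in the last $d'$ coordinates and to make the Frobenius exponents in the first $d$ coordinates non-negative. The two key ingredients are that $\phi_\fS$ acts as the $p$-power map on each formal residue torus (Lemma~\ref{Kzz1}) and that $\phi_\fS$ commutes with the prime-to-$p$ Hecke action (diagram~\eqref{faca}).

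Concretely, for $M\ge m$ I would apply $\phi_\fS^{(M,\ldots,M)}$ componentwise to all $d+d'$ factors of $\fS^{d+d'}$, and set $\fV':=\phi_\fS^{(M,\ldots,M)}(\fV)$ and $w_n':=\phi_\fS^{(M,\ldots,M)}(w_n)$. By Corollary~\ref{image} one has $d(w_n',\fV')\to 0$. On the last $d'$ coordinates each quasi-canonical lifting of order dividing $p^m$ is sent to the identity of its formal residue torus, i.e.\ to the canonical lifting of the corresponding reduction; on the first $d$ coordinates $\phi^{I_n}(x_n)$ becomes $\phi^{J_n}(x_n)$ with $J_n:=I_n+(M,\ldots,M)\in\BZ_{\ge 0}^d$. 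I would then partition $\{n\}$ by the value of $J_n$: for each $J\in\BZ_{\ge 0}^d$ with $N_J:=\{n:J_n=J\}$ nonempty, the commutation of $\phi_\fS$ with prime-to-$p$ Hecke gives $\phi^J(x_n)=T_n\phi^J(x_1)$ for $n\in N_J$, so the subsequence $\{w_n'\}_{n\in N_J}$ fits the hypothesis of Lemma~\ref{thm1'1} with base point $(\phi^J(x_1),y_{n_1}^\can)$ for $n_1=\min N_J$. Lemma~\ref{thm1'1} then produces, for every $x'\in (Z'_J)(k)$, a union of translated formal subtori in $\fV'_{x'}$ containing $(Z'_J)_{x'}$, where $Z'_J$ denotes the Zariski closure of the reductions of this subsequence in $\fV'_k$.

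To combine these conclusions over the countably many values of $J$ I would enlarge $k$ to an uncountable algebraically closed field, which is admissible since the content of the statement is stable under such base change. Then each irreducible component $W$ of $Z':=\overline{\bigcup_J Z'_J}$ cannot be covered by the countably many proper closed subsets $W\cap Z'_J$, so $W\subseteq Z'_{J_W}$ for some single index $J_W$; combining over the finitely many irreducible components of $Z'$ yields, for every $x'\in Z'(k)$, a union of translated formal subtori in $\fV'_{x'}$ containing $Z'_{x'}$. Finally one transfers back to $\fV$ via Corollary~\ref{vpr0}: for $x\in Z(k)$ and $x':=\phi_\fS^{(M,\ldots,M)}(x)$ one has $\fV'_{x'}=\phi_\fS^{(M,\ldots,M)}(\fV_x)$, and Corollary~\ref{vpr0}(2) lifts each formal-branch subtorus of $\fV'_{x'}$ to a formal-branch subtorus of $\fV_x$ of the same form. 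The hard part will be this last transfer step: Lemma~\ref{thm1'1} only guarantees that the produced subtori sit inside $\fV'_{x'}$ and contain $Z'_{x'}$, but not that they appear as formal branches of $\fV'_{x'}$, so one must first refine the cover---using the finite isogeny structure of $\phi_\fS^{(M,\ldots,M)}$ on formal residue tori together with the Noetherian structure of $\fV'_{x'}$---so that each covering piece is a formal branch of $\fV'_{x'}$ before Corollary~\ref{vpr0}(2) can be invoked.
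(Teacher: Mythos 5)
Your first reduction (applying a forward Frobenius power componentwise to kill the quasi-canonical liftings and to make the exponents non-negative, then transferring back via Corollary \ref{vpr0}(2)) is essentially the paper's own Lemma \ref{m=0}, and that part is fine. The fatal gap is in how you handle the varying exponents $I_n$ afterwards. You partition the sequence by the value $J$ of the exponent vector, apply Lemma \ref{thm1'1} to each class, and then try to combine the countably many conclusions by base changing to an uncountable field and arguing that an irreducible component $W$ of $Z'$ ``cannot be covered by the countably many proper closed subsets $W\cap Z'_J$.'' But nothing guarantees that $W$ is covered by $\bigcup_J Z'_J$ at all: that union only contains the (countable) set of reductions $\ol{w'_n}$ lying in $W$, which is Zariski dense in $W$ but need not exhaust even the closed points of $W$. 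In the worst case each value of $J$ occurs only finitely often, so every $Z'_J$ is a finite set of points and no $Z'_{J_W}$ can contain a positive-dimensional component; your argument then produces nothing, while the hypothesis of the theorem is perfectly consistent with this situation. (The base change to an uncountable $k$ is also not innocuous: the inputs you invoke, e.g.\ Lemma \ref{thm1'1} via Proposition \ref{lem1}, use in their proofs that $Z$ and its points are defined over a finite field to get Frobenius-stability of $Y_y$, so they are tied to $k=\ol\BF_p$.)

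The paper resolves exactly this difficulty in the opposite way, by exploiting the countability of $k=\ol\BF_p$: working in an affine open $V\subset Z$ and using Lemma \ref{as in}(1), one extracts, coordinate by coordinate, a single subsequence whose reductions are still Zariski dense and in which each exponent $i_n^{(j)}$ is either constant in $n$ or tends to infinity. For the coordinates where $i_n^{(j)}\to\infty$, Lemma \ref{Kzz1}(2) (Frobenius is the $p$-power map on residue tori, which shrinks uniformly toward the unit on $\Fo$-points) gives $d(v_n,\ol{v_n}^{\can})\to 0$, so those coordinates may be replaced by canonical liftings of the reductions while keeping $d(\cdot,\fV)\to 0$; the constant-exponent coordinates stay in a single prime-to-$p$ Hecke orbit, and one concludes by a single application of Lemma \ref{thm1'1}. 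This subsequence-extraction-plus-limit-to-canonical-liftings step is the idea missing from your proposal, and the uncountability trick cannot substitute for it. (Your final worry about Corollary \ref{vpr0}(2) needing formal branches is a genuine point of care, but it is secondary: the same transfer occurs in the paper's Lemma \ref{m=0}, whereas the combination step above is where your argument actually breaks.)
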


    \begin{proof} 
    Let $\fU$ be an open formal subscheme of $\fS_{F^\circ}^{d+d'}$ that contains $\fV$ as a closed formal subscheme. 

    \begin{lem}\label{m=0}If   Theorem \ref{thm2'} holds for $m=0$, then it holds for every   $m$.
\end{lem}
\begin{proof} Let  $\phi^m=(\phi_\fS^{m},. . . ,\phi_\fS^{m})|_{\fU}$ ) with $d+d'$-terms (see the paragraph above Corollary \ref{vpr0} (2) for  the restriction. 
By Corollary \ref{image},  $d\lb \phi^m(w_n),\phi^m(\fV)\rb\to 0$.
By   the relation between  $\phi_{\fS}$ and  the $p$  power endomorphisms of formal residue tori in
Lemma \ref{Kzz1} (2), we can apply
the case $m=0$ to obtain that
for  a general $m$, the formal completion   $  \phi^m (\fV) _{ \phi^m(x)}$  contains a translated  subtorus   that contains   $ \phi^m  (Z) _{ \phi^m(x)}$. 
Then the lemma follows from Corollary \ref{vpr0} (2).   
    \end{proof}
        
   We continue the proof of the theorem.                         By Lemma \ref{m=0}, we may   assume that $m=0$. Then $w_n=\lb  \phi^{I_n}(x_n),y_n\rb $.

          First, we   prove the theorem in the case $d'=0$.  We use Lemma \ref{as in} to simplify the problem as follows.
Up to  replacing $\{ w_n=\phi^{I_n}(x_n)\}_{n=1}^\infty$ by a subsequence, we may assume that  the set of reductions $\{ \ol {w_n}\}_{n=1}^\infty$ is contained in  an affine open   $V\subset Z$. 
          For a non-negative integer $i$,  let $X_i=\{\ol {w_n}:i_n^{(1)}=i\}. $ Then  $\cup_{i=0}^\infty X_i=\{ \ol {w_n}\}_{n=1}^\infty$, which is  Zariski dense in
        $V$ by assumption.
      By  Lemma \ref{as in}   (1), up to  replacing $\{ w_n\}_{n=1}^\infty$ by a subsequence, we may assume  one of the following two conditions:
 \begin{itemize}\item    there exists a non-negative integer  $i^{(1)}$ such that  $X_{i^{(1)}}$    is Zariski dense in $V$ and 
  $X_i=\emptyset $ for  $i\neq i^{(1)}$;
 \item     $   i_n^{(1)}\to \infty $ as $n\to \infty$.  
 \end{itemize}
Continue this process for $i_n^{(2)}$,..., $i_n^{(d)}$ and after permuting $1,...,d$, we arrive at 
a decomposition  $d=a+b$ and a sequence $\{ w_n\}_{n=1}^\infty$ with dense reduction in $Z$ such that 
\begin{itemize} 
\item for $j=1,...,a$,     $i_n^{(j)} =i_1^{(j)}$  (thus does not depend on $n$);
\item for $j=a+1,...,a+b$,  $   i_n^{(j)}\to \infty $ as $n\to \infty$.  
\end{itemize}

Now let us continue the proof of the theorem in the case $d'=0$.  
According to    $\fS^{a+b}=\fS ^{a}\times \fS ^{b}$,
    we write $w_n=(u_n,v_n)$. Since the prime-to-$p$ Hecke action and $\phi_\fS$ commute,   
$\{u_n\}_{n=1}^\infty$ is a sequence in
the prime-to-$p$ Hecke orbit of   $u_1= \phi^{I_1^{(a)}}\lb x_1^{(a)}\rb\in \fS^a(F^\circ)$, where the superscript ``$(a)$" means the first $a$-component, i.e., $I_1^{(a)}=(i_1^{(1)},. . .,i_1^{(a)}). $
Note that    $p$-power endomorphism on a formal torus over $\Fo$ shrinks the torus  toward the unit.  
Since $  i_n^{(j)}\to \infty $ as $n\to \infty$ for $j=a+1,...,a+b$, 
by the relation between  $\phi_{\fS}$ and  the $p$-power endomorphisms of formal residue tori in
Lemma \ref{Kzz1} (2),
$d(\ol {v_n}^\can, v_n)\to 0$ in  $\fS^b$.  Here 
 $\ol {v_n}$ is  the reduction 
 of $ {v_n}$. 
Let $z_n=(u_n,\ol {v_n}^\can)$.
Then $d(z_n,\fV)\to 0$. Now the theorem follows from Lemma \ref{thm1'1}.

For a general $d'$,  by the same process, we arrive at  $z_n=(u_n,\ol {v_n}^\can,y_n)$ (recall that $y_n$ is a canonical lifing) such that $d(z_n,\fV)\to 0$. The theorem follows from Lemma \ref{thm1'1}.
 \end{proof}
    \begin{lem}    \label{degree}For a non-negative integer $m $, let  $F_m$ be $F  $ adjoining all $p^m$-th roots of unity.
      For $x\in  \Fo$ such that $0<|x-1|<1$,  there exists a  positive integer $m_0$ such that for every
positive integer $m$ and
$x_m\in p^{-m}(\{x\})\bsl p^{-(m-1)}(\{x\})$, we have  $$   p^{m-m_0}\leq [F(x_m):F], [F_m:F]  \leq p^{m}.$$
 
\end{lem}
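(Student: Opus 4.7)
The plan is to verify the upper bounds by inspection and establish the two lower bounds separately, using elementary cyclotomic theory for $[F_m:F]$ and a Newton polygon analysis of $Y^{p^m}-x$ for $[F(x_m):F]$. Note that since $0<|x-1|<1$ forces $x\ne 1$ and hence $x$ not to be a $(p-1)$-th root of unity, the condition $x_m\notin p^{-(m-1)}(\{x\})$ is automatic (it would otherwise force $x^{p-1}=1$).

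The upper bounds are immediate: $x_m$ is a root of the degree-$p^m$ polynomial $Y^{p^m}-x$, so $[F(x_m):F]\le p^m$; and $[F_m:F]\le [\BQ_p(\zeta_{p^m}):\BQ_p]=p^{m-1}(p-1)\le p^m$. For the lower bound on $[F_m:F]$, let $m_1$ be maximal with $\zeta_{p^{m_1}}\in F$. For $m\ge m_1$ the cyclotomic extension $\BQ_p(\zeta_{p^m})/\BQ_p(\zeta_{p^{m_1}})$ is totally ramified of degree $p^{m-m_1}$; since $F$ is linearly disjoint from $\BQ_p(\zeta_{p^{m_1+k}})$ over $\BQ_p(\zeta_{p^{m_1}})$ for all $k\ge 1$ by the choice of $m_1$, the same degree $p^{m-m_1}$ survives after base change to $F$, giving $[F_m:F]\ge p^{m-m_1}$.

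The core step is the lower bound on $[F(x_m):F]$, obtained via the Newton polygon of $f(Z):=(1+Z)^{p^m}-x\in F[Z]$ relative to the valuation $v$ normalised so that $v(p)=1$. Setting $\delta:=v(x-1)>0$ and using $v(\binom{p^m}{p^k})=m-k$ together with $v(\binom{p^m}{j})\ge m-v_p(j)$ for general $j$, the Newton polygon of $f$ is the lower convex hull of $(0,\delta)$ together with the points $(p^k,m-k)$ for $k=0,\dots,m$. A direct computation shows that, with $k^*:=\lceil m-\delta+1/(p-1)\rceil$, the leftmost vertex other than $(0,\delta)$ is $(p^{k^*},m-k^*)$, and every slope of the polygon equals $-\lambda$ for some $\lambda$ of the form $(\delta-m+k^*)/p^{k^*}$ (leftmost segment) or $1/(p^k(p-1))$ with $k^*\le k\le m-1$. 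Each such $\lambda$ is the valuation of $x_m-1$ for some root $x_m$ of $Y^{p^m}-x$, and the $p$-part of the denominator of $\lambda$ in lowest terms is at least $p^{k^*-c_0}$ for a constant $c_0$ depending only on $\delta$ and $e_F$, since any cancellation with the numerator is bounded by the size of the numerator, which itself lies in a range controlled by $\delta$ and $e_F$. Since $v(x_m-1)\in \tfrac{1}{e_{F(x_m)}}\BZ$, this forces $p^{k^*-c_0}\mid e_{F(x_m)}$, and hence
\[
[F(x_m):F]\ge e_{F(x_m)}/e_F\ge p^{k^*-c_0}/e_F\ge p^{m-m_0}
\]
for some $m_0$ depending only on $\delta$ and $F$.

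The main obstacle is the case-by-case bookkeeping in the Newton polygon step: one must verify uniformly, across all roots of $Y^{p^m}-x$ and all $m$, that the $p$-adic denominator of $v(x_m-1)$ is not too severely reduced by cancellations in the numerator. This is manageable because the numerators that appear lie in a bounded set determined by $\delta$ and $e_F$. Taking $m_0$ larger than both $m_1$ and the constant produced by the Newton polygon analysis yields all four inequalities simultaneously.
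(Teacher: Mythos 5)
Your core step is the same as the paper's: the paper proves the bound on $[F(x_m):F]$ by exactly the Newton-polygon inspection you carry out (it states it in one line), and your bookkeeping of the slopes and of the $p$-part of their denominators is correct, as is your observation that the exclusion $x_m\notin p^{-(m-1)}(\{x\})$ is automatic for $x\neq 1$. Two small points there: the regime $\delta\le 1/(p-1)$, where $k^*>m$ and the polygon is a single segment from $(0,\delta)$ to $(p^m,0)$, should be mentioned (the same denominator argument applies with $k^*$ replaced by $m$), and when $m-\delta+1/(p-1)$ is an integer the first vertex can sit at $p^{k^*+1}$, with the same slope, so nothing changes.

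The one genuine flaw is in your lower bound for $[F_m:F]$. Maximality of $m_1$ does not give linear disjointness of $F$ from $\BQ_p(\zeta_{p^m})$ over $\BQ_p(\zeta_{p^{m_1}})$: the correct Galois-theoretic statement is $[F_m:F]=[\BQ_p(\zeta_{p^m}):F\cap\BQ_p(\zeta_{p^m})]$, and the intersection can strictly contain $\BQ_p(\zeta_{p^{m_1}})$ without containing $\zeta_{p^{m_1+1}}$, because not every subfield of $\BQ_p(\zeta_{p^m})$ is cyclotomic. For odd $p$, the field $\BQ_p(\zeta_{p^2})$ has a degree-$p$ subfield containing no primitive $p$-th root of unity, and $F$ may contain it while $m_1=0$; for $p=2$ one has $\BQ_2(\sqrt{2})\subset\BQ_2(\zeta_8)$ with $\zeta_4\notin\BQ_2(\sqrt2)$. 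In such cases $[F_m:F]<p^{m-m_1}$, so your claimed inequality fails, although only by a bounded factor. The statement you need is still true and is repaired by the same ramification reasoning you use for $F(x_m)$: since $v(\zeta_{p^m}-1)=1/\bigl(p^{m-1}(p-1)\bigr)$ must lie in the value group of $F_m$, which is $\tfrac{1}{e_F\,e(F_m/F)}\BZ$, one gets $[F_m:F]\ge e(F_m/F)\ge p^{m-1}(p-1)/e_F\ge p^{m-m_0}$; this is what the paper dismisses as well-known, so with this patch your proof matches the paper's.
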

 \begin{proof}  The  second ``$\leq$"  is immediate. For the first  ``$\leq$", inspecting the Newton polygon, the ramification index of $F(x_m)$ over $F$ is $\geq p^{m-m_0}$ for some non-negative integer  $m_0$.  The same inequality for $F_m$ is well-known. 
      \end{proof}
\begin{proof}
[Proof of Theorem \ref{jointthm} (1)] 
We have a reduction step that will be used for both (1) and (2) of Theorem \ref{jointthm}.
After enlarging $F$, we may assume that each $O_i$  is the  Hecke--Frobenius orbit  of a point $o_i\in \fS(\Fo)$. 
Since the Hecke orbit of a CM point consists of CM points, 
we may assume that each $o_i$  is not a CM point (in particular, not a canonical lifting).  
For a non-negative integer $m$, define $$O_i^{(m)}:=\bigcup_{I \in \BZ_{\geq -m}^d}\phi^{I}\lb O_i\rb,$$ 
and $\CM^{(m)}\subset \CM$ the subset of quasi-canonical lifting of order dividing $p^m$.   
By  the description of    prime-to-$p$ Hecke orbits in \ref{Unramified points on Siegel moduli schemes}, every element in the  prime-to-$p$ Hecke orbit of $o_i$ have the same $\Fo$-coordinate with $o_i$ under a suitable coordinate system  (by which we mean an isomorphism to $\wh\BG_{m,\Fo}^{g(g+1)/2}$) of the corresponding formal residue torus over $\Fo$.  
By the relation between  $\phi_{\fS}$ and  the $p$  power endomorphisms of formal residue tori in
Lemma \ref{Kzz1} (2), we can apply the first ``$\leq$" of Lemma \ref{degree}  to conclude that  for $m$ large enough, depending on $e$ and $o_i$'s, \begin{equation}\label{Febound}  \lb \prod_{i=1}^d O_i\times \CM^{d'}\rb \cap \bigcup_{[E:F]\leq e}\fS (E^\circ)\subset    \prod_{i=1}^d O_i^{(m)}\times \lb\CM^{(m)}\rb^{d'} .\end{equation}

Now we can prove    Theorem \ref{jointthm} (1)  using Theorem \ref{thm2'}. 
By  assumption    there exists  a   sequence $\{ \ep_n \}_{n=0}^\infty$    of positive real numbers with  $\ep_n\to 0$, such that the Zariski  closure $Z_n$ of the reduction of  the intersection of $\fV_{\ep_n}$ with   \eqref{Febound}, which is  in $\fV_k$,    contains $Z$.
By the noetherianness of $\fV_k$, $Z_n$ stabilizes for $i$ large enough. Thus we may assume that $Z_n=Z$ for all $n$.
 By  Lemma \ref{as in}   (2) (applied with $V=Z$ and $X_i$  the reduction of  the intersection of $\fV_{\ep_{i+1}}$ with   \eqref{Febound}),    Theorem \ref{thm2'}     
implies Theorem \ref{jointthm} (1).
     \end{proof}
    \section{Backward Frobenius action} \label{Frobenius action}  
  
 
 \subsection{Boxall's trick}   
 
 We recall a result of Serban \cite{Ser21}, which has its origin in Boxall's study \cite{Box} of unlikely intersection for abelian varieties over finite fields.

              Let $\fG$ be a    formal torus 
over $F^\circ$.
For a non-negative integer $r $,   let  $F_{\cycl,r}$ be $F  $ adjoining all $p^r$-th roots of unity.
Then
$F_{\cycl,r}=F\lb \fG(\oFo)[p^r]\rb $.    
Let $$\fG(F_{\cycl,r}^\circ)^\div=\{P\in \fG(\oFo): p^nP\in \fG(F_{\cycl,r}^\circ)\text{ for some }n\in \BZ_{\geq 0}\}.$$

\begin{lem} [{\cite[Lemma 2.5]{Ser21}}]\label{Serlem}
For $r>1$  and   $P\in \fG(F_{\cycl,r}^\circ)^\div$
such that  $p^{r-1} P\not \in  \fG(F_{\cycl,r}^\circ)$,
there exists $\sigma \in \Gal(\ol F/F_{\cycl,r})$ such that $\sigma(P)-P \in  \fG(F_{\cycl,r}^\circ)[p^r] \bsl \fG(F_{\cycl,r}^\circ)[p^{r-1}]$,  
\end{lem}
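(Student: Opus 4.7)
Let $n \geq r$ be minimal with $p^n P \in \fG(F_{\cycl,r}^\circ)$; such $n$ exists by the division-point hypothesis, and $n \geq r$ follows from the assumption $p^{r-1} P \notin \fG(F_{\cycl,r}^\circ)$. Write $G := \Gal(\ol F / F_{\cycl,r})$ and consider the map $\psi \colon G \to \fG[p^n](\oFo)$, $\sigma \mapsto \sigma(P) - P$; this takes values in $\fG[p^n]$ since $p^n(\sigma(P) - P) = \sigma(p^n P) - p^n P = 0$. The goal is to produce some $\sigma$ for which $\psi(\sigma)$ has order exactly $p^r$.

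The first substantive step is to isolate the subset $H := \{\sigma \in G : \psi(\sigma) \in \fG[p^r]\}$. Since the condition $\psi(\sigma) \in \fG[p^r]$ is equivalent to $\sigma(p^r P) = p^r P$, we have $H = \Gal(\ol F / F_{\cycl,r}(p^r P))$, a closed subgroup. On $H$ the values of $\psi$ land in the trivial $G$-module $\fG[p^r] \subset \fG(F_{\cycl,r}^\circ)$ (trivial because $F_{\cycl,r}$ is precisely $F$ adjoined with the $p^r$-torsion of $\fG$), so the cocycle computation $\psi(\sigma\tau) = \sigma(\psi(\tau)) + \psi(\sigma) = \psi(\tau) + \psi(\sigma)$ shows that $\psi|_H \colon H \to \fG[p^r]$ is an honest group homomorphism. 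It then suffices to prove that the image of $\psi|_H$ is not contained in $\fG[p^{r-1}]$, since any $\sigma \in H$ with $\psi(\sigma) \notin \fG[p^{r-1}]$ gives exactly the desired element of $\fG[p^r] \setminus \fG[p^{r-1}]$.

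Suppose for contradiction that $\psi(H) \subset \fG[p^{r-1}]$, so that every $\sigma \in H$ fixes $p^{r-1} P$. Then $H \subset \Gal(\ol F / F_{\cycl,r}(p^{r-1} P))$, which combined with the automatic reverse inclusion (since $p^r P = p \cdot p^{r-1} P$) forces the field equality $F_{\cycl,r}(p^{r-1} P) = F_{\cycl,r}(p^r P)$. When $n = r$ this gives an immediate contradiction: the right-hand side equals $F_{\cycl,r}$, so the hypothesis $p^{r-1} P \notin \fG(F_{\cycl,r}^\circ)$ is violated. For the general case $n > r$, one invokes formal-group Kummer theory: since $\fG$ is a torus and $\mu_p \subset F_{\cycl,r}$ (as $r \geq 2$), each step in the tower $F_{\cycl,r} = F_{\cycl,r}(p^n P) \subseteq F_{\cycl,r}(p^{n-1} P) \subseteq \cdots \subseteq F_{\cycl,r}(P)$ is either trivial or of degree $p$, and the triviality at level $r$ expresses that $p^r P$ is already a $p$-th power in $\fG$ over $F_{\cycl,r}(p^r P)$; one then descends through the tower (or, alternatively, passes to the larger base $F_{\cycl,n}$ where full Kummer theory applies and then carefully restricts) to derive a contradiction with the minimality of $n$.

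I expect the descent in the case $n > r$ to be the main technical obstacle. The clean case $n = r$, which already covers the principal application (where $P$ arises as a $p^r$-th division point of an element of the cyclotomic field), admits the short argument above; the subtlety for $n > r$ is that $F_{\cycl,r}$ contains only $\mu_{p^r}$ rather than $\mu_{p^n}$, so one must either propagate the triviality of the Kummer step along the tower while controlling each intermediate degree, or enlarge the base and argue by descent, both of which require a careful bookkeeping of partial cyclotomic divisibility.
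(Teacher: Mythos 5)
The paper does not actually prove this lemma --- it is quoted from Serban ([Ser21, Lemma 2.5], with the remark that the torsion hypothesis there is not needed, cf. [Ser22, Lemma 2.2]) --- so the only question is whether your argument is complete, and it is not. Your reduction is correct and clean: $H:=\{\sigma: \sigma(P)-P\in \fG[p^r]\}=\Gal(\ol F/F_{\cycl,r}(p^rP))$, the restriction $\psi|_H$ is a homomorphism into the trivial module $\fG[p^r]$, and the lemma is equivalent to the purely field-theoretic statement that $p^{r-1}P\notin \fG\lb F_{\cycl,r}(p^rP)^\circ\rb$, i.e. $F_{\cycl,r}(p^{r-1}P)\neq F_{\cycl,r}(p^rP)$. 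But you only establish this when $p^rP\in \fG(F_{\cycl,r}^\circ)$ (your case $n=r$), where it is literally the hypothesis; in the general case the hypothesis only controls rationality over $F_{\cycl,r}$, not over the larger field $F_{\cycl,r}(p^rP)$, and this is exactly the content of Serban's lemma. What you leave as "bookkeeping" is the whole proof.

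Moreover, the two routes you sketch for the general case do not obviously close the gap. A descent "contradicting the minimality of $n$" does not work as stated: applying the statement inductively to $pP$ only yields $F_{\cycl,r}(p^rP)\neq F_{\cycl,r}(p^{r+1}P)$, which is compatible with the equality $F_{\cycl,r}(p^{r-1}P)=F_{\cycl,r}(p^rP)$ you want to exclude, and minimality of $n$ is a statement over $F_{\cycl,r}$, not over $F_{\cycl,r}(p^rP)$. Passing to the larger base $F_{\cycl,n}$ to use full Kummer theory destroys the hypothesis: over $F_{\cycl,n}$ the point $p^{r-1}P$ may well become rational (already for torsion points), so one cannot simply "restrict back". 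The correct argument has to combine the Kummer-type data with the cyclotomic character of $\Gal(\ol F/F_{\cycl,r})$ (only $\mu_{p^r}$ is rational over the base), which is what Serban does. Finally, your claim that the case $n=r$ "already covers the principal application" is inaccurate: in the paper's use (Corollary \ref{Serprop} applied to the set $\Gamma$ of points of large degree, e.g. quasi-canonical liftings of high order or deep division points in Hecke orbits), the minimal $n$ with $p^nP\in\fG(F_{\cycl,r}^\circ)$ is typically much larger than $r$, so the general case is precisely the one that is needed.
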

\begin{rmk}    In \cite[Lemma 2.5]{Ser21},  it is required that $P$ is a torsion point. However, this requirement does not play a role in the proof of \cite[Lemma 2.5]{Ser21}. See \cite[Lemma 2.2]{Ser22}.


\end{rmk}

   \begin{cor} \label{Serprop}
Let  $r>1$, $ \Gamma \subset \fG(F_{\cycl,r}^\circ)^\div$  such that
$p^{r-1} (\Gamma)\not \in  \fG(F_{\cycl,r}^\circ)$.  
Then for every    closed formal subscheme $\fZ\subset \fG$ and $\ep>0$,   
\begin{align*} \fZ_{\ep}  \cap \Gamma& \subset   \bigcup_{Q\in \fG(F_{\cycl,r}^\circ)[p^r]\bsl \fG(F_{\cycl,r}^\circ)[p^{r-1}]}  \lb \fZ_{\ep}   \cap  \lb \fZ_{F_{\cycl,r}^\circ} +{Q } \rb_\ep \rb     \\
& \subset \lb \fZ\cap \lb \fZ+ \lb \fG [p^r]\bsl \fG [p^{r-1}]\rb\rb\rb_\ep
\end{align*}

\end{cor}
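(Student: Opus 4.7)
The plan is to prove the first inclusion by an element-wise application of Boxall's trick (Lemma \ref{Serlem}) that exploits Galois-invariance of $\fZ$ over $F^\circ$, and then to deduce the second inclusion from Lemma \ref{internbhd} together with monotonicity of $p$-adic neighborhoods (Lemma \ref{sub}).

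For the first inclusion, I would fix $P \in \fZ_\ep \cap \Gamma$. By hypothesis $p^{r-1}P \notin \fG(F_{\cycl,r}^\circ)$, so Lemma \ref{Serlem} produces $\sigma \in \Gal(\ol F / F_{\cycl,r})$ with $Q := \sigma(P) - P \in \fG(F_{\cycl,r}^\circ)[p^r] \setminus \fG(F_{\cycl,r}^\circ)[p^{r-1}]$. The next step is to observe that because $\fZ$ is defined over $F \subset F_{\cycl,r}$, the automorphism $\sigma$ preserves the valuation on $\oFo$ and carries $\fZ(\oFo/\delta)$ to itself for each $\delta$; inspecting the definition of $d(\cdot,\fZ)$ in \ref{distance} then gives $d(\sigma(P),\fZ) = d(P,\fZ) \leq \ep$, i.e., $\sigma(P) \in \fZ_\ep$. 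Writing $P = \sigma(P) - Q$ and using Lemma \ref{base} to identify $\fZ_\ep$ with $(\fZ_{F_{\cycl,r}^\circ})_\ep$, I conclude $P \in (\fZ_{F_{\cycl,r}^\circ} + (-Q))_\ep$. Since $-Q$ lies in the same indexing set as $Q$, the first inclusion follows.

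For the second inclusion, I would apply Lemma \ref{internbhd} inside $\fG_{F_{\cycl,r}^\circ}$ to each term in the union, obtaining
\[
\fZ_\ep \cap (\fZ_{F_{\cycl,r}^\circ} + Q)_\ep = \bigl(\fZ_{F_{\cycl,r}^\circ} \cap (\fZ_{F_{\cycl,r}^\circ} + Q)\bigr)_\ep.
\]
Since each $Q$ in the indexing set is in particular an $\oFo$-point of $\fG[p^r] \setminus \fG[p^{r-1}]$, the intersection $\fZ_{F_{\cycl,r}^\circ} \cap (\fZ_{F_{\cycl,r}^\circ} + Q)$ sits inside $\fZ \cap (\fZ + (\fG[p^r] \setminus \fG[p^{r-1}]))$, understood as the image of the addition map $\fZ \times (\fG[p^r] \setminus \fG[p^{r-1}]) \to \fG$ intersected with $\fZ$. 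Taking the finite union over $Q \in \fG(F_{\cycl,r}^\circ)[p^r] \setminus \fG(F_{\cycl,r}^\circ)[p^{r-1}]$ and applying monotonicity (Lemma \ref{sub}) finishes the second inclusion.

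All of the nontrivial content is packaged into Lemma \ref{Serlem}; the main (mild) obstacle is to verify carefully that the $p$-adic distance $d(\cdot,\fZ)$ is Galois-invariant, which will follow immediately from the definition via reductions modulo $\delta$ together with the fact that $\fZ$ is defined over $F^\circ$ and $\sigma$ fixes $F_{\cycl,r}$ pointwise.
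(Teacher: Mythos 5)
Your proposal is correct and matches the paper's own argument: both derive $P\in\lb \fZ_{F_{\cycl,r}^\circ}+Q'\rb_\ep$ for a suitable torsion $Q'$ via Lemma \ref{Serlem} together with Galois- and translation-invariance of $d(\cdot,\fZ)$, and both obtain the second inclusion from Lemmas \ref{base}, \ref{sub}, and \ref{internbhd}. The only minor additions are your explicit observation that the indexing set is symmetric under $Q\mapsto -Q$ (implicit in the paper) and the slightly informal statement that $\sigma$ "carries $\fZ(\oFo/\delta)$ to itself" — more precisely it carries it to $\fZ(\oFo/\sigma(\delta))$ with $|\sigma(\delta)|=|\delta|$, which gives the same infimum.
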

\begin{proof} For $\sigma \in \Gal(\CP/F)$, $d(P,\fZ)=d(\sigma(P),\fZ)=d(P,\fZ-\lb \sigma(P)-P\rb)$.  
Then  Lemma \ref{Serlem} implies the first ``$\subset $". 
The second ``$\subset $" follows by applying
  Lemma \ref {base}, Lemma \ref {sub} and 
Lemma \ref{internbhd}. 
\end{proof} 
The following result will be useful in order to apply Corollary \ref{Serprop}.

\begin{lem}  \label{smaller}
Let $\fZ_1,\fZ_2\subset \fG$ be   integral closed formal subschemes such that $\dim \fZ_1\leq \dim \fZ_2$. Assume that and $\fZ_1$ contains no translated formal subtori. 
Then 
for $r$ large enough, $$\dim   \fZ_1\cap  \lb\fZ_2+ \lb \fG [p^r]\bsl \fG [p^{r-1}]\rb\rb <\dim \fZ_2.$$
\end{lem}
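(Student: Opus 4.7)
The plan is to argue by contradiction. If $\dim \fZ_1<\dim \fZ_2$ the conclusion is automatic, so I would first reduce to the case $\dim \fZ_1=\dim \fZ_2=d$. Under this reduction, since $\fZ_1$ and each translate $\fZ_2+Q$ are integral of dimension $d$, the intersection $\fZ_1\cap(\fZ_2+Q)$ has dimension $d$ if and only if $\fZ_1=\fZ_2+Q$. As $\fG[p^r]\setminus\fG[p^{r-1}]$ is finite, the lemma is thus equivalent to the statement that for $r$ sufficiently large, no $Q\in\fG[p^r]\setminus\fG[p^{r-1}]$ satisfies $\fZ_1=\fZ_2+Q$.

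Suppose for contradiction that such $Q_{r_i}$ exist for an infinite sequence $r_1<r_2<\cdots$. Then for every $i$ the difference $Q_{r_i}-Q_{r_1}$ stabilizes $\fZ_2$. Let $\Gamma\subset\fG(\oFo)$ be the subgroup generated by all these differences. Every element of $\Gamma$ stabilizes $\fZ_2$, $\Gamma$ is a torsion subgroup, and it has unbounded exponent, since $p^{r_i-1}(Q_{r_i}-Q_{r_1})=p^{r_i-1}Q_{r_i}$ is nonzero for $i$ large enough.

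The main step is to extract a positive-dimensional formal subtorus from $\Gamma$. Identifying $\fG[p^\infty](\oFo)\cong(\BQ_p/\BZ_p)^n$ with $n=\dim\fG$, Pontryagin duality exhibits $\Gamma$ as the dual of a finitely generated $\BZ_p$-module; the structure theorem therefore gives $\Gamma\cong(\BQ_p/\BZ_p)^a\oplus F$ for some $a\ge 0$ and a finite abelian $p$-group $F$. Unbounded exponent forces $a\ge 1$, so $\Gamma$ contains a Pr\"ufer subgroup $\Gamma'\cong\BQ_p/\BZ_p$. Such a subgroup corresponds to a saturated rank one $\BZ_p$-quotient of $T_p\fG\cong\BZ_p^n$, and hence to a one dimensional formal subtorus $\fT\hookrightarrow\fG$ whose full torsion is exactly $\Gamma'$. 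Because torsion is schematically dense in a formal torus over $\Fo$ (as can be seen on $\wh\BG_{m,\Fo}$ from the fact that the polynomials $(1+T)^{p^r}-1$ are Weierstrass polynomials of strictly increasing degree and therefore have zero intersection as ideals, together with flatness of $\fT$ over $\Fo$), the schematic closure of $\Gamma'$ in $\fG$ equals $\fT$.

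Finally, the stabilizer of $\fZ_2$ in $\fG$ is a closed formal subscheme of $\fG$, so it contains the schematic closure $\fT$ of $\Gamma'$; thus $\fT$ stabilizes $\fZ_2$, i.e.\ $\fZ_2+\fT=\fZ_2$. Picking any $z\in\fZ_2(\oFo)$ gives a translated formal subtorus $z+\fT\subset\fZ_2$, and translating by $Q_{r_1}$ yields the translated formal subtorus $z+Q_{r_1}+\fT\subset\fZ_2+Q_{r_1}=\fZ_1$, contradicting the hypothesis that $\fZ_1$ contains no translated formal subtori. The main technical delicacy will be to make the formal-scheme-theoretic statements in the last two paragraphs rigorous: the closedness of the stabilizer functor and the schematic density of torsion in a formal subtorus --- both expected, but deserving explicit verification in this non-noetherian/formal context.
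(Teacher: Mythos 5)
There is a genuine gap at the very first reduction, and it is precisely the point where the paper has to work. You claim that, after reducing to $\dim\fZ_1=\dim\fZ_2=d$, the intersection $\fZ_1\cap\lb\fZ_2+Q\rb$ has dimension $d$ if and only if $\fZ_1=\fZ_2+Q$, "since $\fZ_1$ and each translate $\fZ_2+Q$ are integral of dimension $d$". But $Q$ is only defined over $F_{\cycl,r}^\circ$ (or a finite extension), so $\fZ_2+Q$ and the intersection only exist after base change to that ring, and integrality of $\fZ_1,\fZ_2$ is a hypothesis over $\Fo$ only: after base change they may break into several irreducible components. A full-dimensional intersection therefore only forces \emph{one} component of $\fZ_{1,F_{\cycl,r}^\circ}$ to coincide with \emph{one} component of $\fZ_{2,F_{\cycl,r}^\circ}+Q_r$, and these components can vary with $r$ and live over different fields. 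Consequently your key deduction that the differences $Q_{r_i}-Q_{r_1}$ stabilize $\fZ_2$ does not follow, and the whole construction of the torsion stabilizer subgroup $\Gamma$ (and hence the extraction of the formal subtorus) collapses. The difficulties you do flag at the end (closedness of the stabilizer, schematic density of torsion) are real but minor and circumventable (e.g.\ take a point $z\in\fZ_2(\oFo)$ and the schematic closure of $z+\Gamma'$); the unflagged base-change/integrality issue is the essential one.

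For comparison, the paper's proof handles exactly this: it only extracts closed formal subschemes $\fX_i\subset\fZ_{i,F_{\cycl,r}^\circ}$ of dimension $\dim\fZ_2$ with $\fX_1=\fX_2+Q_r$, uses Galois descent to see that the union of the $\Gal(F_{\cycl,r}/F)$-conjugates of $\fX_2$ is all of $\fZ_2$, picks a single point $P\in\fZ_2(\oFo)$ (Corollary \ref{samedimcor}), and obtains $P+\sigma(Q_r)\in\fZ_1(\oFo)$ with $\sigma(Q_r)$ of exact order $p^r$ for infinitely many $r$; it then invokes Serban's infinitesimal Manin--Mumford theorem [Ser, Theorem 1.3(1)] to produce the translated formal subtorus in $\fZ_1$. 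Your duality argument (extracting a copy of $\BQ_p/\BZ_p$ and its schematic closure) in effect reproves a special case of that theorem, but only in the situation where a \emph{fixed} subscheme is stabilized by torsion of unbounded order --- which is not what survives the descent step. It could plausibly be salvaged by a pigeonhole over the finitely many geometric components of $\fZ_1,\fZ_2$ (fixed over one sufficiently large finite extension), applying your stabilizer argument to a fixed pair of components; but that requires additional work (finiteness and field of definition of geometric components, descent of the stabilization to $\Fo$) that is absent from the proposal as written.
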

\begin{proof}    
We prove by  contradiction. Assume  that for infinitely many  $r$, there exists $Q_r\in \fG(F_{\cycl,r}^\circ)[p^r]\bsl \fG(F_{\cycl,r}^\circ)[p^{r-1}]$ such that  $\dim \lb  \fZ_{1,F_{\cycl,r}^\circ}\cap  \lb  \fZ_{2,F_{\cycl,r}^\circ}+Q_r\rb\rb=\dim \fZ_2$. Then there exist closed formal subschemes $\fX_i$ of 
$   \fZ_{i,F_{\cycl,r}^\circ}$ of $\dim \fZ_2$ such that $\fX_1=\fX_2+Q_r$.
For $P\in   \fZ_2(\oFo)$ (which exists by Corollary \ref{samedimcor}),   we want to  find a translation of $P$  in  $ \fZ_1(\oFo)$.
 By Galois descent \cite[p 139, Example B ]{BLR} (applied to the  affine scheme corresponding to the union of Galois conjugates of $\fX_2$),  the union of Galois conjugates of $\fX_2$ is $\fZ_2$.
Then for some $\sigma\in \Gal(F_{\cycl,r}/F)$, $P\in\sigma( \fX_2)(\oFo)$. So $P+\sigma(Q_r)\in   \fZ_1(\oFo)$.  By \cite[Theorem1.3 (1)]{Ser} and the infinitude of such $r$'s, $\fZ_1$ contains a translated formal subtorus and this is a contradiction.
\end{proof}

\subsection{Proof of  Theorem \ref{jointthm}  (2)} \label{thm-5ps} 
First, we have a reduction step about our formal subscheme $\fV$ as follows.
By 
Lemma \ref{union}, we may assume that $\fV$ is flat over ${\Fo}$ after replacing $\fV$ by its maximal closed subscheme that is flat over ${\Fo}$.
       By the reduced fiber theorem \cite {BLR0}, after replacing $F$ by a finite extension and $\fV$ by a nonempty open 
       formal subscheme
       which does not affect the truth of    Theorem \ref{jointthm}  (2) by Lemma \ref{open} and 
Lemma \ref{base},   we may assume that  
       \begin{itemize}\item
       $\fV_k$ is connected, unibranch 
       and has no embedded points;
       \item there is a flat formal $\Fo$-scheme $\fV'$ with reduced unibranch special fiber $\fV'_k$ and 
    a finite schematically surjective morphism $f:\fV'\to \fV$.                 
            \end{itemize}
       These assumptions will be used when we apply  Corollary \ref           {smallercor} later.
       
   Second, we have the reduction step  about the set of points as in the  beginning of the  proof of Theorem \ref{jointthm} (1) (in the end of last section).

     Now assume that
for some   $x\in \fV(k)$, $\fV_{k,x}$   contains no translated formal subtori  of   $\fS_x^{d+d'}$, we will show that the   reduction of $\fV_{\ep}  \cap  \lb \prod_{i=1}^dO_i\times \CM^{d'}\rb$ is not Zariski dense in   of  $\fV_k$ for $\ep$ small enough. This is a contradiction and     Theorem \ref{jointthm}  (2) follows.

By Lemma \ref{smaller}, we can choose  $r$ large enough such that     
\begin{equation}           \label{smallereq}
\dim \fV_x\cap \lb \fV_x+ \lb \fS_x [p^r]\bsl \fS_x [p^{r-1}]\rb\rb<\dim \fV_x.          \end{equation}

 Let $e$ be a positive integer that is to be determined. Let $$   \Gamma  = \lb \prod_{i=1}^d O_i\times \CM^{d'}\rb\bsl \bigcup_{[E:F]\leq e}\fS (E^\circ).$$
By the second ``$\leq$"  of  Lemma \ref{degree}, we may choose (and do choose) $e$ large enough, depending on $r$, such that for every  $z\in \fS(k)$, 
$$p^{r-1}\lb \Gamma\cap \fS_z(\oFo)\rb\not\subset \fS_z(F_{\cycl,r}^\circ).$$
Then by Lemma \ref{sub2} (for  $ \fS_z \to \fS$) and   Corollary \ref{Serprop}, for $\ep>0$ and $P\in  \fV_{\ep}\cap \Gamma$, 
$$d_{\fS_z }(P,  \fV_z\cap \lb \fV_z+ \lb \fS_z [p^r]\bsl \fS_z [p^{r-1}]\rb\rb)\leq \ep ,$$
where  $z$ is the  reduction of $P$.
 By  Corollary \ref{samedimcor} (1), there exists $Q\in  \fU(\oFo)$ such that $f(Q)=P$.
Then by  Corollary \ref           {smallercor} and \eqref{smallereq},   Lemma \ref{sub}  and Lemma \ref
{sub2},
there exists a positive integer $s$       and  a closed   subscheme $ Y\subset  \fV_k$ (independent of $P,Q$) 
of $\dim Y<\dim \fV_k$
such that if 
$z\not\in Y(k)$,   then 
\begin{align*}&d_{\fS_z }(P,  \fV_z\cap \lb \fV_z+ \lb \fS_z [p^r]\bsl \fS_z [p^{r-1}]\rb\rb)
\\=
&d_{\fU_{f^{-1}(z)} }\lb Q,   f^{-1}\lb \fV_z\cap \lb \fV_z+ \lb \fS_z [p^r]\bsl \fS_z [p^{r-1}]\rb\rb\rb\rb 
\geq 
d_{\fU_{f^{-1}(z)} }(Q,   \fV'_{f^{-1}(z)}/\varpi^s )
=|\varpi^s| . \end{align*} 
Thus for    $\ep<|\varpi^s|$,
the reduction of $ \fV_{\ep}\cap \Gamma$ is contained in
$  Y$.    

  By the definition of $\Gamma$ and \eqref{Febound}, 
\begin{align}\label{either}     \fV_\ep \cap \lb \prod_{i=1}^dO_i\times \CM^{d'}\rb  \bsl \Gamma\subset   \fV_\ep \cap  \lb \prod_{i=1}^dO_i\times \CM^{d'}\rb \cap \bigcup_{[E:F]\leq e}\fS (E^\circ).\end{align} 
Since
$\fV_{k,x}$   contains no translated formal subtori  of   $\fS_x^{d+d'}$,
 by  Theorem \ref{jointthm}  (1), 
for $\ep$ small enough, 
the reduction of  the right hand side, and thus the left hand side, of  \eqref{either}   is  not Zariski dense in    of  $\fV_k$ (and is in fact finite). Thus the reduction of 
 $$\fV_{\ep}  \cap  \lb \prod_{i=1}^dO_i\times \CM^{d'}\rb=\lb \fV_{\ep}\cap \Gamma\rb \cup \lb    \fV_\ep \cap \lb \prod_{i=1}^dO_i\times \CM^{d'}\rb  \bsl \Gamma\rb$$ is not Zariski dense in   of  $\fV_k$ for $\ep$ small enough.

\appendix 
\section{Approximation on perfectoid spaces}\label{Perfectoid spaces}

We   recall some applications of Scholze's approximation lemma on  perfectoid spaces \cite{Sch12} (see  \cite{Qiu}  and  \cite{Xie}  for more applications).  

\subsection{Approximation lemma}

Let $(R,R^+)$ be a perfectoid  affinoid $(\CP,\CPo)$-algebra  with tilt the $(\Cflat,\Cfcc)$-algebra 
$(R^\flat,R^\fpl)$.
In particular, we have the $\sharp$-map \cite[Proposition 5.17]{Sch12} 
$$R^\flat\to R,\ f\mapsto f^\sharp.$$
We remind the reader that this  $\sharp$-map is not a ring homomorphism, and we do not use the explicit definition of this map. The only important fact is \eqref{impt} and Lemma \ref{Corollary 6.7. (1)}  below.

Let $\cX=\Spa(R,R^+)$ with tilt
$\cX^\flat=\Spa(R^\flat,R^\fpl)$.   Every $x\in \Spa(R,R^+)$ is an equivalence class  valuations on $R$.
Then   the tilting bijection \cite[Definition 6.16]{Sch12} \begin{equation}\label{tiltbij}\rho :\cX(\CP,\CPo)\cong \cX^\flat(\Cflat,\Cfcc),\end{equation}  that is defined by the following equation for all $g\in R^\flat$:
\begin{equation}\label{impt}|g(\rho(x))|=|g^\sharp(x)|.\end{equation}
Below, we always choose a representative $|\cdot(x)|$ in the  equivalence class  of $x$  such that  
$|u(x)|=|u|$  for every $u\in \CP$.   We make the same choice for $(\Cflat,\Cfcc)$-algebras.

\begin{lem}[{\cite[Corollary 6.7 (1)]{Sch12}}]\label{Corollary 6.7. (1)} Let $f\in R^+$.
Then   for every   $0<\ep<1$, there exists 
$g\in R^\fpl$   such that  for every $x\in \cX(\CP,\CPo)$, we have
\begin{equation}|f(x)-g^\sharp(x)|  \leq  \frac{1}{2}  \max\{|f(x)|,\ep\} = \frac{1}{2}   \max\{|g^\sharp(x)|,\ep\}.\label{2.1}\end{equation}    \end{lem}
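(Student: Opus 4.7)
This statement is Scholze's approximation lemma, whose standard proof proceeds by the tilting dictionary and a Frobenius-iteration trick. The plan is to use the fact that $R^{\fpl}$ is a perfect $\Cfcc$-algebra whose reduction modulo a pseudo-uniformizer $\varpi^\flat \in \Cfcc$ is canonically identified with $R^+/\varpi$ (for a matching pseudo-uniformizer $\varpi \in \CPo$ with $(\varpi^\flat)^\sharp \equiv \varpi$ up to a unit), so that any element of $R^+$ can be lifted to $R^\fpl$ modulo $\varpi$. The task is to upgrade such a mod-$\varpi$ lift to an approximation that is sharp pointwise on $\cX(\CP,\CPo)$, with the multiplicative bound demanded by \eqref{2.1}.

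First, I would fix a large integer $N$ (depending on $\ep$) so that $|\varpi|^N < \ep / 4$. Second, I would lift $f^{1/p^N}$: namely, since $R^+$ is integrally perfectoid, choose a lift (modulo $\varpi$) of $f \in R^+/\varpi$ through the identification $R^+/\varpi \cong R^\fpl/\varpi^\flat$, then use the perfectness of $R^\fpl$ to take a $p^N$-th root there; the $p^N$-th power of the resulting element, call it $g \in R^\fpl$, yields a candidate. The third and central step is to estimate $|f(x) - g^\sharp(x)|$ at a fixed continuous valuation $x$. Writing $g^\sharp = h^{p^N}$ for $h \in R^+$ (by compatibility of $\sharp$ with $p$-th powers) and using that $f^{1/p^N}$ and $h$ were arranged to differ by something of norm $\leq |\varpi|$ at $x$, I would expand the difference $f - h^{p^N} = (f^{1/p^N})^{p^N} - h^{p^N}$ via the binomial identity and bound each cross term by $\max(|f(x)|, |g^\sharp(x)|)^{1-1/p^N}\cdot |\varpi|$, times a binomial coefficient that is killed $p$-adically. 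Iterating or choosing $N$ appropriately turns this into an estimate of the form $|f(x) - g^\sharp(x)| \leq |\varpi|^N \cdot \max(|f(x)|, 1)$ plus an error of size $\leq |\varpi|^N$, which by the choice of $N$ is dominated by $\tfrac{1}{2}\max\{|f(x)|, \ep\}$.

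The main obstacle is keeping the estimate uniform in $x$ while extracting the multiplicative factor $\max\{|f(x)|,\ep\}$ rather than just an additive $\ep$; this is exactly what forces the $p^N$-th root detour. The second equality in \eqref{2.1} (with $|g^\sharp(x)|$ replacing $|f(x)|$) is then automatic from the first and the strict triangle inequality, since $|f(x) - g^\sharp(x)| \leq \tfrac{1}{2}\max\{|f(x)|, \ep\}$ forces $|f(x)|$ and $|g^\sharp(x)|$ to coincide whenever either exceeds $\ep$. Since the statement is cited directly as Scholze's \cite[Corollary 6.7]{Sch12}, the cleanest presentation is to simply invoke the reference rather than reproduce the Frobenius bookkeeping in detail.
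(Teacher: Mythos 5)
The paper offers no proof of this lemma at all—it is quoted verbatim from \cite[Corollary 6.7 (1)]{Sch12}—and your proposal, which ends by invoking that same reference, therefore matches the paper's treatment; your sketch of the Frobenius/$p^N$-th-root argument is a fair outline of Scholze's own proof, and your observation that the second equality in \eqref{2.1} follows from the first by the ultrametric inequality is correct.
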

\subsection{Algebraic setting}
Assume that $\Cflat$ is the   closure  of  an algebraic extension of ${k} ((t))$ in $\Cflat$.  
Also assume that there exists  
a  ${k}$-algebra $S$, such that  $ R^{\flat+}$ is the $t$-adic completion of $S \otimes K^{\flat\circ}$.
We have the following corollary of Lemma \ref{Corollary 6.7. (1)}.
\begin{cor}\label{improveCorollary 6.7. (1)} 
For $u\in \Cflat$ with $|u|<1$, let $m_u:=\min\{m\in \BZ :|u|^ m\leq\ep \}$.
Then  $g\in R^\fpl$,  exists 
$u\in \Cflat$ with $|u|<1$,
and    \begin{equation*}\label{range}g_\ep= \sum_{ i\in \BZ\cap [0,m_u) }g_{\ep,i}  u^i \end{equation*} with $g_{\ep,i}\in S$,
 such that    \begin{equation}g-g_\ep\in  u^{m_u}R^{\flat+}.\label{modi}
\end{equation} 

\end{cor}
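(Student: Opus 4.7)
The plan is to produce the approximation in two stages. First, use the hypothesis that $R^{\flat+}$ is the $t$-adic completion of $S\otimes_k \Cfcc$ to replace $g$ by a finite sum $g''=\sum_j s_j\otimes c_j$ with $s_j\in S$ and $c_j\in \Cfcc$, modulo a large power of $t$. Second, use that $\Cflat$ is, by construction, the $t$-adic completion of $\overline{k((t))}$ to expand each $c_j$ as a finite $u$-polynomial with coefficients in $k$, modulo a controlled power of $u$; these $k$-coefficients will then be absorbed into the $s_j$'s to land inside $S$.

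Concretely, I would first fix $g$ and $\ep$ and pick $N$ large enough so that, choosing $g''\in S\otimes_k \Cfcc$ by the completion hypothesis, we have $g-g''\in t^N R^{\flat+}$. Next, using density of $\overline{k((t))}$ in $\Cflat$, I choose for each $j$ a finite extension $L_j/k((t))$ and an element $\tilde c_j\in \cO_{L_j}$ with $|c_j-\tilde c_j|\leq |t|^N$. Let $L$ be a finite extension of $k((t))$ containing every $L_j$, and set $u=\pi_L$, a uniformizer of $L$. Since $k$ is algebraically closed, every finite extension of $k((t))$ has residue field $k$, so by the Cohen structure theorem $\cO_L\cong k[[u]]$. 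Thus each $\tilde c_j$ admits an expansion $\tilde c_j=\sum_{i\ge 0}a_{j,i}u^i$ with $a_{j,i}\in k$. With $u$ fixed, $m_u$ is determined, and up to enlarging $N$ I arrange $|t|^N\leq |u|^{m_u}$. Truncating the expansions yields $c_j-\sum_{i=0}^{m_u-1}a_{j,i}u^i\in u^{m_u}\Cfcc$. Setting $g_{\ep,i}:=\sum_{j=1}^\ell a_{j,i}s_j\in S$ (via $k\subset S$) and $g_\ep:=\sum_{i=0}^{m_u-1}g_{\ep,i}u^i$ gives $g''-g_\ep\in u^{m_u}(S\otimes_k\Cfcc)\subset u^{m_u}R^{\flat+}$, and combining with $g-g''\in u^{m_u}R^{\flat+}$ we obtain $g-g_\ep\in u^{m_u}R^{\flat+}$.

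The delicate point is the choice of $u$: the corollary only asserts existence of some $u$, so one has the latitude to pick $u$ as a uniformizer of a finite extension $L/k((t))$ large enough to house simultaneous approximations of all the $c_j$. It is precisely this choice that guarantees $\cO_L=k[[u]]$, which in turn forces the expansion coefficients $a_{j,i}$ to lie in $k$ rather than in some proper extension. Without this, the terms $a_{j,i}s_j$ would only lie in $k'\otimes_k S$ for some $k'/k$, breaking the requirement that $g_{\ep,i}\in S$. The remaining ingredients — density of $\overline{k((t))}$ in $\Cflat$, algebraic closedness of $k$ to identify the residue fields, and the $t$-adic completion property of $R^{\flat+}$ — are all direct from the hypotheses.
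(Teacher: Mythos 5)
Your proposal is correct and follows essentially the same route as the paper's proof: approximate $g$ by a finite sum in $S\otimes \cO_E$ for a finite extension $E$ of $k((t))$ using the $t$-adic completion and density hypotheses, identify $\cO_E\cong k[[u]]$ for a uniformizer $u$, expand and truncate at $u^{m_u}$, and absorb the $k$-coefficients into $S$. The only loose point, the arrangement $|t|^N\leq |u|^{m_u}$ "up to enlarging $N$" (which as literally stated risks circularity since re-approximating may change $u$), is harmless and is best fixed by choosing $N$ once at the outset with $|t|^{N-1}\leq \ep$: since $u$ is a uniformizer of a finite extension of $k((t))$ one has $|u|\geq |t|$, hence $|u|^{m_u}>\ep|u|\geq \ep|t|\geq |t|^{N}$ for any $u$ that arises.
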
 
\begin {proof}
Let $n$ be a  positive integer such that $|t|^n\leq \ep.$
By the assumptions on $\Cflat$ and $S$, 
we can choose a finite extension $E={k}((u))$  of ${k}((t))$ with   $|u|<1$, and     (a  finite  sum)
$g'=\sum s_j a_j \in   S \otimes  E$, where     $s_j\in S$  and $a_j\in E$, such that 
$g-g'\in  t^n R^{\flat+}.$ 
For $a\in E$, there exist $\alpha_h\in  {k}$  such that
$$a-\sum_{ h \in \BZ \cap [0,m_u)} \alpha_h u^h \in u^{m_u} E^\circ.$$ 
Applying this to the finitely many $a_j$'s, 
the corollary follows. 
\end{proof}

      \subsection{Frobenius descent}
         \begin{asmp} \label{asmp5}  There  is
a sequence $\{S_n\to S_{n+1}\}_{n=0}^\infty$ of  morphisms of  $k$-algebras such that 
\begin{itemize}\item[(1)]$S=\vil_n S_n$;
\item[(2)] the absolute Frobenius map $S_n\to S_n,\ x\mapsto x^p,$  factors through the image of $S_{n-1}$ for $n\geq 1$. 
\end{itemize}
\end{asmp}
        Let $\cX_0 $  be the adic generic fiber of $\Spec S_0\otimes \Cfcc$. Then we have a natural morphism
${\pr^\flat}:\cX^\flat\to \cX_0$. 
Let $\Lambda\subset  \Spec S_0(k) $, and    $\Lambda^\zar_0$    the Zariski  closure of $\Lambda$ in $\Spec S_0$. 
We have the following maps and inclusions:
$$\cX(\CP,\CPo)\xrightarrow{\rho}\cX^\flat(\Cflat,\Cfcc)\xrightarrow{{\pr^\flat}}\cX_0(\Cflat,\Cfcc) \supset \Lambda^\zar  (\Cflat)\supset \Lambda    . $$

\begin{lem}\label{techprop5}  Assume Assumption \ref{asmp5}.
Let $f\in  R^+$. For $0\leq\ep< 1$, let $\Xi_\ep:=\{x\in  \cX (\CP,\CPo) :|f(x )|\leq \ep\}$.  
Assume that $ \Lambda\subset {\pr^\flat} ( \rho( \Xi_\ep))$.
Then $ ( {\pr^\flat}\circ \rho)^{-1}\left(\Lambda^\zar(\Cfcc) \right)\subset    \Xi_\ep$.
\end{lem}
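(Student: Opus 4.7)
The plan is to combine the approximation on $\cX$ with the Frobenius descent in Assumption \ref{asmp5} and a Zariski density argument on $\cX_0$. First, I would apply Lemma \ref{Corollary 6.7. (1)} to the given $f \in R^+$ to obtain $g \in R^{\flat+}$ with $|f(x) - g^\sharp(x)| \leq \tfrac12 \max\{|f(x)|,\ep\}$ for all $x$. Then by Corollary \ref{improveCorollary 6.7. (1)}, I would further approximate $g$ by a finite sum $g_\ep = \sum_{i \in \BZ \cap [0,m_u)} g_{\ep,i}\,u^i$ with $g_{\ep,i} \in S$, satisfying $g - g_\ep \in u^{m_u} R^{\flat+}$, where $|u|^{m_u} \leq \ep$. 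In particular, $|g(y) - g_\ep(y)| \leq \ep$ for every $y \in \cX^\flat(\Cflat,\Cfcc)$.

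The second step is the Frobenius descent. Since each $g_{\ep,i}$ lies in some $S_{n_i}$ and the absolute Frobenius of $S_{n_i}$ factors through the image of $S_{n_i-1}$, iterating gives a lift $h_i \in S_0$ whose image in $S_{n_i}$ equals $g_{\ep,i}^{p^{n_i}}$. Crucially, $h_i$ defines a function on $\cX_0$ such that for every $y \in \cX^\flat(\Cflat,\Cfcc)$,
\[
h_i(\pr^\flat(y)) = g_{\ep,i}(y)^{p^{n_i}}.
\]
Because $\Cflat$ is perfect of characteristic $p$, the vanishing of $g_{\ep,i}(y)$ is equivalent to the vanishing of $h_i$ at $\pr^\flat(y)$. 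Moreover, for $\lambda \in \Lambda \subset \Spec S_0(k)$, evaluating $h_i$ at $\lambda$ yields an element of $k$, so $g_{\ep,i}(y) \in k$ whenever $\pr^\flat(y) = \lambda$, in which case $|g_{\ep,i}(y)| \in \{0,1\}$.

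The third step exploits the hypothesis $\Lambda \subset \pr^\flat(\rho(\Xi_\ep))$. For $\lambda \in \Lambda$, choose $x_\lambda \in \Xi_\ep$ with $\pr^\flat(\rho(x_\lambda)) = \lambda$. From $|f(x_\lambda)| \leq \ep$ and Lemma \ref{Corollary 6.7. (1)} one gets $|g^\sharp(x_\lambda)| \leq \ep$, i.e.\ $|g(\rho(x_\lambda))| \leq \ep$; combined with $|g - g_\ep|_\sup \leq |u|^{m_u} \leq \ep$, this yields $|g_\ep(\rho(x_\lambda))| \leq \ep$. Setting $y := \rho(x_\lambda)$, I would then use that $g_{\ep,i}(y) \in k$, so the terms $g_{\ep,i}(y)\,u^i$ for $i \in [0,m_u)$ have pairwise distinct valuations $|u|^i > \ep$ whenever nonzero; the ultrametric inequality forces each to vanish. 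Hence $h_i$ vanishes on $\Lambda$ for every $i$, and by definition of Zariski closure $h_i$ vanishes on $\Lambda^{\zar}(\Cflat)$.

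The final step is the conclusion. Let $x \in \cX(\CP,\CPo)$ with $\pr^\flat(\rho(x)) \in \Lambda^\zar(\Cfcc)$. Applying the previous step to $y := \rho(x)$, we get $h_i(\pr^\flat(y)) = 0$, hence $g_{\ep,i}(y) = 0$ for all $i \in [0,m_u)$, so $g_\ep(y) = 0$ and then $|g(\rho(x))| \leq |u|^{m_u} \leq \ep$. Via \eqref{impt} this gives $|g^\sharp(x)| \leq \ep$, and then Lemma \ref{Corollary 6.7. (1)} yields $|f(x)| \leq \ep$, i.e.\ $x \in \Xi_\ep$. The main obstacle to watch out for is step three, namely justifying that the ultrametric principle really forces every coefficient $g_{\ep,i}(y)$ to vanish; this relies on the fact (supplied by the Frobenius descent in Assumption \ref{asmp5}) that those coefficients land in $k$, where the valuation is trivial and the terms therefore have pairwise distinct norms.
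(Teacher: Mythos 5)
Your proposal is correct and is essentially the paper's own argument: approximate $f$ by $g$ and then by the finite sum $g_\ep$, use Assumption \ref{asmp5} to descend the coefficients (after raising to a suitable $p$-power) to $S_0$, note that their values at the $k$-points of $\Lambda$ have norm $0$ or $1$ so the ultrametric inequality forces them to vanish on $\Lambda$, and then propagate this vanishing to $\Lambda^\zar(\Cfcc)$ and back through \eqref{modi}, \eqref{impt} and \eqref{2.1}; the paper does the same, only raising the whole sum $g_\ep$ to a single uniform power $p^n$ instead of treating each coefficient separately. The one point you omit is the boundary case $\ep=0$ (where Lemma \ref{Corollary 6.7. (1)} and $m_u$ are unavailable), which the paper handles in one line by applying the case $0<\ep<1$ and letting $\ep\to 0$, using $\Xi_0=\bigcap_{\ep>0}\Xi_\ep$.
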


    \begin{proof}  First, assume that $0<\ep<1$.
Choose   $$g_\ep= \sum_{ i\in \BZ\cap [0,m_u) }g_{\ep,i}  u^i $$
as in Corollary \ref{improveCorollary 6.7. (1)} where $g_{\ep,i}\in S$ for all $i$.
           By the assumption on $\Lambda$,  every element   $x\in  \Lambda$   can be  written as $\pr^\flat \circ \rho (y)$ where $y\in \Xi_\ep$. By  \eqref{impt}, \eqref{2.1} and  \eqref{modi},   $|g_\ep(\rho(y) )|\leq  \ep$. 
By the finiteness of the sum, we can  choose   a positive integer    $n$  such that $g_{\ep,i}\in S_{n}$ for all $i$. 
    Then $g_{\ep,i}^{p^n}\in S_0$ and   $|g_\ep^{p^n}(x )|\leq  \ep ^{p^n}$.          
Since 
$$g_\ep^{p^n}= \sum_{ i\in \BZ\cap [0,m_u) }g_{\ep,i}^{p^n}  u^{ip^n} ,$$
and  \begin{equation}\label{0or1}|g_{\ep,i}^{p^n}(x)|=0 \text{ or }1,\end{equation}  the condition $ i\in   [0,m_u)$ implies that $g_{\ep,i}^{p^n}(x)=0$. 
 Thus  $g_{\ep,i}^{p^n}$ lies in the ideal defining $\Lambda^\zar$.
  So   for every $x\in \Lambda^\zar(\Cfcc) $,  $g_{\ep,i}^{p^n}(x)=0$. Thus $g_\ep ^{p^n}(x)=0$, and then  $g_\ep (x)=0$. By  \eqref{impt}, \eqref{2.1} and   \eqref{modi},  for every  $x\in \Lambda^\zar(\Cfcc) $, we have $$|f\left(\rho^{-1}\left(\pr ^{\flat,-1}(x)\right)\right)|\leq  \ep.$$
  
  Second, the case $\ep=0$ follows by letting  $0<\ep<1$ and $\ep\to 0$.
   \end{proof}

\section{Inverse limit of schemes}         

Let $\{ X_i\}_{i=0}^\infty$ be an inverse  system of  schemes with affine transition  morphisms $f_{j,i}:X_j\to X_i$, $j\geq i$,  so that the inverse limit  $\wt X:=\vpl X_i$ exists.  Let $  \pi_i:\wt X\to X_i$ be the natural  morphism.

             \begin{lem}\label{1lem}
                   Let $ \Lambda\subset \wt X $ be a   subset and  $ \Lambda_i=  \pi_i(\Lambda)\subset X_i $.  We have the following relation between Zariski closures: \begin{equation}\Lambda ^\zar =\bigcap_{i=0}^\infty   \pi_i^{-1}\left( \Lambda_i^\zar\right) .\label{bydef}\end{equation}

\end{lem}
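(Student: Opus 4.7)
The plan is to prove the two inclusions separately, treating both sides purely as closed subsets of the underlying topological space of $\wt X$.

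For the inclusion $\Lambda^\zar \subset \bigcap_{i=0}^\infty \pi_i^{-1}(\Lambda_i^\zar)$, I first note that each $\pi_i$ is continuous and each $\Lambda_i^\zar \subset X_i$ is closed, so $\pi_i^{-1}(\Lambda_i^\zar)$ is a closed subset of $\wt X$. By definition of $\Lambda_i$, we have $\Lambda \subset \pi_i^{-1}(\Lambda_i) \subset \pi_i^{-1}(\Lambda_i^\zar)$ for every $i$, and taking the Zariski closure on the left gives $\Lambda^\zar \subset \pi_i^{-1}(\Lambda_i^\zar)$. Intersecting over $i$ gives the desired inclusion.

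For the reverse inclusion, the key input is the standard fact that for an inverse system of schemes with affine transition morphisms, the underlying topological space of $\wt X = \vpl X_i$ is the inverse limit of the underlying topological spaces of the $X_i$ (see e.g.\ \cite[Lemma 01YX]{Sta}, which applies because the transition morphisms $f_{j,i}$ are affine and hence quasi-compact). In particular, the open sets of the form $\pi_i^{-1}(U)$, for $i \geq 0$ and $U \subset X_i$ open, form a basis for the topology on $\wt X$. I would argue by contrapositive: suppose $x \in \wt X$ is not in $\Lambda^\zar$. Then $x$ lies in the open set $\wt X \setminus \Lambda^\zar$, so by the basis description there exist $i$ and an open $U \subset X_i$ with $x \in \pi_i^{-1}(U)$ and $\pi_i^{-1}(U) \cap \Lambda = \emptyset$. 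The latter disjointness translates to $U \cap \Lambda_i = \emptyset$, whence $\Lambda_i \subset X_i \setminus U$ and, $X_i \setminus U$ being closed, $\Lambda_i^\zar \subset X_i \setminus U$. Since $\pi_i(x) \in U$, this forces $\pi_i(x) \notin \Lambda_i^\zar$, i.e.\ $x \notin \pi_i^{-1}(\Lambda_i^\zar)$, so $x \notin \bigcap_j \pi_j^{-1}(\Lambda_j^\zar)$, as desired.

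I do not anticipate any serious obstacle here: the argument is purely topological and the only nontrivial input is the cited fact that an affine cofiltered inverse limit of schemes has underlying space equal to the inverse limit of underlying spaces, which is standard.
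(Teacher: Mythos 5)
Your proof is correct, but it takes a different route from the paper's. The paper reduces to the affine case $X_i=\Spec B_i$, $\wt X=\Spec B$ with $B=\vil_i B_i$, and argues with ideals: the (radical) ideal $I\subset B$ cutting out $\Lambda^\zar$ is the union of the images of the ideals $I_i\subset B_i$ cutting out $\Lambda_i^\zar$, which yields \eqref{bydef} directly, and in fact at the level of closed subschemes (scheme-theoretic preimages and intersection), the point being that every function on $\wt X$ vanishing on $\Lambda$ already comes from some finite level. You instead argue purely with the underlying topological spaces: the trivial inclusion by continuity of the $\pi_i$, and the reverse inclusion via the fact that $|\wt X|$ is the limit of the $|X_i|$ and (using directedness of the index set) the opens $\pi_i^{-1}(U)$ form a basis, so a point off $\Lambda^\zar$ is separated from $\Lambda$ already at a finite level. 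Both proofs rest on the same phenomenon (everything on the limit is detected at finite levels), but yours is functions-free and gives the set-theoretic identity, which is all that is used downstream (containments among reduced Zariski closures), while the paper's ideal-theoretic argument additionally gives the refinement that the intersection of the scheme-theoretic preimages of the reduced subschemes $\Lambda_i^\zar$ is already reduced and equal to $\Lambda^\zar$. One small adjustment: the Stacks tag 01YX you cite is the existence of the limit along affine transition morphisms; the identification of the underlying space with $\vpl_i|X_i|$ and the basis statement is a neighboring lemma (or can be extracted from 01YX together with the affine case $\Spec\vil_i B_i$), so the reference should be corrected, but the fact itself is standard and your use of it is sound.
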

\begin{proof} It is enough to consider the affine case: $X_i=\Spec B_i$ and $X=\Spec B$, where $B$ is the direct limit of $B_i$'s.  
The ideal $I\subset B$   defining $\Lambda ^\zar$, with reduced induced structure as a closed subscheme, is generated by the union of the images $ I_i$ in $B$, where $I_i\subset B_i$ is the ideal of elements whose image in $B$ vanishes on $\Lambda ^\zar$.  By the definition of $\Lambda_i$, $I_i$ is the ideal defining $\Lambda_i^\zar$. Then \eqref{bydef} follows.
    \end{proof}

The rest of this appendix      is devoted to proving Proposition \ref{2lem}  about formal branches in the inverse limit with finite \etale morphisms.  We start with is a simple lemma.
             \begin{lem}\label{3lem} 
  Let $X_i=\Spec B_i$ where $B_i$ is a noetherian local ring.
          Assume that  every   $f_{j,i}$ is a  is surjective \etale  local morphism and  induces an isomorphism between residue fields.     
          Then $\pi_i$ induces to an isomorphism between the formal completions of $\wt X$ and $X_i$. 

\end{lem}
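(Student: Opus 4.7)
The plan is to reduce to the algebraic statement that the natural ring homomorphism $\wh B_i \to \wh{\cO}_{\wt X,y}$ is an isomorphism, where $B = \vil_j B_j$, $\fm := \vil_j \fm_j$ (with $\fm_j$ the maximal ideal of $B_j$), and $y \in \wt X = \Spec B$ is the point corresponding to $\fm$. The key input is the classical fact that an \'etale local homomorphism of noetherian local rings with trivial residue extension induces an isomorphism on every finite-order infinitesimal neighborhood of the closed point.

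First I will verify that for each $j \geq i$ and each $n \geq 1$, the induced map $B_i/\fm_i^n \to B_j/\fm_j^n$ is an isomorphism. Since $B_i \to B_j$ is unramified and local, $\fm_i B_j = \fm_j$, so $B_j/\fm_j^n = B_j \otimes_{B_i} B_i/\fm_i^n$. Using flatness of $B_i \to B_j$ together with the trivial residue extension, each graded piece satisfies $\fm_j^{k}/\fm_j^{k+1} \cong \fm_i^{k}/\fm_i^{k+1} \otimes_{B_i/\fm_i} B_j/\fm_j \cong \fm_i^{k}/\fm_i^{k+1}$. An induction on $n$ using the short exact sequences
\[
0 \to \fm_i^{n-1}/\fm_i^n \to B_i/\fm_i^n \to B_i/\fm_i^{n-1} \to 0
\]
and their images in the $B_j$-tower then yields the claim.

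Next, I observe that filtered colimits commute with quotients, so $B/\fm^n = \vil_j B_j/\fm_j^n$; by the previous step each transition map in this direct system is an isomorphism, so the colimit equals $B_i/\fm_i^n$. In particular $B/\fm = B_i/\fm_i$ is a field, making $\fm$ a maximal ideal of $B$. Any element of $B \setminus \fm$ is a unit modulo $\fm$, hence a unit modulo $\fm^n$ (as $\fm/\fm^n$ is nilpotent in $B/\fm^n$), so $B_\fm/\fm^n B_\fm = B/\fm^n = B_i/\fm_i^n$. Passing to inverse limits gives the identification $\wh{B}_\fm = \vpl_n B_i/\fm_i^n = \wh B_i$, which is plainly induced by $\pi_i$.

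The main obstacle, more technical than substantive, is the bookkeeping around the non-noetherian ring $B$: I must check that the formal completion of $\wt X$ at its distinguished closed point $y$ coincides with $\Spf \wh B_\fm$ in the intended sense. The point $y$ is the unique point of $\wt X$ mapping to the closed point of each $X_j$, which exists and is unique because every $f_{j,i}$ is surjective and local and each $B_j$ has a unique closed point; unwinding definitions, the formal completion is then $\vpl_n B/\fm^n$, which the previous paragraph identifies with $\wh B_i$.
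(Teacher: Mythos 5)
Your proposal is correct and follows essentially the same route as the paper: both reduce the statement to the fact that each \'etale local map $B_i\to B_j$ with trivial residue extension induces isomorphisms $B_i/\fm_i^n\cong B_j/\fm_j^n$, and then pass to the (co)limit to identify $B/\fm^n$ with $B_i/\fm_i^n$ and hence the completions. The only differences are cosmetic: you re-derive the mod-$\fm^n$ isomorphism via graded pieces where the paper simply invokes that \'etale local maps induce isomorphisms on completions, and you phrase the limit step through colimit exactness and localization at $\fm$ (in fact $B$ is already local, so the localization is trivial) where the paper argues via surjectivity of $B_i\to B/\fp^n$ and a kernel computation.
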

\begin{proof} By the   assumption, every $f_{j,i}^\sharp:B_i\to B_j $ is faithfully flat and thus injective. Thus we understand $B_i$'s, with maximal ideal $\fp_i$'s,  as  increasing subrings of $B$ whose union is $B$ with maximal ideal the union $\fp$ of  $\fp_i$'s.  It is enough to show that $B_i/\fp_i^n\cong B/\fp^n$ for all $n$.
By the assumption,
$f_{j,i} $ induces an isomorphism  between completions.  In particular, $B_i/\fp_i^n\to B_j/\fp_j^n$ is an isomorphism so that $B_i \to B /\fp^n$ is surjective. The kernel is  the union of $\fp_j^n\cap B_i=\fp_i^n $ for all $j\geq i$, is $\fp_i^n $.    
\end{proof}  
\begin{rmk}
The analog of the  lemma (and the the analog of  Proposition \ref{2lem} below) for henselizations instead of  formal completions also holds. However, we will use formal completions in the  main body  of the paper. For $X$  that  may not be noetherian, passing from henselization to  formal completion could be troublesome (compare with \cite[Lemma 06LJ]{Sta}.  
Thus we stick to formal completions in this   appendix.
     \end{rmk} 
     
      We need some definitions and lemmas about  (formal) branches.
\begin{defn} [{\cite[Definition 0BQ2, Definition 0C38, Lemma 00ET, Lemma 0C37]{Sta}}]\label{fbran}$\ \ \ $

(1)  A branch of a local ring is an irreducible component of the spectrum  of  its henselization.

(2)  Let $X$ be a scheme and $x\in X$. 
A branch  of   $X$ at $x$ is  a branch of  $\cO_{X,x}$.  
Let  $b(X,x)$ be the number of  branches  of $X$ at $x$. 

(3) We say that $X$ is  unibranch  at $x$ if  $b(X,x)=1$. It is  unibranch  if it is  unibranch  everywhere.

(4)  Let $\fX=\Spf A$, where $A$ is a complete noetherian local ring.  
A  formal branch of $\fX$ is a closed formal subscheme $\Spf A/\fp$  for a minimal prime ideal $\fp$ of $A$. 
\end{defn} 
\begin{lem}[{\cite[Lemma 0E20]{Sta}}]\label{0e20}
       We have    $b(X,x)=\sum_i b(X_i,x)$, where $X_i$'s are irreducible components of $X$ passing through $x$.

                 \end{lem}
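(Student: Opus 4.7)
The plan is to work entirely at the level of local rings. Set $A = \cO_{X,x}$ and write $A^h$ for its henselization. The irreducible components $X_i$ of $X$ passing through $x$ (with reduced induced structure) correspond bijectively to the minimal primes $\fp_1, \ldots, \fp_n$ of $A$, with $\cO_{X_i,x} \cong A/\fp_i$. By Definition \ref{fbran}, $b(X,x)$ is the number of minimal primes of $A^h$ and each $b(X_i,x)$ is the number of minimal primes of $(A/\fp_i)^h$. So the task reduces to proving
$$\#\{\text{minimal primes of } A^h\} = \sum_{i=1}^n \#\{\text{minimal primes of } (A/\fp_i)^h\}.$$

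First I would invoke two standard facts about henselization: (i) the map $A \to A^h$ is faithfully flat, since $A^h$ is a filtered colimit of essentially \etale local $A$-algebras with trivial residue extension; and (ii) henselization commutes with quotients, i.e., $(A/\fp_i)^h \cong A^h / \fp_i A^h$. Combining these, $b(X_i, x)$ equals the number of primes of $A^h$ that are minimal over $\fp_i A^h$.

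Next I would assemble a bijection between the minimal primes of $A^h$ and the disjoint union over $i$ of the primes of $A^h$ minimal over $\fp_i A^h$, via contraction $\fq \mapsto \fq \cap A$. Flatness plus going-down guarantees that every minimal prime $\fq$ of $A^h$ contracts to a minimal prime $\fp_i$ of $A$, and such a $\fq$ is then automatically minimal over $\fp_i A^h$. Conversely, given a prime $\fr$ of $A^h$ minimal over $\fp_i A^h$, any strictly smaller prime $\fr' \subsetneq \fr$ must have contraction strictly smaller than $\fp_i$: indeed $\fr' \cap A \subseteq \fp_i$ is automatic, and the equality case would force $\fr' \supseteq \fp_i A^h$, contradicting minimality of $\fr$ over $\fp_i A^h$. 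Since $\fp_i$ is already minimal in $A$, no such $\fr'$ can exist, so $\fr$ is itself a minimal prime of $A^h$. Summing over $i$ then yields the claimed identity.

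The main step to verify carefully is (ii), the commutation of henselization with quotients. This is standard but not entirely trivial: it follows from the universal property of henselization together with the facts that a quotient of a henselian local ring is henselian and that $A^h/\fp_i A^h$ has trivial residue extension over $A/\fp_i$. With (ii) in hand, the rest is routine commutative algebra built on faithful flatness and going-down.
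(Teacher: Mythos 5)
The paper does not prove this lemma at all: it is quoted verbatim from the Stacks Project [Tag 0E20], so there is no internal proof to compare against. Your argument is correct and is essentially the standard henselization argument (and in fact the same one the Stacks Project uses): identify $b(X,x)$ with the number of minimal primes of $A^h$, $A=\cO_{X,x}$, use $(A/\fp_i)^h\cong A^h/\fp_i A^h$ together with flatness and going-down, and match minimal primes of $A^h$ with primes minimal over the $\fp_i A^h$. The only place that deserves one more line is your claim that ``$\fr'\cap A\subseteq \fp_i$ is automatic'' in the converse direction: this needs the fact that $\fr\cap A=\fp_i$, which is not formally automatic but follows from the same toolkit, since $A/\fp_i\to A^h/\fp_i A^h=(A/\fp_i)^h$ is faithfully flat and $A/\fp_i$ is a domain, so a prime of $A^h$ minimal over $\fp_i A^h$ contracts to $\fp_i$ by going-down; with that inserted, the two directions are mutually inverse and the count follows. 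For comparison, note that where the paper does argue in this circle of ideas (its Lemma \ref{0lem}, which says it ``follows the proof of'' 0E20), it works instead through normalizations: branches are matched with connected components over $x$ of the normalization, and additivity over components comes from the normalization of $X$ decomposing as the disjoint union of the normalizations of its components. That route needs excellence or similar hypotheses to control the normalization, whereas your purely prime-theoretic argument via $A^h$ works for an arbitrary scheme, which is exactly the generality in which the lemma is stated.
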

    \begin{lem}[{\cite[Lemma 0C2E]{Sta}}]\label{0C2E}Assume that $X$ is noetherian and excellent,  and $x\in X$ is a closed point. Then       $b(X,x)$ is  the number of  formal branches  of the formal completion $X_x$.                 \end{lem}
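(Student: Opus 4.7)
The plan is to translate both sides into counts of minimal prime ideals and then use the fact that excellent henselian local rings have well-behaved completions. Writing $R:=\cO_{X,x}$, unwinding Definition \ref{fbran} gives $b(X,x)=\#\{\text{minimal primes of }R^h\}$ (where $R^h$ is the henselization), while the number of formal branches of $X_x=\Spf\wh R$ is $\#\{\text{minimal primes of }\wh R\}$. So it suffices to produce a canonical bijection between these two finite sets.

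First I would note the standard fact that henselization preserves excellence for Noetherian local rings, so $R^h$ is excellent and henselian, and that the natural map $R^h\to \wh{R^h}=\wh R$ is faithfully flat. The core algebraic ingredient is the theorem that an excellent henselian local domain is \emph{analytically irreducible}, equivalently, its completion is again a domain. Applying this to each $R^h/\fp$, where $\fp$ ranges over the minimal primes of $R^h$ (each such quotient being an excellent henselian local domain, since excellence and henselianness pass to quotients), yields that $\wh{R^h/\fp}=\wh R/\fp\wh R$ is a domain, so that $\fp\wh R$ is a prime of $\wh R$. Faithful flatness of $R^h\to\wh R$ and going-down (or a dimension count using $\dim R^h/\fp=\dim \wh R/\fp\wh R$) force $\fp\wh R$ to be a minimal prime of $\wh R$, and distinct $\fp$'s give distinct primes $\fp\wh R$ since $\fp\wh R\cap R^h=\fp$ by faithful flatness.

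Conversely, for any minimal prime $\fq\subset\wh R$, faithful flatness and going-down show that $\fp:=\fq\cap R^h$ is a minimal prime of $R^h$, and then $\fq\supseteq \fp\wh R$; but $\fp\wh R$ is already prime of the same dimension by the previous paragraph, so $\fq=\fp\wh R$. This gives the desired bijection $\fp\leftrightarrow \fp\wh R$.

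The main obstacle is the analytic irreducibility of excellent henselian local domains, which is the nontrivial input; it rests on the fact that an excellent local ring is a $G$-ring, so its formal fibres are geometrically regular, combined with the henselian property to rule out the creation of new irreducible components upon completion. Everything else is routine flatness and dimension bookkeeping. Note that one could alternatively factor the argument through the strict henselization or through Lemma \ref{0e20} to reduce first to the case where $\Spec R^h$ is irreducible, at which point only the single statement ``excellent henselian local domain $\Rightarrow$ completion is a domain'' is needed.
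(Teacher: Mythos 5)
The paper does not prove this lemma at all---it is quoted verbatim from the Stacks Project (Tag 0C2E)---so there is no internal argument to compare with; what you have written is a correct rendition of the standard proof underlying that citation. Your reduction of both sides to counts of minimal primes (of $R^h$ via Definition \ref{fbran}(1)--(2), of $\wh R=\wh{R^h}$ via Definition \ref{fbran}(4)), and the bijection $\fp\leftrightarrow\fp\wh R$ via faithful flatness, going-down, and the fact that $\wh R/\fp\wh R=\wh{R^h/\fp}$ is a domain, all check out. Two remarks. First, be aware that the two inputs you invoke are themselves genuine theorems rather than routine bookkeeping: preservation of (quasi-)excellence under henselization, and analytic irreducibility of an excellent henselian local domain (proved, e.g., by passing to the finite normalization, which is local by henselianness, and using that completions of normal excellent local rings are normal---the same circle of facts as the Tag 0C23 used elsewhere in this paper); these should be cited explicitly. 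Second, the closing aside about first reducing to the case where $\Spec R^h$ is irreducible via Lemma \ref{0e20} is not as innocent as stated: Lemma \ref{0e20} decomposes $b(X,x)$ over irreducible components of $X$, and matching this with a corresponding decomposition of the minimal primes of $\wh R$ already requires an argument of the same nature as your main one (compare Lemma \ref{0lem} in the appendix). Since that remark is optional and your direct argument is complete, this is only a caution, not a gap.
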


\begin{lem}\label{0lem}
                 Let $X$  be   the spectrum  of a reduced excellent local ring $R$,  $x$ the closed point,  $Z\subset X $ an irreducible component so that  the formal completion $Z_x$ is a closed formal subscheme of $X_x$ by Lemma \ref{-1lem} (1). 
                 Then  $Z_x$ is a  union of formal branches of $X_x$.
                 \end{lem}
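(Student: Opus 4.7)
The plan is to translate the statement into one about the ideal $\mathfrak{p}_1\widehat{R} \subset \widehat{R}$, where $\mathfrak{p}_1$ is the minimal prime of $R$ defining $Z$, and to show that $\mathfrak{p}_1\widehat{R}$ is an intersection of minimal primes of $\widehat{R}$. Let $\mathfrak{p}_1,\ldots,\mathfrak{p}_n$ be the minimal primes of $R$ (finitely many since $R$ is noetherian), and assume $Z=V(\mathfrak{p}_1)$. By Lemma \ref{-1lem}(1) applied to $R\twoheadrightarrow R/\mathfrak{p}_1$, combined with the fact that every ideal in the noetherian complete local ring $\widehat{R}$ is closed for its maximal-ideal topology, one has $Z_x=\Spf \widehat{R}/\mathfrak{p}_1\widehat{R}$.

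The key step is a twofold use of excellence of $R$. First, since $R\to\widehat{R}$ is regular, the ring $\widehat{R}$ inherits reducedness from $R$; and by the characterization of reducedness as Serre's conditions $(R_0)+(S_1)$, this forces $\operatorname{Ass}_{\widehat{R}}(\widehat{R})$ to coincide with the set of minimal primes of $\widehat{R}$. Second, each quotient $R/\mathfrak{p}_i$ is again reduced and excellent, so each $\widehat{R}/\mathfrak{p}_i\widehat{R}=\widehat{R/\mathfrak{p}_i}$ is reduced as well, and therefore $\operatorname{Ass}_{\widehat{R}}(\widehat{R}/\mathfrak{p}_i\widehat{R})$ coincides with the set of primes of $\widehat{R}$ minimal over $\mathfrak{p}_i\widehat{R}$.

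Now I apply the flat base change formula for associated primes (Matsumura, \emph{Commutative Ring Theory}, Theorem 23.2, or Stacks Project Tag 05BZ) to the flat map $R\to\widehat{R}$ and the zero ideal; using $\operatorname{Ass}_R(R)=\{\mathfrak{p}_1,\ldots,\mathfrak{p}_n\}$ (once more by reducedness of $R$), this yields
\[
 \operatorname{Ass}_{\widehat{R}}(\widehat{R})=\bigcup_{i=1}^n\operatorname{Ass}_{\widehat{R}}(\widehat{R}/\mathfrak{p}_i\widehat{R}).
\]
Combined with the two identifications from the previous paragraph, this shows that every prime of $\widehat{R}$ minimal over some $\mathfrak{p}_i\widehat{R}$ is in fact a minimal prime of $\widehat{R}$. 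Taking $i=1$, we may write $\mathfrak{p}_1\widehat{R}=\mathfrak{q}_1\cap\cdots\cap\mathfrak{q}_k$ as an intersection of minimal primes of $\widehat{R}$, so that
\[
 Z_x=\Spf \widehat{R}/\mathfrak{p}_1\widehat{R}=\bigcup_{j=1}^k\Spf \widehat{R}/\mathfrak{q}_j
\]
is the desired expression of $Z_x$ as a union of formal branches of $X_x$.

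The only genuinely subtle point is the double invocation of excellence to ensure both that $\widehat{R}$ is reduced and that each $\widehat{R/\mathfrak{p}_i}$ is reduced; everything else, including the passage from associated primes to minimal primes in a reduced noetherian ring and the flat base change formula, is formal.
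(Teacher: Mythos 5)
Your proof is correct, but it follows a genuinely different route from the paper's. You argue purely ideal-theoretically: excellence makes $R\to\widehat R$ regular, so $\widehat R$ and each $\widehat R/\mathfrak p_i\widehat R\cong\widehat{R/\mathfrak p_i}$ are reduced, hence their associated primes are exactly the relevant minimal primes; the flat base change formula for associated primes then identifies every prime minimal over $\mathfrak p_1\widehat R$ with a minimal prime of $\widehat R$, and radicality of $\mathfrak p_1\widehat R$ (again from reducedness of the quotient) gives $\mathfrak p_1\widehat R=\mathfrak q_1\cap\cdots\cap\mathfrak q_k$, i.e.\ $Z_x$ is the union of the formal branches $\Spf\widehat R/\mathfrak q_j$ in the sense of Definition \ref{fbran} (4). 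The paper instead follows the proof of Lemma \ref{0e20}: it normalizes, uses that $Z'$ is a union of connected components of $X'$, and invokes excellence to show that completion commutes with normalization, so that connected components of the completed normalization correspond to formal branches of $Z_x$ and $X_x$. Your argument is more elementary and self-contained (no normalization, no comparison of completion with normalization), and it makes the scheme-theoretic equality $Z_x=\bigcup_j\Spf\widehat R/\mathfrak q_j$ explicit; the paper's geometric route has the advantage that the same normalization diagram is reused immediately afterwards in Proposition \ref{00lem}, where the surjectivity onto all branches of $X_x$ is extracted from properness of the induced map of normalizations, a statement your method would not produce as a byproduct. The only points you leave implicit — that minimal primes over $\mathfrak p_1\widehat R$ are associated primes of the quotient, and that the scheme-theoretic union of the $\Spf\widehat R/\mathfrak q_j$ is cut out by $\bigcap_j\mathfrak q_j$ — are standard and covered by the reducedness you established, so there is no gap.
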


\begin{proof} We follow the proof of Lemma \ref{0e20} (i.e., \cite[Lemma 0E20]{Sta}). 
Let $\nu_Z:Z'\to Z$ and $\nu_X:X'\to X $ be the   normalization morphisms. 
Then by \cite[Lemma 02LX]{Sta},  $Z'$ is naturally a closed subscheme of $X'$, and moreover is a union of connected components of $X'$. 
Let $Z'_x$ and $X'_x$ be the corresponding completions along $\nu_Z^{-1}(x)$ and  $\nu_X^{-1}(x)$. 
Then $Z'_x$ is   a union of connected components of $X'_x$. 
By  the excellence and \cite[Lemma 0C23]{Sta}, it is not hard to show that the vertical (natural) morphisms in
  the following commutative diagram    \begin{equation*}  
\xymatrix{
	Z'_x  \ar[r]  \ar[d]   &X'_x \ar[d] \\
Z_x\ar[r]  &  X_x}.
\end{equation*}
are normalization morphisms. Thus
the left (resp. right) vertical morphism is surjective and maps   one connected component of $Z'_x$ (resp. $X'_x$)  to exactly one branch of $Z_x$ (resp. $X_x$) (see  \cite[Lemma 0C24]{Sta} and note that $Z_x,X_x$ are henselian).  The lemma follows.
    \end{proof}  

\begin{prop}\label{00lem}
               Assume that $f:Y\to X$  is   a  proper  \etale (=finite \'etale) morphism between  reduced excellent  schemes and  $x\in X$   such that for every $y\in f^{-1}(x)$,  $f$  induces an isomorphism between residue fields of $y$ and $x$. Let $Z\subset Y$ be an irreducible component. 
                 Then  $f|_{Z_y}$  is an isomorphism to           a union   of   branches of    $X_x$. And every branch
                 of $X_x$ is contained in 
                                      $f({Z_y})$  for some $y\in f^{-1}(x)$. 
       \end{prop}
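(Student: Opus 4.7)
The plan is to reduce everything to an isomorphism of formal schemes $Y_y\cong X_x$ for every $y\in f^{-1}(x)$ and then apply Lemma \ref{0lem}. Since $f$ is \'etale at $y$ with $\kappa(y)\cong\kappa(x)$, the local structure theorem for \'etale morphisms yields $\widehat{\cO}_{X,x}\xrightarrow{\sim}\widehat{\cO}_{Y,y}$, i.e.\ $f$ restricts to an isomorphism of formal schemes $Y_y\xrightarrow{\sim} X_x$. Moreover $Y$ is reduced (as $f$ is flat with reduced base and reduced \'etale fibers) and excellent (\'etale morphisms of finite type preserve excellence), so Lemma \ref{0lem} applies to the local ring $\cO_{Y,y}$.

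For the first assertion, fix $y\in Z\cap f^{-1}(x)$ and view $Z$ as an irreducible component of the reduced excellent local scheme $\Spec\cO_{Y,y}$. Lemma \ref{0lem} then identifies $Z_y$ with a union of formal branches of $Y_y$; composing with $Y_y\cong X_x$ exhibits $f|_{Z_y}$ as an isomorphism onto a union of formal branches of $X_x$.

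For the second assertion, I will use that for each $y'\in f^{-1}(x)$ the local scheme $\Spec\cO_{Y,y'}$ is the union of its irreducible components through $y'$, so by Lemma \ref{0lem} the formal completion $Y_{y'}$ is the union of the formal completions $Z''_{y'}$ of these components $Z''$. Via the iso $Y_{y'}\cong X_x$, each formal branch of $X_x$ therefore arises as a formal branch of $Z''_{y'}$ for some irreducible component $Z''$ of $Y$ and some $y'$. To realize such a branch as lying in the fixed $Z$ of the statement, one selects $y\in Z\cap f^{-1}(x)$ using that $Z$ is an irreducible component and $f|_Z:Z\to X$ is surjective onto its irreducible image containing $x$. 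The main obstacle is this final matching step, which requires careful bookkeeping of how a prescribed formal branch of $X_x$ transports, via the various isomorphisms $Y_{y'}\cong X_x$, to a formal branch of $Z_y$ for some $y$ rather than of a different irreducible component of $Y$ through $y$.
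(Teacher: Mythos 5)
Your first paragraph and the argument for the first assertion are correct and are essentially the paper's: reduce to $\widehat{\cO}_{X,x}\cong\widehat{\cO}_{Y,y}$ via \'etaleness plus trivial residue extension, observe that $\Spec\cO_{Z,y}$ is an irreducible component of $\Spec\cO_{Y,y}$, and apply Lemma \ref{0lem}. Fine.

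For the second assertion you have a genuine gap, and you say so yourself: you can show that every branch of $X_x$ arises as a branch of \emph{some} irreducible component $Z''$ of $Y$ at \emph{some} $y'\in f^{-1}(x)$, but you cannot force $Z''$ to be the fixed component $Z$. Your closing sentence (``the main obstacle is this final matching step\ldots'') describes the problem accurately but does not resolve it, so the proof is incomplete.

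The missing idea is to pass to normalizations and exploit the properness of $f$. Let $\nu_Z:Z'\to Z$ and $\nu_X:X'\to X$ be the normalization morphisms. Branches of $X_x$ are in bijection with points of $\nu_X^{-1}(x)\subset X'$, and likewise branches of $Z_y$ correspond to points of $\nu_Z^{-1}(y)\subset Z'$ (this uses excellence, cf.\ \cite[Lemma 0C23]{Sta}, and is where Lemma \ref{0lem} plugs in locally, exactly as in your argument for the first part). Now $Z'\to X'$ is a morphism between normal schemes which is dominant onto the component of $X'$ lying over $f(Z)$, and it is \emph{proper} (a composite of the finite $\nu_Z$, the finite $f|_Z$, and the fact that properness descends along $\nu_X$ which is separated), hence surjective onto that component. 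Therefore each point $x'\in\nu_X^{-1}(x)$ lying on that component is the image of some $y'\in Z'$; letting $y=\nu_Z(y')\in f^{-1}(x)$, the branch of $X_x$ indexed by $x'$ is exactly $f(Z_y^{(y')})$ for the branch of $Z_y$ indexed by $y'$. This is the bookkeeping you were looking for, and it is not just careful labeling: the surjectivity of $Z'\to X'$ is a real input, and without properness of $f$ it would fail.

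One caveat in the same direction: the second assertion as stated tacitly requires that every branch of $X_x$ come from the single irreducible component $f(Z)\subset X$. In the paper's application $X$ is irreducible, so this is automatic; but if $x$ lay on several irreducible components of $X$, the fixed $Z$ could only see the branches coming from $f(Z)$. Worth keeping in mind if you try to state your version in greater generality.
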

\begin{proof}  
By \cite[Lemma 00ET]{Sta} (more precise, its proof), $\Spec \cO_{Z,y}$ is an irreducible component of $\Spec \cO_{Y,y}$.
By the assumptions,
$f $ induces an isomorphism $Y_y\cong X_x$  between completions.  Then by Lemma \ref{0lem}, the first part follows.
  Now we prove the second part. Let $\nu_Z:Z'\to Z$ and $\nu_X:X'\to X $ be the   normalization morphisms.
Consider the natural commutative diagram    \begin{equation}  
\xymatrix{
\coprod_{y\in f^{-1}(x)}  \coprod_{y'\in \nu_Z^{-1}(y)}	Z'_{y'}  \ar[r]  \ar[d]   &\coprod_{x'\in \nu_X^{-1}(x)}X'_{x'} \ar[d] \\
\coprod_{y\in f^{-1}(x)}Z_y\ar[r]     &  X_x}.\label{ZxXx}
\end{equation}
By the excellence and \cite[Lemma 0C23]{Sta}, the vertical morphisms are normalization morphisms. 
Then the lemma follows, as in Lemma \ref{0lem}, as long as every $x'$ is the image of some $y'$ in \eqref{ZxXx}.
But this is true since the properness of $f$   the   normalization morphisms implies that   
the natural dominant morphism  $Z'\to X'$ is proper and thus surjective.
    \end{proof}  

          Now we start to discuss   the pro-finite    analog of    Proposition   \ref{00lem}.
          Let notations be as the beginning of this appendix. 
 Assume that every $X_i $ is  noetherian, every   $f_{j,i}$ is a  \'etale.   Assume that $X_0$ is irreducible.   Let $Z_0=X_0$   and let $Z_{i+1}\subset f_{i+1,i}^{-1}(Z_i)$ be an irreducible component  inductively.
It is plain to check  by definition that 
$Z_j$ is an   irreducible component of $f_{j,i}^{-1}(Z_i)$  for any $i$ (in particular,   $Z_j$ is an  irreducible components of $X_j$). 
Let $Y =\vpl_i Z_i\subset 
     \wt X.$
Let $y\in Y$ and $x_i=\pi_i(y)\in Z_i$ so that  $f_{j,i}(x_j)=x_i$. If  $f_{j,i}$    induces an isomorphism between residue fields of $x_j$ and $x_i$,
by the \'etaleness,
  $f_{j,i} $
  induces an isomorphism between  the henselizations of $\cO_{f_{j,i}^{-1}(Z_i),x_j}$ and $\cO_{Z_i,x_i}$. 
  Then by Lemma \ref{0e20},  \begin{equation}
\label{bZZ}
b(Z_{j},x_j)\leq b(Z_{i},x_i).
\end{equation}
And if $b(Z_{j},x_j)=b(Z_{i},x_i)$, then  
$z_j$ lies in  only one irreducible component  of $f_{j,i}^{-1}(Z_i)$, which is $Z_j$, so that
$f_{j,i}|_{Z_{j,x_j}}$ is an isomorphism to  $ Z_{i,x_i}$. Moreover, if this is the case for all $i,j$, then by Lemma \ref{3lem}.
$\pi_0|_{Y_y} $ is an isomorphism to the formal completion $Z_{0,x_0}$ 

     
    \begin{prop}\label{2lem}    Assume that every $X_i $ is   further excellent and reduced, every   $f_{j,i}$ is  further proper.
    Then  for   $x_0\in Z_0$ with separably closed residue field and $y\in \pi_0^{-1}(x_0)\cap Y$, $\pi_0|_{Y_y}$  is an isomorphism to           a union   of   formal branches of    $Z_{0,x_0}$.  
Moreover, 
there exists finitely many $y\in \pi_0^{-1}(x_0)$ such that   every branch
                 of $Z_{0,x_0}$ is contained in 
                                      $\pi_0({Y_y})$  for  one of these $y$'s       
           

     

\end{prop}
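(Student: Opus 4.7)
The plan is to combine the monotonicity \eqref{bZZ} with the one-step descent supplied by Proposition \ref{00lem}. Since $x_0$ has separably closed residue field and every $f_{j,i}$ is étale, each point in $\pi_i^{-1}(x_0)$ also has separably closed residue field, so $f_{j,i}$ restricts to residue-field isomorphisms along any chain $y=(x_i)\in\pi_0^{-1}(x_0)\cap Y$. Thus the discussion preceding \eqref{bZZ} applies, yielding a non-increasing sequence $b(Z_i,x_i)$ of positive integers that stabilizes at some $i_0=i_0(y)$.

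For the first assertion, I will show that $\pi_{i_0}|_{Y_y}$ is an isomorphism of formal schemes onto $Z_{i_0,x_{i_0}}$ and then analyze $f_{i_0,0}|_{Z_{i_0,x_{i_0}}}$. The observation following \eqref{bZZ} (read with constant $b$) says $f_{j,i_0}|_{Z_{j,x_j}}\colon Z_{j,x_j}\to Z_{i_0,x_{i_0}}$ is an isomorphism for every $j\ge i_0$. Applying Lemma \ref{3lem} to the inverse system $\{\Spec\cO_{Z_j,x_j}\}_{j\ge i_0}$---whose transitions are étale local with residue-field isomorphisms, and surjective because local étale ring maps are faithfully flat---produces the first isomorphism. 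Next, $f_{i_0,0}|_{Z_{i_0}}\colon Z_{i_0}\to Z_0$ is proper étale: it is proper because $Z_{i_0}\hookrightarrow X_{i_0}$ is a closed immersion into something proper over $X_0$, and étale because each $Z_j$ is open-and-closed in $f_{j,j-1}^{-1}(Z_{j-1})$ (irreducible components of a finite étale cover of an irreducible reduced scheme coincide with the connected components). Hence Proposition \ref{00lem} identifies $f_{i_0,0}|_{Z_{i_0,x_{i_0}}}$ with a union of formal branches of $Z_{0,x_0}$, and composition establishes the first claim.

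For the second claim, I will exhibit, for each formal branch $C$ of $Z_{0,x_0}$, some $y\in\pi_0^{-1}(x_0)\cap Y$ with $C\subset\pi_0(Y_y)$; since $Z_{0,x_0}$ has only finitely many branches by Lemma \ref{0C2E}, finitely many $y$'s will suffice. The construction is inductive: I simultaneously produce $x_i\in\pi_i^{-1}(x_0)\cap Z_i$ and a formal branch $C_i$ of $Z_{i,x_i}$ with $C_0=C$ and $f_{i+1,i}(C_{i+1})=C_i$. At the step $i\to i+1$, the second part of Proposition \ref{00lem} applied to $f_{i+1,i}|_{Z_{i+1}}$ (proper étale as above) yields $x_{i+1}\in f_{i+1,i}^{-1}(x_i)\cap Z_{i+1}$ with $C_i\subset f_{i+1,i}(Z_{i+1,x_{i+1}})$, and its first part identifies $f_{i+1,i}|_{Z_{i+1,x_{i+1}}}$ with a union of branches of $Z_{i,x_i}$ containing $C_i$; I take $C_{i+1}$ to be the branch mapping isomorphically onto $C_i$. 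At the stabilization index $i_0$, the first claim gives $\pi_0(Y_y)=f_{i_0,0}(Z_{i_0,x_{i_0}})\supset f_{i_0,0}(C_{i_0})=C$, as desired.

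The main difficulty I anticipate is verifying the proper-étale hypothesis on each restriction $f_{j,i}|_{Z_j}\colon Z_j\to Z_i$---specifically, that $Z_j$ is a connected component, not merely an irreducible component, of $f_{j,i}^{-1}(Z_i)$. This rests on the fact that a finite étale cover of an irreducible reduced scheme decomposes as a disjoint union of its irreducible components, which I would verify directly by decomposing the generic fiber into separable field extensions and spreading out, or simply invoke as folklore.
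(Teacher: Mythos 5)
Your overall organization is sound and uses exactly the ingredients the paper uses: the monotonicity \eqref{bZZ}, Lemma \ref{3lem} applied once the branch number $b(Z_i,x_i)$ stabilizes at some $i_0(y)$, and Proposition \ref{00lem} to descend from $Z_{i_0,x_{i_0}}$ to a union of branches of $Z_{0,x_0}$; your inductive lifting of a prescribed branch $C$ through the tower replaces the paper's induction on $b(Z_0,x_0)$ (where, when the branch number never drops, the paper picks $y$ in an inverse limit of nonempty finite sets), and the two packagings are essentially equivalent. The application of Lemma \ref{3lem} is fine, because at that point you only need \emph{local} \'etaleness of $Z_j\to Z_{i_0}$ at $x_j$, which follows from the observation after \eqref{bZZ}: when $b$ is constant, $x_j$ lies on a single irreducible component of $f_{j,i_0}^{-1}(Z_{i_0})$, so near $x_j$ the component $Z_j$ coincides with the (\'etale) preimage.

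The genuine gap is the step you flagged yourself and then waved through as folklore: it is \emph{false} that the irreducible components of a finite \'etale cover of an irreducible reduced scheme are its connected components, hence false in general that $Z_j$ is open and closed in $f_{j,j-1}^{-1}(Z_{j-1})$ and that $f_{i_0,0}|_{Z_{i_0}}\colon Z_{i_0}\to Z_0$ is \'etale. Counterexample: the connected degree-$2$ \'etale cover of a nodal cubic obtained by gluing two copies of $\BP^1$ crosswise over the two preimages of the node is connected but has two irreducible components, neither of them open, and each component maps to the base by the normalization, which is not \'etale at the node. Your proposed repair via the generic fiber fails for the same reason: the closure of a component of the generic fiber need not be open. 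Fortunately the claim is unnecessary: Proposition \ref{00lem} is formulated precisely for a finite \'etale morphism $f\colon Y\to X$ \emph{together with} an irreducible component $Z\subset Y$, and both of its conclusions concern $f|_{Z_y}$. So apply it with $Y=X_{i_0}$ (equivalently $f_{i_0,0}^{-1}(Z_0)$, since $Z_0=X_0$), $X=Z_0$, $Z=Z_{i_0}$ for the first assertion, and with $Y=f_{i+1,i}^{-1}(Z_i)$, $X=Z_i$, $Z=Z_{i+1}$ at each step of your branch-lifting induction; these morphisms are finite \'etale by hypothesis, no \'etaleness of $Z_j\to Z_i$ is needed, and with this substitution the rest of your argument (choice of $x_{i+1}\in Z_{i+1}\cap f_{i+1,i}^{-1}(x_i)$ with $C_i\subset f_{i+1,i}(Z_{i+1,x_{i+1}})$, the branch $C_{i+1}$ mapping isomorphically to $C_i$, the final stabilization, and finiteness via Lemma \ref{0C2E}) goes through.
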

\begin{proof} 



If for all $i$,  the finite subscheme $\{x_i\in f_{i,0}^{-1}(x_0):b(Z_i,x_i)=b(Z_0,x_0)\}$ of $X_i$ is nonempty,
by \eqref{bZZ}, these finite subschemes   form an inverse system.
Choose $y$ in the inverse limit of these finite subschemes.    We   are done by the discussion above the proposition.

To deal with the general case,
we do induction on   $b(Z_0,x_0)$. 
If  $b(Z_0,x_0)=1$, we   are done by \eqref{bZZ} and  the last paragraph.  For a general $b(Z_0,x_0)$, if for some $i$, $b(Z_i,x_i)<b(Z_0,x_0)$ for every $x_i\in f_{i,0}^{-1}(x_0)$, by   Proposition  \ref{00lem}, we can replace $0$ by $i$, and conclude the proposition by the induction hypothesis.   Otherwise, we   are done by the last paragraph.
   \end{proof}

\end{document}